\newcommand{\corr}[1]{#1}
\title[A stable-compact method for semilinear elliptic equations]{A stable-compact method for qualitative properties of semilinear elliptic equations}
\author{Henri Berestycki}
\address{HB: Department of Mathematics, University of Maryland, 4176 Campus Dr, College Park, MD 20742, USA}
\address{Centre d'analyse et de math\'{e}matique sociales, EHESS-CNRS, 54 Boulevard Raspail, 75006, Paris, France}
\address{Senior Visiting Fellow, Institute for Advanced Study, Hong Kong University of Science and Technology, Clear Water Bay, Kowloon, Hong Kong}
\email{\href{mailto:hb@ehess.fr}{\tt hb@ehess.fr}}
\author{Cole Graham}
\address{CG: Department of Mathematics, University of Wisconsin--Madison, 480 Lincoln Dr, Madison, WI 53706, USA}
\email{\href{mailto:graham@math.wisc.edu}{\tt graham@math.wisc.edu}}
\begin{document}
\begin{abstract}
  We study the uniqueness of reaction-diffusion steady states in general domains with Dirichlet boundary data.
  Here we consider ``positive'' (monostable) reactions.
  We describe geometric conditions on the domain that ensure uniqueness and we provide complementary examples of nonuniqueness.
  Along the way, we formulate a number of open problems and conjectures. 
  To derive our results, we develop a general framework, the \emph{stable-compact method}, to study qualitative properties of nonlinear elliptic equations.
\end{abstract}

\maketitle

\section{Overview and main results}

We study the uniqueness of steady states of reaction-diffusion equations in general domains with Dirichlet boundary.
These steady states solve semilinear elliptic equations of the form
\begin{equation}
  \label{eq:main}
  \begin{cases}
    -\Delta u = f(u) & \text{in } \Omega,\\
    u = 0 & \text{on } \partial \Omega
  \end{cases}
\end{equation}
in domains $\Omega \subset \R^d$, which need not be bounded.

The classification of solutions of \eqref{eq:main} is a fundamental question in semilinear elliptic theory.
It is, moreover, prerequisite to understanding the dynamics of the parabolic form of \eqref{eq:main}, which models a host of systems in the natural sciences.
In~\cite{BG24}, we considered \eqref{eq:main} when the reaction $f$ is of strong-KPP type.
There, we found that positive bounded solutions are unique under quite general conditions on $\Omega$.
In contrast, we showed that slightly weaker assumptions on the reaction can easily lead to multiple solutions.
Thus, the classification of solutions of \eqref{eq:main} with general reactions is more complex.
Here, we take up this question.

We assume that the nonlinearity ${f \colon [0, \infty) \to \R}$ is $\m{C}^{1,\gamma}$ for some $\gamma \in (0, 1]$ and $f(0) = f(1) = 0$.
As a consequence, $0$ solves \eqref{eq:main} and $1$ is a supersolution, in the sense that it satisfies \eqref{eq:main} with $\geq$ in place of equalities.
We also assume that $f|_{(1, \infty)} < 0$ and $f'(1) < 0$, so that the reaction drives large values down toward its stable root $1$.
Then the maximum principle implies that all positive bounded solutions of \eqref{eq:main} take values between $0$ and $1$.
We are thus primarily interested in the behavior of $f$ on $(0, 1)$.
We say that a reaction $f$ is \emph{positive} if
\begin{enumerate}[label = \textup{(P)}, leftmargin = 5em, labelsep = 1em, itemsep= 1ex, topsep = 1ex]
\item
  \label{hyp:positive}
  $f|_{(0,1)} > 0$ and $f'(0^+) > 0$.
\end{enumerate}
This is sometimes termed ``monostability.''
We use distinct terminology to emphasize the positive derivative at $u = 0$, which plays a significant role in our analysis.
In many sources, monostability only denotes the first condition in \ref{hyp:positive}.

For the domain, we assume that $\Omega$ is open, nonempty, connected, and uniformly $\m{C}^{2, \gamma}$ smooth.
For a precise definition of this notion, see Definition~A.1 in~\cite{BG24}.
Here, we merely note that this hypothesis includes a uniform interior ball condition.
\medskip

We now present our main contributions.
We begin by establishing uniqueness on several classes of structured domains.
\begin{definition}
  \label{def:ES}
  A domain $\Omega$ is \emph{exterior-star} if $\Omega^\cc = \R^d \setminus \Omega$ is star-shaped.
  It is \emph{strongly exterior-star} if $\Omega^\cc$ is star-shaped about a nonempty open set of points.
\end{definition}
\begin{theorem}
  \label{thm:ES}
  Suppose $\Omega$ is strongly exterior-star and $\Omega^\cc$ is compact.
  Then \eqref{eq:main} has a unique positive bounded solution.
\end{theorem}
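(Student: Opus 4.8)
The plan is to fix two positive bounded solutions $u$ and $v$ and prove $u \equiv v$; existence will come essentially for free from the comparison structure. By the maximum principle both solutions satisfy $0 < u, v < 1$ in $\Omega$. For existence, note that $1$ is a supersolution, while the hypothesis $f'(0^+) > 0$ together with the fact that $\Omega$ (having compact complement) contains arbitrarily large balls lets one build a small subsolution from a Dirichlet principal eigenfunction supported in such a ball; monotone iteration between this subsolution and $1$ then yields a solution. The entire difficulty is therefore uniqueness, and the engine will be a \emph{scaling comparison} that converts star-shapedness of $\Omega^\cc$ into a one-parameter family of supersolutions.

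After translating so that $\Omega^\cc$ is star-shaped about an open neighborhood of the origin, star-shapedness is equivalent to the outward dilation invariance $\lambda\Omega \subseteq \Omega$ for every $\lambda \ge 1$. Hence for $\lambda \ge 1$ the rescaling $v_\lambda(x) := v(\lambda x)$ is well defined on $\Omega$, and since $-\Delta v_\lambda = \lambda^2 f(v_\lambda)$ with $f > 0$ on $(0,1)$, it is a supersolution of \eqref{eq:main}, \emph{strict} whenever $\lambda > 1$. Moreover, \emph{strong} exterior-starness forces $\lambda x$ to lie strictly inside $\Omega$ for $x \in \partial\Omega$ and $\lambda > 1$, so $v_\lambda \ge \delta_\lambda > 0$ on the compact boundary $\partial\Omega$, with a uniform margin, whereas $u = 0$ there. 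I would then run the classical sliding argument in the scaling parameter: set $\lambda^* := \inf\{\lambda \ge 1 : v_\lambda \ge u \text{ in } \Omega\}$ and aim to show $\lambda^* = 1$.

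To close the sliding, suppose $\lambda^* > 1$. The boundary margin and the behavior at infinity (below) prevent $v_{\lambda^*} - u$ from touching $0$ on $\partial\Omega$ or escaping at infinity, so if the inequality $v_{\lambda^*} \ge u$ degenerates it must do so at an interior contact point $x_0$ where $w := v_{\lambda^*} - u \ge 0$ vanishes. Writing $f(v_{\lambda^*}) - f(u) = c(x)\, w$ with $c$ bounded, $w$ satisfies $-\Delta w - c(x) w \ge (\lambda^{*2} - 1) f(v_{\lambda^*}) \ge 0$, so the strong maximum principle gives $w \equiv 0$; but then $(\lambda^{*2}-1) f(v_{\lambda^*}) \equiv 0$, contradicting $f > 0$ on $(0,1)$. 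Hence $\lambda^* = 1$, i.e.\ $v \ge u$; exchanging the roles of $u$ and $v$ yields $u \ge v$, and therefore $u \equiv v$. The \emph{strict} supersolution gain $(\lambda^{*2}-1)f(v_{\lambda^*}) > 0$ is exactly what defeats the usual obstruction that monostable $f$ is not monotone.

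The main obstacle is the behavior at infinity, and this is where the \emph{stable-compact} philosophy enters. Because $\Omega^\cc$ is compact, $\Omega$ is an exterior domain, and far from the obstacle the equation is simply $-\Delta u = f(u)$ in (asymptotically) all of $\R^d$, whose only relevant bounded state is the \emph{stable} root $1$ (recall $f'(1) < 0$). I expect to prove two-sided exponential convergence $c_1 e^{-c_\ast|x|} \le 1 - u(x) \le c_2 e^{-c_\ast|x|}$, with $c_\ast = \sqrt{-f'(1)}$, by constructing radial barriers on $\{|x| > R\}$; the same bounds hold for $v$. For $\lambda > 1$ these give $1 - v_\lambda(x) = 1 - v(\lambda x) \lesssim e^{-c_\ast \lambda |x|} \ll e^{-c_\ast|x|} \lesssim 1 - u(x)$ as $|x| \to \infty$, so $v_\lambda > u$ near infinity; this both starts the sliding (combining with $v_\lambda \to 1$ uniformly on compacts) and guarantees that the critical infimum cannot leak to infinity. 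Establishing these sharp two-sided estimates uniformly, and reconciling the compact and near-infinity regimes for a single large $\lambda$, is the delicate point; the stability of the state $1$ (the ``stable'' ingredient) together with the compactness of $\Omega^\cc$ (the ``compact'' ingredient) is precisely what localizes the problem and makes the comparison at infinity tractable.
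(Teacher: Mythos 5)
Your proposal rests on the same core deformation as the paper---comparison under dilations, which is exactly what star-shapedness of $\Omega^{\mathrm{c}}$ provides---but it implements the ``stable at infinity'' part quite differently, so the two proofs are worth contrasting. The paper (proving the more general Theorem~\ref{thm:ES-full}) uses $v_\kappa = v(\cdot/\kappa)$, a \emph{subsolution} extended by zero, runs the sliding in $\kappa$ only on the near-boundary region $\Omega_- = \Omega\setminus\overline{\Omega[R]}$, and propagates the comparison into the deep region $\Omega[R]$ via a maximum principle (Lemma~\ref{lem:MP-deep}) based on positivity of the generalized principal eigenvalue; the uniform gap needed to decrease $\kappa_*$ is then obtained by a soft compactness/blow-up argument along sequences tending to infinity. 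Your $v_\lambda = v(\lambda\,\cdot)$ is the same deformation in the equivalent \emph{supersolution} framing (the paper's inequality $v(\cdot/\kappa)\le u$ on $\kappa\Omega$ is literally the statement $v \le u(\kappa\,\cdot)$ on $\Omega$), and you replace both the eigenvalue-based maximum principle and the soft limiting argument by explicit exponential barriers around the stable root $1$. Your route is more elementary and quantitative and avoids the generalized eigenvalue machinery entirely, but it leans heavily on compactness of $\Omega^{\mathrm{c}}$ (far away, $\Omega$ looks like $\R^d$ in every direction), whereas the paper's soft treatment of infinity is precisely what lets it also handle unbounded complements that are convex at infinity.

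Two steps need repair, though both are fixable. First, the two-sided bound $c_1 e^{-c_*|x|}\le 1-u(x)\le c_2 e^{-c_*|x|}$ with the \emph{same} rate $c_*=\sqrt{|f'(1)|}$ on both sides is false for $d\ge 2$: positive solutions of the linearized exterior problem $\Delta w = c_*^2 w$ decay like $|x|^{-(d-1)/2}e^{-c_*|x|}$, so no constant $c_1>0$ can make the stated lower bound hold. What your barrier construction actually yields (after absorbing the $O(w^{1+\gamma})$ nonlinear error) is $c_\varepsilon e^{-(c_*+\varepsilon)|x|}\le 1-u \le C_\varepsilon e^{-(c_*-\varepsilon)|x|}$ for every $\varepsilon>0$, and this suffices: the comparison $1-v(\lambda x) < 1-u(x)$ near infinity only needs $(c_*-\varepsilon)\lambda > c_*+\varepsilon$, which holds for any fixed $\lambda>1$ once $\varepsilon$ is small---and in the sliding you only ever invoke it for $\lambda$ bounded away from $1$, namely $\lambda$ near $\lambda^*>1$. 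Second, ruling out an interior contact point only gives $w = v_{\lambda^*}-u>0$ in $\Omega$; since both solutions tend to $1$, $w\to 0$ at infinity no matter what, so strict positivity alone does not contradict the minimality of $\lambda^*$. You must add the closing continuity step: the $\varepsilon$-rate barriers give $v_\lambda>u$ outside some ball $B_R$ \emph{uniformly} for $\lambda\in[\lambda^*-\delta,\lambda^*]$; strict positivity of $w$ together with the boundary margin gives $\min_{\overline{\Omega}\cap\overline{B}_R} w>0$; and the Lipschitz dependence of $\lambda\mapsto v_\lambda$ on compact sets (from Schauder gradient bounds on $v$) then preserves $v_\lambda\ge u$ on $\Omega\cap\overline{B}_R$ for $\lambda$ slightly below $\lambda^*$, contradicting the definition of $\lambda^*$. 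With these two repairs your argument closes, and it constitutes a valid, self-contained alternative to the paper's proof in the compact-complement case.
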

\noindent
This is a simplified form of Theorem~\ref{thm:ES-full} below, which also covers some domains with unbounded complements.

The uniqueness in Theorem~\ref{thm:ES} is somewhat surprising, as it is not the norm.
Indeed, on every bounded domain, there exists a positive reaction (depending on the domain) such that \eqref{eq:main} supports multiple positive solutions~\cite[Proposition~1.4]{BG24}.
On the other hand, if we hold the reaction fixed, solutions on the one-dimensional interval $(0, L)$ are unique provided $L$ is sufficiently large~\cite[Lemma~2.5]{BG22a}.
Here, we extend this result to dilation in multiple dimensions.
\begin{theorem}
  \label{thm:dilation}
  Fix a positive reaction $f$ and a domain $\Omega$.
  There exists $\ubar{\kappa}(f, \Omega) > 0$ such that for all $\kappa \geq \ubar\kappa$, \eqref{eq:main} has a unique bounded positive solution on the dilated domain $\kappa \Omega$.
\end{theorem}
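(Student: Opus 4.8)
The plan is to argue by contradiction and compactness, letting the domain grow and passing to the two model problems that $\kappa\Omega$ resembles at large scale: the whole space $\R^d$ in the interior, and the half-space $\R^d_+ = \{x_d > 0\}$ near the boundary. On both models the positive Dirichlet problem is rigid, and I would use this rigidity to rule out a second solution.

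First I would record the preliminaries. For $\kappa$ large enough that the principal Dirichlet eigenvalue of $-\Delta$ on $\kappa\Omega$ drops below $f'(0^+)$, a small multiple of the principal eigenfunction is a subsolution while $1$ is a supersolution, so monotone iteration produces a minimal solution $\ubar u_\kappa$ and a maximal solution $\bar u_\kappa$ with $0 < \ubar u_\kappa \le \bar u_\kappa < 1$, between which every positive bounded solution lies. Thus uniqueness is equivalent to $\ubar u_\kappa \equiv \bar u_\kappa$. The key preliminary is \emph{bulk convergence}: for every $\epsilon>0$ there are $R_0,\kappa_0$ so that every positive bounded solution satisfies $u \ge 1-\epsilon$ wherever $\mathrm{dist}(x,\partial(\kappa\Omega)) \ge R_0$ and $\kappa\ge\kappa_0$. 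I would prove this by comparison from below: the ground state $\zeta_\rho$ of the positive problem on a ball $B_\rho$, extended by zero, is a subsolution on $\kappa\Omega$, so $u \ge \zeta_\rho$ after centering the ball at any deep interior point; since $\zeta_\rho \to 1$ locally as $\rho\to\infty$ (the large-ball ground state saturates at the stable state), the claim follows.

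Now suppose uniqueness fails along some $\kappa_n \to \infty$, so that $v_n := \ubar u_{\kappa_n} \le u_n := \bar u_{\kappa_n}$ with $v_n \not\equiv u_n$; set $w_n := u_n - v_n > 0$ and $s_n := \sup w_n$. Fix $\epsilon$ small; by bulk convergence the difference is at most $\epsilon$ deep inside, so any point realizing a definite fraction of $s_n$ lies within a bounded distance of $\partial(\kappa_n\Omega)$. I would then run a dichotomy. If $\limsup s_n > 0$, choose $x_n$ with $w_n(x_n) \ge s_n/2 \ge \delta$; translating and rotating the nearest boundary point to the origin, the uniformly $\m{C}^{2,\gamma}$ domains flatten (their curvature scales like $\kappa_n^{-1}$) and converge to $\R^d_+$, while $u_n,v_n$ converge in $C^2_{\mathrm{loc}}$, by interior and boundary elliptic estimates, to solutions $u_\infty \ge v_\infty$ of the Dirichlet problem on $\R^d_+$ that tend to $1$ at infinity and differ at the limit of $x_n$; this contradicts uniqueness of the positive bounded solution on the half-space. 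If instead $s_n \to 0$, I would linearize: $\hat w_n := w_n/s_n$ has $\sup \hat w_n = 1$ and solves $-\Delta \hat w_n = c_n \hat w_n$ with $c_n(x) = \int_0^1 f'(v_n + t w_n)\,dt$. Blowing up at a point where $\hat w_n \ge 1/2$, the limit $\hat w_\infty$ is a nonnegative, bounded, nontrivial solution of the linearized operator on the limit domain. On $\R^d_+$ the limit coefficient is $f'(\phi)$ for the half-space profile $\phi$, and strict stability $\lambda_1(-\Delta - f'(\phi);\R^d_+) > 0$ forbids such a solution; in the interior case the coefficient is $f'(1) < 0$, and the positive zeroth-order term in $-\Delta - f'(1)$ likewise forbids a nontrivial bounded nonnegative solution on $\R^d$. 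Either way we reach a contradiction, so $\ubar u_\kappa \equiv \bar u_\kappa$ for all large $\kappa$, and $\ubar\kappa(f,\Omega)$ is the resulting threshold.

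The main obstacle is the rigidity of the two model problems together with the compactness that delivers them. The genuinely delicate input is the half-space: I must know both that its positive bounded solution is unique (so the non-degenerate case closes) and that it is strictly stable, i.e.\ $\lambda_1(-\Delta - f'(\phi);\R^d_+)>0$ (so the coalescing case closes). The first follows from sliding/moving-plane rigidity, and the second from the fact that $\phi' > 0$ solves the linearized equation without vanishing at the boundary, forcing the Dirichlet principal eigenvalue strictly positive. Securing uniform estimates and the correct domain convergence under dilation — so that the blow-up limits genuinely solve the model problems — is the remaining technical heart, and it is exactly the interplay of stability and compactness that the method is built to exploit.
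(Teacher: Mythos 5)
Your overall architecture is sound but genuinely different from the paper's. The paper first proves that all solutions on $\kappa\Omega$ coalesce uniformly around the glued profile $\Phi_\kappa(x) = \varphi\big(\op{dist}(x,\kappa\partial\Omega)\big)$ (Lemma~\ref{lem:dilation-convergence}), then establishes a \emph{global} spectral lower bound $\lambda(-\Delta - f'(\Phi_\kappa),\kappa\Omega) \to \lambda(-\partial_x^2 - f'(\varphi),\R_+) > 0$ (Proposition~\ref{prop:dilation-asymp-stable}), whose hard half requires extending Lieb's eigenvalue-localization inequality to potentials (Theorem~\ref{thm:potential-Lieb}); uniqueness then follows by linearizing the difference of two solutions and applying the maximum principle once, on all of $\kappa\Omega$. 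You instead run a contradiction/blow-up dichotomy: an order-one difference is killed by half-space uniqueness, and an infinitesimal difference is normalized and killed by a Liouville property of the linearized operators on the two model domains ($f'(1)$ on $\R^d$, $f'(\varphi)$ on $\H^d$). This is attractive because it avoids Lieb's inequality entirely---spectral information is only ever used on the limit domains, never uniformly on $\kappa\Omega$. Both routes, however, consume exactly the same two nontrivial inputs: uniqueness of the positive bounded solution on $\H^d$, and \emph{strict} stability of $\varphi$.

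It is at the second input that your proposal has a real gap. You assert that strict stability follows ``from the fact that $\varphi' > 0$ solves the linearized equation without vanishing at the boundary, forcing the Dirichlet principal eigenvalue strictly positive.'' On an unbounded domain this implication is false: take the zero potential on $\H^d$---the constant function $1$ is positive, solves the linearized equation $-\Delta w = 0$, and does not vanish on the boundary, yet $\lambda(-\Delta,\H^d) = 0$. A positive solution with positive boundary data yields only $\lambda \geq 0$ (via the supersolution characterization of $\lambda$), and $\lambda \geq 0$ does not close your degenerate branch: with merely marginal stability the maximum principle can fail, so the nontrivial normalized limit $\hat w_\infty$ is not excluded. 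What actually forces strict positivity is the behavior of the potential at infinity: $-f'(\varphi) \to \abs{f'(1)} > 0$, which pins the essential spectrum at $[\abs{f'(1)},\infty)$; only then can one say that any spectrum below $\abs{f'(1)}$ consists of genuine eigenvalues with exponentially decaying eigenfunctions $\psi$, so that the integration by parts of $\psi$ against $\varphi'$ is legitimate and produces the strictly positive pure boundary term $\psi'(0)\varphi'(0)$. This is precisely the content of Lemma~\ref{lem:half-line-stable}, together with Lemma~\ref{lem:product}, which reduces $\H^d$ to the half-line using self-adjointness ($\lambda = \lambda'$). Your proof would be complete once you substitute this spectral argument (or an equivalent one) for the one-line claim; the remaining steps---bulk convergence, compactness of the blow-ups, and the Liouville statements given strict stability via the maximum principle of \cite{Nordmann}---are correct modulo standard details.
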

We next consider uniqueness on epigraphs: domains bounded by the graph of a function.
Given $\phi \colon \R^{d-1} \to \R$, its epigraph is the open set
\begin{equation*}
  \Omega \coloneqq \big\{(x', y) \in \R^{d - 1} \times \R\mid y > \phi(x')\big\}.
\end{equation*}
In Theorem~1.2(d) of~\cite{BCN97b}, the first author, Caffarelli, and Nirenberg showed that \eqref{eq:main} has a unique positive bounded solution on $\Omega$ provided $\phi$ is \emph{uniformly Lipschitz}: $\op{Lip} \phi < \infty$.
We extend this uniqueness to a much broader class of epigraphs.
Our strongest result is somewhat technical, so we defer it to Section~\ref{sec:epigraphs}.
Here, we illustrate the general result through a particularly evocative example.
\begin{definition}
  \label{def:flat}
  We say the epigraph $\Omega$ is \emph{asymptotically flat} if for all $1 \leq i,j \leq d-1$,
  \begin{equation*}
    \partial_i\,\left(\frac{\partial_j \phi}{\sqrt{\abs{\nab \phi}^2 + 1}}\right) \to 0 \quad \text{as } \abs{x'} \to \infty.
  \end{equation*}
\end{definition}
That is, the curvature of the boundary $\partial \Omega$ vanishes at infinity.
Many epigraphs of interest are asymptotically flat but not uniformly Lipschitz; the parabola $\{y > x^2\}$ is a natural example.
This distinction arises whenever $\phi$ grows superlinearly at infinity in a consistent fashion.
We show that such superlinear growth does not impede uniqueness.
\begin{theorem}
  \label{thm:flat}
  If $\Omega$ is an asymptotically flat epigraph, then \eqref{eq:main} admits a unique bounded solution $u$.
\end{theorem}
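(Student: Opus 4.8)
The plan is to realize Theorem~\ref{thm:flat} as an instance of the stable-compact method, using asymptotic flatness to reduce the behavior of solutions near infinity to an explicitly solvable half-space problem. For existence, note that $f(1) = 0$, $f|_{(1,\infty)} < 0$, and $f'(1) < 0$ make the constant $1$ a stable supersolution, while $f'(0^+) > 0$ produces a small positive subsolution sitting deep inside $\Omega$ (from the principal eigenfunction on a large interior ball, available by the uniform interior ball condition). Monotone iteration between them yields a maximal positive bounded solution $\bar u$, which the stable-compact machinery shows to be stable. It then remains to prove uniqueness, so I would fix an arbitrary positive bounded solution $u$; the maximum principle forces $0 < u \le \bar u$, and the goal is equality.

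The heart of the argument is a \emph{saturation} estimate: every positive bounded solution satisfies $u(z) \to 1$ as $\op{dist}(z, \partial\Omega) \to \infty$, and near $\partial\Omega$ it matches the one-dimensional monostable profile $U$, the unique increasing solution of $-U'' = f(U)$ on a half-line with $U(0) = 0$ and $U(+\infty) = 1$. This is where asymptotic flatness is essential. I would argue by translation compactness: along any sequence of boundary points $z_n \in \partial\Omega$ escaping to infinity, recenter at $z_n$ and rotate the inward normal to a fixed axis. Since the curvature $\partial_i(\partial_j \phi/\sqrt{\abs{\nab\phi}^2 + 1})$ vanishes at infinity, the recentered boundaries converge in $\m{C}^{1,\gamma}_{\mathrm{loc}}$ to a hyperplane, so the domains converge to a half-space $H$; by uniform elliptic estimates the translated solutions converge to a bounded positive solution on $H$ vanishing on $\partial H$, which the classical theory identifies with $U$ measured from $\partial H$. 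Because the sequence was arbitrary, this pins the profile of $u$ near $\partial\Omega$ to $U$ and forces the stated saturation uniformly.

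With saturation established for both $u$ and $\bar u$, uniqueness follows from the stable-compact method. The difference $v \coloneqq \bar u - u \ge 0$ solves a linear equation $-\Delta v = c(z)\, v$ with bounded $c$, it vanishes on $\partial\Omega$, and by saturation it tends to $0$ as $\op{dist}(z, \partial\Omega) \to \infty$; moreover near $\partial\Omega$ both solutions share the profile $U$, so $v$ is also small there. Thus $v$ is a nonnegative bounded solution decaying both at the boundary and at depth. Using the stability of $\bar u$ together with $f'(1) < 0$ to control the comparison in the region where both solutions are close to $1$, I would run a sliding/maximum-principle argument (sliding $u$ in the $y$-direction and letting the shift tend to $0$) to conclude $v \equiv 0$, i.e.\ $u = \bar u$. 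This gives the unique positive bounded solution.

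The main obstacle is exactly the absence of a uniform Lipschitz bound, which is what powers the sliding method of~\cite{BCN97b}: when $\phi$ grows superlinearly---as for the parabola $\{y > \abs{x'}^2\}$---the boundary is arbitrarily steep at infinity, so one cannot slide against, or compare with, a single fixed half-space, and ``depth'' no longer increases uniformly under vertical translation. Asymptotic flatness is the precise substitute: it guarantees that in every unit window near infinity the boundary is nearly a hyperplane, so the translation limits land on genuine half-space problems. The delicate point is to make these limits \emph{uniform} over all escaping boundary points and to reconcile the (rotating) inward normals along $\partial\Omega$ with the fixed vertical direction used for sliding; securing this uniformity is the technical core of the proof and the reason the sharp statement is deferred to Section~\ref{sec:epigraphs}.
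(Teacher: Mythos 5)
Your overall strategy---half-space limits at infinity, a far-field maximum principle, and vertical sliding on the compact part---is the same stable-compact route the paper takes (the paper simply observes that asymptotically flat epigraphs are AUL and invokes Theorem~\ref{thm:AUL}, whose proof is exactly this scheme). But there is a genuine gap at the decisive step. Your uniqueness argument rests on (i) the fact that $v \coloneqq \bar u - u$ decays at infinity and vanishes on $\partial\Omega$, and (ii) ``stability of $\bar u$ together with $f'(1)<0$'' to power the maximum principle. Neither suffices. The difference solves $-\Delta v = c\, v$ with $c = \big(f(\bar u)-f(u)\big)/(\bar u - u)$, and in the region near $\partial\Omega$ at infinity---where both solutions are close to the half-space profile $\varphi$ and hence close to $0$---the potential $c$ is close to $f'(\varphi)$, which is \emph{positive} (close to $f'(0) > 0$) where $\varphi$ is small. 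An operator $-\Delta - c$ with $c>0$ on an unbounded set does not obey a maximum principle merely because the compared function decays at infinity, and $f'(1)<0$ only controls the deep region where solutions are near $1$. Likewise, the stability of $\bar u$ that maximality provides is only the non-strict bound $\lambda \geq 0$, which is exactly not enough to run the comparison.

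What is missing is the strict spectral input: $\lambda(-\partial_x^2 - f'(\varphi), \R_+) > 0$, the strict linear stability of the half-space profile (the paper's Lemmas~\ref{lem:product} and~\ref{lem:half-line-stable}, proved by locating the essential spectrum at $[\abs{f'(1)},\infty)$ and ruling out a nonpositive principal eigenvalue by integrating against $\varphi'$). This strict stability must then be transferred to the actual domain: one needs a uniform lower bound on $\lambda(-\Delta - f'(u), \Omega_R)$ outside a large ball, which the paper extracts from Lieb's inequality (Theorem~\ref{thm:potential-Lieb}, used in Proposition~\ref{prop:far-stable}), and then a maximum principle outside $B_R$ that holds \emph{uniformly in the slide parameter} $h$ (Proposition~\ref{prop:asymptotic-MP}), since sliding compares $u$ with every shift $v_h$, not just with $v$ itself. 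Your saturation step is correct in outline (modulo using Lemma~\ref{lem:sub} to rule out the zero limit when identifying half-space limits with $\varphi$), but locally uniform convergence to the profiles does not by itself produce these eigenvalue bounds, and without them the sliding argument cannot be closed. As written, the proof does not go through; filling the gap amounts precisely to the spectral lemmas and far-field maximum principle that occupy Sections~\ref{sec:dilation}--\ref{sec:epigraphs} of the paper.
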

\noindent
This is a consequence of the more general Theorem~\ref{thm:AUL} presented below.

In the plane, convex epigraphs are automatically asymptotically flat.
\begin{corollary}
  \label{cor:convex}
  If $d = 2$ and $\Omega$ is a convex epigraph, then \eqref{eq:main} has a unique positive bounded solution.
\end{corollary}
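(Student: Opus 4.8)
The plan is to invoke Theorem~\ref{thm:flat}: it suffices to show that every convex planar epigraph is asymptotically flat. Since $d = 2$, the profile $\phi \colon \R \to \R$ is a function of a single scalar variable, and convexity of $\Omega$ is equivalent to convexity of $\phi$. The condition in Definition~\ref{def:flat} involves only the single index $i = j = 1$, so I must show that
\[
  g'(x) \to 0 \quad \text{as } \abs{x} \to \infty, \qquad g \coloneqq \frac{\phi'}{\sqrt{(\phi')^2 + 1}}.
\]

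First I would compute $g'$ directly. A short calculation yields
\[
  g' = \frac{\phi''}{\big((\phi')^2 + 1\big)^{3/2}},
\]
which is precisely the signed curvature $\kappa$ of the boundary curve $\partial\Omega = \{y = \phi(x)\}$, confirming the heuristic that asymptotic flatness means vanishing boundary curvature. The key structural input is then that convexity forces $\phi'' \geq 0$, so $\kappa = g' \geq 0$ and $g$ is nondecreasing.

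Next I would exploit boundedness. Since $g$ takes values in $(-1, 1)$ and is monotone, the limits $g(\pm\infty)$ exist, whence
\[
  \int_{\R} \kappa(x)\,dx = g(+\infty) - g(-\infty) \leq 2 < \infty,
\]
so the curvature is integrable over the whole line. Integrability alone does not force $\kappa(x) \to 0$, so the remaining ingredient is a uniform modulus of continuity for $\kappa$. This is where I would use the standing hypothesis that $\Omega$ is uniformly $\m{C}^{2,\gamma}$: it furnishes uniform bounds on $\phi'$ and $\phi''$ together with a uniform bound on the $\gamma$-Hölder seminorm of $\phi''$, whence $\kappa$ is uniformly Hölder continuous on $\R$, say with constant $C$ and exponent $\gamma$.

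Finally I would combine these two facts by a standard argument: were $\kappa$ a nonnegative, uniformly continuous, integrable function not tending to $0$ at infinity, there would exist a sequence $\abs{x_n} \to \infty$ and $\delta > 0$ with $\kappa(x_n) \geq \delta$; uniform Hölder continuity would then force $\kappa \geq \delta/2$ on intervals of fixed positive length $r = (\delta/2C)^{1/\gamma}$ about each $x_n$, and after thinning to a separated subsequence these disjoint bumps would contribute infinite mass, contradicting $\int_\R \kappa < \infty$. Hence $\kappa(x) \to 0$, the epigraph is asymptotically flat, and Theorem~\ref{thm:flat} delivers uniqueness. I expect the main obstacle to be the middle step: extracting a genuinely \emph{uniform} (rather than merely local) modulus of continuity for the curvature from the uniform $\m{C}^{2,\gamma}$ regularity of $\Omega$, since it is precisely this uniformity that legitimizes the integrability-plus-continuity conclusion across the full noncompact boundary.
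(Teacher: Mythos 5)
Your overall route---reducing to Theorem~\ref{thm:flat} by showing that planar convex epigraphs are asymptotically flat---is legitimate and matches the remark preceding the corollary; your computation that the quantity in Definition~\ref{def:flat} is the signed curvature $\phi''/\big(1+(\phi')^2\big)^{3/2}$, and the bound $\int_\R g' \leq 2$, are both correct. However, there is a genuine gap at exactly the step you flagged: it is \emph{false} that uniform $\m{C}^{2,\gamma}$ regularity of the domain $\Omega$ furnishes uniform bounds on $\phi'$, $\phi''$, or the H\"older seminorm of $\phi''$ as functions of $x$. This is precisely the distinction the paper emphasizes at the start of Section~\ref{sec:epigraphs}: the smoothness of a domain is measured in frames rotated normal to the boundary, and Example~\ref{ex:parabola} (the parabola $\phi(x) = x^2$, which is convex) is a uniformly $\m{C}^{2,\gamma}$ epigraph with $\phi'$ unbounded. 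Consequently your bump argument, which needs a modulus of continuity for the curvature \emph{in the variable $x$}, does not go through as justified: in a region where $\abs{\phi'} \approx M \gg 1$, a unit of arc length occupies an $x$-interval of length $\approx 1/M$, so the uniform H\"older control that domain regularity actually provides (in arc length, i.e., in the rotated frames) degenerates completely when read in the $x$-variable.

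The repair is to run your argument entirely in arc length: by the uniform $\m{C}^{2,\gamma}$ hypothesis the curvature is uniformly bounded and uniformly H\"older as a function of arc length, convexity makes it nonnegative, and its integral in arc length is the total turning of the graph, at most $\pi$; disjoint bumps of fixed height then each carry a fixed amount of turning, so only finitely many can occur, the curvature tends to $0$ along the boundary, and hence the quantity in Definition~\ref{def:flat} tends to $0$ as $\abs{x} \to \infty$. It is also worth noting that the paper's own proof bypasses curvature decay altogether: it uses the weaker property AUL (Definition~\ref{def:AUL}) rather than asymptotic flatness. Since $\phi$ is convex, $\phi'$ is monotone, so the limits $\phi'(\pm\infty)$ exist in $[-\infty, +\infty]$; hence every locally uniform limit of $\Omega$ at infinity is either $\R^2$ or a half-plane, so $\Omega$ is AUL and Theorem~\ref{thm:AUL} applies directly. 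That route is shorter and avoids the uniform-modulus issue entirely.
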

The exterior-star and epigraph properties are the central structural assumptions in Theorems~\ref{thm:ES} and \ref{thm:flat}.
However, both include additional technical conditions: the former assumes that $\Omega^\cc$ is compact, while the latter assumes that $\partial\Omega$ flattens at infinity.
It is not clear that these technical conditions are necessary.
We are led to ask: does uniqueness hold on any exterior-star domain?
On any epigraph?
We collect a number of such open problems in Section~\ref{sec:open}.

On the other hand, the fundamental structural assumptions in Theorems~\ref{thm:ES}--\ref{thm:flat} \emph{are} essential.
If one relaxes these assumptions in a bounded region, nonuniqueness can arise.
We state an informal result here; for a rigorous version, see Theorem~\ref{thm:pocket-precise} below.
\begin{theorem}
  \label{thm:pocket}
  Given a domain $\Omega_0$, if we attach a ``pocket'' to $\Omega_0$ via a sufficiently narrow bridge, then there exists a positive reaction $f$ such that \eqref{eq:main} admits multiple positive bounded solutions on the composite domain.
\end{theorem}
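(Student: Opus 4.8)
The plan is to use the narrow bridge to nearly decouple the pocket from $\Omega_0$, reducing the construction to a multiplicity phenomenon localized in the pocket. Write $\Omega_\epsilon = \Omega_0 \cup B_\epsilon \cup P$, where $P$ is the pocket and $B_\epsilon$ is a connecting bridge of width $\epsilon$. As $\epsilon \to 0$ the neck has vanishing capacity, so solutions of \eqref{eq:main} on $\Omega_\epsilon$ should converge to solutions of the two \emph{decoupled} Dirichlet problems, one on $\Omega_0$ and one on $P$. The first step is to choose the reaction $f$ and the pocket $P$ so that the decoupled problems already display the structure we want. I would take $f'(0)$ large enough that $f'(0) > \lambda_1(\Omega_0)$, which forces $\Omega_0$ to carry a positive, stable, nondegenerate solution $u_0$, and I would shape $f$ on $(0,1)$ so that the radial problem on a ball has an S-shaped bifurcation in the radius. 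Choosing $P$ to be a ball whose radius lies in the resulting multiplicity window, the decoupled pocket problem then admits two distinct \emph{stable} nondegenerate positive solutions $v^{(1)} < v^{(2)}$ (the lower and upper branches, with an unstable solution in between). The existence of such an $f$ is exactly the engineered nonuniqueness furnished by \cite[Proposition~1.4]{BG22b}, here realized through the one-dimensional time map of the radial equation.

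Second, I would transfer these decoupled data to genuine solutions on the connected domain $\Omega_\epsilon$ for small $\epsilon$. Rather than attack the singular domain perturbation spectrally, I would use ordered sub- and supersolutions, which sidesteps the delicate behavior of the neck. The constant $1$ is a global supersolution since $f(1) = 0$, so every trap has a ceiling. To capture the ``filled'' solution $u_2$ I would sandwich between $1$ and a subsolution built by patching $u_0$ and $v^{(2)}$ across the bridge via a minimum, slightly depressed near the neck; monotone iteration then yields a solution with pocket values near $v^{(2)}$. To capture the ``empty'' solution $u_1$ I would instead trap near $v^{(1)}$: using that $u_0$ and $v^{(1)}$ are strictly stable, I form localized barriers $u_0 \pm \delta\varphi_0$ and $v^{(1)} \pm \delta\psi$ from the principal eigenfunctions of the respective linearizations, patch them across the bridge, and verify that narrowness of $B_\epsilon$ preserves the sub/supersolution inequalities. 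Because the intervening unstable solution separates $v^{(1)}$ from $v^{(2)}$, the upper barrier $v^{(1)} + \delta\psi$ lies strictly below $v^{(2)} - \delta\psi$ on $P$, so the two traps are disjoint over the pocket and the resulting solutions $u_1 \neq u_2$ are genuinely distinct; the strong maximum principle upgrades each to a strictly positive solution on the connected $\Omega_\epsilon$.

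Two remarks orient the argument and locate its difficulty. First, the bridge---not the reaction alone---is essential: were $\Omega_0$ strongly exterior-star with compact complement, Theorem~\ref{thm:ES} would enforce uniqueness for every positive $f$, so the pocket is the minimal geometric defect able to reintroduce multiplicity. Second, the crux is the thin-bridge patching: one must check that the patched functions remain sub/supersolutions across the neck and, more delicately, that the localized trap near the smaller pocket value $v^{(1)}$ is not dragged upward toward $v^{(2)}$ by the coupling to $\Omega_0$. This is exactly where ``sufficiently narrow'' enters---the width $\epsilon$ must be quantified against the stability gaps (the principal eigenvalues of the linearizations at $u_0$, $v^{(1)}$, and $v^{(2)}$) so that the flux through $B_\epsilon$ is too weak to destabilize the empty pocket state. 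Making this quantitative decoupling rigorous, while arranging a single reaction $f$ that simultaneously produces a positive solution on the fixed $\Omega_0$ and a fold on the pocket, is the principal technical burden.
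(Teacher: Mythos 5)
Your high-level strategy (exponential decoupling through the narrow bridge plus engineered multiplicity in the pocket) is the same as the paper's, but the implementation has a genuine gap at exactly the point you flag as the crux: the ``empty'' trap near $v^{(1)}$. Your upper barrier $v^{(1)} + \delta\psi$ vanishes on all of $\partial P$, including the cross-section where the bridge attaches---but in the composite domain that cross-section is \emph{interior}, and every positive solution of \eqref{eq:main} on $\Omega$ is strictly positive there by the strong maximum principle. So the patched barrier cannot dominate any solution near the neck, and the monotone-iteration trap does not close. What is needed is a supersolution on the pocket-plus-half-bridge region with \emph{positive} Dirichlet data at the mid-bridge cross-section, dominating the (exponentially small but nonzero) influx from $\Omega_0$. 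The paper achieves this in two steps you are missing: first, Lemma~\ref{lem:exp-decay} gives a quantitative bound ($u \leq \tfrac14\phi$ at mid-bridge) valid for \emph{every} solution and every Lipschitz reaction with $\op{Lip} f \leq \mu$; second, the reaction is engineered so that the preliminary $f_0$ has a zero at $s = \tfrac12$, making the constant $\tfrac12$ a supersolution on $\Pi_+$ that caps the parabolic evolution started from $\ubar u_0$ with neck data $\tfrac14\phi$, and the resulting gap $\eps = \tfrac12 - \sup v(\infty,\anon) > 0$ is what survives when $f_0$ is finally perturbed into a true positive reaction $f$. Your stability-gap idea could conceivably replace this (absorb the neck data by a correction term controlled by the linearization), but that requires the strict stability of $v^{(1)}$, which brings us to the second gap.

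Your appeal to \cite[Proposition~1.4]{BG22b} for ``two distinct stable nondegenerate positive solutions'' of a \emph{positive} reaction on the pocket misstates what that proposition furnishes: its reaction has a zero at $s = \tfrac12$ (so it is not a positive reaction as defined in this paper), and no stability or nondegeneracy of the two solutions is asserted---indeed extremal solutions are only weakly stable in general, and nondegeneracy genuinely fails at fold points. Producing a single genuinely positive $f$ whose pocket problem has two strictly stable, nondegenerate solutions is an additional unproved step, and the paper's proof is designed precisely to avoid needing it: both solutions on $\Omega$ are obtained as increasing parabolic limits of the extended-by-zero subsolutions $\ubar u_0$ and $\bar u_0$ (note, incidentally, that the correct patching operation for subsolutions is the \emph{maximum} of zero-extensions, not a minimum as you write), and their distinctness is certified purely by the threshold value $\tfrac12$ on the pocket. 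Finally, your requirement that the arbitrary given domain $\Omega_0$ carry a stable nondegenerate solution $u_0$ with $f'(0) > \lambda_1(\Omega_0)$ is unjustified in the stated generality: $\Omega_0$ may be unbounded, strict stability can fail there (this paper constructs epigraphs whose solutions are at best marginally stable, Proposition~\ref{prop:marginal}), and no solution on $\Omega_0$ is needed at all---the paper's argument never refers to one.
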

\noindent
We depict this operation in Figure~\ref{fig:pocket}.
This construction was introduced by Matano \cite{Matano} for bistable reactions with Neumann conditions.
It has been extended to bistable Dirichlet problems~\cite{DY,NW}, and here we apply it to positive reactions.
\begin{figure}[t]
  \centering
  \includegraphics[width = 0.5\linewidth]{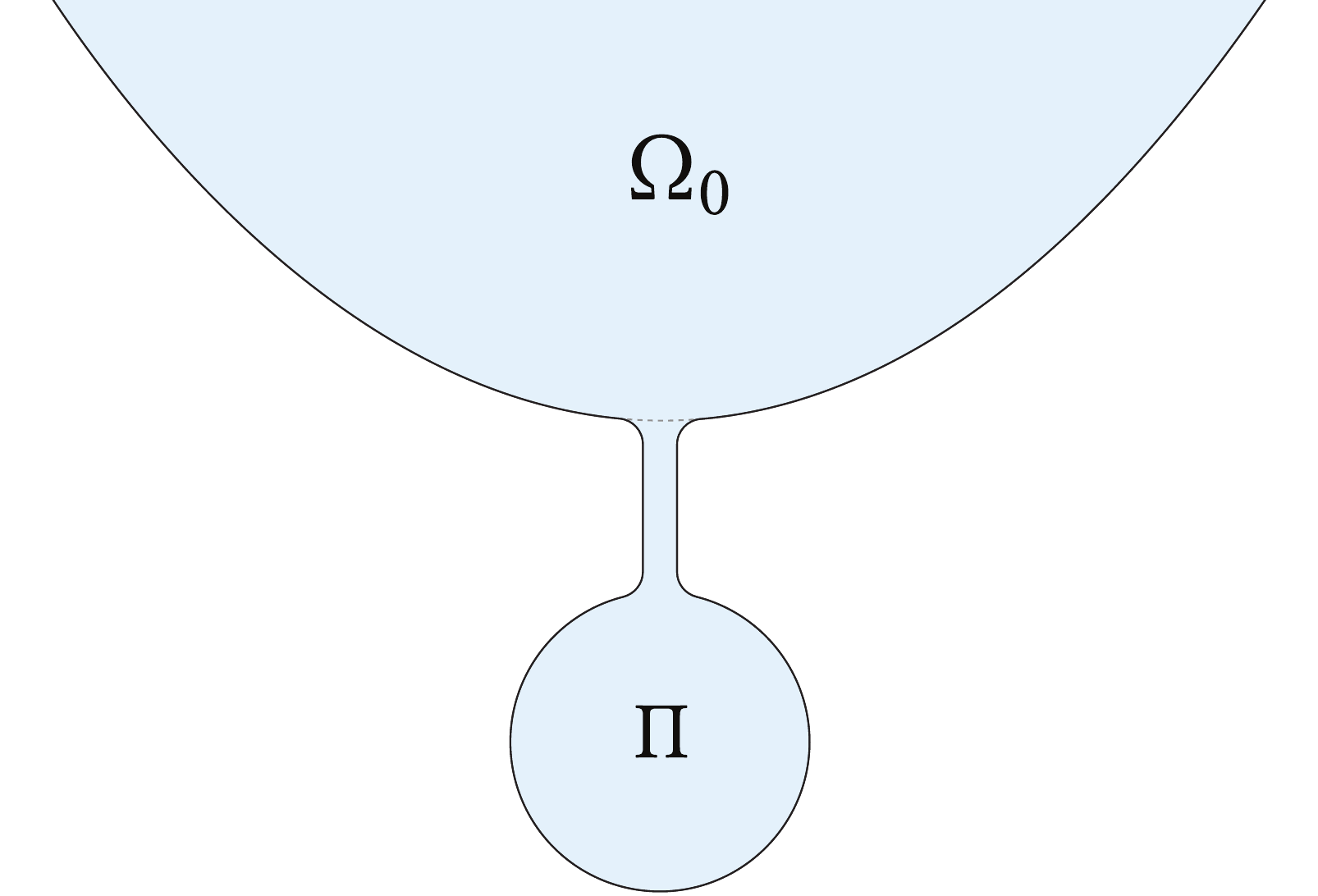}
  \caption[Domain with a pocket]{A domain $\Omega_0$ augmented by a pocket $\Pi$.}
  \label{fig:pocket}
\end{figure}

Theorem~\ref{thm:pocket} demonstrates the importance of the structural assumptions in our prior results.
For example, we can attach a pocket to an asymptotically flat epigraph to produce multiple solutions.
Thus even a compact violation of the epigraph structure suffices for multiplicity.

In our proof of Theorem~\ref{thm:pocket}, we construct two distinct solutions.
Taking a different approach, we show that in some cases \eqref{eq:main} can admit uncountably many solutions.
\begin{proposition}
  \label{prop:cylinder}
  Let $f$ be a $\m{C}^2$ positive reaction such that $f''(0^+) > 0.$
  Then there exists $L > 0$ such that \eqref{eq:main} admits a positive bounded solution on the strip $\R \times (0, L)$ that varies in the first coordinate.
  As a consequence, \eqref{eq:main} admits a continuum of distinct positive bounded solutions.
\end{proposition}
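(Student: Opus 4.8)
The plan is to construct genuinely $x$-dependent solutions by a two-stage bifurcation argument, the key point being that the hypothesis $f''(0^+)>0$ forces a small-amplitude \emph{planar} (i.e.\ $x$-independent) solution to be linearly unstable, and this instability is exactly what powers a transverse bifurcation.

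\textbf{Stage 1 (planar solutions and their instability).} Let $\lambda := f'(0) > 0$ and $L_0 := \pi/\sqrt{\lambda}$. Positive solutions of the one-dimensional problem $-u'' = f(u)$ on $(0,L)$ with $u(0)=u(L)=0$ bifurcate from $u \equiv 0$ at $L = L_0$, with profile $\phi(y) = \sin(\pi y/L_0)$. I would run a Lyapunov--Schmidt expansion $u_L = \varepsilon\phi + O(\varepsilon^2)$, $L = L_0 + O(\varepsilon)$, and track the principal eigenvalue $\mu_1(L)$ of the linearized operator $\mathcal{L}_L := -\partial_y^2 - f'(u_L)$ on $(0,L)$ with Dirichlet data. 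Since $\int_0^{L_0}\phi^3 \ne 0$, the bifurcation is transcritical, and a second-order computation yields $\mu_1(L) = -c_0\,f''(0^+)\,\varepsilon + O(\varepsilon^2)$ along the positive branch, where $c_0 > 0$ is a positive multiple of $\int_0^{L_0}\phi^3$. As $f\in\m{C}^2$ and $f''(0^+)>0$, this gives $\mu_1(L) < 0$ for a small positive planar solution $u_L$ (at an $L$ slightly below $L_0$). Equivalently, one can use the classical time-map $L(\alpha)$ and verify $L'(0^+) < 0$, which by the standard stability criterion for one-dimensional boundary-value problems forces a negative principal eigenvalue. This is the step where both $f\in\m{C}^2$ and $f''(0^+)>0$ are essential.

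\textbf{Stage 2 (transverse bifurcation).} Fix such an unstable $u_L$, set $\ell_* := \sqrt{-\mu_1(L)} > 0$, and let $\psi_1 > 0$ be the principal eigenfunction of $\mathcal{L}_L$. I would seek solutions periodic in $x$, rescaling $\xi := \ell x$ to normalize the period: in the fixed space of $2\pi$-periodic, even-in-$\xi$ functions with Dirichlet data in $y$, set $F(u,\ell) := \ell^2 u_{\xi\xi} + u_{yy} + f(u)$. Then $F(u_L,\ell)=0$ for all $\ell$, and a Fourier mode $\cos(m\xi)\psi(y)$ lies in $\ker D_uF(u_L,\ell)$ exactly when $\mathcal{L}_L\psi = -m^2\ell^2\psi$. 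At $\ell = \ell_*$ the mode $m=1$ is neutral; the resonance check (using $\mu_1 < 0 < \mu_2$ and $m^2\mu_1 < \mu_1$ for $m\ge 2$) shows the kernel is exactly one-dimensional, spanned by $\cos\xi\,\psi_1$. The Crandall--Rabinowitz transversality holds because $D_\ell D_uF(u_L,\ell_*)[\cos\xi\,\psi_1] = -2\ell_*\cos\xi\,\psi_1$ pairs nontrivially with $\psi_1$. This produces a branch $u(t) = u_L + t\cos\xi\,\psi_1 + o(t)$ of solutions nonconstant in $\xi$, hence in $x$, for small $t\ne 0$. Positivity for small $t$ follows from $u_L > 0$ and the Hopf lemma: both $u_L$ and $\psi_1$ vanish linearly at $y\in\{0,L\}$, so $\psi_1/u_L$ is bounded and $u_L$ dominates the perturbation. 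Elliptic regularity makes each $u(t)$ a classical, bounded solution of \eqref{eq:main} on the whole strip.

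\textbf{Continuum and main obstacle.} Since $u(t)$ is nonconstant and \eqref{eq:main} is invariant under translation in $x$ on the strip, the translates $\{u(t)(\,\cdot - a, \cdot)\}_{a\in\R}$ form a continuous, nonconstant curve of distinct positive bounded solutions, hence a continuum, giving the final claim. I expect Stage 1 to be the main obstacle: the whole construction hinges on producing a positive planar solution with \emph{negative} principal eigenvalue, and pinning down the correct sign requires careful tracking of the transcritical bifurcation direction and the first-order eigenvalue shift, which is precisely where $f''(0^+)>0$ is indispensable. One cannot shortcut this by bifurcating directly from $u\equiv 0$: there the neutral mode $\cos(\ell x)\sin(\pi y/L)$ changes sign, so the nearby branch is sign-changing rather than positive, which is why the detour through an unstable planar solution is needed.
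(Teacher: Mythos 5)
Your proposal is correct, and it splits into a stage that matches the paper and a stage that takes a genuinely different route. Stage~1 is essentially the paper's own argument (Lemmas~\ref{lem:instability} and \ref{lem:simple-unstable}): the paper also works with the time map $\al \mapsto L_\al$ of the shooting problem \eqref{eq:shooting}, shows $\dot L_\al < 0$ for small $\al$ from the strict convexity of $f$ near $0$ (exactly where $f''(0^+)>0$ enters), converts $\dot L_\al < 0$ into strict instability via the variation $\dot\phi_\al$, and obtains your spectral-gap statement $\mu_1 < 0 < \mu_2$ by continuity of the first two eigenvalues as $\al \searrow 0$; your option (b) is literally this argument, and your Lyapunov--Schmidt option (a) is a standard equivalent. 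Stage~2 is where you diverge: the paper recasts \eqref{eq:main} on the strip as an ill-posed first-order evolution in the unbounded variable (spatial dynamics) and invokes Fischer's theorems, which say that all sufficiently small bounded solutions form a two-dimensional invariant manifold and that every solution on it is periodic in $\tau$; nonconstancy then needs the extra observation that the only small constant-in-$\tau$ solution is $\phi$ itself. You instead fix the unstable planar solution and run a Crandall--Rabinowitz bifurcation in the wavenumber $\ell$ in a space of $2\pi$-periodic even functions; your kernel count (no $m=0$ mode since the one-dimensional operator has no zero eigenvalue, no $m\ge 2$ mode since $m^2\mu_1 < \mu_1$ and $\mu_1$ is the only negative eigenvalue) and the transversality via self-adjointness are correct, and positivity via the Hopf lemma is the standard argument, valid because the Crandall--Rabinowitz remainder is small in a norm controlling first derivatives up to the boundary. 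Your route is more elementary and self-contained---one classical local bifurcation theorem instead of infinite-dimensional center-manifold machinery---and it yields nonconstancy in $x$ for free from the form of the branch; the paper's route buys a stronger structural conclusion (a classification of \emph{all} small bounded solutions near $\phi$ as a two-dimensional manifold of periodic orbits), which the proposition does not actually require. One shared technical footnote worth recording: to define the nonlinear map on an open neighborhood in function space, $f$ must be extended as a $\m{C}^2$ function to a neighborhood of $[0,1]$; this is harmless since the solutions ultimately produced are positive.
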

The strip enjoys a translation invariance, so we can view this proposition as a form of symmetry breaking: a symmetric domain supports asymmetric solutions.
This shows that positive reactions behave quite differently on Dirichlet strips than on the line.
After all, on $\R$, the only positive bounded solution of \eqref{eq:main} is the constant $1$.
In contrast, we show that boundary absorption on the strip can cause a positive reaction to exhibit \emph{bistable} behavior.
And indeed, bistable reactions admit oscillatory solutions on $\R$ that are not translation-invariant.
For further discussion in this direction, see Section~\ref{sec:cylinder}.

Technically, we approach Proposition~\ref{prop:cylinder} through the lens of ``spatial dynamics'' \cite{Kirchgassner,IMD}.
We think of the first coordinate as time and view \eqref{eq:main} as a second-order dynamical system.
We then seek nontrivial limit cycles.
This effort is complicated by the fact that the phase space is infinite-dimensional---it consists of functions on $(0, L)$.
The theory of spatial dynamics is well-equipped to treat this difficulty, and we are able to prove Proposition~\ref{prop:cylinder} via standard methods.
\medskip

The above results are linked by a common perspective that we term the ``stable-compact'' method.
This is a general approach to the qualitative properties of elliptic equations.
We focus on uniqueness, but other properties naturally arise in both our arguments and conclusions, including symmetry, monotonicity, and stability.

The method rests on the decomposition of the domain $\Omega$ into a ``stable'' part and a ``compact'' part.
In the former, solutions of \eqref{eq:main} are linearly stable and thus obey a maximum principle.
This is conducive to uniqueness, so we can focus on the complementary compact part.
There, solutions enjoy some form of compactness relative to a context-dependent deformation.
(The compact part may be unbounded; compactness refers to solutions, not to the domain.)
Using this deformation, compactness and the strong maximum principle yield uniqueness.
In Section~\ref{sec:stable-compact}, we examine the proofs of our main results through this stable-compact lens.

Given its generality, the stable-compact method naturally encompasses earlier work.
For example, as discussed in Section~\ref{sec:stable-compact}, it can be discerned in the method of moving planes.
We anticipate that the framework will prove of use in a variety of contexts in elliptic and parabolic theory.
\medskip

Naturally, our efforts to classify solutions of certain semilinear elliptic problems intersect an enormous body of literature.
Here, we highlight a handful of connections that strike us as particularly germane.
For a more complete view of the subject, we direct the reader to the references therein.

\subsubsection*{Structured domains}
In this paper, we tackle a rather broad class of reactions by imposing various structural conditions on the domain.
We owe much to the rich literature on the moving plane and sliding methods starting with the seminal works of Alexandrov~\cite{Alexandrov}, Serrin~\cite{Serrin}, and Gidas, Ni, and Nirenberg~\cite{GNN}.
In particular, we draw inspiration from the formulations of the moving plane and sliding methods of the first author and Nirenberg~\cite{BN91} and Dancer~\cite{Dancer92}.

Further results closely related to the present work include those of Esteban and Lions on monotonicity in coercive epigraphs~\cite{EL} and the extensive collaboration of the first author with Caffarelli and Nirenberg, who studied qualitative elliptic properties in half-spaces~\cite{BCN93,BCN97a}, cylinders~\cite{BCN96}, and epigraphs~\cite{BCN97b}.
In particular, our Theorem~\ref{thm:AUL} is a direct generalization of the results of \cite{BCN97b} on uniformly Lipschitz epigraphs.

\subsubsection*{Structured reactions}
Alternatively, one can consider structured \emph{reactions} on general domains.
Rabinowitz took this approach in~\cite{Rabinowitz}, which established uniqueness for ``strong-KPP'' reactions on all smooth bounded domains.
We introduced the terminology ``strong-KPP'' in~\cite{BG24} to distinguish a concavity-like property of $f$ from the weaker ``KPP condition''; see Definition~\ref{def:KPP} for details.
In~\cite{BG24}, we used the stable-compact method (without calling it such) to study strong-KPP uniqueness in general \emph{unbounded} domains.

\subsubsection*{Other boundary conditions}
Here we treat Dirichlet boundary conditions, but the Neumann problem is also well-motivated by applications.
Neumann steady states are much simpler: Rossi showed that $1$ is the unique positive bounded steady state on general domains~\cite[Corollary~2.8]{Rossi}.
His approach built on that of the first author, Hamel, and Nadirashvili, who established the same for (weak) KPP reactions in domains satisfying a mild geometric condition~\cite[Theorem~1.7]{BHN}.

One can also consider the intermediate Robin boundary condition.
In~\cite{BG24}, we were able to treat Dirichlet and Robin conditions in a unified framework for strong-KPP reactions.
However, many of the methods we employ in the present work do not readily extend to the Robin problem.
And indeed, most of the results we derive here are open under Robin conditions; we discuss this further in Section~\ref{sec:open}.

\subsubsection*{Stable solutions for other reactions}
We focus on positive reactions satisfying \ref{hyp:positive}, which see wide use in ecology; however, other applications call for other nonlinearities.
Sign-changing bistable reactions are essential examples arising in biology, materials science, and physics.
Even on the line, bistable reactions support infinitely many solutions, which inspires Proposition~\ref{prop:cylinder} above.
To manage this menagerie, one can consider solutions satisfying additional properties such as stability or monotonicity.
The latter is the subject of de Giorgi's celebrated conjecture that monotone bistable solutions on the whole space are one-dimensional.
This deep conjecture has spurred a great deal of work in both the positive~\mbox{\cite{GG,AC,Savin,HLSWW}} and negative directions~\cite{dPKW}.
We highlight remarkable recent results of Liu, Wang, Wei, and Wu~\cite{LWWW}, who classify stable steady states on the whole space for a very general class of ``unbalanced'' sign-changing reactions.

In a different direction, some applications call for nonnegative reactions outside the ``positive'' class discussed in the present work.
For example, the pure-power nonlinearity $f(s) = s^\al$ (which does not vanish at $1$) yields the Lane--Emden equation from mathematical physics~\cite{GS,FV,CLZ,DFP}.
As in the bistable setting, much prior work on nonnegative reactions focuses on solutions enjoying some form of stability.
For example, under dimension restrictions, Dupaigne and Farina~\cite{DupFar} have classified the stable steady states of nonnegative reactions in the whole and half-space, as well as in coercive epigraphs under additional conditions on the reaction.

A number of these works also treat solutions that are only stable outside a compact set~\cite{DF,Farina,DupFar}.
While this bears some resemblance to our stable-compact approach, the type of result we obtain here is of a different nature.
Indeed, we use ``compactness'' to refer to properties of the \emph{solution}, not the domain.
Moreover, our results do not assume a stable-compact decomposition as an abstract hypothesis.
Rather, we demonstrate that many natural problems intrinsically support a stable-compact structure.
The identification of said structure is a central step in our arguments, and we outline a general method to tackle the issue in Section~\ref{sec:stable-compact}.

\subsection*{Organization}
We devote the first three sections of the body of the paper to uniqueness on exterior-star domains (Section~\ref{sec:exterior-star}), large dilations (Section~\ref{sec:dilation}), and epigraphs (Section~\ref{sec:epigraphs}).
We then exhibit nonuniqueness on domains with pockets (Section~\ref{sec:pocket}) and on cylinders (Section~\ref{sec:cylinder}).
In Section~\ref{sec:stable-compact}, we present a general procedure to identify stable-compact structure.
We trace this procedure through the arguments of Sections~\ref{sec:exterior-star}--\ref{sec:pocket} and use it to prove a new result (Theorem~\ref{thm:irregular}) on strong-KPP reactions in nonsmooth domains.
Finally, we pose a variety of open questions in Section~\ref{sec:open}.
Appendix~\ref{app:marginal} contains supporting ODE arguments.

\section*{Acknowledgments}

We warmly thank Bj\"orn Sandstede for guiding us through the literature on spatial dynamics, which plays a central role in Section~\ref{sec:cylinder}.
\corr{We are also very grateful to Isabeau Birindelli for identifying a subtle gap in an earlier version of the proof of Theorem~\ref{thm:ES-full} below.}

CG was partially supported by the NSF Mathematical Sciences Postdoctoral Research Fellowship program under grant DMS-2103383.
\corr{HB was partially supported by the French ANR grant ReaCh 23-CE40-0023-01.}

\section{Exterior-star domains}
\label{sec:exterior-star}

We open our investigation of \eqref{eq:main} on the exterior-star domains introduced in Definition~\ref{def:ES}.
We are primarily motivated by ``exterior domains'' for which $\Omega^\cc$ is compact, as in Theorem~\ref{thm:ES}.
However, our proof applies to complements of some unbounded star-shaped domains, so we state a more general form here.
\begin{theorem}
  \label{thm:ES-full}
  Suppose $\Omega$ is strongly exterior-star and $\Omega^\cc$ coincides with a convex set outside a bounded region.
  Then \eqref{eq:main} has a unique positive bounded solution $u$.
  Moreover, if $x_*$ lies in the interior of the star centers of $\Omega^\cc$, then $u$ is strictly increasing in the radial coordinate centered at $x_*$.
\end{theorem}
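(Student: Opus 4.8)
The plan is to dispatch existence quickly and concentrate on uniqueness and radial monotonicity, which I would extract \emph{together} from a single sliding argument driven by dilations of the domain. For existence, the constant $1$ is a supersolution, while the positivity $f'(0^+)>0$ lets me build a small compactly supported subsolution from a Dirichlet principal eigenfunction on a large ball inside $\Omega$ (its principal eigenvalue falls below $f'(0^+)$); monotone iteration then produces a positive solution valued in $(0,1)$. Normalizing $x_* = 0$, the key geometric observation is that star-shapedness of $\Omega^\cc$ about $0$ gives $\Omega \subseteq \lambda^{-1}\Omega$ for every $\lambda \geq 1$, so for any solution $u$ the rescaling $u_\lambda(x) \coloneqq u(\lambda x)$ is defined on all of $\Omega$. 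A direct computation yields $-\Delta u_\lambda = \lambda^2 f(u_\lambda) \geq f(u_\lambda)$, with \emph{strict} inequality wherever $0 < u_\lambda < 1$ and $\lambda > 1$; thus each dilate is a strict supersolution of \eqref{eq:main}. Moreover $u_\lambda > 0 = u$ on $\partial\Omega$, since dilation pushes boundary points strictly into $\Omega$: here the strong exterior-star hypothesis and $x_*$ being an interior star center guarantee that $\Omega^\cc$ contains a ball about $0$, so the boundary radius is bounded below and the push-in is uniform.

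To compare two solutions $u_1, u_2$, I would slide the dilation parameter. Set $w_\lambda \coloneqq u_2(\lambda\,\cdot) - u_1$ and let $\lambda_* \coloneqq \inf\{\lambda \geq 1 : w_\mu \geq 0 \text{ on } \Omega \text{ for all } \mu \geq \lambda\}$. The analysis splits exactly along the stable-compact decomposition. First I would show $u_i \to 1$ as one moves to infinity inside $\Omega$, uniformly away from the obstacle; since $f'(1) < 0$, this yields a compact set $K$ such that on the \emph{stable} part $\Omega \setminus K$ one has $u_i \geq 1 - \delta$ with $f' < 0$ on $[1-\delta, 1]$, whence the linearized operator there obeys a maximum principle. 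On the \emph{compact} part $K$ the solutions are bounded away from $1$, which supplies the uniform gap needed to initialize the slide: for $\lambda$ large, $u_2(\lambda\,\cdot) \geq 1 - \varepsilon > \sup_K u_1$ on $K$, and the stable-region maximum principle then propagates $w_\lambda \geq 0$ from $\partial K$ and from infinity across $\Omega \setminus K$. Hence $\lambda_*$ is well defined.

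It remains to prove $\lambda_* = 1$. If $\lambda_* > 1$, the nonnegative function $w_{\lambda_*}$ touches $0$ somewhere. The touch cannot lie on $\partial\Omega$ (there $u_2(\lambda_*\,\cdot) > 0 = u_1$) nor escape to infinity: the stability of $\Omega \setminus K$ forces the first touch, as $\lambda$ decreases to $\lambda_*$, to occur at an attained interior point $x_0$ of the compact part. But at such a contact point $s \coloneqq u_1(x_0) = u_2(\lambda_* x_0) \in (0,1)$, the strict supersolution property gives $-\Delta w_{\lambda_*}(x_0) = (\lambda_*^2 - 1) f(s) > 0$, contradicting the interior minimum of $w_{\lambda_*}$ at $x_0$. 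Therefore $\lambda_* = 1$, and letting $\lambda \downarrow 1$ gives $u_2 \geq u_1$; exchanging $u_1$ and $u_2$ yields equality and hence uniqueness. Taking $u_1 = u_2 = u$ in the same argument shows $u(\lambda x) \geq u(x)$ for all $\lambda \geq 1$, and the strong maximum principle together with the Hopf lemma upgrades this to strict inequality for $\lambda > 1$, i.e.\ $u$ is strictly increasing in the radial coordinate about $x_*$.

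The main obstacle is the analysis at infinity, where compactness of the domain is genuinely lost. Concretely, I must establish the uniform convergence $u_i \to 1$ that defines the stable region, and I must justify the maximum principle on the unbounded stable part $\Omega \setminus K$ so that no contact is lost at infinity as the parameter slides. This is precisely where the hypothesis that $\Omega^\cc$ coincides with a convex set outside a bounded region enters: when the complement is unbounded it controls the far geometry, enabling the sweeping comparison with large-ball (or half-space) solutions that forces $u_i$ up to $1$ and supports a maximum principle in the narrow stable layer. I expect these two facts---the uniform approach to $1$ and the stable-region maximum principle on a non-compact set---to be the most delicate part of the argument; the interior contact analysis, by contrast, is clean, owing entirely to the strict supersolution property of the dilates.
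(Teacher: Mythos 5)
Your overall architecture matches the paper's: dilation about the star center as the deformation, a stable/compact splitting of $\Omega$, sliding in the dilation parameter, and a contact-point contradiction. (The fact that you dilate the solution into a \emph{supersolution} $u(\lambda\,\cdot)$ on $\Omega$, while the paper dilates into a \emph{subsolution} $v(\cdot/\kappa)$ on $\kappa\Omega$ extended by zero, is immaterial.) Your interior contact computation $-\Delta w_{\lambda_*}(x_0) = (\lambda_*^2-1)f(s) > 0$ is also correct as far as it goes. The genuine gap is in the case where $\Omega^\cc$ is unbounded (e.g.\ the exterior of a paraboloid), which is the entire point of Theorem~\ref{thm:ES-full} beyond Theorem~\ref{thm:ES}. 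There, \emph{no compact set $K$ with your stated property exists}: solutions vanish on $\partial\Omega$, so the region where $u_i \geq 1-\delta$ fails is an unbounded layer along the unbounded boundary, not a compact set. Consequently your claim that the first touch as $\lambda \downarrow \lambda_*$ "occurs at an attained interior point of the compact part" is unjustified: the infimum of $w_{\lambda_*}$ may be zero yet approached only along a sequence $x_n \to \infty$ with $\op{dist}(x_n,\partial\Omega) \leq R$, with no attainment. Your parenthetical dismissal of boundary contact ($u_2(\lambda_*\cdot) > 0 = u_1$ on $\partial\Omega$) fails for the same reason: along such a sequence there is no a priori positive lower bound on $u_2(\lambda_* x_n)$, so the "touch" can escape to infinity. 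The same defect already undermines your initialization of $\lambda_*$, which quantifies over the (noncompact) set $K$.

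This escaping-to-infinity scenario is exactly where the paper spends its effort, and it is where the far-field convexity hypothesis actually enters --- not, as you suggest, to force $u_i \to 1$ or to justify the maximum principle on the stable part (Lemmas~\ref{lem:sub} and \ref{lem:MP-deep} are domain-independent), but to establish \emph{uniform positivity of solutions in the unbounded near-boundary region}. Convexity of $\Omega^\cc$ at infinity guarantees that the intrinsic distance from any near-boundary point to the deep set $\deep{R}$ is at most $R$, so the Harnack inequality yields $\liminf_n u(x_n) > 0$ (the paper's \eqref{eq:unif-pos}); a blow-up argument along $x_n$ (extracting limits $u^\infty$, $v_{\kappa_*}^\infty$, $\Omega^\infty$) then contradicts the strong maximum principle and produces the uniform gap \eqref{eq:gap} needed to push the dilation parameter strictly below $\kappa_*$ (your $\lambda_*$). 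This missing step is not a technicality: Figure~\ref{fig:comb} exhibits strongly exterior-star domains, lacking far-field convexity, on which solutions genuinely do vanish locally uniformly at infinity near the boundary, so any proof must use the convexity hypothesis at precisely this point. As written, your sketch is essentially a correct proof of Theorem~\ref{thm:ES} (compact complement), where $K$ can be taken compact and contact is attained, but it does not prove Theorem~\ref{thm:ES-full}.
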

\corr{%
  We permit the convex set to be empty, so that Theorem~\ref{thm:ES-full} applies to bounded obstacles $\Omega^\cc$.
  Then Theorem~\ref{thm:ES-full}
} implies Theorem~\ref{thm:ES} as well as the following.
\begin{corollary}
  If $\Omega^\cc$ is convex, then \eqref{eq:main} has a unique positive bounded solution.
\end{corollary}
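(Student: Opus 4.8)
The plan is to deduce the corollary directly from Theorem~\ref{thm:ES-full} by checking its two structural hypotheses in the convex case. The condition that $\Omega^\cc$ coincide with a convex set outside a bounded region is immediate: since $\Omega^\cc$ is itself convex, it coincides with a convex set (namely itself) already outside the empty region. It therefore remains only to verify that $\Omega$ is strongly exterior-star, i.e.\ that $\Omega^\cc$ is star-shaped about a nonempty open set of points, in the sense of Definition~\ref{def:ES}.

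To this end I would first recall the elementary fact that a convex set is star-shaped about each of its own points: for $x_*$ and $y$ in a convex $K$, the segment $[x_*, y]$ lies in $K$. Hence if $\Omega^\cc$ has nonempty interior, it is star-shaped about every point of that interior, which is a nonempty open set; this is precisely the strong exterior-star property. The whole argument thus reduces to showing that a convex complement cannot be degenerate, i.e.\ that $\Omega^\cc$ is full-dimensional.

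Here is where the standing regularity assumption enters, and this is the only step that is not a formal reading of the definitions. A nonempty convex set with empty interior is contained in a proper affine subspace of $\R^d$, so its topological boundary is not a hypersurface bounding $\Omega$ on one side (think of $\R^d$ minus a lower-dimensional slit, where $\Omega$ lies on \emph{both} sides of $\partial\Omega$). This contradicts the hypothesis that $\Omega$ be uniformly $\m{C}^{2,\gamma}$, for which $\partial\Omega$ is a genuine $\m{C}^{2,\gamma}$ hypersurface with $\Omega$ locally on a single side, carrying in particular a uniform interior ball condition. Consequently, so long as $\Omega^\cc \neq \emptyset$ (the standing Dirichlet setting, in which $\partial\Omega \neq \emptyset$), the set $\Omega^\cc$ has nonempty interior, and the previous paragraph shows that $\Omega$ is strongly exterior-star.

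With both hypotheses of Theorem~\ref{thm:ES-full} now in force, its conclusion yields the unique positive bounded solution, completing the proof. I expect no obstacle beyond the regularity argument of the third paragraph ruling out a lower-dimensional convex complement; indeed the convex case is strictly more restrictive than the hypotheses of Theorem~\ref{thm:ES-full}, since a convex set agrees with a convex set everywhere, not merely outside a bounded region.
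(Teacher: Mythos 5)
Your proposal is correct and takes essentially the same route as the paper, which states this corollary as an immediate consequence of Theorem~\ref{thm:ES-full}: a convex complement is trivially convex outside a bounded region, and it is star-shaped about every point of its (nonempty) interior, hence strongly exterior-star. Your additional step ruling out a degenerate, lower-dimensional convex complement via the uniform $\m{C}^{2,\gamma}$ smoothness of $\Omega$ is a detail the paper leaves implicit, and you handle it correctly.
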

Thanks to this corollary, uniqueness holds on the complements of balls, cylinders, and convex paraboloids.

\subsection{Star properties}
Before proving Theorem~\ref{thm:ES-full}, we explore Definition~\ref{def:ES} in greater detail.
To facilitate the discussion, we define some complementary terms.
\begin{definition}
  A closed set $K$ is \emph{star-shaped} about a point $x_* \in \R^d$ if for all $y \in K$, the line segment from $y$ to $x_*$ lies within $K$.
  If $K$ has $\m{C}^1$ boundary, we say it is \emph{strictly} star-shaped if $K$ is star-shaped about some $x_* \in \op{int} K$ and no ray from $x_*$ is tangent to $\partial K$ at their intersection.
  Finally, $K$ is \emph{strongly} star-shaped if it is star-shaped about a nonempty open set.
\end{definition}
We are naturally interested in the relationships between these three definitions.
\begin{figure}[t]
  \includegraphics[width = 0.5\textwidth]{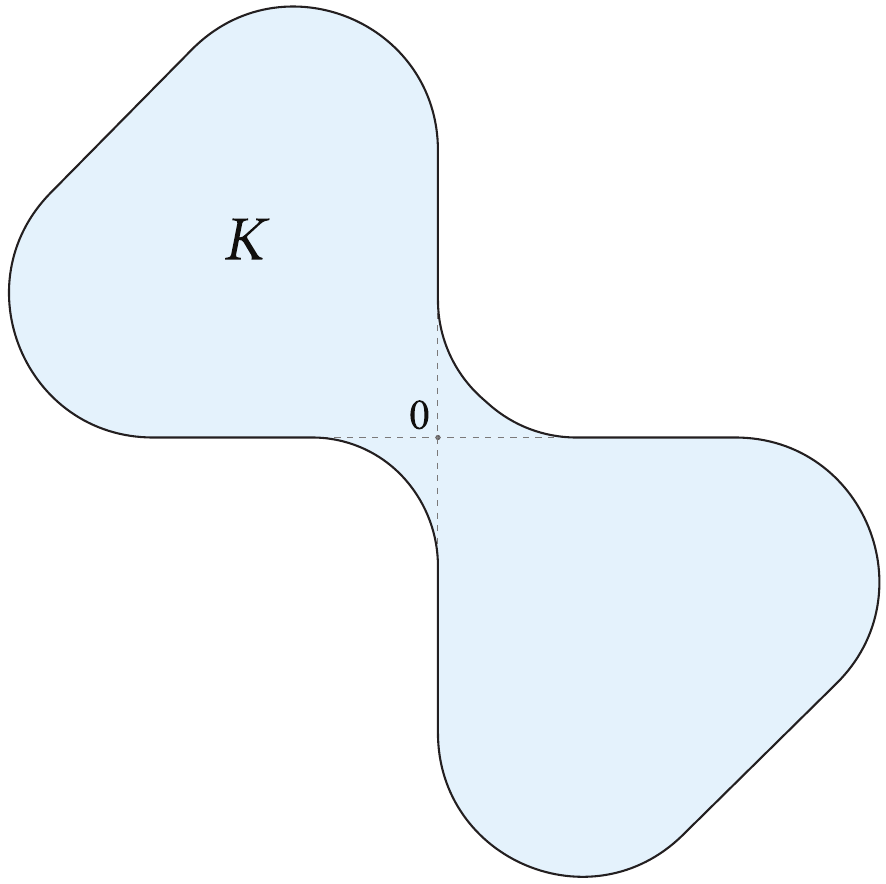}
  \centering
  \caption[Star-shaped hourglass that is neither strictly nor strongly star-shaped]{A star-shaped hourglass that is neither strictly nor strongly star-shaped.}
  \label{fig:hourglass}
\end{figure}
First, one can readily construct a smooth compact $K$ that is star-shaped but neither strictly nor strongly star-shaped.
For example, the boundary of the hourglass in Figure~\ref{fig:hourglass} contains open line segments on each signed coordinate axis.
It follows that $K$ is star-shaped precisely about the origin.
This immediately implies that $K$ is not strongly star-shaped.
Moreover, segments of the boundary are tangent to rays through the star-center $0$, so $K$ is not strictly star-shaped either.
In fact, if $K$ is compact, the strict and strong notions of star-shapedness are equivalent.
\begin{lemma}
  \label{lem:strict-strong}
  Suppose $\partial K$ is $\m{C}^1$.
  If $K$ is strongly star-shaped, then it is strictly star-shaped.
  If $K$ is compact, then the converse holds.
\end{lemma}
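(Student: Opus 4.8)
The plan is to recast both statements as pointwise conditions on the outward unit normal $\nu$ along the $\m{C}^1$ boundary $\partial K$. First I would record the elementary dictionary: since $\partial K$ is $\m{C}^1$, a direction $v$ can point into $K$ at $y \in \partial K$ only if $v \cdot \nu(y) \le 0$. Hence if $K$ is star-shaped about a point $p$, then taking $v = p - y$ (the direction of the inward segment) yields $(y - p)\cdot \nu(y) \ge 0$ for every $y \in \partial K$. Moreover the ray from $p$ through $y$ is tangent to $\partial K$ at $y$ exactly when $(y-p)\cdot \nu(y) = 0$, so $K$ being strictly star-shaped about $p$ forces the strict inequality $(y-p)\cdot\nu(y) > 0$ on all of $\partial K$. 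The reverse passage---from normal positivity back to genuine star-shapedness---is the only delicate point and is deferred to the compact setting below.

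For the first implication, suppose $K$ is star-shaped about every point of a nonempty open set $U$ and fix $x_* \in U \subseteq \op{int} K$. The dictionary applied to each center $x_*' \in U$ gives $(y - x_*')\cdot \nu(y) \ge 0$ for all $y \in \partial K$. If some ray from $x_*$ were tangent to $\partial K$ at a point $y$, then $(y - x_*)\cdot\nu(y) = 0$; substituting $x_*' = x_* + tv$ for small $t > 0$ and an arbitrary direction $v$ (admissible since $U$ is open) would give $-t\,v\cdot\nu(y) \ge 0$, hence $v \cdot \nu(y) \le 0$ for every $v$, forcing $\nu(y) = 0$---impossible. Thus no ray from $x_*$ is tangent and $K$ is strictly star-shaped. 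This half uses neither compactness nor the reverse dictionary.

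For the converse under compactness, strict star-shapedness about $x_*$ gives $(y-x_*)\cdot\nu(y) > 0$ on the compact set $\partial K$, so this continuous quantity attains a positive minimum $m$. For $x_*'$ in the ball $B(x_*,\delta)$ with $\delta < \min\{m, \op{dist}(x_*,\partial K)\}$ I would estimate $(y - x_*')\cdot\nu(y) \ge m - \delta > 0$ for all $y\in\partial K$, with $x_*' \in \op{int} K$. The remaining task is to upgrade this normal positivity to star-shapedness about each such $x_*'$, which I expect to be the main obstacle. I would argue radially: along any ray $r \mapsto x_*' + r\theta$, which begins in $\op{int} K$ and must eventually leave the bounded set $K$, every boundary crossing $y = x_*' + r\theta$ satisfies $r\,\theta\cdot\nu(y) = (y - x_*')\cdot\nu(y) > 0$, hence $\theta\cdot\nu(y) > 0$; thus every crossing is transversal and is an exit, an entry being precluded by $\theta\cdot\nu(y) \le 0$. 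So the ray meets $\partial K$ exactly once and $\{r \ge 0 : x_*' + r\theta \in K\}$ is an interval, i.e.\ $K$ is star-shaped about $x_*'$. Since $B(x_*,\delta)$ is then an open set of star centers, $K$ is strongly star-shaped. Compactness enters twice and essentially: it supplies the uniform gap $m > 0$ and guarantees that every ray exits $K$, which is exactly what lets the local normal condition globalize.
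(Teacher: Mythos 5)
Your proof is correct, and it takes a genuinely different route from the paper's. The paper argues with cones: for the first implication, the convex hull of a ball of star centers and a boundary point $y$ contains a truncated open cone with axis through $y$, so that axis cannot be tangent to $\partial K$; for the compact converse, it builds a continuous family $(\Gamma_y)_{y \in \partial K}$ of truncated open cones with vertices on $\partial K$ and observes that their intersection is an open set of star centers. You instead reduce everything to the scalar function $y \mapsto (y - p)\cdot \nu(y)$ on $\partial K$: star-shapedness about $p$ gives $\geq 0$, strictness gives $> 0$, the first implication becomes a perturbation of the center (forcing $\nu(y) = 0$ at a putative tangency, which is absurd), and the converse becomes a uniform lower bound $m > 0$ by compactness plus a transversality argument showing each ray from a perturbed center crosses $\partial K$ exactly once. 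The two approaches buy different things. The paper's cone argument makes the first implication essentially one line, but its converse direction is left partly to the reader (``one can check that there is a continuous family\dots''); your formulation makes both directions quantitative and actually supplies the globalization step the paper elides, correctly isolating it as the delicate point and using compactness exactly where it is needed (the positive minimum $m$, and the fact that rays must exit a bounded set). The cost is that your argument leans twice on the first-order local structure of a $\m{C}^1$ boundary --- once in the ``dictionary'' ($v$ points into $K$ only if $v \cdot \nu \leq 0$) and once in the exit/entry dichotomy (a crossing with $\theta \cdot \nu(y) > 0$ is locally inside-to-outside) --- so in a polished write-up you should state that local lemma once and cite it at both places; it is standard, and with it every step of your outline goes through.
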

\begin{proof}
  First suppose $K$ is strongly star-shaped and (without loss of generality) the set of star-centers of $K$ contains an open ball $B$ about $0$.
  If $y \in \partial K$, then $K$ contains the convex hull of $B \cup \{y\}$, which includes a truncated open cone with axis through $0$ and $y$.
  Hence this axis is not tangent to $\partial K$ at $y$, and $K$ is strictly star-shaped.

  Now suppose $K$ is compact and strictly star-shaped about a point $x_* \in \op{int}K$.
  Because $\partial K$ is uniformly $\m{C}^1$, one can check that there is a continuous family $(\Gamma_y)_{y \in \partial K}$ of truncated open cones such that $\Gamma_y$ has vertex $y$ and
  \begin{equation*}
    x_* \in U \coloneqq \bigcap_{y \in \partial K} \Gamma_y.
  \end{equation*}
  Moreover, the continuity of the family $\Gamma$ and the compactness of $\partial K$ imply that $U$ is open.
  Each point in $U$ is a star-center of $K$, so $K$ is strongly star-shaped.
\end{proof}
\begin{figure}[t]
  \centering
    \includegraphics[width = 0.5\textwidth]{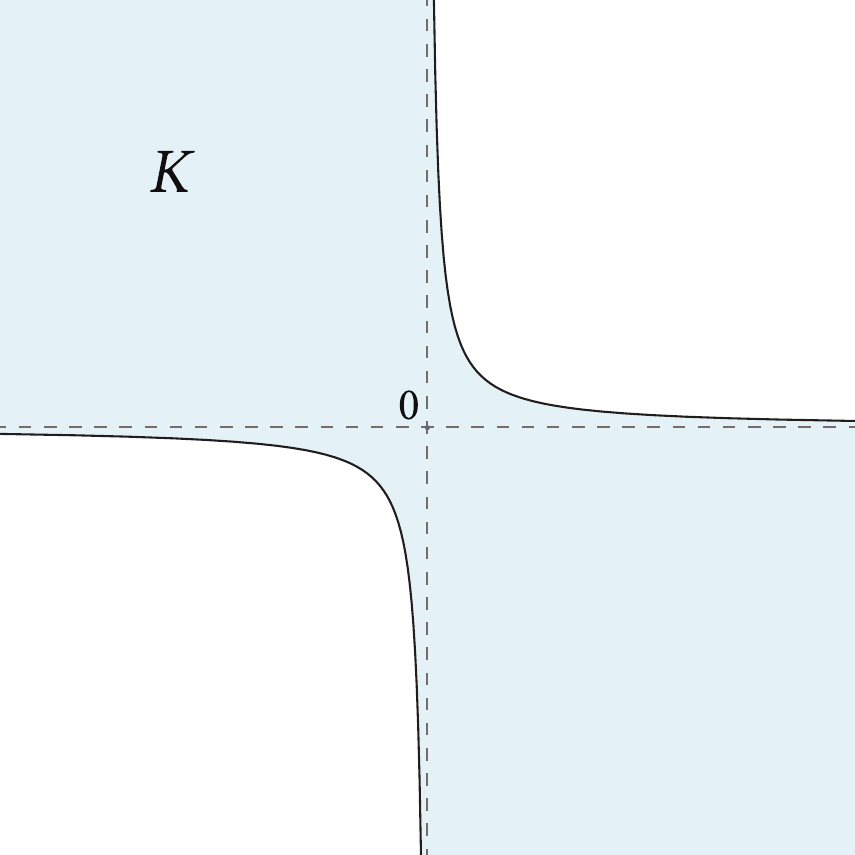}
  \caption[Hyperbolic set that is strictly but not strongly star-shaped]{A hyperbolic set that is strictly but not strongly star-shaped.}
  \label{fig:hyperbola}
\end{figure}
If $K$ is permitted to be unbounded, the strict and strong notions differ.
\begin{example}
  The hyperbolic set $K = \{xy \leq 1\} \subset \R^2$ in Figure~\ref{fig:hyperbola} is strictly but not strongly star-shaped.
  Indeed, like the hourglass in Figure~\ref{fig:hourglass}, $K$ is star-shaped precisely about the origin.
\end{example}
We will make essential use of an important geometric property of strongly exterior-star domains.
\begin{lemma}
  \label{lem:SES}
  Suppose $\Omega$ is strongly exterior-star about $0$.
  Then
  \begin{equation}
    \label{eq:SES}
    \op{dist}(\kappa \Omega, \Omega^\cc) > 0
  \end{equation}
  for all $\kappa > 1$ and
  \begin{equation}
    \label{eq:large-sep}
    \lim_{\kappa \to \infty} \op{dist}(\kappa \Omega, \Omega^\cc) \to \infty.
  \end{equation}
\end{lemma}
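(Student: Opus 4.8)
The plan is to reduce both conclusions to a single quantitative estimate. Since $\Omega^\cc$ is strongly star-shaped about $0$, its set of star-centers is open and contains $0$, hence contains a ball $B_r(0)$ with $r > 0$. I claim
\begin{equation*}
  \op{dist}(\kappa \Omega, \Omega^\cc) \geq (\kappa - 1)\, r \quad \text{for all } \kappa > 1.
\end{equation*}
Granting this, \eqref{eq:SES} is immediate, and sending $\kappa \to \infty$ yields \eqref{eq:large-sep}. So everything rests on this one lower bound.

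The first step is to recast the dilation in terms of the complement. Writing $K \coloneqq \Omega^\cc$, a direct check shows $\kappa \Omega = (\kappa K)^\cc$, so the quantity of interest is $\op{dist}\big((\kappa K)^\cc, K\big)$. The task is thus to exhibit a uniform tube of radius $(\kappa - 1)r$ around $K$ that stays inside $\kappa K$; no point of $(\kappa K)^\cc$ can then lie that close to $K$.

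The heart of the argument is the ``ice-cream cone'' furnished by strong star-shapedness. For any $y \in K$, star-shapedness about every point of $B_r(0)$ implies that $K$ contains the convex hull of $B_r(0) \cup \{y\}$, which in turn contains the ball $B_{(1 - \lambda) r}(\lambda y)$ for each $\lambda \in [0,1]$. Taking $\lambda = 1/\kappa$, I would observe that any $z$ with $\abs{z - y} < (\kappa - 1) r$ satisfies $z/\kappa \in B_{(1 - 1/\kappa) r}(y/\kappa) \subset K$, hence $z \in \kappa K$. Since $y \in K$ was arbitrary, this is precisely the claimed bound.

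There is no serious obstacle here; the only point requiring care is the elementary cone geometry, namely that interpolating between the ball $B_r(0)$ at parameter $0$ and the vertex $y$ at parameter $1$ produces a ball of radius $(1 - \lambda) r$ centered at $\lambda y$. One should also confirm that the open-versus-closed-ball bookkeeping does not degrade the strict inequality in \eqref{eq:SES}, which it does not, as the estimate is a genuine lower bound by the positive quantity $(\kappa - 1)r$.
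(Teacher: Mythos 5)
Your proof is correct, and it takes a genuinely more direct route than the paper's. The paper splits the lemma into two separate arguments: it proves \eqref{eq:SES} by contradiction, taking sequences $(y_n, z_n) \in \partial\Omega \times \partial\Omega$ with $\abs{\kappa z_n - y_n} \to 0$, invoking Lemma~\ref{lem:strict-strong} and a compactness argument to force $\abs{y_n}, \abs{z_n} \to \infty$, and only then applying the cone-over-the-star-ball geometry asymptotically (with a conservative radius $(1-\kappa^{-1})r/2$ near $\kappa^{-1}y_n$); it then proves \eqref{eq:large-sep} by an entirely separate scaling argument, dilating the already-established positive distance $\delta = \op{dist}(2\Omega, \Omega^\cc)$ by factors $\delta^{-1}n$. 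You instead extract the exact content of the same ``ice-cream cone'' geometry---the convex hull of $B_r(0) \cup \{y\}$ contains $B_{(1-\lambda)r}(\lambda y)$ precisely, not just approximately---and convert it into the single quantitative bound $\op{dist}(\kappa\Omega, \Omega^\cc) \geq (\kappa - 1)r$, from which both conclusions fall out at once. This is shorter, dispenses with the contradiction framework, the appeal to strict star-shapedness, and the scaling step, and yields strictly more information: an explicit linear-in-$\kappa$ separation rate, which would even let one quantify the threshold $\bar\kappa$ with $\bar\kappa\Omega \subset \Omega_+$ used later in the proof of Theorem~\ref{thm:ES-full}. One cosmetic caveat: the set of star centers of a closed set is closed, not open, so your opening phrase ``its set of star-centers is open'' is not literally justified; what the hypothesis (as the paper itself reads it) provides is that the star centers \emph{contain} a ball $B_r(0)$, which is exactly what your argument uses, so nothing is affected.
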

\begin{proof}
  Because $\Omega^\cc$ is star-shaped about $0,$ \eqref{eq:SES} is equivalent to the statement that
  \begin{equation*}
    \op{dist}(\kappa \partial \Omega, \partial \Omega) > 0.
  \end{equation*}
  Towards a contradiction, suppose there exists $\kappa > 1$ and a sequence of pairs $(y_n, z_n)_{n \in \N} \subset \partial \Omega \times \partial \Omega$ such that
  \begin{equation}
    \label{eq:approach}
    \lim_{n \to \infty} \abs{\kappa z_n - y_n} = 0.
  \end{equation}
  By Lemma~\ref{lem:strict-strong}, $\Omega^\cc$ is strictly star-shaped about $0$.
  It follows that $\kappa \partial \Omega \cap \partial \Omega = \emptyset$.
  By compactness, we must have $\abs{y_n}, \abs{z_n} \to \infty$ as $n \to \infty$.

  Because $\Omega$ is strongly exterior-star about $0$, $\Omega^\cc$ is star-shaped about a ball $B$ of radius $r > 0$ centered at $0$.
  Let $C_n$ denote the cone between $y_n$ and $B$, with axis $\ell_n \coloneqq \R y_n$.
  In the vicinity of $\kappa^{-1}y_n \in \ell_n$, the cone's radius is close to $(1 - \kappa^{-1})r > 0$.
  It follows that $C_n$ contains the open ball $B_n$ with center $\kappa^{-1}y_n$ and radius $(1 - \kappa^{-1})r/2 > 0$ once $n$ is sufficiently large.
  We emphasize that the radius of $B_n$ is independent of $n$.

  Now, \eqref{eq:approach} implies that $\abs{z_n - \kappa^{-1}y_n} \to 0$ as $n \to \infty$.
  Therefore $z_n \in B_n$ for $n \gg 1$.
  But then $z_n \in C_n \subset \op{int} \Omega^\cc$, which contradicts $z_n \in \partial \Omega$.
  This implies \eqref{eq:SES}.

  To see \eqref{eq:large-sep}, let $\delta \coloneqq \op{dist}(2 \Omega, \Omega^\cc) > 0$.
  Given $n > \delta$, we dilate by $\delta^{-1}n > 1$.
  Because $\Omega^\cc$ is star-shaped about $0$, $\Omega^\cc \subset \delta^{-1}n \Omega^\cc$.
  It follows that
  \begin{equation*}
    \op{dist}(2\delta^{-1}n \Omega, \Omega^\cc) \geq \op{dist}(2\delta^{-1}n \kappa \Omega, \delta^{-1}n \Omega^\cc) = \delta^{-1}n \op{dist}(2 \Omega, \Omega^\cc) = n.
  \end{equation*}
  Since $n$ was arbitrary, \eqref{eq:large-sep} follows.
\end{proof}
\corr{%
  In Theorem~\ref{thm:ES-full}, we assume that $\Omega^\cc$ coincides with a convex domain outside a bounded region.
  We now show that convex domains locally straighten at infinity.
  \begin{lemma}
    \label{lem:convex-infinity}
    Let $K \subset \R^d$ be a closed convex uniformly $\m{C}^{2,\gamma}$ domain, and suppose there is a sequence of points $y_n \in \partial K$ tending to infinity in the $\tbf e_1$ direction.
    Then any subsequential limit of $K - y_n$ has the form $\R \times D$ for some closed convex uniformly $\m{C}^{2,\gamma}$ domain $D \subset \R^{d-1}$ that may depend on the subsequence.
  \end{lemma}
  \begin{proof}
    Given $y \in \partial K$, let $\tbf n_y \in S^{d-1}$ denote the outward normal vector to $\partial K$ at $y$.
    For any $y \in \partial K$, convexity implies that $\tbf n_y \cdot (y_n - y) \leq 0$ for all $n \in \N$.
    Normalizing, it follows that
    \begin{equation*}
      0 \geq \tbf n_y \cdot \frac{y_n - y}{\abs{y_n - y}} \to \tbf n_y \cdot \tbf e_1 \quad \text{as } n \to \infty.
    \end{equation*}
    So $\tbf e_1 \cdot \tbf n_y \leq 0$ for all $y \in \partial K$.
    
    On the other hand, since $y_1 \in K$, convexity also yields
    \begin{equation*}
      \tbf n_{y_n} \cdot \frac{y_1 - y_n}{\abs{y_1 - y_n}} \leq 0 \ForAll n \in \N.
    \end{equation*}
    Then the boundedness of $\tbf n_{y_n}$ implies that
    \begin{equation*}
      \limsup_{n \to \infty} \tbf n_{y_n} \cdot (-\tbf e_1) \leq 0.
    \end{equation*}
    Combining these observations, we see that $\tbf e_1 \cdot \tbf n_{y_n} \to 0$ as $n \to \infty$.
    The same holds uniformly on any bounded neighborhood of $y_n$, so $\tbf{e}_1 \cdot \tbf{n} \to 0$ locally uniformly about the origin on $\partial(K - y_n)$.
    Uniform regularity allows us to extract a subsequential limit $K_\infty$ of $K - y_n$ in the sense of Definition~A.3 of \cite{BG24} (see \cite[Proposition~A.4]{BG24}).
    This convergence holds in $\m{C}^{2,\gamma-}$, so in particular the normal field converges and $\tbf{e}_1 \cdot \tbf{n} = 0$ on $\partial K_\infty$.
    That is, $\tbf{e}_1$ is everywhere tangent to $\partial K_\infty$.
    
    As the $\m{C}^{2,\gamma}$ boundary of an open set, $\partial K_\infty$ is a complete submanifold of $\R^d$.
    We can thus integrate indefinitely along the tangent vector field $\tbf{e}_1$ to conclude that $\partial K_\infty$ is a union of lines parallel to $\tbf{e}_1$.
    Let $\Sigma \coloneqq \partial K_\infty \cap \{x_1 = 0\}$ denote its cross-section.
    Then $\partial K_\infty = \R \times \Sigma$.
    The limit $K_\infty$ inherits the convexity of $K$, so $K_\infty = \R \times D$, where $D$ denotes the convex hull of $\Sigma$.
    Then $D \subset \R^{d-1}$ is a closed convex uniformly $\m{C}^{2,\gamma}$ domain, and $K_\infty = \R \times D$.
  \end{proof}
  In view of Lemma~\ref{lem:convex-infinity}, we now consider uniqueness on cylinders.
  The following result on the question is of independent interest.
  \begin{proposition}
    \label{prop:cylinder-unique}
    Suppose \eqref{eq:main} admits a unique positive bounded solution $u$ in a domain $\omega \subset \R^d$, and $\lambda(-\Delta, \omega) < f'(0)$.
    Then for any $m \in \N$, $u$ is the unique positive bounded solution in the cylinder $\R^m \times \omega$ as well.
  \end{proposition}
  \noindent
  We will apply this result to $\omega = D^\cc$, with $D$ from Lemma~\ref{lem:convex-infinity}.
  \begin{proof}
    Given $m \in \N$, let $\Gamma \coloneqq \R^m \times \omega$.
    We write coordinates as $x = (x', y)$ on $\Gamma$.
    
    Because $\lambda(-\Delta, \omega) < f'(0)$, Proposition~1.9 from \cite{BG24} states that \eqref{eq:main} admits a minimal positive solution $\ubar{v}$ in $\Gamma$ lying beneath all others.
    On the other hand, it is a quite general fact that \eqref{eq:main} admits a \emph{maximal} positive bounded solution $\bar{v}$.
    Indeed, all bounded solutions are bounded above by the supersolution $1$, so the long-time limit of the parabolic flow started from $1$ is the maximal solution.
    
    Minimality implies that $\ubar{v}$ lies beneath all its translates in $x'$, and is thus independent of $x'$.
    By the same reasoning, $\bar{v}$ is independent of $x'$.
    We can thus interpret both as positive bounded solutions on $\omega$.
    By assumption, $\ubar{v} = \bar{v}$.
    Since the minimal and maximal solutions in $\Gamma$ coincide, we have uniqueness in $\Gamma$ as well.
  \end{proof}
}

\subsection{Uniqueness on exterior-star domains}
We now return to the study of \eqref{eq:main} on exterior-star domains.
We begin by showing solutions are large far from $\partial \Omega$.
We will use this result throughout the paper.
\begin{lemma}
  \label{lem:sub}
  Given a positive reaction $f$ and $s \in (0, 1)$, there exists $R(f, s) \in \R_+$ such that if $u$ is a positive solution of \eqref{eq:main} on a domain $\Omega$, then $u(x) \geq s$ if $\op{dist}(x, \Omega^\cc) \geq R$.
\end{lemma}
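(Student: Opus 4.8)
The plan is to bound $u$ from below by a canonical solution on a large ball and then show this barrier tends to $1$ as the ball grows. Fix $s \in (0,1)$. For a radius $R$ large enough that the principal Dirichlet eigenvalue $\lambda_1(-\Delta; B_R)$ drops below $f'(0^+)$, a small multiple $\varepsilon \varphi_R$ of the principal eigenfunction is a subsolution of the zero-Dirichlet problem on $B_R$ (since $f(t) > \lambda_1(-\Delta;B_R)\, t$ near $0$), while the constant $1$ is a supersolution. The sub- and supersolution method then furnishes a \emph{minimal} positive solution $v_R$ of
\begin{equation*}
  -\Delta v = f(v) \text{ in } B_R, \qquad v = 0 \text{ on } \partial B_R,
\end{equation*}
which is radial and radially nonincreasing. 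Crucially, $v_R$ depends only on $f$ and $R$, not on $\Omega$ or $u$.

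First I would establish the comparison $v_R \leq u$ whenever $B_R(x) \subset \Omega$, after translating so that $B_R$ is centered at $x$. Restricted to the open ball, $u$ solves the equation with $u \geq 0 = v_R$ on $\partial B_R$, so $u$ is a positive supersolution of the zero-Dirichlet problem. Because $u > 0$ in $B_R(x)$, the Hopf lemma lets me pick $\varepsilon = \varepsilon(u) > 0$ so small that $\varepsilon \varphi_R \leq u$. Monotone iteration sandwiched between the subsolution $\varepsilon \varphi_R$ and the supersolution $u$ converges to a positive solution $\tilde v$ with $\varepsilon \varphi_R \leq \tilde v \leq u$; by minimality $v_R \leq \tilde v \leq u$, so in particular $u(x) \geq v_R(0)$ at the center. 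The same comparison, applied with $v_{R_2}\vert_{B_{R_1}}$ as a supersolution on $B_{R_1}$, shows $R \mapsto v_R$ is nondecreasing, and comparison with $1$ shows $v_R \leq 1$.

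It remains to prove $v_R(0) \to 1$ as $R \to \infty$. By monotonicity and the bound $v_R \leq 1$, the $v_R$ increase to a limit $v_\infty \colon \R^d \to [0,1]$; interior elliptic estimates show $v_\infty$ solves $-\Delta v_\infty = f(v_\infty)$ on all of $\R^d$, and $v_\infty \geq v_{R_0} > 0$ on $B_{R_0}$, so $v_\infty \not\equiv 0$. The limit $v_\infty$ is radial and radially nonincreasing, hence $v_\infty(r) \to L \in [0,1]$ with $f(L) = 0$, so $L \in \{0,1\}$. To exclude $L = 0$ I would use linearized instability: fixing $\mu \in (0, f'(0^+))$ and $\rho > 0$ with $f(t) \geq \mu t$ on $[0,\rho]$, if $v_\infty \to 0$ then $v_\infty$ is a positive supersolution of $-\Delta w = \mu w$ on the exterior region $\{v_\infty \leq \rho\}$, which contains arbitrarily large balls; but a positive supersolution on a ball $B_\rho(y)$ forces $\lambda_1(-\Delta; B_\rho(y)) \geq \mu$, contradicting $\lambda_1(-\Delta; B_\rho(y)) = \lambda_1(-\Delta; B_1)/\rho^2 \to 0$. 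Hence $L = 1$, and since $v_\infty$ is nonincreasing and bounded by $1$, we get $v_\infty \equiv 1$; in particular $v_R(0) \nearrow 1$.

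Choosing $R = R(f,s)$ with $v_R(0) \geq s$ then finishes the proof: if $\op{dist}(x, \Omega^\cc) \geq R$ then $B_R(x) \subset \Omega$, so $u(x) \geq v_R(0) \geq s$. I expect the main obstacle to be the comparison step, since general comparison principles \emph{fail} for positive (monostable) reactions---indeed this failure underlies the nonuniqueness phenomena elsewhere in the paper. The resolution is that the \emph{minimal} solution, rather than an arbitrary one, lies beneath every positive supersolution, which is exactly what makes the sandwiched iteration go through; the secondary difficulty is the Liouville-type identification $v_\infty \equiv 1$, handled by the eigenvalue argument above.
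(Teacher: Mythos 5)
Your proposal is correct in outline, but it takes a genuinely different route from the paper's. The paper's proof is a one-shot barrier: it sets $\rho = \inf_{r \in (0,s]} f(r)/r > 0$, chooses $R$ so that $\rho$ is exactly the principal Dirichlet eigenvalue of $-\Delta$ on $B_R$, and normalizes the principal eigenfunction $v$ by $\|v\|_\infty = s$; then the whole family $\kappa v$, $\kappa \in [0,1]$, consists of subsolutions, and a single sweeping argument with the strong maximum principle gives $u \geq v_x$, hence $u(x) \geq v(0) = s$. By tuning the eigenvalue to $\inf_{(0,s]}f(r)/r$ rather than merely below $f'(0^+)$, the subsolution's maximum is already the target value $s$, so no minimal solutions, no radial symmetry result, and no Liouville theorem are needed. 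Your route instead constructs the minimal positive solution $v_R$ on $B_R$, proves $v_R \leq u$ by a sandwiched monotone iteration, and identifies the monotone limit $v_\infty$ as the constant $1$ by a Liouville-type instability argument. This is heavier (it uses Gidas--Ni--Nirenberg radial monotonicity \cite{GNN} and a classification of entire solutions), but it buys a canonical, reusable family of barriers $v_R \nearrow 1$ independent of $\Omega$ and $u$ --- essentially a quantitative refinement of the lemma.

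One step deserves shoring up. The monotone iteration between $\varepsilon \varphi_R$ and a supersolution produces the minimal solution \emph{above} $\varepsilon\varphi_R$, not automatically the minimal \emph{positive} solution; since the $\varepsilon(u)$ you extract from the Hopf lemma depends on $u$ and may be smaller than whatever $\varepsilon$ was fixed to define $v_R$, the minimality of $v_R$ among all positive solutions is precisely what your comparison step requires, and it needs an extra argument. Either pass to the decreasing limit $\varepsilon \downarrow 0$ and rule out degeneration to $0$ by linearized instability (if positive solutions $w_n \to 0$ uniformly, then $w_n/\|w_n\|_\infty$ converges along a subsequence to a nonnegative nontrivial Dirichlet eigenfunction of $-\Delta$ on $B_R$ with eigenvalue $f'(0^+) > \lambda(-\Delta, B_R)$, a contradiction), or cite the existence of minimal solutions when $\lambda(-\Delta, B_R) < f'(0)$, e.g.\ Proposition~1.8 of \cite{BG22b}, which this paper itself invokes for exactly this purpose in Section~\ref{sec:dilation}. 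The remaining facts you assume --- radial monotonicity of $v_R$, and the bound $\varepsilon\varphi_R \leq u$ when $\bar B_R(x)$ may touch $\partial\Omega$ --- are standard; the latter is asserted at the same level of detail in the paper's own proof.
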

\begin{proof}
  Fix $s \in (0, 1)$ and let
  \begin{equation*}
    \rho \coloneqq \inf_{r \in (0, s]} \frac{f(r)}{r}.
  \end{equation*}
  By \ref{hyp:positive}, $\rho > 0$.
  Therefore, there exists $R > 0$ such that $\rho$ is the principal Dirichlet eigenvalue of $-\Delta$ in the ball $B_R$ of radius $R$ centered at the origin.
  Let $v$ denote the corresponding positive principal eigenfunction, normalized by $\norm{v}_\infty = s$.
  We note that $v$ is radially decreasing, so $v(0) = s$.
  We extend $v$ by $0$ outside $B_R$; then $\kappa v$ is a subsolution of \eqref{eq:main} for all $\kappa \in [0, 1]$.

  Now let $u$ be a positive solution of \eqref{eq:main} on a domain $\Omega$.
  Fix $x \in \Omega$ such that $\op{dist}(x, \partial\Omega) \geq R$.
  Let $v_x \coloneqq v(\anon - x)$ denote the translate of $v$ to $x$.
  Because $u > 0$ on the compact set $B_R(x)$, there exists $\ubar{\kappa} > 0$ such that $u \geq \ubar{\kappa} v_x$.
  If we continuously raise $\kappa$ from $\ubar{\kappa}$ to $1$, the strong maximum principle prevents $\kappa v_x$ from touching $u$.
  It follows that $u > v_x$, and in particular $u(x) \geq v_x(x) = v(0) = s$.
\end{proof}
Motivated by Lemma~\ref{lem:sub}, we introduce the notation
\begin{equation}
  \label{eq:deep}
  \deep{R} \coloneqq \{x \in \Omega \mid \op{dist}(x, \partial \Omega) > R\} \For R > 0.
\end{equation}
Once $R$ is sufficiently large, Lemma~\ref{lem:sub} implies that solutions of \eqref{eq:main} are close to $1$ on $\deep{R}$.
Recalling that $f'(1) < 0$, these solutions are \emph{stable} on $\Omega[R]$ and obey a maximum principle.
The following relies on the generalized principal eigenvalue introduced in~\cite{BR}:
\begin{equation}
  \label{eq:eigenvalue}
  \lambda(-\m{L}, \Omega) \coloneqq \sup\big\{\lambda \mid \exists \psi \in W_{\text{loc}}^{2,d}(\Omega) \text{ s.t. } \psi > 0, \, (\m{L} + \lambda) \psi \leq 0\big\}.
\end{equation}
\begin{lemma}
  \label{lem:MP-deep}
  There exists $R(f) > 0$ such that the following holds.
  Let $u$ be a positive bounded solution of \eqref{eq:main} on $\Omega$ and let $v \colon \deep{R} \to [0, 1]$ be a subsolution of \eqref{eq:main}.
  If $u \geq v$ on $\partial\deep{R}$, then $u \geq v$ in $\deep{R}$.
\end{lemma}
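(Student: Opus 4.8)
The plan is to linearize the difference $w \coloneqq v - u$ and exploit the fact that, deep inside $\Omega$, both $u$ and $v$ lie in the stable range of $f$ near its root $1$. First I would fix the threshold: since $f'(1) < 0$ and $f' \in \m{C}^{0,\gamma}$ is continuous, set $\delta \coloneqq -\tfrac{1}{2}f'(1) > 0$ and choose $s \in (0,1)$ close enough to $1$ that $f' \leq -\delta$ on $[s,1]$. I then define $R \coloneqq R(f,s)$ via Lemma~\ref{lem:sub}, so that every positive bounded solution obeys $u \geq s$ on $\deep{R}$ (using $\op{dist}(x,\partial\Omega) = \op{dist}(x,\Omega^\cc)$ for $x \in \Omega$). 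As $R$ depends only on $f$, this matches the claimed $R(f)$.

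Next I would restrict attention to the positivity set $D \coloneqq \big\{x \in \deep{R} \mid w(x) > 0\big\}$, since there is nothing to prove off of $D$. On $D$ we have $s \leq u < v \leq 1$, so the entire segment from $u(x)$ to $v(x)$ lies in $[s,1]$. Subtracting the equation for $u$ from the subsolution inequality for $v$ and writing $f(v) - f(u) = c(x)\,w$ with $c(x) = \int_0^1 f'\big((1-t)u + t v\big)\,\mathrm{d}t$, the choice of $s$ forces $c \leq -\delta$ on $D$. Hence $-\Delta w \leq c\,w \leq -\delta\,w$ on $D$; equivalently, $w$ is a subsolution of the operator $\m{L} \coloneqq \Delta - \delta$.

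The crux is then a maximum principle on the possibly unbounded set $D$. By continuity, $w = 0$ on $\partial D$: on the portion interior to $\deep{R}$ this is the level set $\{w = 0\}$, while on $\partial \deep{R} \cap \overline{D}$ the hypothesis $u \geq v$ gives $w \leq 0$, which combines with $w \geq 0$ (as a limit from $D$) to give $w = 0$. Because the zeroth-order coefficient of $\m{L}$ is the negative constant $-\delta$, testing the definition \eqref{eq:eigenvalue} with $\psi \equiv 1$ shows $\lambda(-\m{L}, D) \geq \delta > 0$ on any subdomain. The positivity of this generalized principal eigenvalue yields the maximum principle for bounded subsolutions on general domains \cite{BR}; since $w$ is bounded and $w \leq 0$ on $\partial D$, we conclude $w \leq 0$ in $D$, contradicting $w > 0$ unless $D = \emptyset$. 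Therefore $u \geq v$ throughout $\deep{R}$.

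The main obstacle is the unboundedness of $\deep{R}$: the classical maximum principle does not apply, and a bounded subsolution that is nonpositive on the boundary need not be nonpositive without additional structure. This is precisely what the stable sign of $f'$ near $1$ supplies—it makes the linearized operator strictly coercive (a positive generalized principal eigenvalue), which restores the maximum principle on unbounded domains for bounded functions. The only mild care needed is to confine the linearization to the set $\{v > u\}$, ensuring the comparison interval stays in the stable range $[s,1]$ on which $f' < 0$.
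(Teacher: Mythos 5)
Your setup matches the paper's: the same choice of $R$ via Lemma~\ref{lem:sub}, the same linearization of $w = v-u$ on the positivity set where $s \le u < v \le 1$ forces a potential $\le -\delta$, and the same underlying mechanism (positivity of a generalized principal eigenvalue yielding a maximum principle). The divergence—and the gap—is in the final step. You apply a maximum principle \emph{directly on} $D = \{v > u\}$, an open set with no regularity whatsoever, possibly unbounded, with only pointwise boundary data, and you justify it by asserting that ``positivity of the generalized principal eigenvalue yields the maximum principle for bounded subsolutions on general domains \cite{BR}.'' No such theorem exists in that generality. In unbounded domains, $\lambda(-\m{L},\Omega) > 0$ as defined in \eqref{eq:eigenvalue} does \emph{not} by itself imply the maximum principle; this is precisely why \cite{BR} introduce the second eigenvalue $\lambda'$ (an infimum over bounded subsolutions, which cannot be bounded below by testing with $\psi \equiv 1$) and characterize the maximum principle through it. The results that tie the sup-type eigenvalue $\lambda$ to the maximum principle—\cite[Theorem~1.7(i)]{BR} giving $\lambda = \lambda'$ for self-adjoint operators, Theorem~1 of \cite{Nordmann}, Proposition~3.1 of \cite{BG22b}—carry structural hypotheses (notably domain regularity), and indeed every application of them in this paper is on a uniformly smooth domain ($\R^d$, $\kappa\Omega$, $\Omega_R$), never on a positivity set.

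The paper's proof is engineered exactly to avoid your step: it extends the potential globally (setting $q \equiv \abs{f'(1)}/2$ off the positivity set), extends $w_+ = \max(w,0)$ by zero across $\partial\deep{R}$—continuous precisely because $w \le 0$ there—and observes that $w_+$ is then a distributional subsolution of $\m{L} = \Delta - q$ on all of $\R^d$. The maximum principle is then invoked on $\R^d$, where there is no boundary condition to worry about and the cited results apply. Your argument is salvageable by inserting this gluing step (your operator $\Delta - \delta$ is self-adjoint with constant coefficients, so once on $\R^d$ everything you cite is legitimate, and the conclusion $w_+ \le 0$ gives $D = \emptyset$), but as written the crux of the proof rests on a maximum principle that is false at the stated level of generality: for non-self-adjoint operators there are explicit examples with $\lambda > 0$ on $\R^d$ admitting bounded positive subsolutions, so the boundedness of the subsolution and positivity of $\lambda$ alone cannot be the whole story.
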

\begin{proof}
  Because $f'(1) < 0$ and $f \in \m{C}^1$, there exists $\theta(f) \in (0, 1)$ such that
  \begin{equation*}
    f'|_{[\theta, 1]} \leq - \abs{f'(1)}/2 < 0.
  \end{equation*}
  Let $R(f, \theta) \in \R_+$ be the radius provided by Lemma~\ref{lem:sub}, which depends only on $f$.
  By Lemma~\ref{lem:sub}, $u \geq \theta$ in $\deep{R}$.
  
  We now define $w \coloneqq v - u$ and $P \coloneqq \{w > 0\} \subset \deep{R}$.
  Within $P$ we have $\theta \leq u < v \leq 1$.
  By the mean value theorem and the definition of $\theta$, we find
  \begin{equation*}
    0 = -\Delta w - \left(\frac{f(v) - f(u)}{v - u}\right)w = -\Delta w + q w
  \end{equation*}
  for some $q \geq \abs{f'(1)}/2 > 0$ in $P$.
  Set $q \equiv \abs{f'(1)}/2$ in $P^\cc$ and consider the operator $\m{L} \coloneqq \Delta - q$ on $\R^d$.
  If we extend the positive part $w_+$ by $0$ to $\deep{R}^\cc$, it remains continuous because $w_+ = 0$ on $\partial\deep{R}$ by hypothesis.
  Moreover, as the maximum of two subsolutions, $-\m{L} w_+ \leq 0$ in the sense of distributions.
  So $w_+$ is a generalized subsolution of $\m{L}$ on $\R^d$.
  
  Now, Proposition~2.1(ii) of~\cite{BG24} yields $\lambda(-\m{L}, \R^d) \geq \inf q > 0$.
  Because this eigenvalue is positive, Proposition~3.1 of~\cite{BG24} states that $\m{L}$ satisfies the maximum principle on $\R^d$ (see also Theorem~1 in~\cite{Nordmann}).
  Therefore $w_+ \leq 0$, i.e., $w = 0$.
  (Technically, we have not satisfied the hypotheses of the proposition because the potential $q$ is merely in $L^\infty$ and $w_+$ is a subsolution in the sense of distributions.
  Neither poses a challenge---the proof goes through as written.)
  Thus $v \leq u$ in $\deep{R}$, as desired.
\end{proof}
We emphasize that Lemmas~\ref{lem:sub} and \ref{lem:MP-deep} are independent of the domain, and in particular do not require the exterior-star condition.

We can now prove uniqueness on suitable exterior-star domains.
\begin{proof}[Proof of Theorem~\textup{\ref{thm:ES-full}}]
  \corr{%
    We induct on dimension.
    When $d = 1$, the only connected exterior-star domain is $\R_+$, up to translation.
    Lemma~6.1 of \cite{BG22a} establishes uniqueness in the half-line, proving the base case.
    We now inductively suppose that the statement of Theorem~\textup{\ref{thm:ES-full}} holds in dimension $d - 1$ for some $d \geq 2$.
    We wish to prove it in dimension $d$.
  }
  
  Assume without loss of generality that $\Omega$ is strongly exterior-star about $0$.
  Define
  \begin{equation*}
    \Omega_+ \coloneqq \deep{R} \And \Omega_- \coloneqq \Omega \setminus \bar{\Omega}_+,
  \end{equation*}
  where $R(f) > 0$ is provided by Lemma~\ref{lem:MP-deep}.
  By \eqref{eq:large-sep} in Lemma~\ref{lem:SES}, there exists $\bar \kappa > 0$ such that $\bar\kappa \Omega \subset \Omega_+$.

  Now fix two positive bounded solutions $u,v$ of \eqref{eq:main}.
  Given $\kappa > 1$, define the dilation
  \begin{equation*}
    v_\kappa \coloneqq v\left(\frac{\anon}{\kappa}\right),
  \end{equation*}
  which satisfies
  \begin{equation}
    \label{eq:dilate}
    \begin{cases}
      -\Delta v_\kappa = \kappa^{-2}f(v_\kappa) & \text{in } \kappa \Omega,\\
      v_\kappa = 0 & \text{on } \kappa \partial \Omega.
    \end{cases}
  \end{equation}
  Because $\kappa > 1$ and $f > 0$, we see that $v_\kappa$ is a subsolution of our original equation on the dilated domain $\kappa \Omega$.
  Extending $v_\kappa$ by $0$, it is a generalized subsolution on $\Omega$.

  Now define
  \begin{equation*}
    \kappa_* \coloneqq \inf\{\kappa > 1 \mid v_{\kappa'} \leq u \text{ in } \Omega_-\corr{\text{ for all }\kappa' \geq \kappa}\}.
  \end{equation*}
  Because $\kappa' \Omega \subset \Omega_+$ and thus $v_{\kappa'} \equiv 0$ on $\Omega_-$ \corr{for all $\kappa' \geq \bar\kappa$}, we have $\kappa_* \leq \bar\kappa$.
  We wish to show that $\kappa_* = 1$, so toward a contradiction suppose $\kappa_* > 1$.
  By pointwise continuity in $\kappa$, $v_{\kappa_*} \leq u$ in $\Omega_-$.
  Applying Lemma~\ref{lem:MP-deep}, we see that in fact $v_{\kappa_*} \leq u$ in $\Omega$.
  Moreover, $v_{\kappa_*} \neq u$ because $\kappa_* > 1$, so the strong maximum principle implies that
  \begin{equation}
    \label{eq:strict}
    v_{\kappa_*} < u \quad \text{in }\Omega.
  \end{equation}

  \corr{%
    By the definition of $\kappa_*$, there exists a sequence $\kappa_n \nearrow \kappa_*$ and $x_n \in \Omega_-$ such that
    \begin{equation}
      \label{eq:wrong-sign}
      v_{\kappa_n}(x_n) > u(x_n).
    \end{equation}
    We claim the sequence $(x_n)_n$ cannot have an accumulation point $x_\infty \in \bar\Omega_-$.
    Otherwise \eqref{eq:SES} would ensure that $x_\infty \in \Omega_-$, but then pointwise continuity would imply that $v_{\kappa_*}(x_\infty) \geq u(x_\infty)$, contradicting \eqref{eq:strict}.
    
    It follows that $(x_n)_n$ tends to infinity.
    By the compactness of $S^{d-1}$, we can extract a subsequence (also called $x_n$) that tends to infinity in a particular direction in the sense that $\frac{x_n}{\abs{x_n}}$ converges.
    After a rotation, we may assume that this limiting direction is $\tbf{e}_1$.
    Recall that $\Omega^\cc$ coincides with a convex domain $K$ outside some bounded region $A$.
    Since $(x_n)_n$ tends to infinity, we are free to assume that $\op{dist}(x_n,A) \geq 2R$ for each $n$.
    Let $y_n$ denote the projection of $x_n$ onto the convex set $K$.
    Because $x_n \in \Omega_-$, $\abs{y_n - x_n} \leq R$.
    It follows that $y_n$ tends to infinity in direction $\tbf{e}_1$ as well.

    The regularity of $\partial K$ (which coincides with $\partial \Omega$ outside $A$) allows us to extract a subsequential limit $K_\infty$ of $K - y_n$, and hence a complementary limit $\Omega_\infty = K_\infty^\cc$ of $\Omega - y_n$.
    By Lemma~\ref{lem:convex-infinity}, $K_\infty = \R \times D$ for some convex uniformly $\m{C}^{2,\gamma}$ set $D \subset \R^{d-1}$.
    We likewise wish to extract subsequential limits $z,u_\infty,$ and $\ti v$ of $x_n - y_n$, $u(\anon + y_n)$, and $v_{\kappa_n}(\anon + y_n)$.
    Each has its own subtlety, so we discuss these in turn below.

    Because $\abs{x_n - y_n} \leq R$, we can readily extract a subsequential limit $z \in \bar{\Omega}_\infty$.
    We claim that in fact $z \in \Omega_\infty$.
    Indeed, by \eqref{eq:SES}, there exists $\delta > 0$ such that for all $n \in \N$, $v_{\kappa_n}(x) = 0$ where $\op{dist}(x, \partial\Omega) \leq \delta$.
    Thus \eqref{eq:wrong-sign} implies that $\op{dist}(x_n,\partial\Omega) > \delta$.
    Passing to the limit, we see that $\op{dist}(z,\partial\Omega_\infty) \geq \delta > 0$, and in particular $z \in \Omega_\infty$.
    As a union of half-spaces, $\Omega_\infty$ is connected unless it is exactly the union of two opposite half-spaces separated by a flat slab.
    In this case we replace $\Omega_\infty$ by its connected component (a half-space) containing $z$.

    We next consider the sequence $u(\anon + y_n)$.
    Uniform Schauder estimates allow us to extract a subsequential limit $u_\infty$ solving \eqref{eq:main}  in $\Omega_\infty$.
    We claim that $u_\infty > 0$.
    By the connectedness of $\Omega_\infty$ and the strong maximum principle, it suffices to show that $u_\infty$ is positive at some point in $\Omega_\infty$.
    To see this, fix $n \in \N$ and recall that $y_n$ is the projection of $x_n$ on $K$.
    Parametrizing the ray from $y_n$ to $x_n$ by $p_n(t) \coloneqq \frac{x_n - y_n}{\abs{x_n - y_n}}t + y_n$, convexity implies that $\op{dist}(p_n(t),K) = t$.
    Recalling $\delta$ from above, let $q_n \coloneqq p_n(R + \delta/2)$.
    Then $\abs{q_n- x_n} < R - \delta/2$ and $\op{dist}(q_n,K) = R + \delta/2$.
    Now $\op{dist}(x_n,A) \geq 2R$, so $\op{dist}(q_n,A) > R + \delta/2$.
    In particular, $q_n \in \Omega[R]$.
    By the construction of $R$ in Lemma~\ref{lem:MP-deep}, $\inf_{\deep{R}} u > 0$.
    Thus if $q_\infty \in \Omega_\infty$ denotes a subsequential limit of the bounded sequence $q_n - y_n$, locally uniform convergence implies that $u_\infty(q_\infty) \geq \inf_{\deep{R}} u > 0$.
    So indeed $u_\infty > 0$ in $\Omega_\infty$.

    Finally, consider the sequence $v_{\kappa_n}(\anon + y_n)$.
    The dilated domains $\kappa_n \Omega$ are uniformly $\m{C}^{2,\gamma}$, so we can extract a uniformly $\m{C}^{2,\gamma}$ subsequential limit $\Omega_\infty'$ of $(\kappa_n \Omega - y_n)_n$.
    Note that the sets $\kappa_n \Omega - y_n$ cannot withdraw to infinity locally, leaving $\Omega_\infty'$ empty, because \eqref{eq:wrong-sign} ensures that $x_n \in \kappa_n \Omega$ and $\abs{x_n - y_n} < R$.
    So $\Omega_\infty' \neq \emptyset$, but on the other hand \eqref{eq:SES} implies that $\op{dist}(\Omega_\infty',K_\infty) \geq \delta$.
    That is, $\Omega_\infty'$ is a proper subset of $\Omega_\infty$.
    Next, Schauder estimates ensure that $v_{\kappa_n}$ is uniformly $\m{C}^{2,\gamma}$ in $\kappa_n \Omega$, so we can extract a $\m{C}^{2,\gamma}$ subsequential limit $\ti v$ of $v_{\kappa_n}(\anon + y_n)$ in $\Omega_\infty'$.
    Using local $\m{C}^2$ convergence in \eqref{eq:dilate}, we see that $\ti v$ satisfies $-\Delta \ti v = \kappa_*^{-2}f(\ti v)$ in $\Omega_\infty'$.
    In particular, it is a subsolution of \eqref{eq:main} there.
    Extending $\ti v$ by zero outside $\Omega_\infty'$, $\ti v$ is the locally uniform limit of $v_{\kappa_n}(\anon + y_n)$ and a generalized subsolution.
    Note that there does not seem to be a straightforward link between $\ti v$ and $v_{\kappa_*}$, because dilation acts non-uniformly at large distances.
    Here, we only use the fact that the limit $\ti v$ is a subsolution.

    We have now shown that $u_\infty$ is a positive bounded solution of \eqref{eq:main} in $\Omega_\infty$, while $\ti v$ is a generalized subsolution that vanishes on the nonempty set $\Omega_\infty \setminus \Omega_\infty'$.
    Moreover, passing to the limit in \eqref{eq:wrong-sign},
    \begin{equation}
      \label{eq:wrong-sign-limit}
      \ti v(z) \geq u_\infty(z) > 0 \quad \text{at } z \in \Omega_\infty.
    \end{equation}
    Recall that $v \leq 1$, and hence $\ti v \leq 1$.
    Because $\ti v$ is a subsolution and $1$ a supersolution, there is a solution $\ti u$ between them, which can be obtained as the long-time limit of the parabolic flow starting from $1$.
    By \eqref{eq:wrong-sign-limit}, $\ti{u} \not\equiv 0$, so $\ti{u} > 0$ by the strong maximum principle.
    That is, both $u_\infty$ and $\ti{u}$ are positive bounded solutions of \eqref{eq:main} in $\Omega_\infty$.

    Recall that $K_\infty$ is a cylinder: $K_\infty = \R \times D$ for some uniformly $\m{C}^{2,\gamma}$ convex set $D \subset \R^{d-1}$.
    We now write $\Omega_\infty = (\R \times D)^\cc = \R \times \omega$ for $\omega \coloneqq D^\cc \subset \R^{d-1}$.
    As a $\m{C}^{2,\gamma}$ convex set, $D$ is strongly star-shaped and of course coincides with a convex set.
    Thus $\omega$ satisfies the hypotheses of Theorem~\ref{thm:ES-full} in dimension $d-1$.
    By our inductive hypothesis, \eqref{eq:main} admits a unique positive bounded solution in $\omega$.
    As a complement of a convex set, $\omega$ contains a half-space.
    By Proposition~2.1(ii) of \cite{BG24}, $\lambda(-\Delta,\omega) = 0 < f'(0)$.
    Applying Proposition~\ref{prop:cylinder-unique} to $\Omega_\infty$, we see that $u_\infty = \ti{u}$, and in particular $u_\infty \geq \ti v$.
    On the other hand, the subsolution $\ti v$ does not coincide with $u_\infty$, because it vanishes in $\Omega_\infty \setminus \Omega_\infty'$.
    Thus by the strong maximum principle, $\ti v < u_\infty$ in $\Omega_\infty$, contradicting \eqref{eq:wrong-sign-limit}.
  }  

  From this contradiction, we conclude that in fact $\kappa_* = 1$.
  That is, $v \leq u$.
  By symmetry, $u = v$ as desired.
  Moreover, this argument shows that $u_\kappa \leq u$ for all $\kappa \geq 1$, so $u$ is increasing in the radial coordinate centered at zero.
  Applying the strong maximum principle to $\partial_r u$, it is \emph{strictly} increasing.
  Shifting, this holds about any point $x_*$ in the interior of the star centers of $\Omega$.
  \corr{Having shown the theorem statement in dimension $d$, the result now follows from induction.}
\end{proof}
\begin{figure}[t]
  \centering
  \includegraphics[width=0.7\linewidth]{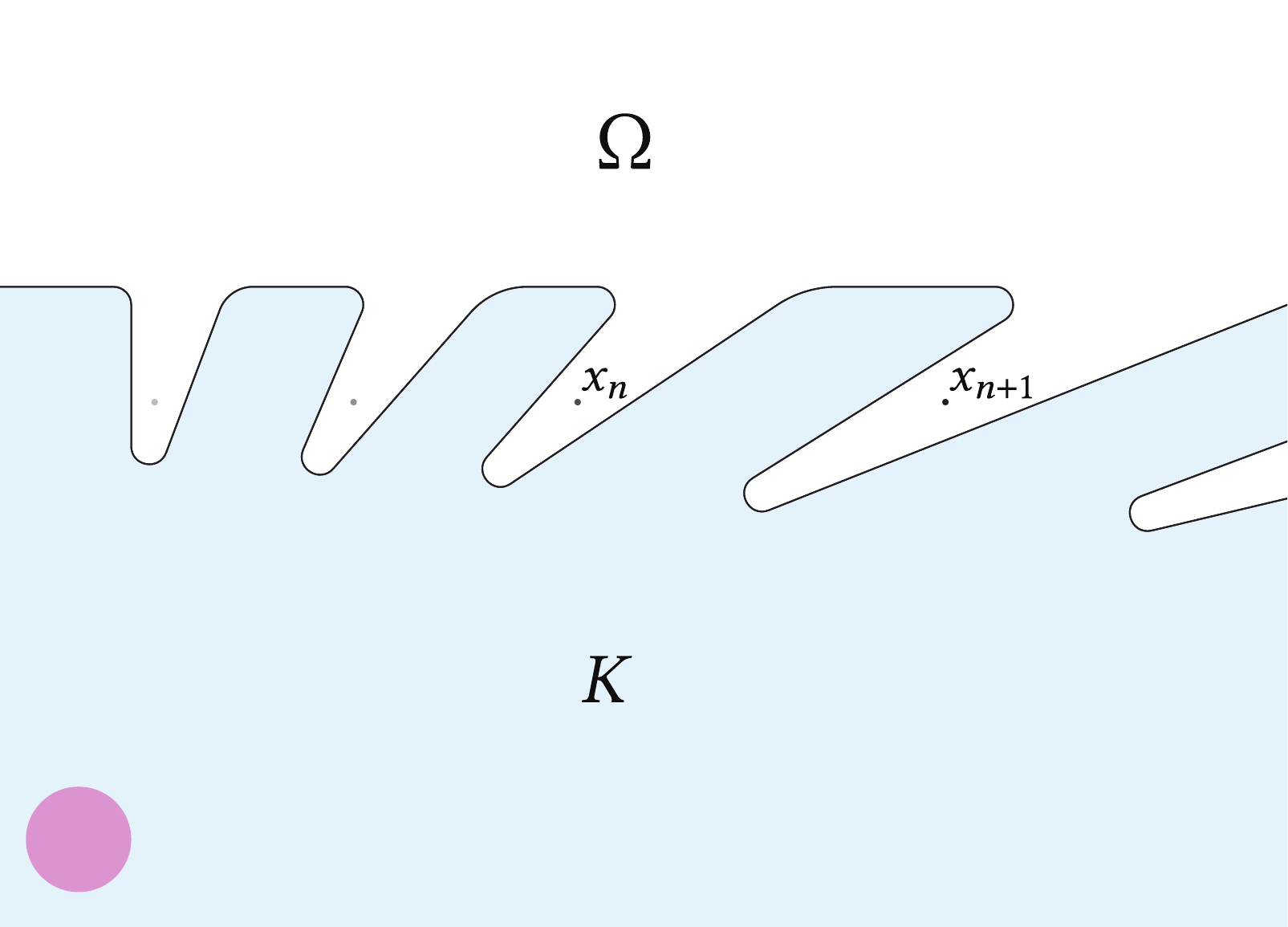}
  \caption[Strongly exterior-star ``slanted comb'' that presents certain difficulties]{A ``slanted comb'' $\Omega$ whose complement $K$ is strongly star-shaped about the magenta disk.
    If the channels enclosing the sequence $(x_n)_{n \in \N}$ are sufficiently narrow, all solutions of \eqref{eq:main} vanish locally uniformly in the vicinity of $x_n$ as $n \to \infty$.
  }
  \label{fig:comb}
\end{figure}
In the above proof, we use far-field convexity in two ways: to show solutions cannot vanish locally uniformly along any subsequence, \corr{and to reduce the effective dimension in the inductive step.}
A proof in the absence of such convexity seems delicate.
For example, one can construct strongly exterior-star domains in which solutions \emph{do} vanish locally uniformly along a subsequence; see Figure~\ref{fig:comb}.
Thus to fully remove far-field convexity from Theorem~\ref{thm:ES-full}, one would need to develop an argument that permits such decay, perhaps by including additional regions in $\Omega_+$.
\corr{Additionally, one would need a substitute for the local straightening used to induct on dimension.}
Because Theorem~\ref{thm:ES-full} already covers a host of interesting domains, we leave this question to future work.

If $\Omega^\cc$ is bounded, one may naturally wonder whether the exterior-star hypothesis is necessary.
Perhaps all ``exterior domains'' (complements of compact sets) enjoy uniqueness?
This is not the case; for an example, see Figure~\ref{fig:nonunique} below.

\section{Dilated domains}
\label{sec:dilation}

In~\cite{BG22a}, we studied the behavior of \eqref{eq:main} on one-dimensional bounded intervals as a stepping stone toward results on the half-space.
As noted in the introduction, uniqueness cannot hold in general on a bounded interval.
Indeed, to every interval $[0, L]$ one can associate a positive reaction $f_L$ such that \eqref{eq:main} admits multiple positive solutions on $[0, L]$ with reaction $f_L$.
However, in Lemma~2.5 of \cite{BG22a}, we show that for a \emph{fixed} reaction, uniqueness does hold on sufficiently large intervals.
That is, given a positive reaction $f$, there exists a length $L_f > 0$ such that \eqref{eq:main} has a unique positive solution on $[0, L]$ whenever $L > L_f$.
In this section, we investigate this phenomenon in higher directions.

Throughout the remainder of the section, let us fix a positive reaction $f$ and a domain $\Omega \subset \R^d$ satisfying our standing assumptions.
Given a dilation factor $\kappa \in \R_+$, we investigate uniqueness for \eqref{eq:main} on the dilation $\kappa \Omega$.
That is, we consider
\begin{equation}
  \label{eq:dilation}
  \begin{cases}
    -\Delta u = f(u) & \text{in } \kappa \Omega,\\
    u = 0 & \text{on } \kappa \partial \Omega.
  \end{cases}
\end{equation}
The aim of the section is Theorem~\ref{thm:dilation}, which ensures uniqueness provided $\kappa$ exceeds some threshold depending on $f$ and $\Omega$.
In contrast to other uniqueness results in this paper, we make no structural assumptions on $\Omega$.
This is due to the fact that in the stable-compact framework, the problem \eqref{eq:dilation} becomes \emph{purely stable} once $\kappa$ is sufficiently large.
There is no need for a compact part with its attendant deformation.
The pure stability of \eqref{eq:dilation} (at large $\kappa$) is thus responsible for the generality of Theorem~\ref{thm:dilation}.
We note that $\Omega$ need not be bounded, although the bounded case seems of particular interest.

Our approach to Theorem~\ref{thm:dilation} rests on the following observation.
As $\kappa \to \infty$, the domain $\kappa \Omega$ locally looks like a whole space or half-space modulo isometry.
Precisely, using the notion of locally uniform limit introduced in~\cite[Definition~A.2]{BG24}, the sequence $(\kappa \Omega)_{\kappa \geq 1}$ has exactly two limits as $\kappa \to \infty$: the whole space $\R^d$ and the half-space $\H^d \coloneqq \R^{d-1} \times \R_+$ (up to isometry).
The constant $1$ is the unique positive bounded solution of \eqref{eq:main} on $\R^d$.
Likewise, \eqref{eq:main} has a unique positive bounded solution $\varphi$ on $\H^d$, which is a function of the distance to $\partial\H^d$~\cite[Theorem~1.1(A)]{BG22a}.
We will argue below that both $1$ and $\varphi$ are strictly \emph{linearly stable}.
This is the source of the claimed stability of the problem \eqref{eq:dilation} when $\kappa \gg 1$.

To exploit this stability, we must relate solutions on $\kappa \Omega$ to those on $\R^d$ and $\H^d$.
To this end, define
\begin{equation}
  \label{eq:dilated-profile}
  \Phi_\kappa(x) \coloneqq \varphi\big(\op{dist}(x, \kappa\partial\Omega)\big).
\end{equation}
Here we have used the fact that $\varphi$ is essentially one-dimensional: writing coordinates as $x = (x', y) \in \R^{d-1} \times \R_+ = \H^d$, $\varphi$ depends $y$ alone.
The profile $\varphi$ satisfies $\varphi(+\infty) = 1$, so $\Phi_\kappa$ is close to $1$ deep in the interior of $\kappa \Omega$.
(In the extreme case that $\Omega = \R^d$, we set $\Phi_\kappa \equiv 1$.)
Moreover, when $\kappa \gg 1$, the curvature of $\kappa \partial\Omega$ is very slight, so $\Phi_\kappa$ locally resembles an isometry of $\varphi$ itself.
Thus when $\kappa \gg 1$, $\Phi_\kappa$ approximately unifies the limiting solutions $1$ and $\varphi$ in one object.
We show that solutions of \eqref{eq:dilation} coalesce around $\Phi_\kappa$.
\begin{lemma}
  \label{lem:dilation-convergence}
  Let $\m{U}_\kappa$ denote the set of positive bounded solutions of \eqref{eq:dilation}.
  Then
  \begin{equation*}
    \lim_{\kappa \to \infty} \sup_{u \in \m{U}_\kappa} \norm{u - \Phi_\kappa}_{\m{C}(\kappa\Omega)} = 0.
  \end{equation*}
\end{lemma}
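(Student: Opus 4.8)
The plan is to argue by contradiction, using elliptic compactness to reduce the statement to the two known uniqueness results on the limiting domains $\R^d$ and $\H^d$. Suppose the conclusion fails. Then there are $\varepsilon > 0$, a sequence $\kappa_n \to \infty$, solutions $u_n \in \m{U}_{\kappa_n}$, and points $x_n \in \kappa_n \Omega$ with $\abs{u_n(x_n) - \Phi_{\kappa_n}(x_n)} \geq \varepsilon$. Set $d_n \coloneqq \op{dist}(x_n, \kappa_n \partial\Omega)$; after passing to a subsequence, either $d_n \to \infty$ or $d_n \to d_\infty \in [0, \infty)$.

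First I would recenter at $x_n$, defining $\tilde u_n \coloneqq u_n(\anon + x_n)$ on $\tilde\Omega_n \coloneqq \kappa_n \Omega - x_n$. Since $0 \leq u_n \leq 1$ and $f \in \m{C}^{1,\gamma}$, interior and boundary Schauder estimates furnish uniform $\m{C}^{2,\gamma}_{\mathrm{loc}}$ bounds on $\tilde u_n$. Because $\Omega$ is uniformly $\m{C}^{2,\gamma}$, the curvature of $\kappa_n \partial\Omega$ is $O(\kappa_n^{-1}) \to 0$, so (as recorded in the text) the recentered domains $\tilde\Omega_n$ converge locally uniformly, up to a rotation, in the sense of \cite[Definition~A.2]{BG22b}: to $\R^d$ when $d_n \to \infty$, and to a half-space $\H$ whose boundary lies at distance $d_\infty$ from the origin when $d_n$ is bounded. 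Extracting a further subsequence, $\tilde u_n \to u_\infty$ in $\m{C}^2_{\mathrm{loc}}$, where $u_\infty$ solves \eqref{eq:main} on the limit domain $\Omega_\infty \in \{\R^d, \H\}$ with $0 \leq u_\infty \leq 1$.

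Next I would identify $u_\infty$ with the canonical profile. Applying Lemma~\ref{lem:sub} with $s = 1/2$ shows $\tilde u_n \geq 1/2$ at every point whose distance to $\tilde\Omega_n^\cc$ exceeds $R(f, 1/2)$; since $\Omega_\infty$ contains such points, passing to the limit gives $u_\infty \not\equiv 0$, whence the strong maximum principle yields $u_\infty > 0$ in $\Omega_\infty$. Thus $u_\infty$ is a positive bounded solution. On $\R^d$ the unique such solution is the constant $1$, and on $\H$ it is $\varphi$ composed with the distance to $\partial\H$ by \cite[Theorem~1.1(A)]{BG22a}; in both cases $u_\infty = \varphi\big(\op{dist}(\anon, \partial\Omega_\infty)\big)$, with the convention $\varphi(+\infty) = 1$ on $\R^d$. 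Using the locally uniform domain convergence, $\op{dist}(\anon + x_n, \kappa_n \partial\Omega) \to \op{dist}(\anon, \partial\Omega_\infty)$ locally uniformly, so $\Phi_{\kappa_n}(\anon + x_n) \to \varphi\big(\op{dist}(\anon, \partial\Omega_\infty)\big) = u_\infty$. Evaluating at the origin gives $u_n(x_n) - \Phi_{\kappa_n}(x_n) \to u_\infty(0) - u_\infty(0) = 0$, contradicting $\abs{u_n(x_n) - \Phi_{\kappa_n}(x_n)} \geq \varepsilon$ and closing the argument.

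The hard part will be the geometric convergence of the dilated boundaries and, with it, the convergence of the distance functions defining $\Phi_\kappa$: one must verify that the uniform $\m{C}^{2,\gamma}$ structure of $\partial\Omega$ forces $\kappa_n\partial\Omega$, seen near $x_n$, to flatten to a hyperplane, and that this flattening is strong enough to pass $\op{dist}(\anon, \kappa_n\partial\Omega)$ to the limit locally uniformly—including its degenerate behavior deep in the interior, where it diverges and is reconciled with $\varphi(+\infty) = 1$. Once this geometric input is in hand, the remaining ingredients—Schauder compactness, the lower bound of Lemma~\ref{lem:sub}, and the two uniqueness classifications—are routine.
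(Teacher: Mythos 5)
Your proposal is correct and follows essentially the same route as the paper's proof: a contradiction/compactness argument that recenters at the offending points $x_n$, splits according to whether $\op{dist}(x_n, \kappa_n\partial\Omega)$ stays bounded, extracts locally uniform limits of solutions and domains, identifies the limits via the classifications on $\R^d$ and $\H^d$ (using Lemma~\ref{lem:sub} to rule out degeneration to $0$), and passes $\Phi_{\kappa_n}$ to the same limiting profile to contradict the $\eps$-gap. The only minor variation is how positivity of the limit is secured—the paper proves a uniform positivity estimate via the Harnack inequality and an intrinsic-distance bound, whereas you use Lemma~\ref{lem:sub} at a single deep point together with the strong maximum principle on the limit—and both work.
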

\begin{proof}
  Let $R$ denote the radius corresponding to $f$ and $s = \tfrac{1}{2}$ in Lemma~\ref{lem:sub}.
  Because $\Omega$ is uniformly smooth, there exists $\kappa_0(f, \Omega) \in \R_+$ such that $\kappa\Omega$ contains a ball of radius $R$ for every $\kappa \geq \kappa_0$.
  Recalling the notation \eqref{eq:deep}, define
  \begin{equation}
    \label{eq:faraway}
    \Omega_\kappa^\ell \coloneqq (\kappa \Omega)[\ell] = \{x \in \kappa \Omega \mid \op{dist}(x, \kappa\partial\Omega) > \ell\} \For \ell > 0.
  \end{equation}
  Then $\Omega_\kappa^R$ is nonempty when $\kappa > \kappa_0$.
  By Lemma~\ref{lem:sub},
  \begin{equation}
    \label{eq:lower-inner}
    \inf_{(u, x) \in \m{U}_\kappa \times \Omega_\kappa^R} u(x) \geq \frac{1}{2}.
  \end{equation}
  
  Recall that $\kappa\Omega$ locally (and smoothly) converges to the half-space near its boundary.
  It follows that the intrinsic distance from any point in $\kappa\Omega$ to $\Omega_\kappa^R$ is uniformly bounded provided $\kappa \gg 1$.
  Precisely, there exists $\kappa_1(f, \Omega) \in \R_+$ such that for all $\kappa \geq \kappa_1$,
  \begin{equation*}
    \sup_{x \in \kappa\Omega} \op{dist}_{\kappa\Omega}(x, \Omega_\kappa^R) \leq 2R.
  \end{equation*}
  Thus \eqref{eq:lower-inner}, the Harnack inequality, and uniform smoothness imply that solutions of \eqref{eq:dilation} are uniformly positive away from $\kappa\partial \Omega$.
  That is, for all $\delta > 0$ and $\kappa \geq \kappa_1$,
  \begin{equation}
    \label{eq:uniformly-positive}
    \inf_{(u, x) \in \m{U}_\kappa \times \Omega_\kappa^\delta} u(x) > 0.
  \end{equation}
  
  Towards a contradiction, suppose there exists $\eps > 0$ and ${(u_\kappa, x_\kappa) \in \m{U}_\kappa \times \kappa \Omega}$ such that $\abs{u_\kappa(x_\kappa) - \Phi_\kappa(x_\kappa)} > \eps$ along some sequence of $\kappa$ tending to infinity.
  We restrict to this sequence.
  If $\op{dist}(x_\kappa, \kappa\partial\Omega) \to \infty$ along a subsequence, then we can center around $x_\kappa$ and extract a locally-uniform subsequential limit $u^*$ that solves \eqref{eq:main} on $\R^d$.
  Along the same sequence, $\Phi_\kappa$ tends to $\varphi(+\infty) = 1$, so $u^*(0) \leq 1 - \eps$.
  Moreover, \eqref{eq:lower-inner} implies that $u^*(0) \geq \tfrac{1}{2}$.  
  However, the only bounded solutions of \eqref{eq:main} on $\R^d$ are $0$ and $1$, a contradiction.
  
  It therefore follows that the sequence $(x_\kappa)$ remains a bounded distance from $\kappa\partial\Omega$.
  We again center around $x_\kappa$ and extract a locally-uniform subsequential limit $(u^*, \Omega^*)$ of $(u_\kappa, \kappa \Omega)$.
  Note first that \eqref{eq:uniformly-positive} implies that $u^* > 0$; that is, $u^*$ is a positive bounded solution of \eqref{eq:main} on $\Omega^*$.
  Moreover, as $\kappa \to \infty$, the boundary of $\kappa\Omega$ flattens, so $\Omega^* = g^{-1} \H^d$ for some isometry $g$ of $\R^d$.
  Thus by Theorem~1.1(A) of~\cite{BG22a}, $u^* = \varphi \circ g$.
  Moreover, $\Phi_\kappa$ also converges to the limit $\Phi^* \coloneqq \varphi \circ g$.
  But by the definition of $(u_\kappa, x_\kappa)$, $\abs{u^*(0) - \Phi^*(0)} \geq \eps > 0$, a contradiction.
  The lemma follows.
\end{proof}
Due to this lemma, \eqref{eq:dilation} becomes perturbative around $\Phi_\kappa$ when $\kappa \to \infty$.
We are therefore interested in the linearization of \eqref{eq:dilation} about $\Phi_\kappa$.
We begin by showing that the half-space solution $\varphi$ underlying $\Phi_\kappa$ is linearly stable.
We tackle this in two steps.
We first show that the principal eigenvalue on the half-space $\H^d$ coincides with that on the half-line.
We state the result in a rather general form, as it may be of independent interest.
\begin{lemma}
  \label{lem:product}
  Let $\m{L}$ be a self-adjoint elliptic operator on a uniformly smooth domain $\omega$.
  For any $d \geq 1$,
  \begin{equation*}
    \lambda(-(\m{L} + \Delta_{\R^d}), \omega \times \R^d) = \lambda(-\m{L}, \omega).
  \end{equation*}
  In particular, $\lambda(-\Delta - f'(\varphi), \H^d) = \lambda(-\partial_x^2 - f'(\varphi), \R_+)$.
\end{lemma}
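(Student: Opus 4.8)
The plan is to prove the two inequalities separately, exploiting the fact that the $\R^d$ factor contributes nothing: the Laplacian on all of $\R^d$ has vanishing generalized principal eigenvalue, $\lambda(-\Delta_{\R^d}, \R^d) = 0$, witnessed on the one hand by the constant supersolution $1$ and on the other by the standard fact that $0$ is the bottom of the spectrum of $-\Delta$ on $\R^d$.

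For the lower bound $\lambda(-(\m{L}+\Delta_{\R^d}), \omega\times\R^d) \ge \lambda(-\m{L},\omega)$, I would simply lift admissible test functions. Fix $\lambda < \lambda(-\m{L},\omega)$ and let $\psi > 0$ on $\omega$ satisfy $(\m{L}+\lambda)\psi \le 0$. Setting $\Psi(x,z) \coloneqq \psi(x)$, the product structure gives $(\m{L}+\Delta_{\R^d})\Psi = \m{L}\psi$, since $\m{L}$ acts only in $x$ while $\Delta_{\R^d}\Psi = 0$. Hence $\Psi > 0$ and $((\m{L}+\Delta_{\R^d})+\lambda)\Psi = (\m{L}+\lambda)\psi \le 0$, so $\lambda$ is admissible on the product; taking the supremum over such $\lambda$ yields the inequality. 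This argument requires no self-adjointness, and with the separated lift $\psi(x)\eta(z)$ it in fact gives the general super-additivity $\lambda(-(\m{L}+\Delta_{\R^d})) \ge \lambda(-\m{L}) + \lambda(-\Delta_{\R^d})$.

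For the reverse inequality I would invoke self-adjointness to pass to the variational characterization: for a self-adjoint elliptic operator the generalized principal eigenvalue of \eqref{eq:eigenvalue} coincides with the bottom of the $L^2$ spectrum, i.e.\ the infimum of the Rayleigh quotient $\langle -\m{L}\phi,\phi\rangle / \norm{\phi}_2^2$ over compactly supported trial functions $\phi \in \m{C}_c^\infty(\omega)$. Testing the product operator against separated functions $\Phi(x,z) = \phi(x)\eta(z)$ with $\phi \in \m{C}_c^\infty(\omega)$ and $\eta \in \m{C}_c^\infty(\R^d)$, the quadratic form splits and the Rayleigh quotient becomes the sum of the quotients for $-\m{L}$ on $\omega$ and for $-\Delta_{\R^d}$ on $\R^d$. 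Minimizing the two factors independently gives $\lambda(-(\m{L}+\Delta_{\R^d}),\omega\times\R^d) \le \lambda(-\m{L},\omega) + \lambda(-\Delta_{\R^d},\R^d) = \lambda(-\m{L},\omega)$, which together with the lower bound forces equality. An equivalent route avoids spectral theory by approximating with an exhaustion $\omega_n \times B_n$ of bounded subdomains, on which separation of variables gives the exact splitting $\lambda_1(-(\m{L}+\Delta_{\R^d}), \omega_n\times B_n) = \lambda_1(-\m{L},\omega_n) + \lambda_1(-\Delta_{\R^d}, B_n)$ of Dirichlet principal eigenvalues, and then letting $n\to\infty$ using that $\lambda$ is the limit of these along any exhaustion.

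The main obstacle is the reverse inequality, and precisely the identification of the supersolution-defined eigenvalue \eqref{eq:eigenvalue} with the Rayleigh-quotient infimum (equivalently, with the exhaustion limit); this is where self-adjointness is essential, and it rests on the classical Allegretto--Piepenbrink-type equivalence between positivity of the spectral gap and existence of positive supersolutions. In the exhaustion formulation one must additionally accommodate the corners of $\omega_n \times B_n$, but this is routine: principal Dirichlet eigenvalues are monotone under domain inclusion, so one may round the corners without affecting the limit. Finally, the ``in particular'' claim follows by applying the lemma with $\omega = \R_+$ (the normal coordinate, which I relabel $x$) and with $\R^{d-1}$ in place of $\R^d$. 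Since $\varphi$ depends only on the distance to $\partial\H^d$, the potential $f'(\varphi)$ is a function of $x$ alone, so $\m{L} = \partial_x^2 + f'(\varphi)$ is self-adjoint and acts solely in the $\omega$-variable, exactly as the hypotheses require.
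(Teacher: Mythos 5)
Your proposal is correct, and it diverges from the paper's own argument precisely at the reverse inequality. The lower bound is identical in both: lift a positive supersolution $\psi$ on $\omega$ to the cylinder as a function constant in the $\R^d$ factor, which needs no self-adjointness (your super-additivity remark is accurate). For the upper bound, the paper never touches the Rayleigh quotient: it introduces the dual eigenvalue $\lambda'$ of \cite{BR} (defined via bounded positive \emph{sub}solutions vanishing on the boundary), lifts subsolutions the same way to get $\lambda'(-(\m{L}+\Delta_{\R^d}), \omega\times\R^d) \leq \lambda'(-\m{L},\omega)$, and then invokes \cite[Theorem~1.7(i)]{BR}, which says $\lambda = \lambda'$ for self-adjoint operators; self-adjointness enters only through that identity. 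You instead invoke the variational characterization of $\lambda$ for self-adjoint operators (the same fact, \cite[Proposition~2.3(vi)]{BR}, that the paper uses later in Lemma~\ref{lem:half-line-stable}), split the quadratic form on separated test functions $\phi\otimes\eta$, and use $\inf_\eta \int \abs{\nab \eta}^2 / \int \eta^2 = 0$ on $\R^d$; your exhaustion variant (separation of variables on $\omega_n \times B_n$ plus \cite[Proposition~2.3(iv)]{BR}) is an equally valid alternative. The trade-off: the paper's route is shorter and stays entirely within the sub/supersolution calculus---indeed the authors remark that the point of the proof is the utility of expressing $\lambda$ as both a supremum and an infimum---while your route is the more classical spectral-theoretic one and, via the exhaustion formulation, reduces the whole statement to bounded-domain separation of variables. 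Both arguments rely on self-adjointness at exactly one step, just through different results of \cite{BR}, and your handling of the ``in particular'' clause (taking $\omega = \R_+$ with the potential $f'(\varphi)$ a function of the normal coordinate alone) matches the intended application.
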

\begin{proof}
  We draw on the generalized principal eigenvalues $\lambda$ and $\lambda'$ defined in~\cite{BR}.
  We defined the former in \eqref{eq:eigenvalue}; the latter is given by
  \begin{equation*}
    \lambda'(-\m{L}, \Omega) \coloneqq \inf\big\{\lambda \mid \exists \psi \in W_{\text{loc}}^{2,d}(\Omega) \cap L^\infty(\Omega) \text{ s.t. } \psi > 0, \psi|_{\partial\Omega} = 0, (\m{L} + \lambda) \psi \geq 0\big\}.
  \end{equation*}
  We can view any function $\psi \colon \omega \to \R$ as a function on $\omega \times \R^d$ that is constant in the second factor.
  It follows that any supersolution $\psi$ satisfying the definition \eqref{eq:eigenvalue} of $\lambda(-\m{L}, \omega)$ yields a supersolution for $\lambda(-(\m{L} + \Delta_{\R^d}), \omega \times \R^d)$.
  The same is true for the subsolutions defining $\lambda'$.
  Therefore
  \begin{equation}
    \label{eq:extension}
    \begin{aligned}
      \lambda(-(\m{L} + \Delta_{\R^d}), \omega \times \R^d) \geq \lambda(-\m{L}, \omega)& \quad \text{and}\\
                                               &\hspace{5pt}\lambda'(-(\m{L} + \Delta_{\R^d}), \omega \times \R^d) \leq \lambda'(-\m{L}, \omega).
    \end{aligned}
  \end{equation}
  On the other hand, $\m{L}$ and $\m{L} + \Delta_{\R^d}$ are self-adjoint, so \cite[Theorem~1.7(i)]{BR} implies that
  \begin{equation}
    \label{eq:lambdas}
    \lambda(\m{L}, \omega) = \lambda'(\m{L}, \omega) \And \lambda(-(\m{L} + \Delta_{\R^d}), \omega \times \R^d) = \lambda'(-(\m{L} + \Delta_{\R^d}), \omega \times \R^d).
  \end{equation}
  The lemma follows from \eqref{eq:extension} and \eqref{eq:lambdas}.
\end{proof}
This short argument demonstrates the utility of expressing $\lambda$ as both a supremum and an infimum.
We next show stability on the half-line.
\begin{lemma}
  \label{lem:half-line-stable}
  We have $0 < \lambda(-\partial_x^2 - f'(\varphi), \R_+) \leq \abs{f'(1)}$.
\end{lemma}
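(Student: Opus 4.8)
The plan is to prove the two inequalities separately; the lower bound carries the essential content. Write $L \coloneqq -\partial_x^2 - f'(\varphi)$ on $\R_+$ with the Dirichlet condition at $0$. By Lemma~\ref{lem:product} and the self-adjointness exploited there, $\lambda(-L, \R_+)$ coincides with $\lambda'$ and hence with the bottom of the $L^2$-spectrum of $L$. The structural fact underlying everything is that $\varphi$ is strictly increasing. Indeed, the profile equation $-\varphi'' = f(\varphi)$ conserves the energy $\tfrac{1}{2}(\varphi')^2 + F(\varphi)$ with $F' = f$; since $\varphi(+\infty) = 1$ and $\varphi'(+\infty) = 0$, this energy equals $F(1)$, so $\tfrac{1}{2}(\varphi')^2 = F(1) - F(\varphi) = \int_\varphi^1 f > 0$ on $[0, \infty)$ because $\varphi < 1$ there (equality at a finite point would force $\varphi \equiv 1$ by uniqueness). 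Thus $\varphi' > 0$, and differentiating $-\varphi'' = f(\varphi)$ gives $L \varphi' = 0$: the positive function $\varphi'$ solves the linearized equation.

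For the lower bound, I would first note that $\varphi' > 0$ is an admissible test function in the definition \eqref{eq:eigenvalue}: since $(\partial_x^2 + f'(\varphi))\varphi' = -L\varphi' = 0$, we have $(\partial_x^2 + f'(\varphi) + \lambda)\varphi' = \lambda \varphi' \leq 0$ for every $\lambda \leq 0$, whence $\lambda(-L, \R_+) \geq 0$. The crux is strictness. Suppose toward a contradiction that $\lambda(-L, \R_+) = 0$. Because the potential $-f'(\varphi(x)) \to \abs{f'(1)} > 0$ as $x \to \infty$, the essential spectrum of $L$ is $[\abs{f'(1)}, \infty)$, so $0$ lies strictly below it and is therefore an isolated eigenvalue with a positive $L^2$ Dirichlet eigenfunction $\psi_0$; in particular $\psi_0(0) = 0$, $\psi_0 > 0$ on $(0, \infty)$, and $L \psi_0 = 0$. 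Now $\psi_0$ and $\varphi'$ both solve the second-order ODE $u'' = -f'(\varphi) u$, so reduction of order gives $\psi_0 = \varphi'\big(c_1 + c_2 \int^x \varphi'^{-2}\big)$. Since $\varphi'$ decays like $e^{-\sqrt{\abs{f'(1)}}\,x}$ (it is the decaying solution of an asymptotically constant-coefficient equation), the integral $\int^x \varphi'^{-2}$ diverges and $\varphi'\int^x \varphi'^{-2}$ grows exponentially; hence $\psi_0 \in L^2$ forces $c_2 = 0$, i.e.\ $\psi_0 = c_1 \varphi'$. But $\varphi'(0) > 0$ while $\psi_0(0) = 0$, a contradiction. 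Therefore $\lambda(-L, \R_+) > 0$.

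For the upper bound, I would use the monotonicity of the generalized eigenvalue under restriction of the domain and under raising the potential. For any $X > 0$,
\[
  \lambda(-L, \R_+) \leq \lambda(-L, (X, \infty)) \leq \lambda\Big(-\partial_x^2 + \textstyle\sup_{x > X}\big(-f'(\varphi)\big),\, (X, \infty)\Big).
\]
On a half-line $\lambda(-\partial_x^2, \cdot) = 0$: the constant $\psi \equiv 1$ handles all $\lambda \leq 0$, while for $\lambda > 0$ any positive supersolution of $\psi'' + \lambda \psi \leq 0$ is forced to change sign on an unbounded interval. Adding a constant $c$ shifts this eigenvalue to $c$. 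Letting $X \to \infty$ and using $f'(\varphi) \to f'(1)$ then yields $\lambda(-L, \R_+) \leq \abs{f'(1)}$.

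I expect the strictness of the lower bound to be the main obstacle. The difficulty is precisely that the natural positive solution $\varphi'$ at energy zero fails the Dirichlet condition, so $\lambda \geq 0$ is immediate but strictness is not automatic; it requires the spectral input (essential spectrum at $\abs{f'(1)} > 0$) together with the ODE fact that $\varphi'$ is the unique decaying solution, which is where the sign $f'(1) < 0$ and the exponential decay of $\varphi'$ enter decisively.
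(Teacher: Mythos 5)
Your proof is correct, and both halves take routes genuinely different from the paper's. For the upper bound, the paper identifies $\lambda_{\m{H}} \coloneqq \lambda(-\partial_x^2 - f'(\varphi), \R_+)$, with $\m{H} \coloneqq -\partial_x^2 - f'(\varphi)$, as $\min \op{Spec}(\m{H}, \R_+)$ and then invokes Teschl's theorem (using the exponential, hence $L^1$, convergence of $f'(\varphi)$ to $f'(1)$) to locate the absolutely continuous spectrum at $[\abs{f'(1)}, \infty)$; your chain of monotonicities --- restrict the domain to $(X, \infty)$, raise the potential to its supremum there, and compute $\lambda(-\partial_x^2 + c, (X,\infty)) = c$ explicitly --- is more elementary and needs no scattering-type input at all. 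For strict positivity, the paper does not split the argument into ``$\lambda_{\m{H}} \geq 0$'' plus ``exclude $\lambda_{\m{H}} = 0$'': it takes the positive principal eigenfunction $\psi$ (which exists whenever $\lambda_{\m{H}} < \abs{f'(1)}$, the spectrum below the essential spectrum being discrete), integrates against $\varphi'$, and obtains the identity $\lambda_{\m{H}} \int_{\R_+} \psi \varphi' = \psi'(0)\varphi'(0) > 0$ in one stroke. Your two-step route --- nonnegativity by testing $\varphi'$ in \eqref{eq:eigenvalue}, then reduction of order to show that any $L^2$ zero mode of $\m{H}$ is a multiple of $\varphi'$ and hence violates the Dirichlet condition since $\varphi'(0) > 0$ --- is sound, and is in fact the paper's computation in disguise: the constancy of the Wronskian $\psi_0 \varphi'' - \psi_0' \varphi'$, which equals $-\psi_0'(0)\varphi'(0) \neq 0$ at the origin yet tends to $0$ at infinity by exponential decay, is precisely the paper's boundary-term identity specialized to $\lambda_{\m{H}} = 0$. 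What you gain is economy of hypotheses (Weyl's theorem on the essential spectrum suffices, and the finer absolutely-continuous-spectrum statement is never needed) and an upper bound by pure comparison; what the paper gains is a single identity that handles every eigenvalue below $\abs{f'(1)}$ at once, with no separate nonnegativity step. One small attribution point: the identification of $\lambda$ with the bottom of the $L^2$ spectrum comes from the Rayleigh-quotient formula \cite[Proposition~2.3(vi)]{BR} together with self-adjointness, exactly as in the paper's proof, rather than from Lemma~\ref{lem:product} itself; the fact you invoke is the right one, just cited a bit loosely.
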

\begin{proof}
  Let $\m{H} \coloneqq -\partial_x^2 - f'(\varphi)$, which we view as a Schr\"odinger operator on the Dirichlet half-line.
  For the sake of brevity, let $\lambda_{\m{H}} \coloneqq \lambda(\m{H}, \R_+)$ denote the generalized principal eigenvalue.
  By Proposition~2.3(vi) of \cite{BR},
  \begin{equation*}
    \lambda_{\m{H}} = \inf_{\psi \in H_0^1(\R_+) \setminus \{0\}} \frac{\int_{\R_+} \psi \m{H} \psi}{\int_{\R_+} \psi^2}.
  \end{equation*}
  This is the classical Rayleigh quotient formula for the principal eigenvalue.
  By standard functional analysis for symmetric operators,
  \begin{equation*}
    \lambda_{\m{H}} = \min \Spec(\m{H}, \R_+),
  \end{equation*}
  where $\Spec(\m{H}, \R_+)$ denotes the Dirichlet spectrum of $\m{H}$ on $L^2(\R_+)$.

  We now recall that $\varphi$ satisfies the ODE
  \begin{equation}
    \label{eq:ODE}
    - \varphi'' = f(\varphi), \quad \varphi(0) = 0.
  \end{equation}
  Moreover, $\varphi'>0$ and $\varphi(+\infty) = 1$.
  Hence $-f'(\varphi) \to \abs{f'(1)} > 0$ at infinity.
  An elementary calculation then implies that $\varphi \to 1$ exponentially quickly at infinity.
  Because $f' \in \m{C}^\gamma$, the same holds for the limit $-f'(\varphi) \to \abs{f'(1)}$.
  In particular, $f'(\varphi) + \abs{f'(1)} \in L^1(\R_+)$.
  It follows from Theorem~9.38 of~\cite{Teschl} that $\m{H}$ has no singular continuous spectrum and its absolutely continuous spectrum is $[\abs{f'(1)}, \infty)$.
  (While that theorem is stated for operators on $\R$, it also holds on $\R_+$, as the author notes below the proof.)
  Hence $\lambda_{\m{H}} \leq \abs{f'(1)}$.
  
  If $\lambda_{\m{H}} = \abs{f'(1)} > 0$, we are done.
  Suppose otherwise, so that $\m{H}$ has spectrum below $\abs{f'(1)}$.
  By the above results, $\m{H}$ has only pure point spectrum below $\abs{f'(1)}$. 
  Thus there exists a principal eigenfunction $\psi > 0$ in $H^1(\R_+)$ solving
  \begin{equation*}
    \begin{cases}
      -\psi'' - f'(\varphi) \psi = \lambda_{\m{H}} \psi & \text{on } \R_+,\\
      \psi(0) = 0.
    \end{cases}
  \end{equation*}
  Also, differentiating \eqref{eq:ODE}, we find
  \begin{equation*}
    \begin{cases}
      -(\varphi')'' - f'(\varphi) \varphi' = 0 & \text{on } \R_+,\\
      \varphi''(0) = 0.
    \end{cases}
  \end{equation*}
  So $\m{H} \varphi' = 0$.

  It is straightforward to show that both $\psi$ and $\varphi'$ decay exponentially at infinity.
  Hence $\psi$ and $\varphi'$ are exponentially-localized positive eigenfunctions of $\m{H}$, albeit with different boundary data.
  The Hopf lemma (or elementary ODE uniqueness theory) yields $\psi'(0) , \varphi'(0) > 0.$
  Integrating over $\R_+$, we find:
  \begin{align*}
    \lambda_{\m{H}} \int_{\R_+} \psi \varphi' &= - \int_{\R_+} [\psi'' + f'(\varphi)\psi] \varphi'\\
                                              &= -\int_{\R_+} [(\varphi')'' + f'(\varphi)\varphi'] \psi + (\psi \varphi'' - \psi'\varphi')\big|_0^\infty\\
                                              &= \psi'(0) \varphi'(0) > 0.
  \end{align*}
  Now $\psi, \varphi' > 0$ in $\R_+$, so we must have $\lambda_{\m H} > 0$, as desired.
\end{proof}
We now turn to the stability of the function $\Phi_\kappa$ defined in \eqref{eq:dilated-profile}.
\begin{proposition}
  \label{prop:dilation-asymp-stable}
  We have
  \begin{equation*}
    \lim_{\kappa \to \infty} \lambda(-\Delta - f'(\Phi_\kappa), \kappa\Omega) = \lambda(-\partial_x^2  - f'(\varphi),\R_+) > 0.
  \end{equation*}
\end{proposition}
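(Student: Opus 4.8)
The plan is to sandwich the eigenvalue between $\lambda_*$ from above and below, where $\lambda_* \coloneqq \lambda(-\partial_x^2 - f'(\varphi), \R_+) = \lambda(-\Delta - f'(\varphi), \H^d)$ by Lemma~\ref{lem:product}; its positivity is exactly Lemma~\ref{lem:half-line-stable}, so only the convergence is at stake. Since $-\Delta - f'(\Phi_\kappa)$ is a self-adjoint Schr\"odinger operator with bounded potential, the eigenvalue \eqref{eq:eigenvalue} coincides with the bottom of its Dirichlet spectrum, hence with the infimum of the Rayleigh quotient over $H_0^1(\kappa\Omega)$, exactly as in the proof of Lemma~\ref{lem:half-line-stable}. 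I will use this variational characterization for the upper bound and the supersolution characterization \eqref{eq:eigenvalue} for the lower bound. The heuristic throughout is that $\kappa\Omega$ has only two local models as $\kappa \to \infty$: the interior, where the potential tends to $-f'(1) = \abs{f'(1)} \geq \lambda_*$, and the boundary layer, where $(\kappa\Omega, -f'(\Phi_\kappa))$ converges to $(\H^d, -f'(\varphi(\op{dist}(\cdot,\partial\H^d))))$. The boundary layer carries the smaller eigenvalue $\lambda_*$, so it should govern the limit.

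For the upper bound I would fix a boundary point and transplant the half-line ground state into the flattening boundary layer. Concretely, let $\psi_*$ be a compactly supported approximation of the principal Dirichlet eigenfunction of $-\partial_x^2 - f'(\varphi)$ on $\R_+$, and test the Rayleigh quotient with $\Psi(x) = \psi_*(\op{dist}(x,\kappa\partial\Omega))\,\chi(x)$, where $\chi$ is a tangential cutoff whose support widens as $\kappa \to \infty$. Because $\partial(\kappa\Omega)$ flattens (its curvature is $O(1/\kappa)$), the potential $-f'(\Phi_\kappa)$ converges to $-f'(\varphi(\op{dist}))$ on the support of $\Psi$, the tangential gradient cost of $\chi$ vanishes, and the normal part reproduces the half-line Rayleigh quotient. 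Passing to the limit yields $\limsup_{\kappa}\lambda(-\Delta - f'(\Phi_\kappa),\kappa\Omega) \leq \lambda_*$.

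The lower bound is the crux, and I would obtain it by constructing a global positive supersolution on $\kappa\Omega$. Fix $\mu < \lambda_*$ and an intermediate level $\mu' \in (\mu, \lambda_*)$. Since $\mu'$ lies below the principal value and $-f'(\varphi) \to \abs{f'(1)} > \mu'$ exponentially, a Sturm--Liouville argument produces a solution $\zeta > 0$ of $-\zeta'' - f'(\varphi)\zeta = \mu'\zeta$ on $[0,\infty)$ with $\zeta(0) > 0$ that grows like $e^{\nu x}$, $\nu = \sqrt{\abs{f'(1)} - \mu'}$, at infinity; in particular $\zeta$ is eventually increasing with bounded logarithmic derivative. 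I then set $\Psi_\kappa(x) \coloneqq \zeta(\op{dist}(x,\kappa\partial\Omega))$. Where the distance function $d$ is smooth, the chain rule gives $-\Delta\Psi_\kappa - f'(\Phi_\kappa)\Psi_\kappa = \mu'\zeta(d) - \zeta'(d)\,\Delta d$, and uniform smoothness forces $\abs{\Delta d} = O(1/\kappa)$ there. The two features of $\zeta$ make this an honest supersolution at level $\mu$ once $\kappa \gg 1$: near the boundary $\zeta \geq \zeta(0) > 0$ is bounded below, so the $O(1/\kappa)$ error is absorbed into the gap $\mu' - \mu$; and deep inside---where the cut locus of $\kappa\Omega$ lives, at depth of order $\kappa$---$\zeta$ is increasing, so the nonpositive singular part of $\Delta d$ arising from semiconcavity of the distance function carries the favorable sign for a supersolution. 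Hence $\Psi_\kappa$ is a positive supersolution at level $\mu$, giving $\lambda(-\Delta - f'(\Phi_\kappa),\kappa\Omega) \geq \mu$; letting $\mu \uparrow \lambda_*$ completes the bound.

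I expect the main obstacle to be this supersolution construction, in two respects. First, one must produce the positive, eventually increasing half-line supersolution $\zeta$ with $\zeta(0) > 0$ at a level arbitrarily close to $\lambda_*$; the delicate point is maintaining positivity near the boundary, where the potential $-f'(0) < 0$ works against it, which is why I take a level strictly below $\lambda_*$ rather than the Dirichlet ground state itself (which vanishes at $0$ and would destroy the needed slack). Second, one must justify the supersolution inequality across the cut locus, where $d$ is merely semiconcave; the clean way is to record that $\Delta d$ splits into an absolutely continuous part of size $O(1/\kappa)$ and a nonpositive singular part, and that $\zeta' \geq 0$ there turns the singular part to our advantage. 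Both points are standard but use the uniform interior geometry of $\Omega$ and the $\kappa$-scaling of curvature quantitatively.
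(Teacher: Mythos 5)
Your proposal is correct in outline and, for the hard (lower-bound) direction, takes a genuinely different route from the paper. The paper localizes: it extends Lieb's eigenvalue inequality (Theorem~\ref{thm:potential-Lieb}) to reduce the global eigenvalue to eigenvalues on balls of fixed radius, then classifies the local limits of $(\kappa\Omega,\Phi_\kappa)$ -- deep interior, where the potential approaches $\abs{f'(1)}$, and boundary layer, where $(\kappa\Omega,\Phi_\kappa)\to(\H^d,\varphi)$ -- and concludes via continuity of $\lambda$ on bounded domains, domain monotonicity, and Lemma~\ref{lem:product}. You instead exhibit a single global positive supersolution $\zeta(\op{dist}(\cdot,\kappa\partial\Omega))$ at each level $\mu<\lambda_*\coloneqq\lambda(-\partial_x^2-f'(\varphi),\R_+)$, which bounds $\lambda$ below directly from the definition \eqref{eq:eigenvalue}. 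This avoids Lieb's theorem and the compactness machinery altogether, at the price of fine properties of the distance function (curvature scaling, cut locus, semiconcavity) and of the half-line ODE; your construction of $\zeta$ (positive on $[0,\infty)$ with $\zeta(0)>0$, exponentially growing, bounded logarithmic derivative) is achievable, e.g.\ by adding the decaying positive solution to the positive solution vanishing at $0$. The upper bounds are morally identical (upper semicontinuity under the local convergence to the half-space), yours done by hand with transplanted Rayleigh quotients rather than by citing \cite[Lemma~2.2]{BG22a}; note that on your test function's support the potential needs no limiting argument at all, since $\Phi_\kappa=\varphi(\op{dist}(\cdot,\kappa\partial\Omega))$ exactly, by \eqref{eq:dilated-profile}, so only the Jacobian and cutoff errors must vanish. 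One small correction there: the half-line operator may have no $L^2$ ground state (its spectral bottom can equal $\abs{f'(1)}$, the edge of the essential spectrum, cf.\ Lemma~\ref{lem:half-line-stable}), so use a compactly supported near-minimizer of the Rayleigh quotient rather than an ``approximation of the principal eigenfunction.''

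Two points in the supersolution step need repair, though both are fixable with the mechanism you yourself introduce. First, the claim that $\abs{\Delta d}=O(1/\kappa)$ wherever $d\coloneqq\op{dist}(\cdot,\kappa\partial\Omega)$ is smooth (equivalently, that the absolutely continuous part of $\Delta d$ has ``size $O(1/\kappa)$'') is false: for $\Omega=B_1$, so $\kappa\Omega=B_\kappa$, the distance function $\kappa-\abs{x}$ is smooth away from the center but has Laplacian $-(d-1)/\abs{x}$, which is of order one at $\abs{x}=1$ and unbounded near the center. The correct statement is one-sided: with principal curvatures of $\kappa\partial\Omega$ bounded by $K/\kappa$, the \emph{positive} part of $\Delta d$ is $O(1/\kappa)$ uniformly on the smooth set, while $\abs{\Delta d}=O(1/\kappa)$ holds only on $\{d\le T\}$ for fixed $T$ and $\kappa$ large. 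This is exactly compatible with your two-region argument: on $\{d\le T\}$, where $\zeta'$ may be negative, you may use the two-sided bound; on $\{d>T\}$, where $\zeta'\ge0$, the large negative contributions -- both the smooth part near focal points and the singular part on the cut locus -- carry the favorable sign, and only the $O(1/\kappa)$ positive part must be absorbed into $\mu'-\mu$ via the bounded logarithmic derivative of $\zeta$. You attribute the deep-region danger solely to the cut-locus singular measure; the smooth negative blow-up must be handled by the same sign observation. Second, your $\Psi_\kappa$ is Lipschitz but not $W_{\text{loc}}^{2,d}$ across the cut locus, whereas \eqref{eq:eigenvalue} requires $W_{\text{loc}}^{2,d}$ supersolutions. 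This needs a sentence: either mollify ($f'(\Phi_\kappa)$ is uniformly continuous and $\zeta$ has bounded logarithmic derivative, so mollification degrades the supersolution inequality by an error that vanishes with the mollification scale), or invoke the equivalence of \eqref{eq:eigenvalue} with the analogous supremum over continuous distributional (or viscosity) supersolutions. Neither point threatens the structure of your proof, but as written the key estimate rests on an assertion that is not true.
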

As a general principle, the principal eigenvalue is upper semicontinuous in the operator and domain; see Lemma~2.2 of~\cite{BG24} for an example.
One can thus conclude from ``soft'' analysis (and Lemma~\ref{lem:product}) that
\begin{equation*}
  \limsup_{\kappa \to \infty} \lambda(-\Delta - f'(\Phi_\kappa), \kappa\Omega) \leq \lambda(-\Delta - f'(\varphi), \H^d) = \lambda(-\partial_x^2 - f'(\varphi), \R_+).
\end{equation*}
It is more challenging, however, to obtain a matching lower bound as $\kappa \to \infty$.
For this purpose, we make essential use of a beautiful result of Lieb~\cite{Lieb}.
\begin{theorem}
  \label{thm:potential-Lieb}
  Let $V$ be a uniformly $\m{C}^\gamma(\Omega)$ potential.
  Then for all $R > 0,$
  \begin{equation}
    \label{eq:potential-Lieb}
    \inf_{x \in \R^d} \lambda\big(\!-\Delta + V, \Omega \cap B_R(x)\big) \leq \lambda(-\Delta + V, \Omega) + \lambda(-\Delta, B_1) R^{-2}.
  \end{equation}
\end{theorem}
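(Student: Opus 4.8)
The plan is to adapt Lieb's sliding (``coherent states'') argument \cite{Lieb}: localize a near-optimal test function for $\Omega$ against a translating copy of the Dirichlet ground state of the ball $B_R$, then average over all translations. Since $-\Delta + V$ is self-adjoint, I would first trade the supersolution definition \eqref{eq:eigenvalue} of $\lambda$ for its variational form. By \cite[Proposition~2.3(vi)]{BR}, together with $\lambda = \lambda'$ from \cite[Theorem~1.7(i)]{BR},
\[
  \lambda(-\Delta + V, D) = \inf_{\psi \in C_c^\infty(D) \setminus \{0\}} \frac{\int_D \abs{\nab \psi}^2 + V \psi^2}{\int_D \psi^2}
\]
for any domain $D$; in particular, the restricted eigenvalue on $\Omega \cap B_R(x)$ is bounded above by the Rayleigh quotient of any admissible test function. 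Let $\chi$ be the positive principal Dirichlet eigenfunction of $-\Delta$ on $B_R$, extended by zero to $\R^d$; by scaling, its Rayleigh quotient is $\norm{\nab\chi}_{L^2}^2 / \norm{\chi}_{L^2}^2 = \lambda(-\Delta, B_R) = \lambda(-\Delta, B_1) R^{-2}$.

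Fix $\psi \in C_c^\infty(\Omega)$ and, for each $x \in \R^d$, form the localized test function $\psi_x \coloneqq \psi \, \chi(\anon - x) \in H_0^1(\Omega \cap B_R(x))$. Writing $N(x) \coloneqq \int \abs{\nab \psi_x}^2 + V \psi_x^2$ and $D(x) \coloneqq \int \psi_x^2$, the variational bound gives $\lambda(-\Delta + V, \Omega \cap B_R(x))\, D(x) \leq N(x)$ for every $x$. Since $D(x) \geq 0$, the infimum $c \coloneqq \inf_x \lambda(-\Delta + V, \Omega \cap B_R(x))$ satisfies $c\, D(x) \leq N(x)$ pointwise, so integrating in $x$ yields $c \int_{\R^d} D \leq \int_{\R^d} N$. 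The payoff of averaging is that these $x$-integrals decouple: by Fubini and the substitution $z = y - x$, one finds $\int_{\R^d} D(x)\,\mathrm{d}x = \norm{\chi}_{L^2}^2 \int \psi^2$. Expanding $\nab \psi_x = \chi(\anon - x)\,\nab \psi + \psi\,(\nab\chi)(\anon - x)$ produces three terms, whose $x$-averages are $\norm{\chi}_{L^2}^2 \int \abs{\nab\psi}^2$, $\norm{\nab\chi}_{L^2}^2 \int \psi^2$, and a cross term. The cross term averages to $\int_\Omega \psi\,\nab\psi \cdot \big(\int_{\R^d} 2\chi\,\nab\chi\,\mathrm{d}z\big)\,\mathrm{d}y$, and the inner integral is $\int_{\R^d} \nab(\chi^2) = 0$ because $\chi$ is compactly supported; so it vanishes. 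The potential term averages to $\norm{\chi}_{L^2}^2 \int V\psi^2$.

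Collecting these gives $\int_{\R^d} N = \norm{\chi}_{L^2}^2 \big(\int \abs{\nab\psi}^2 + V\psi^2\big) + \norm{\nab\chi}_{L^2}^2 \int \psi^2$, so dividing by $\int_{\R^d} D = \norm{\chi}_{L^2}^2 \int\psi^2$ produces
\[
  c \leq \frac{\int \abs{\nab\psi}^2 + V\psi^2}{\int\psi^2} + \frac{\norm{\nab\chi}_{L^2}^2}{\norm{\chi}_{L^2}^2}.
\]
The last ratio is $\lambda(-\Delta, B_1) R^{-2}$, and taking the infimum over $\psi \in C_c^\infty(\Omega)$ collapses the first term to $\lambda(-\Delta + V, \Omega)$, which is exactly \eqref{eq:potential-Lieb}. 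I expect the main obstacle to be not the computation—where the cross term conveniently vanishes—but the spectral bookkeeping: justifying the Rayleigh characterization of the generalized principal eigenvalue on the possibly irregular intersections $\Omega \cap B_R(x)$, and guaranteeing that the averaged integrals converge. This is precisely why I would restrict to compactly supported $\psi$ (so $N$ and $D$ are finite and Fubini applies, using that the uniformly $\m{C}^\gamma$ potential $V$ is bounded on $\op{supp}\psi$) and lean on the self-adjoint eigenvalue theory of \cite{BR}.
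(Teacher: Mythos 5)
Your proposal is correct and follows essentially the same route as the paper's proof: Lieb's averaging argument, localizing a near-optimal test function for $\Omega$ against translates of a test function for $B_R$, using Fubini and the vanishing cross term, and concluding that some translate must beat the average. The only cosmetic differences are that the paper uses an $\eps$-near-optimal $g \in \m{C}_0^\infty(B_R)$ in place of your exact ball eigenfunction $\chi$ (thereby staying with smooth compactly supported test functions), and it phrases the final step as ``the integrated inequality forces a good center $x$ on a set of positive measure'' rather than your equivalent ``the infimum bound holds pointwise, then integrate.''
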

In a certain sense, this bound states that the principal eigenvalue is ``local:'' $\lambda$ can be approximated to accuracy $\eps$ by examining the eigenvalue problem at spatial scale $\eps^{-1/2}$.
This is crucial for our purposes, as $\Omega$ only resembles the half-space locally.
In~\cite{Lieb}, Lieb stated Lemma~\ref{thm:potential-Lieb} for $V \equiv 0$.
However, his argument readily extends to other potentials.
For the sake of completeness, we include a proof here.
\begin{proof}
  Fix $\eps > 0$.
  By the Rayleigh quotient formula for the principal eigenvalue of self-adjoint operators, there exist $f \in \m{C}_0^\infty(\Omega)$ and $g \in \m{C}_0^\infty(B_R)$ with unit $L^2$ norms such that
  \begin{equation}
    \label{eq:almost-eigen}
    \int_\Omega \big(\abs{\nab f}^2 + V f^2\big) < \lambda(-\Delta + V, \Omega) + \frac{\eps}{2} \And \int_{B_R} \abs{\nab g}^2 < \lambda(-\Delta, B_R) + \frac{\eps}{2}.
  \end{equation}
  We extend $g$ by $0$ to $\R^d$ and define
  \begin{equation*}
    h_x(y) \coloneqq f(y)g(y - x)
  \end{equation*}
  for $x \in \R^d$ and $y \in \Omega$.
  Next, we define,
  \begin{equation*}
    T(x) \coloneqq \int_\Omega \big(\abs{\nab h_x}^2 + V h_x^2\big) \And D(x) \coloneqq \int_\Omega h_x^2.
  \end{equation*}
  Fubini and our $L^2$-normalization of $f$ and $g$ imply that $\int_{\R^d} D = 1$ and
  \begin{equation}
    \label{eq:potential}
    \int_{\R^d \times \Omega} V(y) h_x(y)^2 \d x \ds y = \int_{\R^d \times \Omega} V(y) f(y)^2 g(y - x)^2 \d x \ds y = \int_\Omega V f^2.
  \end{equation}
  For the gradient term of $T$, we compute
  \begin{equation*}
    \abs{\nab h_x(y)}^2 = \abs{\nab f(y)}^2 g(y - x)^2 + f(y)^2 \abs{\nab g(y - x)}^2 + \frac{1}{2} \nab (f^2)(y) \cdot \nab(g^2)(y - x).
  \end{equation*}
  Writing $ \nab(g^2)(y - x)$ as $-\nab_x[g(y - x)^2]$, we see that the final term vanishes when we integrate in $x$.
  Thus by Fubini,
  \begin{equation}
    \label{eq:deriv-int}
    \begin{aligned}
      \int_{\R^d \times \Omega} \abs{\nab h_x}^2 &= \int_{\R^d \times \Omega} \left[\abs{\nab f(y)}^2 g(y - x)^2 + f(y)^2 \abs{\nab g(y - x)}^2\right]\\
      &= \int_\Omega \abs{\nab f}^2 + \int_{B_R} \abs{\nab g}^2.
    \end{aligned}
  \end{equation}
  Combining \eqref{eq:potential} and \eqref{eq:almost-eigen} with \eqref{eq:deriv-int}, we see that
  \begin{equation*}
    \int_{\R^d} T(x) < \lambda(-\Delta + V, \Omega) + \lambda(-\Delta, B_R) + \eps.
  \end{equation*}
  Since $\int D = 1$, we have
  \begin{equation*}
    \int_{\R^d} \big[T(x) - [\lambda(-\Delta + V, \Omega) + \lambda(-\Delta, B_R) + \eps] D(x)\big] < 0.
  \end{equation*}
  Therefore
  \begin{equation}
    \label{eq:positive-set}
    [\lambda(-\Delta + V, \Omega) + \lambda(-\Delta, B_R) + \eps] D(x) > T(x)
  \end{equation}
  on a set of positive measure.
  In particular, there exists $x \in \R^d$ satisfying \eqref{eq:positive-set}.
  Now, if we substitute $h_x$ into the Rayleigh quotient for $\lambda\big(\!-\Delta + V, \Omega \cap B_R(x)\big)$, we obtain $T(x)/D(x)$.
  Thus by our choice of $x$, we see that
  \begin{equation*}
    \lambda\big(\!-\Delta + V, \Omega \cap B_R(x)\big) \leq \lambda(-\Delta + V, \Omega) + \lambda(-\Delta, B_R) + \eps.
  \end{equation*}
  Since $\eps > 0$ was arbitrary, \eqref{eq:potential-Lieb} follows from the identity $\lambda(-\Delta, B_R) = \lambda(-\Delta, B_1) R^{-2}$.
\end{proof}
With this tool in hand, we can tackle Proposition~\ref{prop:dilation-asymp-stable}.
\begin{proof}[Proof of Proposition~\textup{\ref{prop:dilation-asymp-stable}}]
  The result is trivial when $\Omega = \R^d$, so suppose otherwise.

  We first tackle the hard direction: we use our extension of Lieb's theorem to show the lower bound
  \begin{equation}
    \label{eq:liminf}
    \liminf_{\kappa \to \infty} \lambda(-\Delta - f'(\Phi_\kappa), \kappa \Omega) \geq \lambda(-\partial_x^2 - f'(\varphi), \R_+).
  \end{equation}
  Fix $\eps > 0$ and choose $R > 0$ such that $\lambda(-\Delta, B_1) R^{-2} < \eps/2$.
  Then Theorem~\ref{thm:potential-Lieb} yields
  \begin{equation}
    \label{eq:dilation-Lieb}
    \lambda(-\Delta - f'(\Phi_\kappa), \kappa \Omega) \geq \inf_{x \in \R^d} \lambda\big(\!-\Delta - f'(\Phi_\kappa), \kappa \Omega \cap B_R(x)\big) - \frac{\eps}{2}.
  \end{equation}
  We are free to assume that $x \in S_\kappa \coloneqq \{x \in \R^d \mid \op{dist}(x, \kappa \Omega) < R\}$, for otherwise $\kappa\Omega \cap B_R(x) = \emptyset$.

  Since $\varphi(\infty) = 1$ and $f'$ is continuous, there exists $Y > R$ such that
  \begin{equation*}
    -f' \circ \varphi(y) \geq \abs{f'(1)} - \tfrac{\eps}{2} \ForAll y \geq Y - R.
  \end{equation*}
  Recalling the set $\Omega_\kappa^\ell$ from \eqref{eq:faraway}, Lemma~\ref{lem:half-line-stable} yields
  \begin{equation}
    \label{eq:eigenvalue-deep}
    \inf_{x \in \Omega_\kappa^Y} \lambda\big(\!-\Delta - f'(\Phi_\kappa), \kappa \Omega \cap B_R(x)\big) \geq \abs{f'(1)} - \frac{\eps}{2} \geq \lambda(-\partial_x^2 - f'(\varphi), \R_+) - \frac{\eps}{2}.
  \end{equation}
  Now consider centers $x$ in the ``collar'' $\Sigma_\kappa \coloneqq S_\kappa \setminus \Omega_\kappa^Y$, which lies a bounded distance from $\kappa \partial \Omega$.

  As $\kappa \to \infty$, the principal radii of curvature of $\kappa \partial\Omega$ grow without bound.
  Thus for sufficiently large $\kappa$, every point $x \in \Sigma_\kappa$ has a unique nearest point $p(x) \in \kappa \partial \Omega$ and $\Phi_\kappa$ is $\m{C}^{2,\gamma}$ on $\Sigma_\kappa$.
  Let $n(x)$ denote the inward unit normal vector to $\kappa \partial \Omega$ at $p(x)$.
  As the boundary flattens, the distance function $z \mapsto \op{dist}(z, \kappa\partial \Omega)$ comes to resemble the linear coordinate
  \begin{equation*}
    y_x(z) \coloneqq n(x) \cdot [z - p(x)]
  \end{equation*}
  on $\kappa \Omega \cap B_R(x)$.
  That is,
  \begin{equation*}
    \lim_{\kappa \to \infty} \sup_{x \in \Sigma_\kappa} \norm{\op{dist}(\anon, \kappa \partial \Omega) - y_x}_{\m{C}^{2, \gamma}(\kappa \Omega \cap B_R(x))} = 0.
  \end{equation*}
  Hence $f'(\Phi_\kappa) \to f'(\varphi \circ y_x)$ on $\kappa\Omega \cap B_R(x)$ uniformly in $x \in \Sigma_\kappa$ as $\kappa \to \infty$.
  In the same manner, the domain $\Omega \cap B_R(x)$ converges to $\H_x \cap B_R(x)$, where $\H_x$ denotes the half-space defined by $y_x > 0$.
  Since $B_R$ has uniformly bounded radius, the principal eigenvalue is continuous in the potential and the domain within $B_R$; see, for instance,~\cite{Buttazzo}.
  It follows that
  \begin{equation}
    \label{eq:local-eigenvalue-limit}
    \begin{aligned}
      \lim_{\kappa \to \infty} \sup_{x \in \Sigma_\kappa} \big|\lambda\big(\!-\Delta - f'(\Phi_\kappa), \kappa &\Omega \cap B_R(x)\big)\\
      &- \lambda\big(\!-\Delta - f'(\varphi \circ y_x), \H_x \cap B_R(x)\big)\big| = 0.
    \end{aligned}
  \end{equation}
  Using monotonicity in the domain (see \cite[Proposition~2.1(i)]{BG22a}),
  \begin{equation*}
    \lambda\big(\!-\Delta - f'(\varphi \circ y_x), \H_x \cap B_R(x)\big) \geq \lambda\big(\!-\Delta - f'(\varphi \circ y_x), \H_x\big).
  \end{equation*}
  Now, the principal eigenvalue is invariant under isometry.
  Rotating, translating, and using Lemma~\ref{lem:product}, we find
  \begin{equation*}
    \lambda\big(\!-\Delta - f'(\varphi \circ y_x), \H_x \cap B_R(x)\big) \geq \lambda\big(\!-\Delta - f'(\varphi), \H^d\big) = \lambda(-\partial_x^2 - f'(\varphi), \R_+).
  \end{equation*}
  In light of \eqref{eq:local-eigenvalue-limit}, we have
  \begin{equation*}
    \liminf_{\kappa \to \infty} \inf_{x \in \Sigma_\kappa} \lambda\big(\!-\Delta - f'(\Phi_\kappa), \kappa \Omega \cap B_R(x)\big) \geq \lambda(-\partial_x^2 - f'(\varphi), \R_+).
  \end{equation*}
  Combining this with \eqref{eq:eigenvalue-deep}, we see that
  \begin{equation*}
    \liminf_{\kappa \to \infty} \inf_{x \in S_\kappa} \lambda\big(\!-\Delta - f'(\Phi_\kappa), \kappa \Omega \cap B_R(x)\big) \geq \lambda(-\partial_x^2 - f'(\varphi), \R_+) - \frac{\eps}{2}.
  \end{equation*}
  Then \eqref{eq:dilation-Lieb} yields
  \begin{equation*}
    \liminf_{\kappa \to \infty} \lambda(-\Delta - f'(\Phi_\kappa), \kappa \Omega) \geq \lambda(-\partial_x^2 - f'(\varphi), \R_+) - \eps.
  \end{equation*}
  Since $\eps > 0$ was arbitrary, \eqref{eq:liminf} follows.

  We now show a matching upper bound.
  For each $\kappa > 0$, let $g_\kappa$ be an isometry of $\R^d$ such that if $\Omega_\kappa \coloneqq g_\kappa^{-1}(\kappa \Omega)$, then $0 \in \partial \Omega_\kappa$ and $e_n$ is the inward unit normal vector to $\partial\Omega_\kappa$ at $0$.
  We showed above that $\Omega_\kappa \to \H^d$ locally uniformly in a sense made precise in Definition~A.2 in~\cite{BG22a}.
  Similarly, we showed that $\Phi_\kappa \circ g_\kappa \to \varphi$ locally uniformly in $\m{C}^{2,\gamma}$.
  Hence by isometry-invariance, Lemma~2.2 of~\cite{BG22a}, and Lemma~\ref{lem:product},
  \begin{equation}
    \label{eq:limsup}
    \limsup_{\kappa \to \infty} \lambda(-\Delta - f'(\Phi_\kappa), \kappa\Omega) \leq \lambda(-\Delta - f'(\varphi), \H^d) = \lambda(-\partial_x^2 - f'(\varphi), \R_+).
  \end{equation}
  Strictly speaking, \cite[Lemma~2.2]{BG22a} only treats the operator $-\Delta$.
  However, the inclusion of a locally convergent potential like $-f'(\Phi_\kappa \circ g_\kappa)$ does not change the proof; we do not repeat it here.
  The proposition now follows from \eqref{eq:liminf}, \eqref{eq:limsup}, and Lemma~\ref{lem:half-line-stable}.
\end{proof}
With this spectral estimate in place, we can prove uniqueness in \eqref{eq:dilation}.
\begin{proof}[Proof of Theorem~\textup{\ref{thm:dilation}}]
  Let $\mu \coloneqq \lambda(-\partial_x^2 - f'(\varphi), \R_+)$, which is positive by Lemma~\ref{lem:half-line-stable}.
  Because $f'$ is uniformly continuous, there exists $\delta > 0$ such that
  \begin{equation}
    \label{eq:deriv-cont}
    \abs{f'(s_1) - f'(s_2)} \leq \frac{\mu}{3} \quad \text{when } \abs{s_1 - s_2} \leq \delta.
  \end{equation}
  Recall that $\m{U}_\kappa$ denotes the set of positive bounded solutions of \eqref{eq:dilation}.
  By Lemma~\ref{lem:dilation-convergence} and Proposition~\ref{prop:dilation-asymp-stable}, there exists $\ubar{\kappa}(\Omega, f) > 0$ such that for all $\kappa \geq \ubar{\kappa}$, we have
  \begin{equation}
    \label{eq:convergence-large}
    \sup_{u \in \m{U}_\kappa} \norm{u - \Phi_\kappa}_{\m{C}(\kappa \Omega)} \leq \delta
  \end{equation}
  and
  \begin{equation}
    \label{eq:eigenvalue-large}
    \lambda(-\Delta - f'(\Phi_\kappa), \kappa\Omega) \geq \frac{2\mu}{3}.
  \end{equation}

  Now fix $\kappa \geq \ubar{\kappa}$ and two solutions $u,v \in \m{U}_\kappa$.
  By the mean value theorem and \eqref{eq:convergence-large}, there exists $r \colon \Omega \to [0, 1]$ such that $\abs{r - \Phi_\kappa} \leq \delta$ and
  \begin{equation*}
    f(u) - f(v) = f'(r) (u - v).
  \end{equation*}
  Let $w \coloneqq u - v$ and $\m{L} \coloneqq \Delta + f'(r)$.
  Then \eqref{eq:dilation} yields
  \begin{equation}
    \label{eq:diff}
    \begin{cases}
      \m{L} w = 0 & \text{in } \kappa \Omega,\\
      w = 0& \text{on } \kappa \partial\Omega.
    \end{cases}
  \end{equation}
  Because $\abs{r - \Phi_\kappa} < \delta$, \eqref{eq:deriv-cont} implies that $f'(r) = f'(\Phi_\kappa) + q$ for some remainder $q$ satisfying $\abs{q} \leq \mu/3$.
  So $-\m{L} \geq -\Delta - f'(\Phi_\kappa) - \mu /3$.
  Using \eqref{eq:eigenvalue-large}, we find
  \begin{equation*}
    \lambda(-\m{L}, \kappa\Omega) \geq \lambda(-\Delta - f'(\Phi_\kappa), \kappa\Omega) - \frac{\mu}{3} \geq \frac{\mu}{3} > 0.
  \end{equation*}
  By Theorem~1 of~\cite{Nordmann} (a form of the maximum principle), \eqref{eq:diff} implies that $w = 0$.
  That is, $u = v$ on $\kappa \Omega$.
\end{proof}
\begin{figure}[t]
  \centering
  \includegraphics[width = 0.5\textwidth]{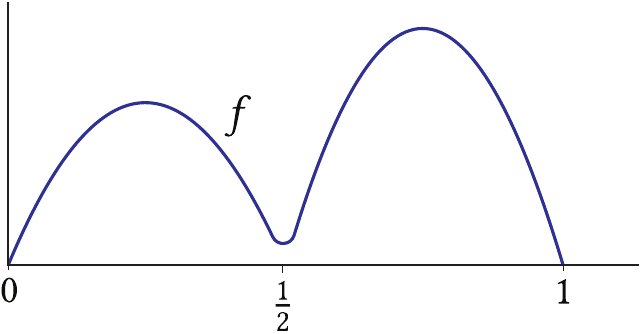}
  \caption[``Double-humped'' reaction exhibiting nonuniqueness]{A ``double-humped'' reaction exhibiting nonuniqueness.}
  \label{fig:camel}
\end{figure}
Having proved uniqueness on ``strongly dilated'' domains, we return to a point raised at the beginning of the section.
Fix a bounded domain $\Omega \subset \R^d$.
By Proposition~1.4 of~\cite{BG24} there exists a positive reaction $f$ such that \eqref{eq:main} admits multiple positive bounded solutions.
Qualitatively, the reaction we construct is ``double-humped'' as in Figure~\ref{fig:camel}.
It admits a solution $u^-$ whose range lies in the first hump and a larger solution $u^+$ whose range spans both humps.

We now consider how these solutions vary as we dilate $\Omega$ by a factor $\kappa \geq 1$.
The proof of \cite[Proposition~1.4]{BG24} shows that we can arrange $f'(0) > \lambda(-\Delta, \Omega)$, which implies $f'(0) > \lambda(-\Delta,\kappa\Omega)$ for $\kappa \geq 1$.
Then Proposition~1.8 of~\cite{BG24} implies that \eqref{eq:dilation} admits a minimal solution $u_\kappa^-$.
Moreover, the proof of the same proposition shows the existence of a maximal solution $u_\kappa^+$.
A comparison argument readily shows that the family $(u_\kappa^\pm)_{\kappa \geq 1}$ is increasing in $\kappa$.
We consider the behavior of the pair $u_\kappa^\pm$ as $\kappa$ grows from $1$.

When $\kappa = 1$, we have $u_1^- < u_1^+$.
On the other hand, Theorem~\ref{thm:dilation} provides $\ubar{\kappa}(\Omega, f) > 1$ such that $u_\kappa^- = u_\kappa^+$ once $\kappa \geq \ubar{\kappa}$.
We informally describe the manner in which the branch $u_\kappa^-$ might merge with $u_\kappa^+$.

Initially, we have constructed our pair of solutions so that $\sup u_1^- < \frac{1}{2}$.
That is, $u_1^-$ is confined to the first hump of $f$.
However, Lemma~\ref{lem:dilation-convergence} implies that $u_\kappa^\pm$ must coalesce around $\Phi_\kappa$ as $\kappa \to \infty$.
Since $\sup \Phi_\kappa \to \varphi(\infty) = 1$ in this limit, $\sup u_\kappa^-$ must eventually cross the threshold $\tfrac{1}{2}$.
That is, as $\kappa$ grows, the minimal solution $u_\kappa^-$ must eventually grow into the second hump of the reaction $f$.
Once this occurs, our proof of nonuniqueness in \cite{BG24} breaks down, and there is nothing preventing $u_\kappa^-$ from merging with $u_\kappa^+$.

Of course, the branches $u_\kappa^\pm$ may exhibit more complicated behavior between $\kappa = 1$ and $\kappa = \ubar{\kappa}$.
Nonetheless, the invasion of the second hump by $u_\kappa^-$ is one pathway by which multiple solution branches might merge as $\kappa \to \infty$.
It would be interesting to rigorously confirm this picture on a simple domain like $\Omega = (0, 1)$.

\section{Epigraphs}
\label{sec:epigraphs}

We now turn to uniqueness on epigraphs: domains bounded by the graph of a function.
Given a $\m{C}_{\text{loc}}^{2, \gamma}$ function $\phi \colon \R^{d - 1} \to \R$, we study its epigraph
\begin{equation*}
  \Omega \coloneqq \big\{(x', y) \in \R^{d - 1} \times \R\mid y > \phi(x')\big\}.
\end{equation*}
We assume that $\Omega$ is uniformly $\m{C}^{2,\gamma}$ as a subset of $\R^d$.
Notably, this is much weaker than assuming that $\phi$ is a uniformly $\m{C}^{2,\gamma}$ \emph{function}.
This discrepancy is due to the fact that the smoothness of a domain can be measured with respect to different coordinate frames.
If the gradient $\nab\phi$ is very large but slowly-varying, the epigraph $\Omega$ can still be quite smooth, because its local $\m{C}^{2,\gamma}$ norm can be measured with respect to a frame oriented normal to the graph of $\phi$.
\begin{example}
  \label{ex:parabola}
  The parabolic epigraph $\Omega$ of the quadratic $\phi(x) = x^2$ is uniformly $\m{C}^{2,\gamma}$, but $\phi$ and $\phi'$ diverge at infinity, so $\phi$ is not uniformly $\m{C}^{2,\gamma}$.
\end{example}
We are interested in the uniqueness of positive bounded solutions of \eqref{eq:main} on the epigraph $\Omega$.

\subsection{Epigraphs of uniformly Lipschitz functions}

This question has already been resolved for an important subclass of epigraphs: those for which $\op{Lip}\phi < \infty$, where $\op{Lip} \phi$ denotes the global Lipschitz constant of $\phi$.
For convenience, we refer to these as uniformly Lipschitz epigraphs.
In~\cite{BCN97b}, Caffarelli, Nirenberg, and the first author studied the qualitative properties of \eqref{eq:main} in this uniformly Lipschitz setting.
For convenience, we restate a form of their main result here.
\begin{theorem}[\cite{BCN97b}]
  \label{thm:Lipschitz}
  Let $\Omega$ be the epigraph of a function $\phi$ such that $\op{Lip} \phi < \infty$.
  Then \eqref{eq:main} admits a unique positive bounded solution $u$, and $\partial_y u > 0$.
\end{theorem}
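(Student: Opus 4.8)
The plan is to prove monotonicity and uniqueness by the sliding method of Berestycki--Nirenberg \cite{BN91}, exploiting the fact that an epigraph is quasi-invariant under vertical translation. Existence is the easy part: since $f'(0^+) > 0$ and $\Omega$ contains arbitrarily large balls, a standard monotone exhaustion of $\Omega$ by bounded domains, trapped between the subsolution of Lemma~\ref{lem:sub} and the supersolution $1$, produces a positive bounded solution, so I focus on monotonicity and uniqueness. Write $x = (x', y)$ and, for $t \geq 0$, set $u^t(x) \coloneqq u(x + t e_d)$. Shifting \emph{up} enlarges the domain, so $u^t$ solves \eqref{eq:main} on $\Omega - t e_d \supseteq \Omega$ with $u^t > 0 = u$ on $\partial\Omega$. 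The monotonicity claim follows once I establish
\begin{equation}
  u^t \geq u \quad\text{on } \Omega \text{ for every } t > 0, \tag{$\ast$}
\end{equation}
since differentiating $(\ast)$ at $t = 0^+$ yields $\partial_y u \geq 0$, and applying the strong maximum principle to $\partial_y u$, which solves the linearization $(-\Delta - f'(u))\partial_y u = 0$, upgrades this to $\partial_y u > 0$.

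To prove $(\ast)$ I slide the parameter $\tau^* \coloneqq \inf\{\tau \geq 0 : u^t \geq u \text{ on } \Omega \text{ for all } t \geq \tau\}$. The key input is Lemma~\ref{lem:sub}: for every $s \in (0,1)$ one has $u \geq s$ once $\op{dist}(\cdot,\partial\Omega) \geq R(f,s)$, so $u \to 1$ uniformly at depth, and likewise for $u^t$. To see $\tau^* < \infty$, fix the stability threshold $\theta$ from Lemma~\ref{lem:MP-deep} and take $t$ so large that $u^t \geq \theta$ on all of $\Omega$ (possible because $\op{Lip}\phi < \infty$ forces $\op{dist}(x + te_d, \partial\Omega) \gtrsim t$ uniformly). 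Then the set $\{u > u^t\}$ is contained in $\{u > \theta\} \subseteq \deep{R}$, where the linearized operator is stable, so the stable maximum principle of Lemma~\ref{lem:MP-deep} forces this set to be empty, giving $u^t \geq u$. It remains to show $\tau^* = 0$: by continuity $u^{\tau^*} \geq u$, and since $u \to 1$ at depth while $u = 0$ on $\partial\Omega$, $u$ is not $\tau^*$-periodic, so the strong maximum principle gives $u^{\tau^*} > u$ strictly. If $\tau^* > 0$ I must lower $\tau$ slightly while preserving the inequality.

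The main obstacle is the lack of compactness in this last step: when decreasing $\tau$ below $\tau^*$, the inequality $u^\tau \geq u$ can first fail only along a sequence of points escaping to infinity. The deep region $\deep{R}$ is ruled out by the stable maximum principle as above, and the boundary itself by the Hopf lemma (where $u^{\tau^*} > 0 = u$), so any failure must occur in the boundary collar at infinity: points $x_n \to \infty$, in the collar, with $u^{\tau_n}(x_n) < u(x_n)$ and $\tau_n \uparrow \tau^*$. I would center at $x_n$ and extract locally uniform limits. Here the uniform Lipschitz hypothesis is \emph{essential}: it guarantees that the translated epigraphs $\Omega - x_n$ are precompact and converge to a half-space or another uniformly Lipschitz epigraph, with uniform boundary regularity (hence a uniform Hopf lemma). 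The limiting solutions satisfy $(\ast)$ with equality at the origin, so either the interior strong maximum principle or the boundary Hopf lemma forces coincidence, contradicting $\tau^* > 0$. Thus $\tau^* = 0$, proving $(\ast)$ and $\partial_y u > 0$. This non-compact closing is precisely the step that the broader Theorem~\ref{thm:AUL} must reinforce once uniform Lipschitz control is dropped.

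For uniqueness, let $u$ and $v$ be two positive bounded solutions; both are monotone in $y$ by the above and both tend to $1$ at depth. I would slide $v$ vertically against $u$: set $v^t(x) = v(x + t e_d)$, show $u \geq v^t$ for $t$ large by the same ``stable set'' argument, and then slide $t \downarrow 0$, using the monotonicity of $v$ to propagate the inequality and the identical stable/compact limiting dichotomy to close it at infinity, obtaining $u \geq v$. By symmetry $v \geq u$, so $u = v$. The delicate point is once more the closing of the slide along escaping sequences, handled exactly as in the monotonicity argument.
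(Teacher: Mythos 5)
First, for orientation: the paper never proves Theorem~\ref{thm:Lipschitz}; it is quoted from \cite{BCN97b} and then used as an input (in Proposition~\ref{prop:uniform-stability} and in the limit identifications inside Propositions~\ref{prop:far-stable} and~\ref{prop:asymptotic-MP}). Your choice of a direct sliding proof is therefore the right one---it is the method of the cited source and the template for the paper's proof of the more general Theorem~\ref{thm:AUL}---and you correctly avoid the circularity of invoking the paper's AUL machinery, which presupposes this theorem. However, your uniqueness step contains a genuine error: the slide goes the wrong way. With $v^t(x) \coloneqq v(x + t e_d)$ one has $v^t > 0$ on $\partial\Omega$ while $u = 0$ there, so the inequality $u \geq v^t$ that you propose to prove for large $t$ is false at every boundary point; moreover the ``stable set'' argument cannot apply, since on $\{v^t > u\}$ near $\partial\Omega$ the lower function $u$ is close to $0$, so the difference quotient $\big(f(v^t) - f(u)\big)/(v^t - u)$ samples $f'$ outside $[\theta, 1]$ and has no sign. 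The repair is standard: slide $u$ up over $v$, i.e.\ prove $v \leq u^t$ for large $t$ (consistent on $\partial\Omega$, where $u^t > 0 = v$, and your monotonicity argument applies verbatim with $v$ as the lower function), then send $t \downarrow 0$; equivalently, shift $v$ upward and extend it by zero, as the paper does with $v_h$ in the proof of Theorem~\ref{thm:AUL}.

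Two further gaps, both repairable but currently unaddressed. (i) The inclusion $\{u > \theta\} \subseteq \deep{R}$ is false---Lemma~\ref{lem:sub} gives the reverse implication---and Lemma~\ref{lem:MP-deep} cannot be cited as a black box, because $u^t$ is not a solution of \eqref{eq:main} on $\Omega$ (it has positive boundary data). What your argument actually needs, and what is true, is value-based rather than location-based: on $P \coloneqq \{u > u^t\}$ both $u$ and $u^t$ take values in $[\theta, 1]$, so the mean-value difference quotient is $\leq -\abs{f'(1)}/2$; the set $P$ avoids $\partial\Omega$ (there $u = 0 < \theta \leq u^t$), and rerunning the proof of Lemma~\ref{lem:MP-deep} for $(u - u^t)_+$ closes the step. (ii) In the closing of the slide along escaping sequences $x_n \to \infty$, the strong maximum principle yields a contradiction only if the limit $\bar u$ of $u(\cdot + x_n)$ is nontrivial: if $u$ degenerated to $0$ along $(x_n)$, ``coincidence'' would be the vacuous statement $0 \equiv 0$ and no contradiction results. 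This must be ruled out: points at depth greater than $R$ in the limit epigraph are limits of points at depth greater than $R$ in $\Omega$, where $u \geq \tfrac{1}{2}$ by Lemma~\ref{lem:sub}, so $\bar u \geq \tfrac{1}{2}$ at depth, hence $\bar u \not\equiv 0$ and then $\bar u > 0$ by the strong maximum principle. One should also state what the contradiction actually is: coincidence means $\bar u$ is periodic in $y$ with period $\tau^* > 0$, which is incompatible with $\bar u$ vanishing on the nonempty, uniformly Lipschitz limit boundary while being at least $\tfrac{1}{2}$ at depth. With these three repairs, your outline does match the known proof.
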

In this section, we expand this result to a much broader class of epigraphs.
The condition $\op{Lip}\phi < \infty$ ensures that the graph of $\phi$ (the boundary $\partial\Omega$) lies between two cones oriented along the $y$-axis.
Many epigraphs of interest, such as the parabola in Example~\ref{ex:parabola}, do not satisfy this condition---they correspond to functions $\phi$ that grow superlinearly at infinity.
We are thus led to a natural question: does the conclusion of Theorem~\ref{thm:Lipschitz} hold for \emph{all} (uniformly $\m{C}^{2,\gamma}$) epigraphs?
While we do not fully resolve this question, we make significant progress.

\subsection{Asymptotically uniformly Lipschitz epigraphs}

We study epigraphs that locally resemble uniformly Lipschitz epigraphs at infinity, perhaps after rotation.
\begin{definition}
  \label{def:AUL}
  The epigraph $\Omega$ is \emph{asymptotically uniformly-Lipschitz} (AUL) if there exists $M \in \R_+$ such that every locally uniform limit of $\Omega$ at infinity is either $\R^d$ or a rotation of an epigraph with global Lipschitz constant at most $M$.
\end{definition}
That is, if we examine $\Omega$ near a sequence of points tending to $\infty$, it locally resembles either the whole space or a rotation of a uniformly Lipschitz epigraph of the form discussed above.
In particular, if the curvature of $\partial\Omega$ vanishes at infinity, the only limit domains at infinity are $\R^d$ and an isometry of the half-space $\H^d$, which is evidently uniformly Lipschitz.
It follows that asymptotically flat epigraphs in the sense of Definition~\ref{def:flat} are AUL in the sense of Definition~\ref{def:AUL}.
\begin{figure}[t]
  \centering
  \includegraphics[width = 0.9\linewidth]{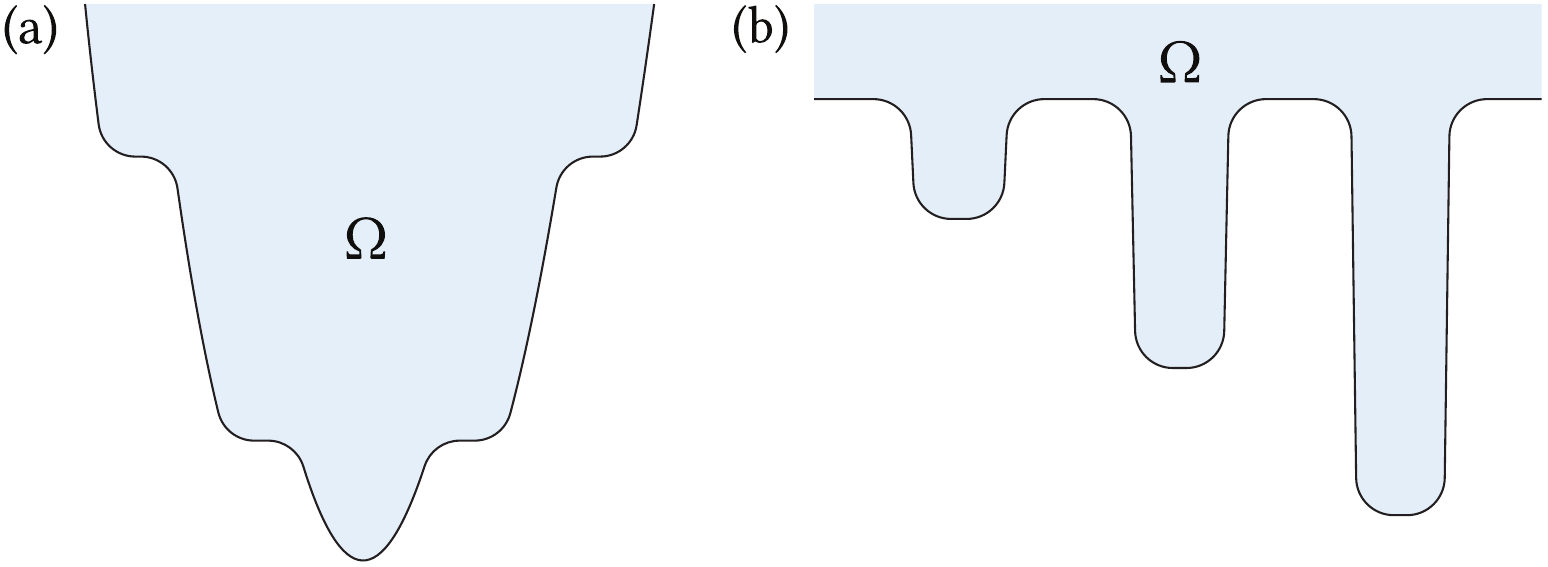}
  \caption[Epigraph ``counterexamples'']{
    (a)~An AUL epigraph that is neither uniformly Lipschitz nor asymptotically flat.
    (b)~A uniformly $\m{C}^{2,\gamma}$ epigraph with arbitrarily deep ``wells'' that is not AUL.
    The well walls are not quite vertical, so $\Omega$ is still the epigraph of a continuous function.
  }
  \label{fig:counter}
\end{figure}
On the other hand, the parabola with ``steps'' in Figure~\ref{fig:counter}(a) is an AUL epigraph that is neither uniformly Lipschitz nor asymptotically flat.

We show uniqueness on AUL epigraphs.
\begin{theorem}
  \label{thm:AUL}
  If $\Omega$ is AUL, then \eqref{eq:main} has a unique positive bounded solution $u$.
  Moreover, $\partial_y u > 0$.
\end{theorem}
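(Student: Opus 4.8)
The plan is to adapt the stable-compact template of our proof of Theorem~\ref{thm:ES-full}, replacing the dilation deformation by vertical \emph{sliding}. Write $e_d$ for the upward unit vector, so that $\Omega + \tau e_d \subset \Omega$ for every $\tau \geq 0$ and $u^\tau \coloneqq u(\anon + \tau e_d)$ again solves \eqref{eq:main} on $\Omega$ for any solution $u$. As in the exterior-star case, I would fix the radius $R(f)$ from Lemma~\ref{lem:MP-deep} and split $\Omega$ into the stable part $\Omega_+ \coloneqq \deep{R}$, on which any positive bounded solution is close to $1$ and obeys the maximum principle of Lemma~\ref{lem:MP-deep}, and the compact part $\Omega_- \coloneqq \Omega \setminus \bar\Omega_+$, a collar about $\partial\Omega$. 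The entire difficulty is concentrated in $\Omega_-$, which is unbounded along the boundary.

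First I would establish monotonicity $\partial_y u > 0$ by sliding. For $\tau \geq 0$ consider the comparison $u^\tau \geq u$. On $\partial\Omega$ we have $u^\tau > 0 = u$, and on the stable part $\Omega_+$ the inequality propagates from $\partial\Omega_+$ by Lemma~\ref{lem:MP-deep} (with $u^\tau$ as the solution and $u$ as the subsolution). It therefore suffices to run a continuation argument on $\Omega_-$: let $\tau_*$ be the infimum of thresholds $T$ such that $u^\tau \geq u$ on $\Omega_-$ for all $\tau \geq T$, and show $\tau_* = 0$. Both the admissibility of large $\tau$ (so that $\tau_* < \infty$) and the decrease of $\tau_*$ to $0$ rest on a compactness/blow-up argument at infinity. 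If the inequality failed, or failed to hold with a uniform gap, along a sequence $x_n \to \infty$ in $\Omega_-$, I would center at $x_n$ and extract a locally uniform limit. By Lemma~\ref{lem:sub} and the Harnack inequality the limit solution $u^*$ is positive, and by Definition~\ref{def:AUL} the limit domain $\Omega^*$ is either $\R^d$ or a rotation of an epigraph with Lipschitz constant at most $M$. On $\R^d$ the only positive bounded solution is $1$, which is trivially monotone; on a uniformly Lipschitz epigraph Theorem~\ref{thm:Lipschitz} supplies a unique, strictly monotone solution. In either case the strong maximum principle applied to $w^* \coloneqq \lim(u^\tau - u) \geq 0$ should force a contradiction.

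The main obstacle is precisely this blow-up on $\Omega_-$, and within it the \emph{steep} limits where $\abs{\nab\phi} \to \infty$ along $x_n$. There the AUL limit is a half-space whose boundary is a vertical wall---a rotation of $\H^d$ carrying $e_d$ into the bounding hyperplane---so the limit solution is invariant under $e_d$-translation and the sliding gap degenerates to zero. Thus the comparison cannot be closed by a strict inequality alone: the fixed sliding direction $e_d$ need not align with the intrinsic monotonicity direction of the rotated limit epigraph furnished by Theorem~\ref{thm:Lipschitz}. Reconciling these---showing that the marginal monotonicity along near-vertical portions of $\partial\Omega$ at infinity does not obstruct the global comparison---is the crux, and is exactly the feature that distinguishes AUL epigraphs from the uniformly Lipschitz epigraphs of \cite{BCN97b}.

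Finally, with monotonicity in hand I would deduce uniqueness. Monotonicity together with Lemma~\ref{lem:sub} gives $u(x', y) \to 1$ as $y \to \infty$, so two positive bounded solutions $u, v$ share the same limits at the top and on $\partial\Omega$. Sliding $v$ against $u$ in the direction $e_d$ and repeating the stable-compact continuation---now using the established strict monotonicity to close the argument at the blow-up limits, where Theorem~\ref{thm:Lipschitz} forces $u^* = v^*$---yields $v \geq u$; exchanging the roles of $u$ and $v$ then gives $u = v$.
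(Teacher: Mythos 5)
You have correctly identified the overall architecture---vertical sliding as the deformation, blow-up along sequences tending to infinity, and uniformly Lipschitz epigraphs as the building blocks---and this matches the paper's strategy. But the proposal has a genuine gap, and it is the one you yourself flag as ``the crux'': at steep portions of $\partial\Omega$ at infinity the blow-up limit is a \emph{rotated} uniformly Lipschitz epigraph whose intrinsic monotonicity direction need not be $e_d$ (in the extreme case the limit domain is invariant under $e_d$-translation), so the sliding gap degenerates and the continuation on your unbounded compact part $\Omega_-$ cannot be closed by the strong maximum principle. Your decomposition compounds the difficulty: taking the stable part to be the deep region $\deep{R}$ leaves the entire boundary collar as the ``compact'' part, but that collar is unbounded and contains precisely the degenerate limits, so no compactness is actually available where it is needed. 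Knowing only uniqueness and monotonicity on the limit epigraphs (Theorem~\ref{thm:Lipschitz}) does not produce the uniform gap that the sliding continuation requires.

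The missing ingredient is the paper's main new tool, which your proposal never invokes: the \emph{uniform strict stability} of the solution on uniformly Lipschitz epigraphs (Proposition~\ref{prop:uniform-stability}). The paper transfers this spectral gap to the far field of the AUL domain via Lieb's inequality (Theorem~\ref{thm:potential-Lieb}, used in Proposition~\ref{prop:far-stable}), and then proves a maximum principle valid for \emph{all} vertical shifts $v_h$ outside a large ball (Proposition~\ref{prop:asymptotic-MP}). Two features of that proof are exactly what resolve your crux. First, at the degenerate blow-up limits one never attempts a strict sliding comparison: one only proves the non-strict bound $v_h \leq u + \delta$ near infinity, by observing that the limit of the shifted solution is a bounded nonnegative subsolution on the limit epigraph $G$ and therefore lies below $u^G$ by the parabolic semigroup argument and uniqueness on $G$; no alignment of sliding directions is needed for this. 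Second, this $\delta$-closeness combined with the spectral gap makes the linearized operator satisfy a maximum principle outside $\bar B_R$, so it is stability, not sliding, that closes the argument at infinity. The net effect is to shrink the compact part to the genuinely bounded set $\Omega \cap \bar B_{R+1}$, where the strong maximum principle does yield a uniform gap and the continuation in the shift parameter goes through (Corollary~\ref{cor:order} starts the slide). Incidentally, the paper does not establish monotonicity first and deduce uniqueness afterwards: it slides one solution directly under another, and $\partial_y u > 0$ falls out at the end. Without Proposition~\ref{prop:uniform-stability}, or some substitute playing the same role, your outline cannot be completed.
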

This extends and significantly strengthens the main result of~\cite{BCN97b}.
Given that bounded domains do not exhibit uniqueness in this generality, Theorem~\ref{thm:AUL} is quite striking.
The epigraph structure seems conducive to uniqueness in~\eqref{eq:main}.

Definition~\ref{def:AUL} encompasses a great variety of ``natural'' epigraphs.
It is, however, not comprehensive: there are uniformly $\m{C}^{2,\gamma}$ epigraphs that are not AUL.
For example, an epigraph with a sequence of ever deeper ``wells'' of bounded width is not AUL; see Figure~\ref{fig:counter}(b) for an example.
Such domains present obstacles to our uniqueness argument; we discuss these difficulties in greater detail in Section~\ref{sec:wells}.

Before proceeding, we use our main Theorem~\ref{thm:AUL} to show the simpler results in the introduction.
\begin{proof}[Proof of Theorem~\textup{\ref{thm:flat}}]
  As noted above, asymptotically flat epigraphs are AUL.
  So Theorem~\ref{thm:flat} is a special case of Theorem~\ref{thm:AUL}.
\end{proof}
\begin{proof}[Proof of Corollary~\textup{\ref{cor:convex}}]
  If $\phi \colon \R \to \R$ in $\m{C}_{\textrm{loc}}^{2,\gamma}$ is convex, then the limits $\phi(\pm \infty)$ exist (though they may be infinite).
  It follows that the only limits of $\Omega$ at infinity are either $\R^2$ or isometries of $\H^2$.
  Hence $\Omega$ is AUL, and the corollary follows from Theorem~\ref{thm:AUL}.
\end{proof}

\subsection{Stability on uniformly Lipschitz epigraphs}

To prove Theorem~\ref{thm:AUL}, we use uniformly Lipschitz epigraphs as the ``building blocks'' of AUL domains.
The centerpiece of the argument is a new qualitative property of uniformly Lipschitz epigraphs: the unique solution $u$ in Theorem~\ref{thm:Lipschitz} is \emph{strictly stable}.
This stability neatly complements the other qualitative properties shown in~\cite{BCN97b}.

In fact, we will require this strict stability to be uniform over a broad class of epigraphs.
To state the result, we recall the notion of a $(\gamma, r, K)$-smooth domain from Definition~A.1 of~\cite{BG24}.
We do not restate the full (rather technical) definition, but informally, the boundary of a $(\gamma, r, K)$-smooth domain has $\m{C}^{2,\gamma}$ norm no larger than $K$ at spatial scale $r$.
Given $\gamma \in (0, 1)$ and $r, K, M \in \R_+$, let $\s{G}(r, \gamma, K, M)$ denote the set of all $(r, \gamma, K)$-smooth epigraphs with global Lipschitz constant at most $M$.
Given $G \in \s{G}(r, \gamma, K, M)$, let $u^G$ denote the unique positive bounded solution of \eqref{eq:main} on $G$ provided by Theorem~\ref{thm:Lipschitz}.
\begin{proposition}
  \label{prop:uniform-stability}
  For all $\gamma \in (0, 1)$ and $r, K ,M \in \R_+$,
  \begin{equation}
    \label{eq:uniform-stability}
    \abs{f'(1)} \geq \inf_{G \in \s{G}(\gamma, r, K, M)} \lambda(-\Delta - f'(u^G), G) > 0.
  \end{equation}
\end{proposition}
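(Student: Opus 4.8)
The bound \eqref{eq:uniform-stability} comprises two inequalities, of which the lower one is the crux. The upper bound is soft: fix $G \in \s{G}(\gamma, r, K, M)$ and recall from Theorem~\ref{thm:Lipschitz} and Lemma~\ref{lem:sub} that $u^G \to 1$ as $\op{dist}(\anon, \partial G) \to \infty$. Taking a point $x$ deep in $G$ together with a large ball $B_\rho(x) \subset G$ on which $u^G$ is close to $1$, domain monotonicity of the principal eigenvalue gives $\lambda(-\Delta - f'(u^G), G) \le \lambda(-\Delta, B_\rho) + \sup_{B_\rho(x)}\big(-f'(u^G)\big)$; letting $\rho \to \infty$ and then $x$ recede to infinity sends the right-hand side to $\abs{f'(1)}$, which yields the upper bound for each individual $G$. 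For the lower bound, the first observation is that $u^G$ is automatically \emph{non-strictly} stable: differentiating \eqref{eq:main} in $y$ shows that $\partial_y u^G$ solves the linearized equation $(\Delta + f'(u^G))\partial_y u^G = 0$, and Theorem~\ref{thm:Lipschitz} provides $\partial_y u^G > 0$. Thus $\partial_y u^G$ is an admissible test function in the supremum \eqref{eq:eigenvalue} at $\lambda = 0$, whence $\lambda(-\Delta - f'(u^G), G) \ge 0$. The entire difficulty is to upgrade this to a bound that is both \emph{strict} and \emph{uniform} over $\s{G}$.

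The plan is to combine a localization estimate with a boundary integration by parts. First I would invoke our extension of Lieb's theorem (Theorem~\ref{thm:potential-Lieb}) exactly as in the proof of Proposition~\ref{prop:dilation-asymp-stable}: for a fixed large radius $R$,
\begin{equation*}
  \lambda(-\Delta - f'(u^G), G) \ge \inf_{x} \lambda\big(\!-\Delta - f'(u^G), G \cap B_R(x)\big) - \lambda(-\Delta, B_1) R^{-2}.
\end{equation*}
For centers $x$ deep inside $G$ the potential $-f'(u^G)$ is close to $\abs{f'(1)} > 0$, so those local eigenvalues are bounded below; it therefore suffices to control the local eigenvalues for centers within a bounded distance of $\partial G$. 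Here I would use the precompactness of $\s{G}$ under locally uniform limits (a consequence of uniform $\m{C}^{2,\gamma}$ smoothness), elliptic estimates, the uniqueness in Theorem~\ref{thm:Lipschitz}, and a Harnack bound (as in Lemma~\ref{lem:sub}) guaranteeing that $u^G$ stays uniformly positive at bounded distance from the boundary. Together these reduce the uniform lower bound to the strict positivity of $\lambda(-\Delta - f'(u^{G_*}), G_*)$ at limit domains $G_*$, which are themselves rotations of uniformly Lipschitz epigraphs (or $\R^d$).

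To establish this strict positivity in a form that survives the limit, I would generalize the integration-by-parts identity at the heart of Lemma~\ref{lem:half-line-stable}. If $\lambda := \lambda(-\Delta - f'(u^{G_*}), G_*)$ lies below the bottom of the essential spectrum, it is a genuine eigenvalue with a positive $L^2$ eigenfunction $\Psi$ satisfying $\Psi|_{\partial G_*} = 0$. Since $\partial_y u^{G_*}$ solves the linearized equation and is \emph{strictly positive up to $\partial G_*$} (by the Hopf lemma, using that the inward normal of an epigraph has positive $y$-component), Green's identity gives
\begin{equation*}
  \lambda \int_{G_*} \Psi\, \partial_y u^{G_*} = \int_{\partial G_*} (\partial_n \Psi)\, \partial_y u^{G_*},
\end{equation*}
whose right-hand side has a definite positive sign because $\partial_n \Psi$ and $\partial_y u^{G_*}$ each do. As the left-hand integrand is positive, this forces $\lambda > 0$, precisely as in the half-line computation.

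The main obstacle is the uniform exclusion of marginal stability, i.e.\ of the degenerate case $\lambda = 0$. The integration by parts above requires a genuine, localized eigenfunction, yet the bottom of the essential spectrum of $-\Delta - f'(u^{G})$ is itself governed by the ``domains at infinity'' of $G$, which again lie in $\s{G}$; one therefore cannot naively iterate. I expect the correct remedy is a concentration-compactness argument: take a minimizing sequence for the infimum in \eqref{eq:uniform-stability}, recenter at the approximate concentration points of the associated near-ground-states, and pass to a limit configuration $(G_*, u^{G_*}, \Psi)$ in which the eigenfunction does \emph{not} escape to infinity, so that $\Psi$ is a bona fide eigenfunction realizing the infimum. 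The Lieb estimate is the tool that quantifies this localization by pinning the relevant eigenvalue to a bounded spatial scale. Once the infimum is realized by such a localized $\Psi$ on a single limit domain, the boundary identity yields $\lambda > 0$, contradicting $\inf \lambda = 0$ and completing the proof.
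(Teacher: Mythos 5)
Your upper bound and the soft inequality $\lambda(-\Delta - f'(u^G), G) \ge 0$ (using $\partial_y u^G > 0$ as a test function in \eqref{eq:eigenvalue}) are fine, and pairing an eigenfunction against $\partial_y u^G$ is indeed the central mechanism. The gap is in your final step, and it is fatal as written: the concentration-compactness scheme cannot break the circularity you yourself flag. The class $\s{G}(\gamma, r, K, M)$ is closed under exactly the recentered limits your argument takes, so the ``problem at infinity'' is the \emph{same} minimization problem; there is no strict subadditivity or energy gap with which to exclude the vanishing/escape alternative. Concretely, Theorem~\ref{thm:potential-Lieb} localizes the eigenvalue only up to an error $\lambda(-\Delta, B_1)R^{-2}$: for each fixed $R$ you do get genuine ground states of the bounded pieces $G_n \cap B_R(x_n)$ with eigenvalue at most $\inf + CR^{-2}$, but the limit domain you extract depends on $R$, and as $R \to \infty$ the localized ground states on the successive limit domains can again spread or drift off, so following their concentration points merely restarts the argument (an infinite regress) rather than terminating it. Equivalently, in the contradiction scenario $\inf = 0$, producing a bona fide $L^2$ eigenfunction with eigenvalue $0$ on a limit domain $G_*$ requires $0 < \inf \op{Spec}_{\mathrm{ess}}\big(\!-\Delta - f'(u^{G_*})\big)$; but the bottom of the essential spectrum is governed by the limits of $G_*$ at infinity, which lie in the same class and may, under the same hypothesis, have eigenvalue $0$ as well. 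And even if a ground state existed at the edge of the essential spectrum, it need not decay (Agmon-type estimates fail at the edge), so the Green identity over the unbounded domain--which discards terms at infinity and needs $\Psi\, \partial_y u^{G_*} \in L^1$, while $\partial_y u^{G_*}$ does not decay near $\partial G_*$ in the horizontal directions--is unjustified in precisely the case where you need it.

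The paper's proof avoids the unbounded spectral problem altogether, and this is the idea your proposal is missing: run the pairing identity on \emph{bounded truncations}. On $G_R \coloneqq \{0 < y' < R\} \cap \Gamma_R$ (with $y' = y - \phi(x')$ and $\Gamma_R$ a cylinder of radius $R$) a positive principal eigenfunction $\psi_R$ always exists, the identity reads
\begin{equation*}
  \lambda_R \int_{G_R} \partial_y u^G\, \psi_R = \int_{\partial G_R} \partial_y u^G\, \abs{\partial_\nu \psi_R},
\end{equation*}
and one proves a \emph{quantitative} lower bound on the flux-to-mass ratio, uniform in $R$ and in $G \in \s{G}$: if $\lambda_R < \tfrac{1}{3}\abs{f'(1)}$, the stability of the region $\{y' > H\}$ forces the mass of $\partial_y u^G\,\psi_R$ to concentrate within bounded height of the bottom (via Schauder, Harnack, and Carleson estimates); a blow-up argument with the Hopf lemma gives the pointwise bound $\psi_R(x', y') \lesssim \partial_y \psi_R(x', 0)\, y'$ near the bottom; and the uniform Lipschitz bound converts $\partial_y \psi_R$ into $\abs{\partial_\nu \psi_R}$ since $-y \cdot \nu$ is uniformly positive. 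Compactness of $\s{G}$ enters only to make constants uniform, never to produce a minimizer, so the marginal-stability dichotomy never arises, and $\lambda(-\Delta - f'(u^G), G) = \lim_R \lambda_R \gtrsim 1$. Note finally that the threat you are trying to argue away is real: by Proposition~\ref{prop:marginal}, once the uniform Lipschitz hypothesis is dropped the infimum can indeed be non-positive, so any correct proof must exploit the Lipschitz structure quantitatively, as the truncation argument does; a soft compactness scheme that uses the class only through its closedness cannot succeed.
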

The half-space is the simplest epigraph, and we have already shown strict stability in that setting in Lemmas~\ref{lem:product} and \ref{lem:half-line-stable}.
So Proposition~\ref{prop:uniform-stability} can be viewed as a vast generalization of those lemmas.
Its proof, naturally, is significantly more complex.

The uniformity in Proposition~\ref{prop:uniform-stability} leads to a curious question: which epigraph is ``most unstable''?
It seems possible that the smoothness parameters in Proposition~\ref{prop:uniform-stability} are superfluous---the infimum in \eqref{eq:uniform-stability} may depend on $M$ alone.
If this is the case, the only free parameter is the Lipschitz bound $M$, and we are left with a family of clean geometric optimization problems.
For example, which $1$-Lipschitz epigraph $G$ minimizes the eigenvalue in \eqref{eq:uniform-stability}?
The quarter-plane $\{y > \abs{x}\}$ is a natural guess, but we leave this line of inquiry to future investigation.
\begin{proof}[Proof of Proposition~\textup{\ref{prop:uniform-stability}}]
  Fix an epigraph $G \in \s{G}(\gamma, r, K, M)$ corresponding to a function $\phi$ with $\op{Lip} \phi \leq M$ and let $u \coloneqq u^G$.
  Let $y' \coloneqq y - \phi(x')$, so $G = \{y' > 0\}$.
  Let $x = (x', y)$.
  Because $\op{Lip} \phi \leq M$,
  \begin{equation}
    \label{eq:trigonometry}
    \op{dist}(x, \partial G) \geq \frac{y'}{\sqrt{M^2 + 1}}.
  \end{equation}
  Now $f'$ is continuous, so there exists $\eta \in (0, 1)$ such that $f' \leq \frac{2}{3} f'(1) < 0$ on $(\eta, 1)$.
  Combining Lemma~\ref{lem:sub} with \eqref{eq:trigonometry},  we see that there exists $H > 1$ such that $u > \eta$ where $y' > H$.
  In particular,
  \begin{equation}
    \label{eq:stable-height}
    -f'(u) \geq \frac{2}{3} \abs{f'(1)} \quad \text{where } y' > H.
  \end{equation}
  Let $\m{L} \coloneqq \Delta + f'(u)$ and $u' \coloneqq \partial_y u > 0$, noting that $\m{L}u' = 0$.

  Given $R > H + 1$, let $B_R^{d-1}$ demote the $R$-ball in $\R^{d-1}$ and let $\Gamma_R \coloneqq B_R^{d-1} \times \R$ denote the cylinder of radius $R$.
  We work on the truncation ${G_R \coloneqq \{0 < y' < R\} \cap \Gamma_R}$ of $G$.
  Let
  \begin{equation*}
    \lambda_R \coloneqq \lambda(-\m{L}, G_R).
  \end{equation*}
  Note that $G_R$ is not smooth: it has corners.
  This will not pose difficulties.
  
  Because $G_R$ is bounded, $\lambda_R$ corresponds to a positive principal eigenfunction $\psi_R \in H_0^1(G_R)$.
  The family $(G_R)_{R > 0}$ exhausts $G$, so by \cite[Proposition~2.3(iv)]{BR},
  \begin{equation}
    \label{eq:limit}
    \lambda(-\m{L}, G) = \lim_{R \to \infty} \lambda_R.
  \end{equation}
  We wish to show that the eigenvalues $(\lambda_R)_{R > 0}$ are uniformly positive.
  We note that they are non-increasing in $R$ because the domains $G_R$ are nested.

  If $\lambda_R \geq \frac{1}{3} \abs{f'(1)}$ for all $R > 0$, we are done.
  Suppose otherwise, so there exists $\ubar{R} > H + 1$ such that $\lambda_R < \frac{1}{3}\abs{f'(1)}$ for all $R > \ubar{R}$.
  In the remainder of the proof, we assume $R > \ubar{R}$.
  In the following, we let $\nu$ denote the outward unit normal vector field on the relevant domain of integration.
  Using $-\m{L} \psi_R = \lambda_R \psi_R$ and $\m{L}u' = 0$ and integrating by parts, we find
  \begin{equation*}
    \lambda_R \int_{G_R} u'\psi_R = -\int_{\partial G_R} u' \partial_\nu \psi_R = \int_{\partial G_R} u'\abs{\partial_\nu \psi_R}.
  \end{equation*}
  In the final equality we have used the fact that $\psi_R$ is positive, so $\partial_\nu \psi_R < 0$ on $\partial G_R$.
  Let $\partial_- G_R \coloneqq \partial G \cap \Gamma_R$ denote the bottom boundary of $G_R$.
  Then we have
  \begin{equation}
    \label{eq:ratio}
    \lambda_R > \frac{\int_{\partial_-G_R} u'\abs{\partial_\nu \psi_R}}{\int_{G_R} u' \psi_R}.
  \end{equation}
  We show that this ratio is uniformly positive.

  We will argue that a significant fraction of the mass in $\int_{G_R} u' \psi_R$ is concentrated near the bottom of the domain.
  To this end, let $G_R^+ \coloneqq \{H < y' < R\} \cap \Gamma_R$ denote the portion of $G_R$ that is at least height $H$ above $\partial G$.
  Using \eqref{eq:stable-height} and our assumption that $\lambda_R < \tfrac{1}{3} \abs{f'(1)}$, we compute
  \begin{equation}
    \label{eq:spectral-gap}
    \begin{aligned}
      \int_{G_R^+} \psi_R &\leq \frac{3}{\abs{f'(1)}} \int_{G_R^+} [-f'(u) - \lambda_R]\psi_R\\
                          &\hspace{3cm}= \frac{3}{\abs{f'(1)}} \int_{G_R^+} \Delta \psi_R = \frac{3}{\abs{f'(1)}} \int_{\partial G_R^+} \partial_\nu \psi_R.      
    \end{aligned}
  \end{equation}
  Now $\partial_\nu \psi_R < 0$ on the lateral and top boundaries $\partial G_R \cap \{y' > H\}$, so
  \begin{equation}
    \label{eq:drop-lateral}
    \int_{\partial G_R^+} \partial_\nu \psi_R \leq \int_{\partial_- G_R^+} \abs{\partial_\nu \psi_R},
  \end{equation}
  where $\partial_-G_R^+ \coloneqq \{y' = H\} \cap \Gamma_R$ denotes the bottom boundary of $G_R^+$.
  Combining \eqref{eq:spectral-gap} and \eqref{eq:drop-lateral}, we see that
  \begin{equation}
    \label{eq:bottom}
    \int_{G_R^+} \psi_R \leq \frac{3}{\abs{f'(1)}} \int_{\partial_- G_R^+} \abs{\partial_\nu \psi_R}.
  \end{equation}

  Now let $G_R^- \coloneqq \{0 < y' < H\} \cap \Gamma_R$ denote the portion of $G_R$ within height $H$ of $\partial G$.
  We claim that
  \begin{equation}
    \label{eq:flip}
    \int_{\partial_- G_R^+} \abs{\partial_\nu \psi_R} \lesssim \int_{G_R^-} \psi_R,
  \end{equation}
  where $a \lesssim b$ indicates that $a \leq C b$ for some constant $C$ that does not depend on $R$ but may depend on $r, \gamma, K,$ $M$, $d$, and $f$.
  In the following, we denote coordinates on $G$ by $(x', y')$ for $x' \in \R^d$ and $y' > 0$ defined above.
  We first focus on the ``interior'' portion $\partial_- G_{R-1}^+$ lying at least distance $1$ from $\partial G_R$.
  There, interior Schauder estimates imply that
  \begin{equation*}
    \abs{\nab \psi_R(x', H)} \leq C_{\text{S}} \sup_{B_{1/2}(x',H)} \psi_R,
  \end{equation*}
  where $C_{\text{S}}$ depends only on $d$ and the H\"older norm of $f'(u)$.
  Moreover, Harnack's inequality yields
  \begin{equation*}
    \sup_{B_{1/2}(x',H)} \psi_R \leq C_{\text{H}} \inf_{B_{1/2}(x',H)} \psi_R
  \end{equation*}
  for a constant $C_{\text{H}}$ depending on the same.
  Combining these bounds and integrating in $y'$, we obtain
  \begin{equation*}
    \abs{\nab \psi_R(x', H)} \lesssim \int_{H-1/2}^{H} \psi_R(x', y') \d y' \ForAll x' \in B_{R-1}^{d-1}.
  \end{equation*}
  Extending the region of integration and integrating over $B_{R-1}^{d-1}$, we have
  \begin{equation}
    \label{eq:interior}
    \int_{B_{R - 1}^{d-1}} \abs{\nab \psi_R(x', H)} \d x' \lesssim \int_{B_{R-1}^{d-1}} \int_0^H \psi_R(x', y') \d y' \d x'.
  \end{equation}

  Now consider the portion near the lateral boundary of $G_R$.
  Schauder estimates up to the boundary yield
  \begin{equation}
    \label{eq:Schauder-boundary}
    \abs{\nab \psi_R(x', H)} \leq C_{\text{S}}' \sup_{B_{1/2}(x',H) \cap G_R} \psi_R.
  \end{equation}
  Now define the contraction
  \begin{equation*}
    \iota x' \coloneqq \frac{\abs{x'} - 1}{\abs{x'}} x',
  \end{equation*}
  which moves points in $B_R^{d-1}$ toward the origin by distance $1$.
  Then Carleson's inequality implies that
  \begin{equation}
    \label{eq:Carleson}
    \psi_R(x', y') \leq C_{\text{C}} \psi_R(\iota x', y') \ForAll x' \in B_R^{d-1}\setminus B_{R-1}^{d-1}, \, y' \in (H-1/2, H+1/2).
  \end{equation}
  That is, the value of $\psi_R$ near the boundary is controlled by its values deeper in the interior.
  Combining \eqref{eq:Schauder-boundary} and \eqref{eq:Carleson} with the interior Harnack inequality, we can write
  \begin{equation*}
    \abs{\nab \psi_R(x', H)} \lesssim \int_{H-1/2}^H \psi_R(\iota x', y') \d y' \ForAll x' \in B_R^{d-1} \setminus B_{R-1}^{d-1}.
  \end{equation*}
  Now $\iota$ has bounded Jacobian once $R \geq 2$.
  It follows that
  \begin{equation*}
    \int_{B_R^{d-1} \setminus B_{R-1}^{d-1}} \abs{\nab \psi_R(x', H)} \d x' \lesssim \int_{B_{R-1}^{d-1}} \int_0^H \psi_R(x', y') \d y' \ds x'.
  \end{equation*}
  In conjunction with \eqref{eq:interior}, we find
  \begin{equation*}
    \int_{B_R^{d-1}} \abs{\nab \psi_R(x', H)} \d x' \lesssim \int_{B_R^{d-1}} \int_0^H \psi_R(x', y') \d y' \ds x',
  \end{equation*}
  which is \eqref{eq:flip}.

  We next observe that $u'$ is uniformly positive on $G_R^-$ and uniformly bounded on $G_R$.
  Hence \eqref{eq:bottom} and \eqref{eq:flip} yield
  \begin{equation*}
    \int_{G_R^+} u' \psi_R \lesssim \int_{G_R^+} \psi_R \lesssim \int_{G_R^-} \psi_R \lesssim \int_{G_R^-} u' \psi_R.
  \end{equation*}
  These constants are uniform in $G$, for otherwise we could extract a subsequential limit domain $G^* \in \s{G}$ such that $\partial_y u^{G^*}$ violates the strong maximum principle or Hopf lemma.
  Therefore
  \begin{equation}
    \label{eq:concentration}
    \int_{G_R} u' \psi_R \lesssim \int_{G_R^-} u' \psi_R,
  \end{equation}
  as claimed.
  Moreover, similar reasoning based on \eqref{eq:Carleson} yields
  \begin{equation}
    \label{eq:equivalent}
    \int_{G_R^-} u' \psi_R \lesssim \int_{G_{R-1}^-} u' \psi_R.
  \end{equation}
  Here we apply Carleson's inequality in the vicinity of the corner of $G_R$; this is possible because the inequality holds on Lipschitz domains~\cite{HW68,HW70}.

  For the remainder of the analysis, we work on $\Gamma_{R-1}$ and thus avoid the corners of $G_R$.
  We claim that
  \begin{equation}
    \label{eq:linear}
    \psi_R(x', y') \lesssim \partial_y \psi_R(x', 0) y' \quad \text{in } G_{R - 1}^-.
  \end{equation}
  To see this, suppose to the contrary that there exists a sequence of graphs $G^n \in \s{G},$ radii $R_n \nearrow \infty$, and points $x_n = (x_n',y_n') \in (G_{R_n - 1}^n)^-$ such that
  \begin{equation*}
    \partial_y \psi_{R_n}(x_n', 0) \leq \frac{1}{n y_n'} \psi_{R_n}(x'_n, y'_n) \ForAll n \in \N.
  \end{equation*}
  Let $A_n$ denote the affine transformation
  \begin{equation*}
    A_n x \coloneqq y_n'x + x_n
  \end{equation*}
  and let
  \begin{equation*}
    \Psi_n \coloneqq \frac{\psi_{R_n} \circ A_n}{\psi_{R_n}(x_n)}.
  \end{equation*}
  This satisfies
  \begin{equation}
    \label{eq:operator-sequence}
    -\Delta \Psi_n = (y_n')^2\big(f'\circ u \circ A_n + \lambda_R\big) \Psi_n \quad \text{on } A_n^{-1}G_{R_n}^n
  \end{equation}
  and
  \begin{equation*}
    \partial_y \Psi_n(0,-1) \leq \frac{1}{n},
  \end{equation*}
  noting that $(0, -1) \in \partial A_n^{-1} G_{R_n}^n$.
  Because $\op{Lip} G^n \leq M$, $A_n^{-1}G_{R_n}^n$ contains a ball $B_\rho$ for some $\rho > 0$ independent of $n$.
  Also, $\Psi_n(0) = 1$ and $\m{L}_n\Psi_n = 0$ for the linear operator $\m{L}_n$ appearing in \eqref{eq:operator-sequence}, whose coefficients are uniformly bounded.
  Thus Schauder estimates allow us to extract a subsequential limit of $(\Psi_n)$ as $n \to \infty$.
  We obtain a nonnegative Dirichlet solution $\Psi_\infty$ of $-\m{L}_\infty\Psi_\infty = 0$ on a uniformly Lipschitz epigraph $G_\infty \supset B_\rho$ with $(0,-1) \in \partial G_\infty$ and
  \begin{equation}
    \label{eq:zero-slope}
    \partial_y \Psi_\infty(0,-1) = 0.
  \end{equation}
  The limit operator $\m{L}_\infty$ is either the Laplacian or a rescaling of $\Delta + f'(u^{G_\infty}) - \lambda_R$, depending on whether $(y_n')$ tends to $0$.
  In either case, it satisfies the strong maximum principle and the Hopf lemma.
  However, $\Psi_\infty(0) = 1$, so by the strong maximum principle $\Psi_\infty > 0$.
  Then \eqref{eq:zero-slope} contradicts the Hopf lemma; this contradiction proves \eqref{eq:linear}.
  
  Integrating \eqref{eq:linear}, we see that
  \begin{equation*}
    \int_{G_{R-1}^-} u'\psi_R \lesssim \int_{\partial_- G_{R-1}} u' \partial_y \psi_R.
  \end{equation*}
  In light of \eqref{eq:equivalent}, we find
  \begin{equation*}
    \int_{G_R^-} u'\psi_R \lesssim \int_{\partial_- G_{R-1}} u' \partial_y \psi_R.
  \end{equation*}
  Finally, we observe that the tangential derivatives of $\psi_R$ vanish on $\partial G$, so
  \begin{equation*}
    \partial_y \psi_R = (y \cdot \nu) \partial_\nu \psi_R.
  \end{equation*}
  Because $G$ is uniformly Lipschitz, $-y \cdot \nu$ is uniformly positive.
  So
  \begin{equation}
    \label{eq:boundary}
    \int_{G_R^-} u'\psi_R \lesssim \int_{\partial_- G_{R-1}} u' \partial_y \psi_R \lesssim - \int_{\partial_- G_{R-1}} u' \partial_\nu \psi_R = \int_{\partial_- G_{R-1}} u' \abs{\partial_\nu \psi_R}.
  \end{equation}
  In light of \eqref{eq:concentration} and \eqref{eq:boundary}, \eqref{eq:ratio} implies that $\lambda_R \gtrsim 1$.
  We emphasize that the implied constant is independent of $R$, so $\inf_{R > \ubar{R}} \lambda_R > 0.$
  The proposition follows from \eqref{eq:limit}.
\end{proof}

\subsection{Uniqueness on AUL epigraphs}

Throughout this subsection, fix an AUL epigraph $\Omega$.
Then $\Omega$ is $(\gamma,r,K)$-smooth for some constants $r, K \in \R_+$.
It follows from Definition~\ref{def:AUL} and \cite[Proposition~A.1]{BG24} that there exists $M \in \R_+$ such that the local limits of $\Omega$ at infinity are either $\R^d$ or rotations of epigraphs in $\s{G}(\gamma,r,K,M)$.
Define
\begin{equation}
  \label{eq:inf-eigenvalue}
  \ubar{\lambda} \coloneqq \inf_{G \in \s{G}(\gamma,r,K,M)} \lambda(-\Delta - f'(u^G), G) > 0,
\end{equation}
which is positive by Proposition~\ref{prop:uniform-stability}.
Using this uniform stability, we show that $\Omega$ is ``stable at infinity.''
Given $R \geq 1$, let $\Omega_R$ be a uniformly smooth domain satisfying
\begin{equation}
  \label{eq:truncation}
  \Omega \setminus \bar{B}_{R + 1} \subset \Omega_R \subset \Omega \setminus \bar{B}_R.
\end{equation}
We think of $\Omega_R$ as a smooth approximation of $\Omega \setminus \bar{B}_R$.
\begin{proposition}
  \label{prop:far-stable}
  Let $u$ be a positive bounded solution of \eqref{eq:main}.
  Then
  \begin{equation}
    \label{eq:asymptotic-stability}
    \lim_{R \to \infty} \lambda(-\Delta - f'(u), \Omega_R) \geq \ubar\lambda > 0.
  \end{equation}
\end{proposition}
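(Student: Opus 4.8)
The plan is to mirror the architecture of the proof of Proposition~\ref{prop:dilation-asymp-stable}: apply Lieb's locality estimate (Theorem~\ref{thm:potential-Lieb}) to reduce the eigenvalue on $\Omega_R$ to local eigenvalue problems on balls of a fixed radius, then pass to the limit $R \to \infty$ using the AUL structure and the uniform stability of Proposition~\ref{prop:uniform-stability}. First I would dispatch the existence of the limit in \eqref{eq:asymptotic-stability}. Since $\Omega \setminus \bar B_R$ is nested decreasing in $R$, the quantity $m(R) \coloneqq \lambda(-\Delta - f'(u), \Omega \setminus \bar B_R)$ is non-decreasing in $R$ by domain monotonicity of $\lambda$. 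The inclusions \eqref{eq:truncation} and the same monotonicity squeeze $m(R) \leq \lambda(-\Delta - f'(u), \Omega_R) \leq m(R+1)$, so $\lambda(-\Delta - f'(u), \Omega_R)$ converges to $\lim_R m(R)$. It therefore suffices to bound the $\liminf$ from below by $\ubar\lambda$.

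Fix $\eps > 0$ and choose $\rho > 0$ so that $\lambda(-\Delta, B_1)\rho^{-2} < \eps$. Applying Theorem~\ref{thm:potential-Lieb} with potential $V = -f'(u)$ on $\Omega_R$ yields
\begin{equation*}
  \lambda(-\Delta - f'(u), \Omega_R) \geq \inf_{x \in \R^d} \lambda\big(\!-\Delta - f'(u), \Omega_R \cap B_\rho(x)\big) - \eps.
\end{equation*}
Only centers $x$ with $\Omega_R \cap B_\rho(x) \neq \emptyset$ contribute, and for such $x$ the inclusion $\Omega_R \subset \Omega \setminus \bar B_R$ forces $\abs{x} \geq R - \rho$. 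Thus the task reduces to showing that
\begin{equation*}
  \liminf_{R \to \infty} \inf_{x} \lambda\big(\!-\Delta - f'(u), \Omega_R \cap B_\rho(x)\big) \geq \ubar\lambda.
\end{equation*}

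Suppose this fails. Then there are radii $R_n \to \infty$ and centers $x_n$ with $\abs{x_n} \to \infty$ such that the local eigenvalues stay below $\ubar\lambda - \delta$ for some $\delta > 0$. Centering at $x_n$ and invoking Schauder estimates, I would extract locally uniform subsequential limits: a limit domain $\Omega^*$ of $\Omega - x_n$ (which agrees locally with the limit of $\Omega_{R_n} - x_n$, since $B_\rho(x_n)$ is eventually disjoint from $\bar B_{R_n + 1}$) and a limit $u_\infty$ of $u(\anon + x_n)$ solving \eqref{eq:main} on $\Omega^*$. By the AUL hypothesis and the paragraph preceding the proposition, $\Omega^*$ is either $\R^d$ or a rotation of an epigraph $G_\infty \in \s{G}(\gamma, r, K, M)$. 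The crux of the argument—and the main obstacle—is to show that $u_\infty \not\equiv 0$; this is precisely where the epigraph structure is essential, in contrast to the comb and well domains of Figures~\ref{fig:comb} and \ref{fig:counter}(b). Because $\Omega^*$ is either the whole space or a uniformly Lipschitz epigraph, it contains points $z$ at arbitrarily large distance from $\partial\Omega^*$ (using \eqref{eq:trigonometry} in the epigraph case). At such a deep $z$, domain convergence gives $\op{dist}(z + x_n, \Omega^\cc) > R(f, \tfrac12)$ for large $n$, so Lemma~\ref{lem:sub} yields $u(z + x_n) \geq \tfrac12$ and hence $u_\infty(z) \geq \tfrac12$. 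By the strong maximum principle $u_\infty > 0$ on $\Omega^*$; in the whole-space case this forces $u_\infty \equiv 1$, and in the epigraph case Theorem~\ref{thm:Lipschitz} identifies $u_\infty$ with the unique stable solution $u^{G_\infty}$.

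To conclude, I would use that the principal eigenvalue is continuous in domain and potential within the fixed ball $B_\rho$, so the local eigenvalues converge (up to isometry) to $\lambda(-\Delta - f'(u_\infty), \Omega^* \cap B_\rho)$. In the epigraph case, domain monotonicity and Proposition~\ref{prop:uniform-stability} give
\begin{equation*}
  \lambda\big(\!-\Delta - f'(u_\infty), \Omega^* \cap B_\rho\big) \geq \lambda\big(\!-\Delta - f'(u^{G_\infty}), G_\infty\big) \geq \ubar\lambda,
\end{equation*}
while the whole-space case gives $\lambda(-\Delta + \abs{f'(1)}, B_\rho) \geq \abs{f'(1)} \geq \ubar\lambda$, the last inequality by \eqref{eq:uniform-stability}. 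In either case the local eigenvalues eventually exceed $\ubar\lambda - \delta$, a contradiction. This establishes the displayed $\liminf$, and letting $\eps \to 0$ in the Lieb bound yields \eqref{eq:asymptotic-stability}. I expect the delicate point throughout to be the positivity step: ruling out that $u$ decays locally to $0$ at infinity, which would produce the destabilizing potential $-f'(0^+) < 0$ in the limit. The AUL hypothesis is exactly what guarantees benign epigraph limits possessing deep interior points where Lemma~\ref{lem:sub} applies.
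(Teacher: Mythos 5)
Your proposal follows essentially the same route as the paper's proof: Lieb's localization (Theorem~\ref{thm:potential-Lieb}) to reduce to eigenvalue problems on balls of fixed radius $\rho$, a contradiction argument along centers $x_n \to \infty$, identification of the subsequential limit of $u(\anon + x_n)$ with $1$ (whole-space case) or with the unique solution $u^{G}$ on a rotated uniformly Lipschitz epigraph (using Lemma~\ref{lem:sub} to rule out degeneration to $0$ and Theorem~\ref{thm:Lipschitz} for the identification), and finally the continuity of $\lambda$ on bounded domains together with domain monotonicity, \eqref{eq:uniform-stability}, and Proposition~\ref{prop:uniform-stability} to reach a contradiction. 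Your explicit verification that the limit in \eqref{eq:asymptotic-stability} exists, via the sandwich $m(R) \leq \lambda(-\Delta - f'(u), \Omega_R) \leq m(R+1)$, is a useful addition that the paper leaves implicit.

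One justification in your argument is incorrect, though easily repaired. Lieb's bound only forces $\abs{x_n} \geq R_n - \rho$, so $B_\rho(x_n)$ need \emph{not} be eventually disjoint from $\bar B_{R_n+1}$: the near-optimal centers may sit right at the inner cut of $\Omega_{R_n}$, where $\Omega_{R_n}$ and $\Omega$ genuinely differ, so your parenthetical claim that the local limits of $\Omega_{R_n} - x_n$ and $\Omega - x_n$ agree does not hold as stated. The paper's fix is one line of domain monotonicity: by \eqref{eq:truncation} we have $\Omega_{R_n} \cap B_\rho(x_n) \subset \Omega \cap B_\rho(x_n)$, and enlarging the domain can only decrease the eigenvalue, so the contradiction hypothesis $\lambda\big(\!-\Delta - f'(u), \Omega_{R_n} \cap B_\rho(x_n)\big) \leq \ubar\lambda - \delta$ transfers to $\lambda\big(\!-\Delta - f'(u), \Omega \cap B_\rho(x_n)\big) \leq \ubar\lambda - \delta$; this is exactly \eqref{eq:Lieb-contradiction} in the paper. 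After this substitution one only ever takes limits of $\Omega - x_n$, and the rest of your argument goes through verbatim.
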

The proof is similar to that of Proposition~\ref{prop:dilation-asymp-stable}
\begin{proof}
  We argue by contradiction.
  Suppose there exists $\delta > 0$ such that
  \begin{equation}
    \label{eq:small-eigenval}
    \lambda(-\Delta - f'(u), \Omega_R) \leq \ubar\lambda - 3 \delta
  \end{equation}
  for all $R \geq 1$.
  Fix $\rho > 0$ such that $\lambda(-\Delta, B_1) \rho^{-2} \leq \delta$.
  Theorem~\ref{thm:potential-Lieb} (due to Lieb) implies that
  \begin{equation}
    \label{eq:Lieb}
    \lambda(-\Delta - f'(u), \Omega_R) \geq \inf_{x \in \R^d} \lambda\big(\!-\Delta - f'(u), \Omega_R \cap B_\rho(x)\big) - \delta.
  \end{equation}
  Taking $R = n \in \N$ in \eqref{eq:small-eigenval} and using \eqref{eq:Lieb}, we see that there exists a sequence $(x_n)_{n \in \N}$ such that
  \begin{equation}
    \label{eq:Lieb-contradiction-pre}
    \lambda_n \coloneqq \lambda\big(\!-\Delta - f'(u), \Omega_n \cap B_\rho(x_n)\big) \leq \ubar\lambda - \delta.
  \end{equation}
  The domain $\Omega_n \cap B_\rho(x_n)$ must be nonempty (otherwise the eigenvalue is infinite by convention), so by \eqref{eq:truncation} we have $\abs{x_n} \geq n - \rho$.
  In particular, $\abs{x_n} \to \infty$ as $n \to \infty$.
  By \eqref{eq:truncation}, $\Omega_n \subset \Omega$.
  Since increasing the domain decreases the eigenvalue, \eqref{eq:Lieb-contradiction-pre} yields
    \begin{equation}
    \label{eq:Lieb-contradiction}
    \lambda_n \coloneqq \lambda\big(\!-\Delta - f'(u), \Omega \cap B_\rho(x_n)\big) \leq \ubar\lambda - \delta.
  \end{equation}
  
  Again noting that $\op{dist}(x_n, \Omega) \leq \rho$, we can extract a nonempty limit $\Omega^*$ of $\Omega$ along a subsequence that we rename $(x_n)_{n \in \N}$.
  By Definition~\ref{def:AUL}, $\Omega^* = \R^d$ or there exists a rotation $\sigma$ such that $\Omega^* = \sigma G$ for some $G \in \s{G}(\gamma,r,K,M)$.
  First suppose $\Omega^* = \R^d$.
  This implies $\op{dist}(x_n, \Omega^\cc) \to \infty$, and by Lemma~\ref{lem:sub} we have $u \to 1$ uniformly on $B_\rho(x_n)$.
  Then by the continuity of $\lambda$ on bounded domains (see, e.g., \cite{Buttazzo}), we obtain
  \begin{equation*}
    \abs{f'(1)} \leq \lim_{n \to \infty} \lambda_n \leq \ubar\lambda - \delta.
  \end{equation*}
  This contradicts \eqref{eq:uniform-stability}, so we must have $\Omega^* = \sigma G$.
  In the vicinity of $x_n$, $u$ must converge to the unique positive bounded solution $u^G$ on $G$ (after rotation), for Lemma~\ref{lem:sub} prevents $u$ from degenerating to $0$.
  Again, the continuity of $\lambda$ on bounded domains yields
  \begin{equation*}
    \lim_{n \to \infty} \lambda_n = \lambda(-\Delta - f'(u^G), G \cap B_\rho) \geq \lambda(-\Delta - f'(u^G), G) \geq \ubar{\lambda}
  \end{equation*}
  by the definition~\eqref{eq:inf-eigenvalue} of $\ubar\lambda$.
  This contradicts \eqref{eq:Lieb-contradiction}.
  Thus \eqref{eq:small-eigenval} is false and \eqref{eq:asymptotic-stability} follows because $\delta > 0$ is arbitrary.
\end{proof}
Using this asymptotic stability, we prove a maximum principle outside a large ball.
Given $h \geq 0$, define the shift $v_h \coloneqq v(\anon - h \tbf{e}_y)$ on $\Omega + h \tbf{e}_y$, which we extend by $0$ to $\Omega$.
\begin{proposition}
  \label{prop:asymptotic-MP}
  Let $u$ and $v$ be positive bounded solutions of \eqref{eq:main}.
  Then there exists $R > 0$ such that for all $h \geq 0$, if $u \geq v_h$ on $\Omega \cap \partial \Omega_R$, then $u \geq v_h$ in $\Omega_R$.
\end{proposition}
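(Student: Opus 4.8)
The plan is to argue by contradiction, exploiting a dichotomy between \emph{large} and \emph{small} violations of the ordering. Throughout, $v$ is a positive bounded solution, so that $v_h$, extended by $0$ below the shifted graph, is a generalized subsolution of \eqref{eq:main} on $\Omega$. Set $w \coloneqq v_h - u$; we must show $w \le 0$ in $\Omega_R$ for suitable $R$. Since $u = v_h = 0$ on $\partial\Omega$, the hypothesis $u \ge v_h$ on $\Omega \cap \partial\Omega_R$ gives $w \le 0$ on all of $\partial\Omega_R$. On the positivity set $P \coloneqq \{w > 0\}$ we have $v_h > u > 0$, so $v_h$ is a genuine solution there and the mean value theorem yields $(-\Delta - c)w = 0$ in $P$, where $c \coloneqq \int_0^1 f'(u + t w)\,dt$ is an average of $f'$ over $[u, v_h]$. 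Suppose the proposition fails. Then for each $n$ there is $h_n \ge 0$ such that the associated $w_n$ satisfies $w_n \le 0$ on $\partial\Omega_n$ yet $M_n \coloneqq \sup_{\Omega_n} w_n > 0$.

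First I would show $M_n \to 0$ using compactness and the AUL structure. If not, pass to a subsequence with $M_n \ge 2\eps_0 > 0$ and choose $x_n \in \Omega_n$ with $w_n(x_n) > \eps_0$. Because $\Omega_n \subset \Omega \setminus \bar B_n$ we have $\abs{x_n} \to \infty$, so after recentering we may extract a locally uniform limit $\Omega^\infty$ of $\Omega$. By Definition~\ref{def:AUL}, $\Omega^\infty$ is either $\R^d$ or a rotation of a uniformly Lipschitz epigraph $G$. Lemma~\ref{lem:sub} prevents $u$ from degenerating, so the limit $u^\infty$ is a positive bounded solution on $\Omega^\infty$; by Theorem~\ref{thm:Lipschitz} (or the trivial case $\R^d$), $u^\infty$ is \emph{the} unique such solution. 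The shifts $v_{h_n}$ vanish on $\partial\Omega$ and between the two graphs, so the limit $v^\infty$ is a bounded subsolution with values in $[0,1]$ vanishing on $\partial\Omega^\infty$. A standard monotone iteration between $v^\infty$ and the supersolution $1$, combined with the uniqueness of $u^\infty$, forces $v^\infty \le u^\infty$ on $\Omega^\infty$. This contradicts $v^\infty(0) - u^\infty(0) = \lim_n w_n(x_n) \ge \eps_0 > 0$. Hence $M_n \to 0$.

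With $M_n \to 0$ in hand, the linearized stability of Proposition~\ref{prop:far-stable} finishes the argument. On $P_n \coloneqq \{w_n > 0\} \subset \Omega_n$ we have $0 < w_n \le M_n$, so uniform continuity of $f'$ gives $c_n \le f'(u) + \omega(M_n)$ pointwise, where $\omega$ is the modulus of continuity of $f'$. By Proposition~\ref{prop:far-stable} and \eqref{eq:inf-eigenvalue}, $\lambda(-\Delta - f'(u), \Omega_n) \ge \ubar\lambda/2$ for $n$ large. Monotonicity of the principal eigenvalue in the domain ($P_n \subset \Omega_n$) and in the zeroth-order term then yields $\lambda(-\Delta - c_n, P_n) \ge \ubar\lambda/2 - \omega(M_n) > 0$ for $n$ large. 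But $w_n$ is a positive bounded solution of $(-\Delta - c_n) w_n = 0$ in $P_n$ with $w_n = 0$ on $\partial P_n$, so the maximum principle associated to a positive principal eigenvalue (exactly as invoked in Lemma~\ref{lem:MP-deep}; see Theorem~1 of~\cite{Nordmann} and Proposition~3.1 of~\cite{BG22b}) forces $w_n \le 0$ in $P_n$. This contradicts $P_n \neq \emptyset$, completing the proof.

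The main obstacle is that the natural comparison operator for $w$ carries the averaged potential $c$ rather than $f'(u)$, and for positive (non-concave) reactions one cannot expect $c \le f'(u)$; thus stability alone does not immediately yield a maximum principle. The dichotomy circumvents this: \emph{large} discrepancies are excluded by compactness together with the AUL hypothesis, which confines blow-up limits to domains on which $u$ is the maximal subsolution, while \emph{small} discrepancies make $c$ uniformly close to $f'(u)$, so that the quantitative stability of Proposition~\ref{prop:far-stable} applies. I expect the delicate point to be the compactness step: verifying that $v^\infty$ is a genuine subsolution vanishing on $\partial\Omega^\infty$ and that the monotone-iteration and uniqueness argument is valid on the (possibly unbounded) limit epigraph.
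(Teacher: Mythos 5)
Your proposal is correct and takes essentially the same route as the paper: the paper first establishes the uniform smallness estimate $\sup_{h \ge 0}(v_h - u) \le \delta$ outside a large ball by exactly your compactness argument (AUL limits, positivity of $u$ via Lemma~\ref{lem:sub}, uniqueness on the limit epigraph from Theorem~\ref{thm:Lipschitz}, and monotone parabolic iteration applied to the subsolution limit), and then concludes via the mean value theorem, Proposition~\ref{prop:far-stable}, and the positive-eigenvalue maximum principle of \cite{Nordmann} and \cite{BG22b}. The only difference is organizational: you fold both steps into a single contradiction over $R = n$ and apply the maximum principle on the positivity set $P_n$, whereas the paper fixes $R$ once, extends the perturbed potential to all of $\Omega_R$, and applies the maximum principle there---a purely cosmetic distinction.
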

\begin{proof}
  Because $f'$ is uniformly continuous on $(0, 1)$, there exists $\delta > 0$ such that
  \begin{equation}
    \label{eq:uniform-cont}
    \abs{f'(r) - f'(s)} \leq \ubar{\lambda}/3 \ForAll r, s \in (0, 1) \text{ such that } \abs{r - s} < \delta.
  \end{equation}
  We claim that there exists $R > 0$ such that $\lambda(-\Delta - f'(u), \Omega_R) \geq 2\ubar{\lambda}/3$ and
  \begin{equation}
    \label{eq:small-diff}
    \sup_{h \geq 0} \sup_{(\Omega + h \tbf{e}_y) \setminus \bar B_R} (v_h - u) \leq \delta.
  \end{equation}
  By Proposition~\ref{prop:far-stable}, it suffices to show \eqref{eq:small-diff} for sufficiently large $R$.
  Suppose for the sake of contradiction that there exists a sequence $(h_n, x_n)_{n \in \N}$ such that $h_n \geq 0,$ $x_n \in (\Omega + h_n \tbf{e}_y) \setminus \bar B_n,$ and
  \begin{equation}
    \label{eq:diff-contradiction}
    v_{h_n}(x_n) > u(x_n) + \delta.
  \end{equation}
  By Lemma~\ref{lem:sub}, we must have $\sup_{n \in \N} \op{dist}(x_n, \partial\Omega) < \infty$.
  Thus we can extract a subsequence that we rename $(x_n)$ along which $\Omega - x_n$, $u(\anon + x_n)$, and $v_{h_n}(\anon + x_n)$ have limits $\Omega^*,u^*,$ and $v^*$.
  By hypothesis, $\Omega^* = \sigma G$ for some rotation $\sigma$ and an epigraph $G \in \s{G}(\gamma, r, K, M)$.
  By uniqueness on $G$, $u^* \circ \sigma = u^G$, for Lemma~\ref{lem:sub} prevents $u^* = 0$.
  Also, $v^* \circ \sigma$ is a nonnegative subsolution on $G$.
  
  Let $\m{P}$ denote the parabolic semigroup on $G$, so that $w(t, x) \coloneqq (\m{P}_tq)(x)$ solves
  \begin{equation}
    \label{eq:parabolic}
    \begin{cases}
      \partial_t w = \Delta w + f(w) & \text{in } G,\\
      w = 0 & \text{on } \partial G,\\
      w(t=0, \anon) = q.
    \end{cases}
  \end{equation}
  Because $v^* \circ \sigma$ is a subsolution, $\m{P}_t(v^* \circ \sigma)$ is increasing in $t$ and thus has a limit $\m{P}_\infty(v^* \circ \sigma) \geq v^* \circ \sigma$ solving \eqref{eq:main} on $G$.
  By Theorem~\ref{thm:Lipschitz}, the only two solutions are $0$ and $u^G$, so $v^* \circ \sigma \leq u^G$.
  Thus
  \begin{equation*}
    \liminf_{n \to \infty} v_{h_n}(x_n) \leq \limsup_{n \to \infty} u(x_n),
  \end{equation*}
  contradicting \eqref{eq:diff-contradiction}.
  This proves \eqref{eq:small-diff}.
  We fix the corresponding value of $R$ in the remainder of the proof.

  Now fix $h \geq 0$ and suppose $w \coloneqq v_h - u \leq 0$ on $\Omega \cap \partial \Omega_R$.
  Because $u = v_h = 0$ on $\partial\Omega$, we also have $w = 0$ on $\partial\Omega \cap \partial\Omega_R$.
  Hence $w \leq 0$ on $\partial \Omega_R$.
  
  Next, using the mean value theorem, we write
  \begin{equation*}
    -\Delta w - f'(r) w = 0
  \end{equation*}
  for some $r$ between $u$ and $v_h$.
  Let $P \coloneqq \{w > 0\} \cap \Omega_R$.
  Then \eqref{eq:small-diff} and the definition \eqref{eq:uniform-cont} of $\delta$ imply that
  \begin{equation*}
    -\Delta w - f'(u) w - qw = 0 \quad \text{on } P
  \end{equation*}
  for some $q \in \m{C}^\gamma(P)$ satisfying $\abs{q} \leq \ubar\lambda/3$.
  Set $q = 0$ on $P^\cc$ and let ${\m{L} \coloneqq \Delta + f'(u) + q}$.
  Then our choice of $R$ implies that
  \begin{equation*}
    \lambda(-\m{L}, \Omega_R) \geq \lambda(-\Delta - f'(u), \Omega_R) - \ubar{\lambda}/3 \geq \ubar{\lambda}/3 > 0.
  \end{equation*}
  By Theorem~1 of~\cite{Nordmann} or Proposition~3.1 of~\cite{BG24}, $\m{L}$ satisfies the maximum principle on $\Omega_R$.
  Now $\m{L} w|_P = 0$, $\m{L}0 = 0$, and $w \leq 0$ on $P^\cc$.
  It follows that $-\m{L} w_+ \leq 0$ in the sense of distributions.
  Since we have $w \leq 0$ on $\partial\Omega_R$ by hypothesis, the maximum principle implies that $w_+ \leq 0$.
  That is, $w = 0$ and $v_h \leq u$ on $\Omega \setminus B_R$, as desired.

  As in the proof of Lemma~\ref{lem:MP-deep}, we have not quite satisfied the hypotheses of Proposition~3.1 of~\cite{BG24}, but the proof goes through nonetheless.
\end{proof}
\begin{corollary}
  \label{cor:order}
  There exists $H \geq 0$ such that $u \geq v_h$ \corr{for all $h \geq H$}.
\end{corollary}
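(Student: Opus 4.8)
The plan is to read the corollary off directly from Proposition~\ref{prop:asymptotic-MP}, which has already done all the analytic work. Fix the radius $R > 0$ furnished by that proposition. The proposition reduces the global inequality $u \geq v_H$ on $\Omega$ to two local checks: that $u \geq v_H$ on the inner boundary $\Omega \cap \partial\Omega_R$, and that $u \geq v_H$ on the bounded remainder $\Omega \setminus \Omega_R$. By the nesting in \eqref{eq:truncation}, we have $\Omega \setminus \Omega_R \subset \bar B_{R+1}$, and the inner boundary $\Omega \cap \partial\Omega_R$ lies in the annulus $\bar B_{R+1} \setminus B_R \subset \bar B_{R+1}$. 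So it suffices to control $v_H$ on the fixed bounded set $\bar B_{R+1}$.

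The key observation is that shifting $v$ far up in the $\tbf{e}_y$ direction vacates any fixed bounded set. Indeed, $v_H$ is defined to vanish outside $\Omega + H\tbf{e}_y = \{(x', y) : y > \phi(x') + H\}$, so $v_H$ is supported in this upward-shifted epigraph. Because $\phi$ is continuous, it is bounded below on the compact set $\{\abs{x'} \leq R+1\}$, say by $m$. Hence for every $x = (x', x_y) \in \bar B_{R+1}$ we have $x_y \leq R+1$, and once $H > R + 1 - m$ the defining inequality $x_y > \phi(x') + H$ fails. Therefore $v_H(x) = 0$ for every such $x$; that is, $v_H \equiv 0$ on $\bar B_{R+1}$.

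Fixing such an $H$, we conclude $v_H \equiv 0$ on both $\Omega \cap \partial\Omega_R$ and $\Omega \setminus \Omega_R$. Thus $u \geq 0 = v_H$ on the inner boundary, and Proposition~\ref{prop:asymptotic-MP} yields $u \geq v_H$ throughout $\Omega_R$. Since $v_H \equiv 0 \leq u$ on the bounded remainder $\Omega \setminus \Omega_R$ as well, and $\Omega = \Omega_R \cup (\Omega \setminus \Omega_R)$, we obtain $u \geq v_H$ on all of $\Omega$, as claimed.

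I expect no serious obstacle here: all of the spectral and maximum-principle difficulty has been absorbed into Propositions~\ref{prop:far-stable} and \ref{prop:asymptotic-MP}. The only genuinely new input is the elementary remark that an upward shift of a solution on an epigraph eventually empties any fixed ball, which is immediate from the epigraph structure and the continuity of $\phi$. The mild point to keep straight is simply that ``up'' (the $+\tbf{e}_y$ direction) points into $\Omega$, so large shifts move the support of $v_H$ deep into the interior and away from the compact core near $\partial B_R$.
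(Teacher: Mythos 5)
Your proof is correct and follows essentially the same route as the paper: the paper also chooses $H$ so that $(\Omega + H\tbf{e}_y) \cap B_{R+1} = \emptyset$ (taking $H = \sup_{B_{R+1}^{d-1}}(-\phi) + R + 1$, which is exactly your threshold $H > R+1-m$), concludes $u \geq 0 = v_H$ on $\Omega \cap \partial\Omega_R$, and invokes Proposition~\ref{prop:asymptotic-MP}. Your write-up merely makes explicit the final step that $v_H \equiv 0$ on the remainder $\Omega \setminus \Omega_R \subset \bar B_{R+1}$, which the paper leaves implicit.
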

\begin{proof}
  Take $R > 0$ as in Proposition~\ref{prop:asymptotic-MP} and let
  \begin{equation*}
    h_R \coloneqq \sup_{B_{R}^{d-1}}(-\phi).
  \end{equation*}
  Then if $H \coloneqq h_{R+1} + R+1$, $(\Omega + h \tbf{e}_y) \cap B_{R+1} = \emptyset$ \corr{for all $h \geq H$}.
  By \eqref{eq:truncation}, $u \geq 0 = v_h$ on $\Omega \cap \partial \Omega_R$.
  Then the corollary follows from Proposition~\ref{prop:asymptotic-MP}.
\end{proof}
We can finally apply a sliding argument to prove uniqueness on $\Omega$.
\begin{proof}[Proof of Theorem~\textup{\ref{thm:AUL}}]
  Take $R > 0$ as in Proposition~\ref{prop:asymptotic-MP} and define
  \begin{equation*}
    h_* \coloneqq \inf\{h \geq 0 \mid u \geq v_{h'} \corr{\text{ for all } h' \geq h}\}.
  \end{equation*}
  By Corollary~\ref{cor:order}, $0 \leq h_* \leq H$.
  We wish to show that $h_* = 0.$
  Suppose for the sake of contradiction that $h_* > 0$.
  By continuity, we have $u \geq v_{h_*}$.
  Since $h_* > 0$, $u \neq v_{h_*}$.
  Thus by the strong maximum principle and compactness,
  \begin{equation*}
    \inf_{\bar\Omega_{h_*/2} \cap \bar B_{R+1}} (u - v_{h_*}) > 0.
  \end{equation*}
  Hence by continuity, there exists $\eps \in (0, h_*/2)$ such that $u \geq v_{h_* - \eps}$ on $\Omega \cap \bar B_{R+1}$.
  Then by \eqref{eq:truncation} and Proposition~\ref{prop:asymptotic-MP}, we have $u \geq v_{h_* - \eps}$ in $\Omega_R$, and hence in $\Omega$.
  This contradicts the definition of $h_*$, so in fact $h_* = 0$.
  That is, $u \geq v$.
  By symmetry, $u = v$.
  Finally, since $u \geq u_h$ for all $h \geq 0$, we have $\partial_y u \geq 0$.
  Then the strong maximum principle implies that $\partial_y u > 0$.
\end{proof}

\subsection{A marginally stable epigraph}
\label{sec:wells}

The previous proof points to a broad principle: an epigraph $\Omega$ enjoys uniqueness whenever its far-field limits are (isometries of) epigraphs with unique, strictly stable solutions.
This suggests a possible induction on ever larger classes of epigraphs.
At step $n \in \N$, we might prove the uniqueness and strict stability of positive bounded solutions of \eqref{eq:main} on epigraphs in some class $\m{C}_n$.
By the reasoning in the preceding subsection, this would imply uniqueness on the class $\m{C}_{n+1}$ of all epigraphs whose far-field limits are isometries of epigraphs in $\m{C}_n$.
For instance, we could take the set of uniformly Lipschitz epigraphs as the base case $\m{C}_1$; then $\m{C}_2$ would be the set of AUL epigraphs.
We might hope that such a procedure could lead to a proof of uniqueness on \emph{all} epigraphs.

We record two difficulties with this program.
First, we have only demonstrated half of the inductive step: we have shown uniqueness but \emph{not} strict stability for solutions on AUL epigraphs (though stability seems plausible).
Second, there exist epigraphs whose solutions are at best marginally stable:
\begin{proposition}
  \label{prop:marginal}
  There exists a positive reaction $f$ and an epigraph $\Omega \subset \R^2$ such that for any positive bounded solution $u$ of \eqref{eq:main} on $\Omega$,
  \begin{equation*}
    \lambda(-\Delta - f'(u), \Omega) \leq 0.
  \end{equation*}
\end{proposition}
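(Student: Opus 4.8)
The plan is to obtain the spectral bound from a single \emph{local} eigenvalue estimate, using monotonicity of the principal eigenvalue in the domain, and to engineer $\Omega$ so that this local estimate is forced to degenerate. The guiding observation is that on a straight channel of width $L$ the operator $-\Delta - f'(0)$ has principal eigenvalue $(\pi/L)^2 - f'(0)$, which vanishes exactly at the critical width $L_* \coloneqq \pi/\sqrt{f'(0)}$. I would therefore attach to $\Omega$ a sequence of ever-deeper ``wells'' whose widths approach $L_*$ from below. In a well narrower than $L_*$ the linearization at $0$ is strictly stable, so any positive solution $u$ decays to $0$ deep inside; there the linearization $-\Delta - f'(u)$ is close to the critical operator $-\Delta - f'(0)$, whose eigenvalue is nearly $0$. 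Capping $\lambda(-\Delta - f'(u),\Omega)$ by the eigenvalue on a box buried in such a well then yields the claim.

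For the construction I would take $f$ to be a concave positive reaction, so that $f(s) \le f'(0)s$ on $[0,1]$; after rescaling, assume $f'(0) = 1$ and hence $L_* = \pi$. A Fisher nonlinearity $f(s) = s(1-s)$ on $[0,1]$, extended so that $f < 0$ and $f'(1) < 0$ beyond $1$, is a concrete choice. For the domain I would let $\Omega \subset \R^2$ be a uniformly $\m{C}^{2,\gamma}$ epigraph carrying the wells of Figure~\ref{fig:counter}(b), the $n$-th of which is, after smoothing its corners, essentially a rectangle of width $L_n \coloneqq \pi(1 - 1/n)$ and depth $d_n$. Since the widths stay bounded and only the depths diverge, $\Omega$ remains a genuine uniformly smooth epigraph, and because it contains arbitrarily large balls one has $\lambda(-\Delta - f'(0),\Omega) = -1 < 0$, so that $\Omega$ carries a positive bounded solution $u$; fix any such $u$.

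The argument then proceeds in three steps. First, working in the $n$-th well $W_n = (0,L_n)\times(0,d_n)$ with mouth at $y = d_n$, I would use $f(u) \le u$ to get $(-\Delta-1)u \le 0$ and compare $u$ against a suitable multiple of the barrier $\sin(\pi x/L_n)\,\sinh(\mu_n y)/\sinh(\mu_n d_n)$, where $\mu_n \coloneqq \sqrt{(\pi/L_n)^2 - 1} > 0$; since $-\Delta - 1$ obeys the maximum principle on $W_n$, this forces $u \lesssim e^{-\mu_n(d_n - y)}$, so that on a box $D_n \subset W_n$ of height $\ell_n \to \infty$ centered at mid-depth one has $\rho_n \coloneqq \sup_{D_n} u \to 0$. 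Second, since $-f'(u) \le -\inf_{[0,\rho_n]} f'$ on $D_n$, monotonicity of $\lambda$ in the potential together with the explicit rectangular Dirichlet eigenvalue gives
\begin{equation*}
  \lambda(-\Delta - f'(u), D_n) \le (\pi/L_n)^2 + (\pi/\ell_n)^2 - \inf_{[0,\rho_n]} f'.
\end{equation*}
As $n \to \infty$ we have $L_n \to \pi$, $\ell_n \to \infty$, and $\rho_n \to 0$, so the right-hand side tends to $1 + 0 - 1 = 0$. Third, since $D_n \subset \Omega$, monotonicity of the principal eigenvalue in the domain gives $\lambda(-\Delta - f'(u),\Omega) \le \lambda(-\Delta - f'(u), D_n)$ for every $n$; letting $n \to \infty$ yields $\lambda(-\Delta - f'(u),\Omega) \le 0$, as claimed.

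The hard part is the tuning in the first step. As $L_n \uparrow \pi$ the spectral gap $\mu_n$ collapses, so the wells must be deepened quickly enough that $\mu_n d_n \to \infty$ while still accommodating a box whose height $\ell_n$ grows---the solution must be driven uniformly small on a region that is itself expanding. This competition between the vanishing gap and the required depth is precisely what renders the limiting channel critical; wells of any \emph{fixed} subcritical width would leave a strictly positive residual eigenvalue and destroy marginality. A secondary technical point is to confirm that the smoothed, nearly-rectangular wells genuinely assemble into a uniformly $\m{C}^{2,\gamma}$ epigraph, and that the barrier comparison survives the slight tapering of the well walls, which I would handle by fitting a rectangular sub-well of width just below $L_n$.
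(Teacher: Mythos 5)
Your proposal is sound and takes a genuinely different route from the paper's. The paper manufactures marginality in the \emph{reaction}: Appendix~\ref{app:marginal} interpolates between a strong-KPP and a weak-KPP nonlinearity and locates the critical interpolation parameter at which nonuniqueness first appears, producing a reaction $f$ and a \emph{supercritical} width $L$ (i.e.\ $f'(0) > \pi^2 L^{-2}$) whose interval problem on $(0,L)$ has a unique but exactly marginally stable solution $\phi$ (Proposition~\ref{prop:marginal-interval}). The wells then have limiting width $L$, any solution $u$ on $\Omega$ is shown \emph{not} to vanish in them and to converge along them to $\phi$ (via Lemma~4.5 of \cite{BG22b} and parabolic comparison), and the bound follows from upper semicontinuity of $\lambda$ under local limits together with Lemma~\ref{lem:product}. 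You instead manufacture marginality in the \emph{geometry}: the reaction is a fixed concave (Fisher) nonlinearity, the wells are \emph{subcritical} with widths $L_n \nearrow \pi/\sqrt{f'(0)}$, solutions decay exponentially inside them by an explicit linear barrier (this is where $f(s) \le f'(0)s$ enters), and the conclusion comes from plain domain monotonicity of $\lambda$ applied to large boxes buried in the wells, on which the potential is within $o(1)$ of $-f'(0)$ and the explicit box eigenvalue tends to $(\pi/L_*)^2 - f'(0) = 0$. Your route is substantially more elementary: no shooting/ODE analysis, no limit-domain compactness, no identification of a limiting profile. What the paper buys in exchange is a stronger phenomenon that integrates with its broader narrative: in its example the wells carry an order-one nontrivial solution whose own linearization is marginal, and the intermediate interval result shows that Lemma~\ref{lem:unique-stable} (uniqueness implies weak stability) cannot be upgraded to strict stability. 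Incidentally, since Fisher is strong-KPP, $f'(u) < f(u)/u$ gives $\lambda(-\Delta - f'(u), \Omega) \ge \lambda(-\Delta - f(u)/u, \Omega) \ge 0$ automatically (using $u$ itself as test function), so your example in fact pins $\lambda(-\Delta - f'(u),\Omega) = 0$ exactly; this is a useful consistency check, not a contradiction.

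Three small repairs, none a gap. First, the maximum principle you invoke on $W_n$ should be justified by $\lambda(-\Delta - 1, W_n) = (\pi/L_n)^2 + (\pi/d_n)^2 - 1 > 0$, which is exactly strict subcriticality of the width. Second, the constant in the mouth comparison must be uniform over $n$ and over all solutions $0 \le u \le 1$; it is, by global gradient bounds on a uniformly smooth domain, and this uniformity matters because the depths $d_n$ must be fixed before $u$ is given. Third, for the tapered walls, an \emph{inscribed} rectangle is awkward because $u$ does not vanish on its lateral sides; it is cleaner to define the barrier on a \emph{circumscribing} rectangle of width $L_n + \eta_n < \pi$ containing the well and run the comparison on the well region itself, where $u = 0$ on the portions of its boundary lying on $\partial\Omega$. (The inscribed version can be repaired with a corrector of size proportional to the taper $\eta_n$, at the cost of a constant that blows up like $(\pi - L_n)^{-1}$, so one must take $\eta_n \ll 1/n$; since you design the domain, this is available, but the circumscribed barrier avoids the issue entirely.)
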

This proposition asserts nothing about uniqueness on $\Omega$, and uniqueness may well hold on all epigraphs.
Nonetheless, this marginally stable example shows that the putative induction described would break down once $\Omega \in \m{C}_n$.

The epigraph we construct in the proof of Proposition~\ref{prop:marginal} has an infinite sequence of ever deeper ``wells'' of some width $L$, as in Figure~\ref{fig:counter}(b).
As a result, $\Omega$ has the infinite strip $(0, L) \times \R$ as a local limit.
By choosing $f$ and $L$ carefully, we can ensure that any solution $u$ has vanishing eigenvalue on this limit strip.
We emphasize that $\Omega$ is not asymptotically uniformly Lipschitz, so Proposition~\ref{prop:marginal} does contradict Theorem~\ref{thm:AUL}.
The proof of Proposition~\ref{prop:marginal} has a distinct character from the rest of the paper, so we relegate it to Appendix~\ref{app:marginal}.

\section{Non-uniqueness in domains with pockets}
\label{sec:pocket}

Thus far, we have largely focused on proving uniqueness in domains satisfying various structural assumptions.
In contrast, in this section we describe a robust method to produce examples of nonuniqueness.

The idea is so attach a bounded ``pocket'' $\Pi$ to a given domain $\Omega_0$ via a narrow ``bridge'' $\Gamma$ to form a ``composite domain'' $\Omega = \Pi \cup \Gamma \cup \Omega_0$; see Figure~\ref{fig:pocket-precise} for an illustration.
\begin{figure}[t]
  \centering
  \includegraphics[width = 0.7\linewidth]{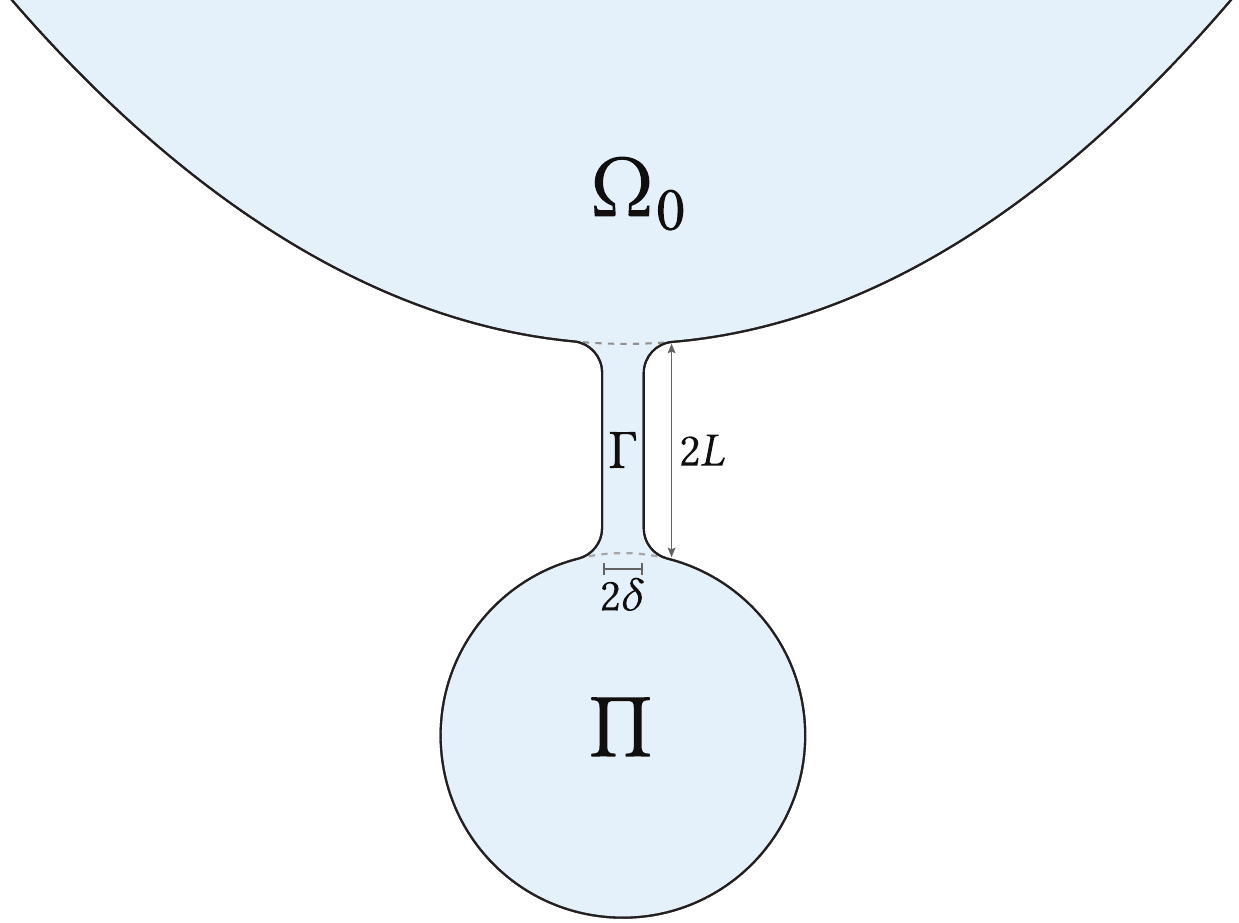}
  \caption[Detailed domain with pocket]{A pocket $\Pi$ attached to $\Omega_0$ via a cylindrical bridge $\Gamma$ of length $2L$ and diameter $2\delta$.}
  \label{fig:pocket-precise}
\end{figure}
Given a pocket $\Pi$, we choose a positive reaction $f$ such that \eqref{eq:main} admits multiple positive solutions on $\Pi$.
We show that this nonuniqueness extends to the entire composite domain $\Omega$ provided the bridge $\Gamma$ is sufficiently narrow.

With this motivation, we study the behavior of solutions on the narrow bridge $\Gamma = B_\delta^{d-1} \times (-L, L)$, where $\delta, L > 0$ and $B_\delta^{d-1}$ denotes the $\delta$-ball in $\R^{d-1}$.
Technically, we must augment $\Gamma$ slightly at either end to smoothly join $\Pi$ and $\Omega_0$.
We elide this point, as it poses no problems for our argument.
We denote coordinates on $\Gamma$ by $(x', y)$.
Given $\mu > 0$, let $\m{F}_\mu$ denote the set of Lipschitz functions $f \colon [0, 1] \to \R$ such that $f(0) = f(1) = 0$ and $\op{Lip} f \leq \mu$.
In an exception to our standing assumptions, the reactions in $\m{F}_\mu$ need not be smoother than Lipschitz.
\begin{lemma}
  \label{lem:exp-decay}
  Suppose $\Gamma \subset \Omega$ and $\partial B_\delta^{d-1} \times (-L, L) \subset \partial \Omega$ for some $L > 0$.
  For every $\mu > 0$, there exists $\delta(d, \mu, L) > 0$ such that for all $f \in \s{F}_\mu$ and every solution $0\leq u \leq 1$ of \eqref{eq:main},
  \begin{equation*}
     u(0, \anon) \leq \frac{1}{4} \phi,
   \end{equation*}
   where $\phi$ denotes the positive Dirichlet principal eigenfunction of $-\Delta$ on $B_\delta^{d-1}$ such that $\norm{\phi}_\infty = 1$.
\end{lemma}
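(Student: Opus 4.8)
The plan is to exploit the narrowness of the bridge: when $\delta$ is small, the transverse Dirichlet eigenvalue $\lambda_1(\delta) \coloneqq \lambda(-\Delta, B_\delta^{d-1}) = \lambda_1(1)\,\delta^{-2}$ swamps the reaction, forcing $u$ to decay exponentially from the ends of $\Gamma$ toward its center. Since $f(0) = 0$ and $\op{Lip} f \leq \mu$, we have $f(s) \leq \mu s$ for $s \in [0,1]$; hence any solution satisfies $(-\Delta - \mu)u = f(u) - \mu u \leq 0$, so $u$ is a subsolution of $\m{L}_0 \coloneqq -\Delta - \mu$ on $\Gamma$. I would compare $u$ against the explicit product supersolution
\begin{equation*}
  w(x', y) \coloneqq S\,\phi(x')\,\frac{\cosh(\nu y)}{\cosh(\nu L_0)}, \qquad \nu \coloneqq \sqrt{\lambda_1(\delta) - \mu},
\end{equation*}
where $L_0 \coloneqq L/2$ and $S > 0$ is fixed below. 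A separation-of-variables computation gives $\m{L}_0 w = (\lambda_1(\delta) - \nu^2 - \mu)w = 0$, so $w$ is an exact positive solution of $\m{L}_0$ that vanishes on the lateral wall, exactly as $u$ does.

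To run the comparison on the sub-cylinder $\Gamma_0 \coloneqq B_\delta^{d-1} \times (-L_0, L_0)$ (using $L_0 < L$ to stay clear of the junctions at $y = \pm L$), I need $w \geq u$ on $\partial\Gamma_0$. Both functions vanish on the lateral boundary, so the entire content is the end-face bound $u(x', \pm L_0) \leq S\,\phi(x')$. This is the crux of the argument: I claim the ratio $u/\phi$ is bounded on these faces by a constant $S = S(d, \mu)$ that is \emph{uniform} over all $f \in \s{F}_\mu$, all solutions $u$, and all small $\delta$. To see this, rescale by $\delta$, setting $\hat u(\hat x, \hat y) \coloneqq u(\delta \hat x, \delta \hat y)$ on the unit cylinder $B_1^{d-1} \times (-L_0/\delta, L_0/\delta)$ and noting that $\phi(\delta \hat x) = \phi_1(\hat x)$, where $\phi_1$ is the sup-normalized principal eigenfunction on $B_1^{d-1}$. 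Then $-\Delta \hat u = \delta^2 f(u)$ with $\norm{\delta^2 f(u)}_\infty \leq \mu \delta^2$, so Calder\'on--Zygmund and Sobolev estimates---which require only $f(u) \in L^\infty$, and therefore tolerate the merely Lipschitz reactions in $\s{F}_\mu$ where Schauder is unavailable---yield a uniform bound $\norm{\nab \hat u}_\infty \leq C(d)(1 + \mu)$ up to the lateral wall at $\hat y = \pm L_0/\delta$. (These slices lie at distance $L/(2\delta) \geq 1$ from the rescaled ends once $\delta < L/2$, so only a boundary, not a corner, estimate is needed.) Since $\hat u$ vanishes on the wall while $\phi_1$ obeys a Hopf lower bound there and is bounded below in the interior, the ratio $u/\phi = \hat u/\phi_1$ is controlled by a uniform $S(d,\mu)$.

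With $S$ fixed, the maximum principle for $\m{L}_0$ on $\Gamma_0$---valid because $\lambda(-\Delta - \mu, \Gamma_0) = \lambda_1(\Gamma_0) - \mu \geq \lambda_1(\delta) - \mu > 0$ for small $\delta$---shows that $w - u$ is a supersolution of $\m{L}_0$ nonnegative on $\partial\Gamma_0$, whence $u \leq w$ throughout $\Gamma_0$. Evaluating at the central slice $y = 0$ gives
\begin{equation*}
  u(x', 0) \leq \frac{S}{\cosh(\nu L_0)}\,\phi(x').
\end{equation*}
Since $\nu = \sqrt{\lambda_1(1)\,\delta^{-2} - \mu} \to \infty$ as $\delta \to 0$, I may choose $\delta(d, \mu, L) > 0$ small enough that $\cosh(\nu L_0) \geq 4S$, which yields $u(\anon, 0) \leq \tfrac14 \phi$, as claimed. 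The main obstacle is precisely the uniform end-face ratio bound $u/\phi \leq S$: the comparison and the exponential gain $\cosh(\nu L_0)^{-1}$ are clean, but the ratio must be controlled simultaneously for every Lipschitz reaction and every solution, which is exactly what the rescaling together with $L^\infty$-data elliptic estimates delivers.
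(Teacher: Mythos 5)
Your proof is correct and follows essentially the same two-step strategy as the paper's: a rescaled gradient estimate combined with a Hopf-type lower bound on the eigenfunction gives the uniform ratio bound $u \leq S\,\phi$ (the paper's $u|_\Gamma \leq A\phi$, obtained the same way), and comparison with the separated profile $\phi(x')\cosh(\nu y)$, $\nu = \sqrt{\lambda(-\Delta, B_\delta^{d-1}) - \mu}$, then yields the exponential smallness at the mid-slice once $\lambda(-\Delta, B_\delta^{d-1}) > \mu$. The only difference is cosmetic: you run a direct elliptic maximum principle (justified by the positive principal eigenvalue) on the half-length cylinder, whereas the paper reaches the same $\cosh$ steady state as the long-time limit of a linear parabolic comparison on the full bridge.
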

\noindent
Thus solutions of \eqref{eq:main} on $\Omega$ are small at the midpoint of a narrow bridge.
\begin{proof}
  Let $\phi$ denote the positive principal eigenfunction of $-\Delta$ on $B_\delta^{d-1}$ normalized by $\norm{\phi}_\infty = 1$.
  Let $\al \coloneqq \lambda(-\Delta, B^{d-1})$, so that $\lambda(-\Delta, B_\delta^{d-1}) = \al \delta^{-2}$.
  We assume $\al \delta^{-2} > \mu$.
  Take $f \in \s{F}_\mu$, and a solution $0 \leq u \leq 1$ of \eqref{eq:main}.
  
  Because $f$ is $\m{C}^{0, 1}$, Schauder estimates imply that $\abs{\nab u} \lesssim_{d, \mu} \delta^{-1},$ where the subscript $\lesssim_{d,\mu}$ indicates that the implied constant can depend on $d$ and $\mu$.
  After all, if we dilate $\Omega$ by the factor $\delta^{-1}$, we obtain a uniformly smooth domain on which the gradient is order $1$.
  Shrinking to the original size, the gradient grows by a factor of $\delta^{-1}$.
  Integrating from the boundary, we find $u \lesssim_{d, \mu} \delta^{-1} \op{dist}(x, \partial \Omega)$.
  On the other hand, $\phi(x') \gtrsim_d \delta^{-1} \op{dist}(x', \partial B_\delta^{d-1})$.
  It follows that
  \begin{equation}
    \label{eq:initial-bd}
    u|_{\Gamma} \leq A \phi
  \end{equation}
  for some $A(d, \mu) > 0$ independent of $\delta$.
  
  By \eqref{eq:initial-bd} and the parabolic comparison principle, $u|_\Gamma$ is bounded above by the solution $z$ of the linear evolution equation
  \begin{equation*}
    \begin{cases}
      \partial_t z = (\Delta + \mu) z & \text{in } \Gamma,\\
      z = 0 & \text{on } \partial B_\delta^{d-1} \times (-L, L),\\
      z = A\phi & \text{on } B_\delta^{d-1} \times \{\pm L\},\\
      z(t = 0, \anon) = A \phi & \text{in } \Gamma.
    \end{cases}
  \end{equation*}
  That is, $z \geq u$ in $\Gamma$.
  Decomposing $z$ in the cross-sectional eigenbasis of the Laplacian, we see that $z = Z(t, y) \phi(x')$ for a function $Z$ solving the one-dimensional equation
  \begin{equation*}
    \begin{cases}
      \partial_t Z = (\partial_y^2 + \mu - \al \delta^{-2}) Z & \text{in } (-L, L),\\
      z(t, \pm L) = A,\\
      z(0, \anon) = A & \text{in } (-L, L).
    \end{cases}
  \end{equation*}
  Because $\al \delta^{-2} > \mu$, $Z$ converges exponentially in time to the unique steady state
  \begin{equation*}
    Z(\infty, y) = A \frac{\cosh\left(y\sqrt{\al \delta^{-2} - \mu}\right)}{\cosh\left(L\sqrt{\al \delta^{-2} - \mu}\right)}.
  \end{equation*}
  Because $\cosh \xi \geq \tfrac{1}{2} \e^\xi$, we obtain
  \begin{equation*}
    u(x', 0) \leq z(\infty, x', 0) = Z(\infty, 0) \phi(x') \leq 2 A(d, \mu) \e^{-L\sqrt{\al \delta^{-2} - \mu}} \phi(x').
  \end{equation*}
  We choose $\delta(d, \mu, L) > 0$ sufficiently small that $2A(d, \mu) \e^{-L\sqrt{\al \delta^{-2} - \mu}} \leq 1/4$.
\end{proof}
We now state our main nonuniqueness result.
\begin{theorem}
  \label{thm:pocket-precise}
  Given a bounded domain $\Pi$ and $L > 0$, there exist a positive reaction $f$ and $\delta > 0$ such that \eqref{eq:main} admits multiple positive bounded solutions on $\Omega = \Pi \cup \Gamma \cup \Omega_0$.
\end{theorem}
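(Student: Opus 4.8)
We prove nonuniqueness by realizing two positive bounded solutions of \eqref{eq:main} on $\Omega = \Pi \cup \Gamma \cup \Omega_0$: a ``large-pocket'' state and a ``small-pocket'' state, which the narrow bridge effectively decouples. First we fix the pocket $\Pi$ and invoke \cite[Proposition~1.4]{BG22b} to select a positive reaction $f$ for which \eqref{eq:main} on $\Pi$ admits a small solution $w^-$ with $\sup_\Pi w^- < \tfrac12$ and a large solution $w^+$ with $\sup_\Pi w^+ > \tfrac12$. As in the double-humped construction recalled in Section~\ref{sec:dilation}, we may arrange $f'(0) > \lambda(-\Delta, \Pi)$ and, taking $w^-$ to be the minimal solution, that $w^-$ is strictly stable, i.e. $\lambda(-\Delta - f'(w^-), \Pi) > 0$. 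Set $\mu \coloneqq \op{Lip} f$. With this $\mu$ and the given $L$, Lemma~\ref{lem:exp-decay} then provides a width $\delta > 0$ so that every bounded solution $u$ of \eqref{eq:main} on $\Omega$ satisfies $u(0, \anon) \le \tfrac14 \phi$ on the midsection $\Sigma = B_\delta^{d-1} \times \{0\}$, where $\phi$ is the principal eigenfunction on $B_\delta^{d-1}$. Since $\Omega \supset \Pi$, monotonicity of the principal eigenvalue in the domain gives $f'(0) > \lambda(-\Delta, \Omega)$, so by \cite[Proposition~1.8]{BG22b} the equation has minimal and maximal positive bounded solutions on $\Omega$.

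The large state is straightforward: extending $w^+$ by $0$ across the pocket mouth yields a generalized subsolution dominated by the supersolution $1$, so monotone iteration produces a solution $u^+$ with $u^+ \ge w^+$ on $\Pi$; in particular $\sup_\Pi u^+ > \tfrac12$, and $u^+ > 0$ on the connected domain $\Omega$ by the strong maximum principle. For the small state I would build a global supersolution $\bar u$ that equals $1$ on $\Omega_0$, stays below $\tfrac12$ on $\Pi$, and bridges the two through an exponential barrier. On $\Gamma$ I set $\bar u(x', y) = \phi(x')\, e^{\beta(y - L)}$ with $\beta \coloneqq \sqrt{\al \delta^{-2} - \mu}$ (real once $\delta$ is small) and $\al = \lambda(-\Delta, B^{d-1})$; then $-\Delta \bar u = (\al \delta^{-2} - \beta^2)\bar u = \mu \bar u \ge f(\bar u)$ since $0 \le f(s) \le \mu s$, so $\bar u$ is a supersolution on the bridge, with $\bar u|_{y = L} = \phi \le 1$ and the exponentially small mouth trace $\bar u|_{y = -L} = \phi\, e^{-2\beta L}$. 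On $\Pi$ I take $\bar u = w^- + t\Psi + h$, where $\Psi > 0$ is the principal eigenfunction of $\Delta + f'(w^-)$ on $\Pi$, so strict stability makes $w^- + t\Psi$ a supersolution for small $t > 0$, and $h$ is the tiny harmonic lift of the mouth data $\phi\, e^{-2\beta L}$; for $\delta$ small this stays a supersolution lying below $\tfrac12$. After the routine smoothing of the two necks (whose min-type junctions are concave and hence harmless), $\bar u$ is a generalized supersolution on $\Omega$ with $\sup_\Pi \bar u < \tfrac12$.

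Monotone iteration below $\bar u$, started from a positive subsolution supported in $\Omega_0$ (a rescaled ball eigenfunction as in Lemma~\ref{lem:sub}), remains trapped between the two and converges to a positive bounded solution $u^-$ with $\sup_\Pi u^- \le \sup_\Pi \bar u < \tfrac12$. As $\sup_\Pi u^- < \tfrac12 < \sup_\Pi u^+$, the solutions $u^-$ and $u^+$ differ on the pocket, which establishes the nonuniqueness claimed in Theorem~\ref{thm:pocket-precise}. The main obstacle is the construction and verification of $\bar u$: one must transmit the smallness guaranteed by Lemma~\ref{lem:exp-decay} from the bridge into the pocket and confirm that the small stable state $w^-$ survives the exponentially small perturbation entering through the mouth, despite the nonmonotonicity of $f$. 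This is precisely where strict stability of $w^-$ is used; equivalently, one may dispense with the explicit global supersolution and instead compare $u^-|_\Pi$ with $w^-$ through a stable maximum principle in the spirit of Lemma~\ref{lem:MP-deep}, since the linearization is stable near $w^-$. The junctions joining $\Gamma$ to $\Pi$ and to $\Omega_0$ pose no difficulty, exactly as in the augmentation of $\Gamma$ already elided above.
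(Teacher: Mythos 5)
Your overall architecture---decouple the pocket from $\Omega_0$ by a narrow bridge, push the large solution up from $w^+$ extended by zero, and trap a small solution under a barrier that is exponentially small across the bridge---matches the paper's strategy in spirit, and the large-solution half is correct. The small-solution half, however, has two genuine gaps. The first is the assertion that one ``may arrange'' the minimal solution $w^-$ on $\Pi$ to be strictly stable. Minimality and monotone iteration only yield weak stability, $\lambda(-\Delta - f'(w^-), \Pi) \geq 0$; a fold point with marginal stability cannot be excluded without a further argument, and both your barrier term $t\Psi$ and your fallback ``stable maximum principle'' comparison require strict positivity of this eigenvalue. The paper never needs any such stability: it first works with a degenerate reaction $f_0$ vanishing at $\tfrac12$, so that the constant $\tfrac12$ is an \emph{exact} supersolution on $\Pi_+$, and only afterwards perturbs $f_0$ to a positive reaction $f$ inside a window $(\tfrac12 - \eps, \tfrac12 + \eps)$ that the confined dynamics never reaches.

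The second gap is fatal to the construction as written: the glued function $\bar u$ is not a generalized supersolution across the pocket mouth. Both $w^-$ and $\Psi$ vanish on all of $\partial\Pi$, which includes the mouth, so on the pocket side $\bar u$ decreases to the value $h = \phi\, e^{-2\beta L}$ at the mouth with inward slope of order one (Hopf's lemma for $w^-$), while on the bridge side $\bar u = \phi\,e^{\beta(y - L)}$ increases away from the mouth. The two pieces live on complementary regions and meet along the mouth at a local minimum in the normal direction. That is a convex (valley) kink, not a concave one: $-\Delta\bar u$ acquires a \emph{negative} surface measure on the mouth, so $-\Delta \bar u \geq f(\bar u)$ fails there in the distributional sense, and min-gluing does not apply (it needs a tent-type crossing, as at the bridge--$\Omega_0$ junction, which is indeed harmless). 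Consequently the monotone iteration is not actually trapped below $\bar u$. Nor is this a technicality: mass genuinely enters the pocket through the mouth, where Lemma~\ref{lem:exp-decay} provides no smallness---the $\tfrac14\phi$ bound holds at the midsection $y = 0$, while at the mouth a solution is only bounded by $A\phi$ with $A$ of order one. For the same reason, your alternative of comparing $u^-|_\Pi$ with $w^-$ on $\Pi$ alone cannot work: $u^- - w^-$ need not be small near the mouth, where $w^-$ vanishes. The paper resolves exactly this point by placing the comparison interface at the midsection rather than the mouth: it compares the increasing parabolic iterates on $\Pi_+$ (pocket plus half bridge) with the parabolic solution $v$ having Dirichlet data $\tfrac14\phi$ on the midsection, and uses $f_0(\tfrac12) = 0$ to keep $v$ below $\tfrac12 - \eps$ for all time. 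To repair your argument you would need to move the gluing/comparison interface to the midsection, and either prove strict stability of $w^-$ or adopt the paper's two-step reaction construction.
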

\begin{proof}
  In~\cite[Proposition~1.4]{BG24}, we showed that there exists $f_0 \geq 0$ of the form in Figure~\ref{fig:reaction}(a) such that \eqref{eq:main} admits multiple positive solutions on $\Pi$ with reaction $f_0$.
  \begin{figure}[t]
    \centering
    \includegraphics[width = 0.9\linewidth]{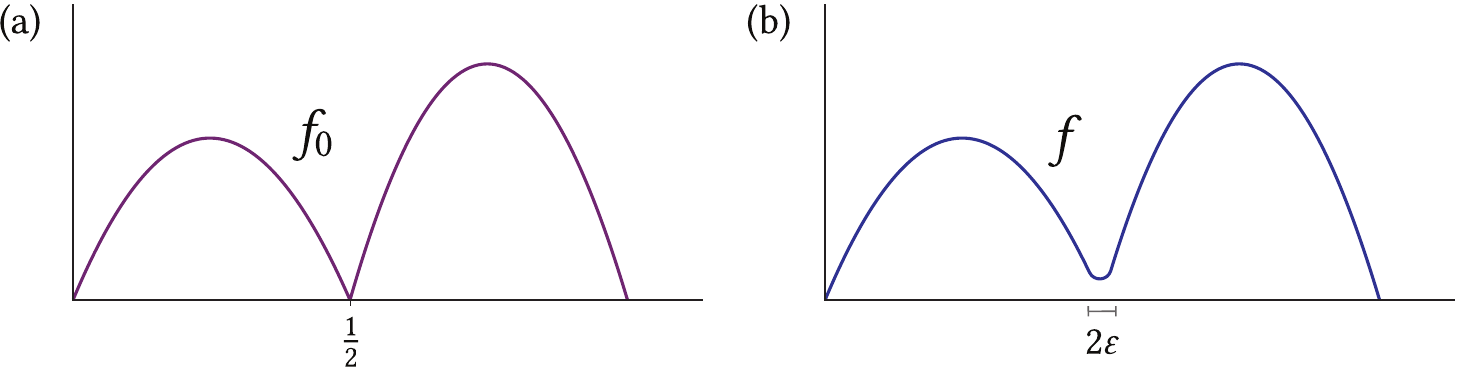}
    \caption[Reactions used to construct nonuniqueness]{
      Reactions used in the proof of nonuniqueness.
      (a) A preliminary reaction that is positive save for a zero at $s = 1/2$.
      (b) A true positive reaction.
    }
    \label{fig:reaction}
  \end{figure}
  In particular, there exist positive solutions $\ubar u_0, \bar{u}_0$ such that
  \begin{equation}
    \label{eq:distinct}
    \ubar{u}_0 < \frac{1}{2} \And \sup \bar{u}_0 > \frac{1}{2}.
  \end{equation}
  
  Let $\mu \coloneqq \op{Lip} f_0$ and take $\delta(d, \mu, L) > 0$ as in Lemma~\ref{lem:exp-decay}.
  Let $\phi$ denote the positive Dirichlet principal eigenfunction of $-\Delta$ on $B_\delta^{d-1}$ such that $\norm{\phi}_\infty = 1$.
  Let $\Pi_+ \coloneqq \Pi \cup \big(B_\delta^{d-1} \times [-L, 0)\big)$ denote the union of the pocket $\Pi$ and the bottom half of the bridge $\Gamma$.
  Then $\partial \Pi_+ \cap \Omega = B_\delta^{d-1} \times \{0\}$.
  By Lemma~\ref{lem:exp-decay}, every solution $0 \leq u \leq 1$ of \eqref{eq:main} on $\Omega$ with a reaction $f \in \m{F}_\mu$ satisfies
  \begin{equation}
    \label{eq:quarter}
    u \leq \frac{1}{4} \phi \quad \text{on } B_\delta^{d-1} \times \{0\}.
  \end{equation}

  Recall the solution $\ubar{u}_0 < 1/2$ of \eqref{eq:main} on $\Pi$ with reaction $f_0$ from \eqref{eq:distinct}.
  Extend $\ubar{u}_0$ by $0$ to $\Omega$ and let $v$ solve the parabolic equation
  \begin{equation}
    \label{eq:parabolic-half}
    \begin{cases}
      \partial_t v = \Delta v + f_0(v) & \text{in } \Pi_+,\\
      v = 0 & \text{on } \partial \Pi_+ \cap \partial \Omega,\\
      v = \tfrac{1}{4} \phi & \text{on } B_\delta^{d-1} \times \{0\},\\
      v(t=0, \anon) = \ubar{u}_0 & \text{in } \Pi_+.
    \end{cases}
  \end{equation}
  Then the initial condition $\ubar{u}_0$ is a subsolution of \eqref{eq:parabolic-half}, so $v$ is increasing in $t$ and has a long-time limit $v(\infty, \anon)$.
  On the other hand, $1/2$ is a supersolution of \eqref{eq:parabolic-half} because $f_0(1/2) = 0$, $\phi \leq 1$, and $\ubar{u}_0 < 1/2$.
  Thus by the strong maximum principle, $v(\infty, \anon) < 1/2$ on $\bar{\Pi}_+$.
  Because $\Pi_+$ is bounded, we have
  \begin{equation*}
    \eps \coloneqq \frac{1}{2} - \sup_{\Pi_+} v(\infty, \anon) > 0.
  \end{equation*}
  We now increase $f_0$ on the region $(1/2-\eps, 1/2+\eps)$ to be positive and smooth while remaining in $\s{F}_\mu$.
  Let $f$ denote the new reaction, as depicted in Figure~\ref{fig:reaction}(b).

  Recall the parabolic semigroup $\m{P}$ from \eqref{eq:parabolic}, defined now on the composite domain $\Omega$.
  Because $\ubar u_0$ is a subsolution on $\Omega,$ $\m{P}_t \ubar{u}_0$ is increasing in $t$ and has a long-time limit $\ubar u \coloneqq \m{P}_\infty \ubar{u}_0$ solving \eqref{eq:main}.
  By \eqref{eq:quarter}, $\m{P}_t \ubar{u}_0 \leq \ubar u \leq \phi/4$ on $\partial\Pi_+\cap \Omega$.
  It follows that $v$ is a supersolution for $\m{P}|_{\Pi_+}$.
  After all, $v \leq 1/2 - \eps$ for all time, so $f_0(v) = f(v)$.
  Therefore $(\m{P} \ubar{u}_0)|_{\Pi_+} \leq v$.
  In the long-time limit, we have $\ubar u|_{\Pi_+} \leq v(\infty, \anon)$.
  In particular,
  \begin{equation}
    \label{eq:lower}
    \sup_{\Pi} \ubar{u} < \frac{1}{2}.
  \end{equation}

  Finally, recall the larger solution $\bar{u}_0 > \ubar{u}_0$ of \eqref{eq:main} on $\Pi$ with reaction $f_0$, which satisfies $\sup_\Pi \bar{u}_0 > 1/2$.
  We extend $\bar{u}_0$ by $0$ to $\Omega$.
  This is a subsolution for \eqref{eq:main} (because $f \geq f_0$), so the limit $\bar{u} \coloneqq \m{P}_\infty \bar{u}_0 \geq \bar{u}_0$ exists and solves \eqref{eq:main}.
  It follows from \eqref{eq:distinct} that
  \begin{equation*}
    \sup_{\Pi} \bar u \geq \sup_\Pi \bar{u}_0 > \frac{1}{2}.
  \end{equation*}
  Comparing with \eqref{eq:lower}, we see that $\bar{u} \neq \ubar{u}$.
  That is, \eqref{eq:main} admits multiple bounded positive solutions on $\Omega$ with the positive reaction $f$.
\end{proof}
In Sections~\ref{sec:exterior-star}--\ref{sec:epigraphs}, we described three classes of structured domains that enjoy uniqueness (under certain conditions): exterior-star, large dilations, and epigraphs.
The nonuniqueness constructed above demonstrates the importance of these structural assumptions: if they are violated in a suitable compact set, uniqueness is lost.
We present this phenomenon in Figure~\ref{fig:nonunique}.
Domains in the first row satisfy our structural hypotheses and have unique positive bounded solutions, while domains in the second have pockets that support multiple positive bounded solutions.
\begin{figure}[t]
  \centering
  \includegraphics[width = \linewidth]{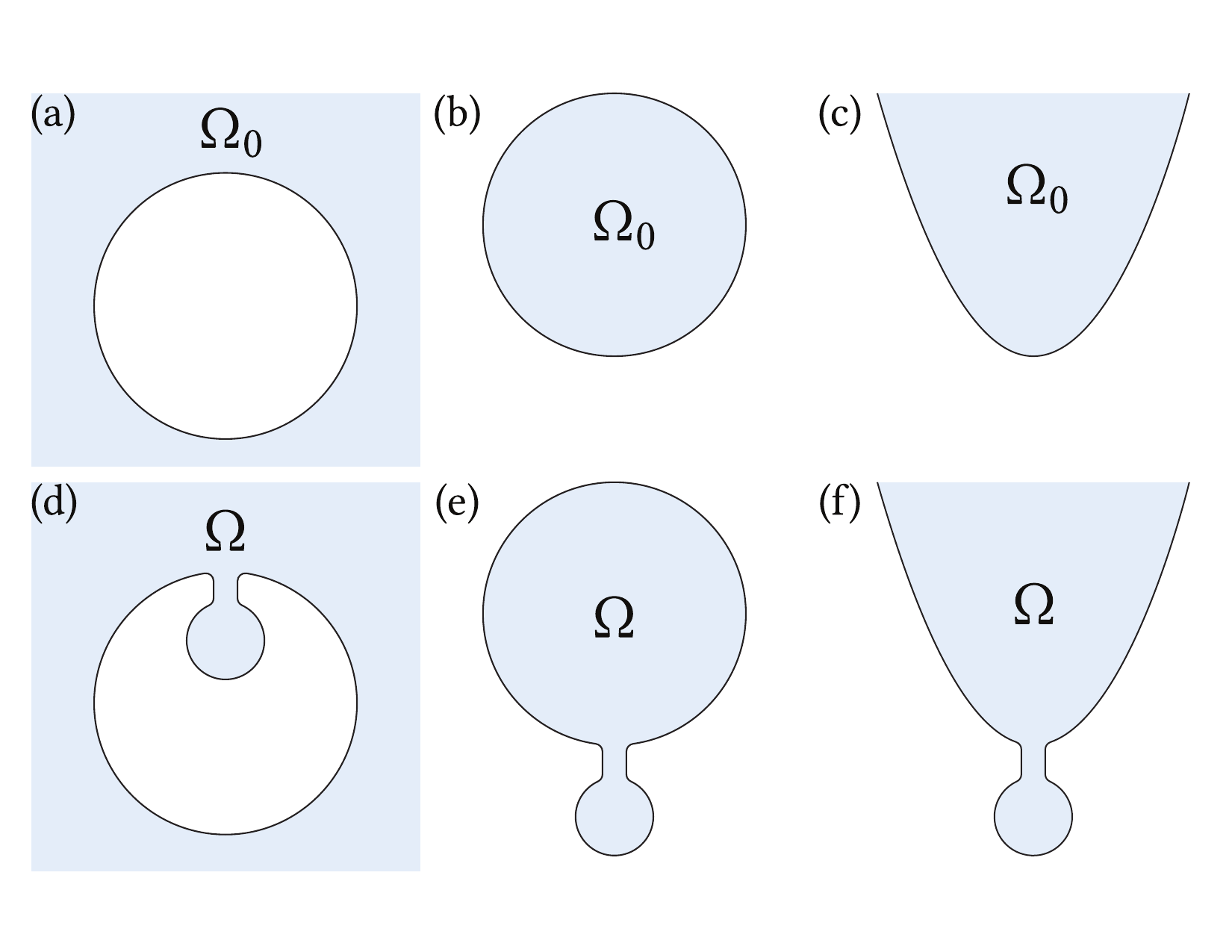}
  \caption[Examples of pockets causing nonuniqueness]{
    Theorems~\ref{thm:ES}--\ref{thm:flat} ensure that \eqref{eq:main} admits a unique positive bounded solution on the following domains $\Omega_0$:
    (a) The exterior of a ball.
    (b) A large ball $B_R$ for $R \gg 1$ depending on the reaction.
    (c) A parabola.
    However, Theorem~\ref{thm:pocket} provides a pocket $\Pi$ and a reaction $f$ such that \eqref{eq:main} admits multiple positive bounded solutions on the augmented domains $\Omega = \Pi \cup \Gamma \cup \Omega_0$ in (d)--(f).
  }
  \label{fig:nonunique}
\end{figure}

\section{Multiple solutions in a cylinder}
\label{sec:cylinder}

In this section, we make a detailed study of \eqref{eq:main} on cylinders with bounded smooth cross-section $\omega \subset \R^{d-1}$.
Our motivation is twofold.
First, as Figure~\ref{fig:counter}(b) demonstrates, cylinders can arise as limits of epigraphs.
Indeed, we can loosely think of a cylinder as an infinitely deep and infinitely steep epigraph.
Thus any form of nonuniqueness on cylinders contrasts strikingly with uniqueness on AUL epigraphs (Theorem~\ref{thm:AUL}).
Second, in the previous section we demonstrated how to robustly construct domains with multiple solutions (Theorem~\ref{thm:pocket-precise}).
There, we are only guaranteed the existence of two solutions.
In this section, we show that a cylinder can support uncountably many solutions.

As noted in the introduction, positive reactions on the line $\R$ admit only one positive bounded solution: their stable root $1$.
We contrast this with bistable reactions, namely $f$ for which $f < 0$ on $(0, \theta)$ and $f > 0$ on $(\theta, 1)$ for some $\theta \in (0, 1)$.
On $\R$, one can perturb around the unstable intermediate root $\theta$ to produce (unstable) oscillatory solutions on $\R$.
These vary in $x$ and are thus not translation-invariant.

We use this phenomenon as a guide to Proposition~\ref{prop:cylinder}.
In a precise sense, positive reactions can appear ``bistable'' on the interval: they may admit multiple stable solutions separated by intermediate unstable solutions.
Moving up in dimension from the interval to the strip, we can perturb around the unstable cross-section to produce oscillatory solutions in the strip.

\subsection{Instability on the interval}
\label{sec:unstable-interval}

As a first step, we show that reactions with $f''(0) > 0$ admit unstable solutions with particularly simple spectra.
\begin{lemma}
  \label{lem:simple-unstable}
  Let $f$ be a $\m{C}^2$ positive reaction with $f''(0) > 0$.
  Then there exists a length $L > 0$ and a solution $\phi$ of \eqref{eq:main} on $(0, L)$ such that the Dirichlet spectrum of the operator $-\partial_x^2 - f'(\phi)$ consists of one negative (principal) eigenvalue and infinitely many positive eigenvalues.
\end{lemma}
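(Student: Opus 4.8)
The plan is to realize $\phi$ as a small-amplitude symmetric bump and to read off its spectrum from the one-dimensional phase plane. Positive solutions of \eqref{eq:main} on an interval are symmetric about the midpoint and are parametrized by their amplitude $\alpha = \max\phi \in (0,1)$, with length given by the time map
\begin{equation*}
  T(\alpha) = \sqrt{2}\int_0^\alpha \frac{ds}{\sqrt{F(\alpha) - F(s)}}, \qquad F(s) \coloneqq \int_0^s f.
\end{equation*}
First I would expand $T$ as $\alpha \to 0^+$. Writing $f(s) = f'(0)s + \tfrac12 f''(0)s^2 + o(s^2)$ and substituting $s = \alpha t$, a direct computation gives $T(\alpha) = \pi/\sqrt{f'(0)} - c\,f''(0)\,\alpha + o(\alpha)$ for some constant $c > 0$. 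Since $f''(0) > 0$, this shows $T'(0^+) < 0$. I then fix a small $\alpha > 0$, set $L \coloneqq T(\alpha)$, let $\phi$ be the corresponding bump, and record that $T'(\alpha) < 0$.

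Next I would diagonalize $\m{L} \coloneqq -\partial_x^2 - f'(\phi)$ by symmetry. Its Dirichlet spectrum on $(0,L)$ is discrete and accumulates only at $+\infty$, so ``infinitely many positive eigenvalues'' is automatic and the claim reduces to showing that exactly one eigenvalue is negative and none vanish. Eigenfunctions are even or odd about $x = L/2$: odd ones restrict to Dirichlet eigenfunctions of $\m{L}$ on the half-interval $(0,L/2)$, while even ones restrict to Dirichlet--Neumann (DN) eigenfunctions there. For the odd part, $\phi'$ solves $\m{L}\phi' = 0$, is positive on $(0,L/2)$, and satisfies $\phi'(0) > 0$ and $\phi'(L/2) = 0$. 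A Wronskian comparison of $\phi'$ against the principal Dirichlet eigenfunction $e_1$ on $(0,L/2)$ yields $e_1'(0)\phi'(0) = \mu_1 \int_0^{L/2} e_1\phi'$, forcing the principal Dirichlet eigenvalue $\mu_1 > 0$. Hence every odd eigenvalue is strictly positive.

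The heart of the argument is the even part, where I would tie the sign of the principal DN eigenvalue to $T'(\alpha)$. Shooting from the left with $\phi(0) = 0$, $\phi'(0) = p$, let $x_* = x_*(p) = L/2$ be the first critical point and $\psi \coloneqq \partial_p \phi$ the linearized flow, so $\m{L}\psi = 0$, $\psi(0) = 0$, $\psi'(0) = 1$. Differentiating $\phi'(x_*) = 0$ and $\alpha = \phi(x_*)$ in $p$ gives $T'(\alpha) = 2\psi'(x_*)/\big(f(\alpha)\psi(x_*)\big)$, and Sturm separation against $\phi'$ (together with the positivity $\mu_1 > 0$ from the odd part) shows $\psi > 0$ on $(0,x_*]$; hence $\op{sign} T'(\alpha) = \op{sign}\psi'(x_*)$. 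A second Wronskian computation against the principal DN eigenfunction gives $e_1(x_*)\psi'(x_*) = \mu_1^{DN}\int_0^{x_*} e_1\psi$, so $\op{sign}\mu_1^{DN} = \op{sign}\psi'(x_*) = \op{sign} T'(\alpha) < 0$. Thus the principal even eigenvalue is strictly negative, and it is the principal eigenvalue of $\m{L}$ on all of $(0,L)$. Finally, to exclude further negative eigenvalues I exploit the small-amplitude regime: as $\alpha \to 0^+$ we have $\phi \to 0$ and $L \to \pi/\sqrt{f'(0)}$, so $\m{L} \to -\partial_x^2 - f'(0)$, whose next DN eigenvalue on the half-interval converges to $8 f'(0) > 0$. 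Hence for small $\alpha$ the second even eigenvalue is positive, leaving exactly one negative eigenvalue and no zero eigenvalue.

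I expect the main obstacle to be the correspondence in the third paragraph between $\op{sign} T'(\alpha)$ and the sign of the principal even eigenvalue: it requires the two shooting/Wronskian identities together with the Sturm-separation fact that $\psi$ remains positive up to $x_*$, which itself reuses the half-interval Dirichlet positivity established for the odd part. By contrast, the time-map expansion and the perturbative positivity of the second even eigenvalue are routine.
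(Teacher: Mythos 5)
Your proposal is correct in substance and shares the paper's skeleton---shooting parametrization, a time map that decreases near zero amplitude because $f''(0)>0$, and a small-amplitude continuity argument to push all non-principal eigenvalues above zero---but your key spectral step is genuinely different. The paper's Lemma~\ref{lem:instability} gets negativity of the principal eigenvalue directly: since $\psi \coloneqq \dot{\phi}_\beta$ satisfies $\psi(0)=0$, $\psi'(0)=1$, and $\psi(L_\beta)=\beta\dot{L}_\beta<0$, it has a first interior zero $\ell$, so $\psi$ is a positive Dirichlet eigenfunction with eigenvalue $0$ on $(0,\ell)$; domain monotonicity plus ODE uniqueness then force $\lambda\big(\!-\partial_x^2-f'(\phi_\beta),(0,L_\beta)\big)<0$. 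You instead split the spectrum by parity about $L/2$, dispose of the odd part with a Wronskian identity against $\phi'$, and convert $\op{sign}T'(\alpha)$ into the sign of the principal Dirichlet--Neumann eigenvalue through the shooting identity $T'(\alpha)=2\psi'(x_*)/\big(f(\alpha)\psi(x_*)\big)$, Sturm separation, and a second Wronskian identity. I verified these identities; they are correct, and Sturm separation alone already gives $\psi>0$ on $(0,x_*]$---the appeal to the odd-part positivity there is unnecessary. Your route is the classical time-map/Morse-index correspondence: it proves more (an exact sign equivalence, and positivity of the entire odd spectrum at every amplitude), at the price of more bookkeeping; the paper's argument is shorter and never needs the parity structure or symmetry of eigenfunctions.

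The one genuine gap is at the start: from $T(\alpha)=\pi/\sqrt{f'(0)}-c\,f''(0)\,\alpha+o(\alpha)$ you may conclude only that the difference quotient of $T$ at $0^+$ is negative, not that $T'(\alpha)<0$ at each fixed small $\alpha$---and your even-part argument requires the pointwise sign $T'(\alpha)<0$ at the very $\alpha$ you fix. Two easy repairs: differentiate the time map under the integral sign (after the substitution $s=\alpha t$) and expand $T'$ itself, which is in effect what the paper does when it shows that the quantity $E=\int_{r s_\alpha}^{s_\alpha}[f(t)-tf'(t)]\,dt$ is negative using strict convexity of $f$ near $0$; or apply the mean value theorem to your expansion to select arbitrarily small $\alpha$ with $T'(\alpha)\le -c\,f''(0)/2<0$. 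With either fix, your proof is complete.
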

\noindent
Informally, $\phi$ is strictly unstable in one direction but strictly stable in all others.

To prove Lemma~\ref{lem:simple-unstable}, we employ the shooting method.
Given $\al \in \R_+$, let $\phi_\al$ solve the initial-value ODE
\begin{equation}
  \label{eq:shooting}
  -\phi_\al'' = f(\phi_\al), \quad \phi_\al(0) = 0, \enspace \phi_\al'(0) = \al.
\end{equation}
We are interested in values of $\al$ for which $\phi_\al$ bends back down to zero at some positive location $L_\al$.
Recalling the $\R_+$-steady state $\varphi$ from~\cite[Theorem~1.1(A)]{BG22a}, let $\al^* \coloneqq \varphi'(0)$.
In the proof of \cite[Lemma~2.4]{BG22a}, we showed that $\phi_\al$ bends back to zero if and only if $\al \in (0, \al^*)$.
(The lemma is stated for other reaction classes, but this fact applies to positive reactions, as noted in the proof of Lemma~2.5 in the same paper.)
So for all $\al \in (0, \al^*)$, $\phi_\al$ has a first positive zero $L_\al$ and is thus a positive solution of \eqref{eq:main} on the interval $(0, L_\al)$.

Let $s_\al \coloneqq \phi_\al(L_\al/2)$ denote the maximum value of $\phi_\al$ and let $F(s) \coloneqq \int_0^s f(r) \d r$ denote the antiderivative of $f$.
Multiplying \eqref{eq:shooting} by $\phi_\al'$ and integrating, we find
\begin{equation}
  \label{eq:first-integral}
  (\phi_\al')^2 - \al^2 = - 2 F(\phi_\al).
\end{equation}
Since $\phi_\al' = 0$ at its maximum, this yields
\begin{equation}
  \label{eq:shooting-max}
  \al^2 = 2 F(s_\al).
\end{equation}
On the other hand, we can rearrange \eqref{eq:first-integral} and integrate again to obtain
\begin{equation}
  \label{eq:shooting-length}
  L_\al = 2 \int_0^{s_\al} \frac{\dn s}{\sqrt{\al^2 - 2F(s)}}.
\end{equation}
(These calculations are presented in greater detail in the proof of \cite[Lemma~2.4]{BG22a}.)
If a function $g$ depends on $\al$, we use the notation $\dot{g}$ to denote the derivative $\partial_\al g$. 
To prove Lemma~\ref{lem:simple-unstable}, we first show that $\dot{L} < 0$ implies instability.
\begin{lemma}
  \label{lem:instability}
  If $\dot{L}_\beta < 0$ for some $\beta \in (0, \al^*)$, then $\phi_\beta$ is strictly unstable:
  \begin{equation*}
    \lambda\big(\!-\partial_x^2 - f'(\phi_\beta), (0, L_\beta)\big) < 0.
  \end{equation*}
\end{lemma}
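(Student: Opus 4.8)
The plan is to connect the sign of $\dot{L}_\beta$ to the principal eigenvalue via a differentiation-in-parameter argument. The family $\phi_\al$ furnishes a natural object to test against the linearized operator: differentiating the ODE \eqref{eq:shooting} in $\al$ shows that $\dot{\phi}_\al$ solves the linearized equation $-\dot{\phi}_\al'' - f'(\phi_\al)\dot{\phi}_\al = 0$, i.e.\ $\dot{\phi}_\al$ lies in the kernel of the operator $\m{L}_\al \coloneqq -\partial_x^2 - f'(\phi_\al)$ on the \emph{whole line}. The subtlety is that $\phi_\beta$ solves \eqref{eq:main} only on the specific interval $(0, L_\beta)$, and $\dot{\phi}_\beta$ does \emph{not} satisfy the Dirichlet conditions there, since the right endpoint $L_\al$ itself moves with $\al$. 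So I would first record the initial data $\dot{\phi}_\al(0) = 0$, $\dot{\phi}_\al'(0) = 1$ (from $\phi_\al(0)=0$, $\phi_\al'(0)=\al$), and then compute the boundary value $\dot{\phi}_\beta(L_\beta)$ by differentiating the identity $\phi_\al(L_\al) = 0$ through the chain rule: $\dot{\phi}_\beta(L_\beta) + \phi_\beta'(L_\beta)\dot{L}_\beta = 0$, hence $\dot{\phi}_\beta(L_\beta) = -\phi_\beta'(L_\beta)\dot{L}_\beta$.

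\textbf{Relating the sign of $\dot L_\beta$ to a nodal/sign condition.} Since $\phi_\beta > 0$ on $(0,L_\beta)$ with $\phi_\beta(L_\beta)=0$, the Hopf lemma gives $\phi_\beta'(L_\beta) < 0$. Therefore $\dot{\phi}_\beta(L_\beta) = -\phi_\beta'(L_\beta)\dot{L}_\beta$ has the \emph{opposite} sign to $\dot L_\beta$; under the hypothesis $\dot L_\beta < 0$ this forces $\dot{\phi}_\beta(L_\beta) < 0$. Meanwhile $\dot\phi_\beta'(0) = 1 > 0$, so $\dot\phi_\beta$ starts positive at the left endpoint. The function $w \coloneqq \dot\phi_\beta$ thus solves $\m{L}_\beta w = 0$ on $(0, L_\beta)$ with $w(0) = 0$, $w'(0) > 0$, and $w(L_\beta) < 0$: it must change sign somewhere in the interior of $(0, L_\beta)$. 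This is the crux: an interior sign change of a solution of $\m{L}_\beta w = 0$ (with the Dirichlet value at the left endpoint) signals that the principal Dirichlet eigenvalue is negative.

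\textbf{Extracting the eigenvalue sign.} To convert the sign change into the spectral conclusion I would use the standard oscillation/comparison theory for the one-dimensional Schr\"odinger operator $\m{L}_\beta$, equivalently the variational characterization
\begin{equation*}
  \lambda\big(\!-\partial_x^2 - f'(\phi_\beta), (0, L_\beta)\big)
  = \inf_{\substack{\psi \in H_0^1(0,L_\beta)\\ \psi \neq 0}}
  \frac{\int_0^{L_\beta}\big[(\psi')^2 - f'(\phi_\beta)\psi^2\big]}{\int_0^{L_\beta}\psi^2}.
\end{equation*}
The positive principal eigenfunction $\psi_1$ of $\m{L}_\beta$ cannot vanish in the open interval, so by Sturm comparison the presence of a solution $w$ of $\m{L}_\beta w = 0$ with $w(0)=0$ and an interior zero $z \in (0,L_\beta)$ forces $\lambda < 0$: concretely, truncating $w$ at its first interior sign change and using it (or its restriction) as a competitor, or invoking that the first Dirichlet eigenfunction has no interior nodes while $w$ does, yields a strictly negative Rayleigh quotient. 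I would phrase this as: if $\lambda \geq 0$ then every nontrivial solution of $\m{L}_\beta w = 0$ vanishing at $0$ would be of one sign on $(0,L_\beta]$ (no conjugate point before $L_\beta$), contradicting $w(0^+) > 0 > w(L_\beta)$.

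\textbf{Main obstacle.} The routine parts are the two differentiation computations and the Hopf-lemma sign. The step requiring the most care is the rigorous Sturm/oscillation argument: translating ``$\dot\phi_\beta$ changes sign strictly inside $(0,L_\beta)$'' into ``$\lambda < 0$'' cleanly. One must confirm that $L_\beta$ is strictly \emph{past} the first conjugate point of the endpoint $0$ — it is the sign change of $w$ that guarantees a conjugate point lies in the open interval, and hence that the principal Dirichlet eigenvalue on $(0,L_\beta)$ dips below $0$. I would make this precise via the monotonicity of the first Dirichlet eigenvalue in the interval length together with the fact that $w = \dot\phi_\beta$ realizes a conjugate point at its first interior zero $z < L_\beta$, so $\lambda$ on $(0,z)$ is exactly $0$ and $\lambda$ on the strictly larger $(0,L_\beta)$ is therefore strictly negative.
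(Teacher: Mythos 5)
Your proposal is correct and follows essentially the same route as the paper: differentiate the shooting ODE in $\al$ so that $\dot\phi_\beta$ lies in the kernel of the linearized operator, use the moving-endpoint identity to show $\dot\phi_\beta(L_\beta)<0$ while $\dot\phi_\beta'(0)=1$, locate the first interior zero where $\dot\phi_\beta$ is the principal eigenfunction with eigenvalue $0$, and conclude by domain monotonicity. The only cosmetic differences are that the paper gets the sign of $\phi_\beta'(L_\beta)$ from the symmetry $\phi_\beta'(L_\beta)=-\beta$ rather than the Hopf lemma, and it phrases the final strict inequality as weak monotonicity plus ruling out $\lambda=0$ by ODE uniqueness, which is exactly the content of the strict domain monotonicity you invoke.
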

In \cite[Lemma~2.4]{BG22a}, we showed that $L_\al \to \infty$ as $\al \to \infty$.
As a result, if $\dot{L}_\beta < 0$, then there exists $\al > \beta$ such that $L_\al = L_\beta$.
That is, we have nonuniqueness.
This demonstrates a link between instability and nonuniqueness.
For a partial converse, see Lemma~\ref{lem:unique-stable} below.
\begin{proof}[Proof of Lemma~\textup{\ref{lem:instability}}]
  Assume $\dot{L}_\beta < 0$ for some $\beta \in (0, \al^*)$.
  By symmetry,
  \begin{equation*}
    \phi_\beta'(L_\beta) = -\phi_\beta'(0) = -\beta.
  \end{equation*}
  It follows that
  \begin{equation*}
    0 = \der{}{\al}[\phi_\al(L_\al)]\Big|_{\al = \beta} = \dot{\phi}_\beta(L_\beta) + \phi_\beta'(L_\beta) \dot{L}_\beta = \dot{\phi}_\beta(L_\beta) - \beta \dot{L}_\beta.
  \end{equation*}
  Rearranging, we see that $\dot{\phi}_\beta(L_\beta) = \beta \dot{L}_\beta < 0$.
  On the other hand, $\dot{\phi}_\beta(0) = 0$ and
  \begin{equation*}
   \dot\phi_\beta'(0) = \pa{^2}{x \partial \al} \phi_\al \Big|_{\al = \beta, x = 0} = \pa{^2}{\al \partial x} \phi_\al \Big|_{\al = \beta, x = 0} = \der{}{\al}[\phi_\al'(0)]\Big|_{\al = \beta} = \der{}{\al} \al \Big|_{\al = \beta} = 1.
  \end{equation*}
  So $\psi \coloneqq \dot{\phi}_\beta$ satisfies $\psi(0) = 0$, $\psi'(0) = 1$, and $\psi(L_\beta) < 0$.
  It follows that $\psi$ vanishes somewhere between $0$ and $L_\al$.
  Let $\ell$ denotes its earliest intermediate zero:
  \begin{equation*}
    \ell \coloneqq \min\{x \in (0, L_\al) \mid \psi(x) = 0\} \in (0, L_\beta).
  \end{equation*}
  Then $\psi > 0$ on $(0, \ell)$ and $\psi(0) = \psi(\ell) = 0$.
  Moreover, if we let $\m{L} \coloneqq \partial_x^2 + f'(\phi_\beta)$ and differentiate \eqref{eq:shooting} with respect to $\al$ at $\al = \beta$, we find $\m{L} \psi = 0.$
  It follows that $\psi$ is the principal eigenfunction of $\m{L}$ on $(0, \ell)$ and
  \begin{equation*}
    \lambda\big(\!-\m{L}, (0, \ell)\big) = 0.
  \end{equation*}
  The eigenvalue can only fall when we increase the domain, so $\lambda\big(\!-\m{L}, (0, L_\beta)\big) \leq 0.$
  Moreover, this eigenvalue cannot be $0$, for otherwise its principal eigenfunction would coincide with $\psi$ (by ODE uniqueness), and $\psi$ is not positive on the entirety of $(0, L_\al)$.
  So in fact $\lambda\big(\!-\m{L}, (0, L_\beta)\big) < 0,$ as desired.
\end{proof}
We use this lemma to construct the simply unstable solution $\phi$ in Lemma~\ref{lem:simple-unstable}.
\begin{proof}[Proof of Lemma~\textup{\ref{lem:simple-unstable}}]
  Let $m \coloneqq f'(0) > 0$.
  The function $\al \mapsto L_\al$ is continuous on $(0, \al^*)$ by ODE well-posedness.
  We consider the behavior of $L_\al$ when $\al \ll 1$.
  This is an almost-linear regime in which $f(s)$ is well-approximated by $ms$.
  Hence the maximal value $s_\al$ tends to $0$ and the length $L_\al$ tends to the value $L_*$ for which $\lambda\big(\!-\partial_x^2 - m, (0, L_*)\big) = 0$.
  The principal eigenfunction is sinusoidal, and one can explicitly compute $L_* = \pi m^{-1/2}$.
  So $s_\al \to 0$ and $L_\al \to L_*$ as $\al \searrow 0$.
  
  Using the substitution $s = s_\al r$ in the first integral in \eqref{eq:shooting-length} as well as \eqref{eq:shooting-max}, we find
  \begin{equation*}
    L_\al = \sqrt{2} \int_0^1 \left[s_\al^{-2}F(s_\al) - s_\al^{-2} F(s_\al r)\right]^{-1/2} \d r.
  \end{equation*}
  Let
  \begin{equation*}
    D(\al, r) \coloneqq s_\al^{-2}F(s_\al) - s_\al^{-2} F(s_\al r)
  \end{equation*}
  denote the denominator.
  Using the hypothesis $f''(0) > 0$, we show that $\dot{D} > 0$ for all $r \in (0, 1)$ when $\al \ll 1$, which implies that $\dot{L}_\al < 0$.

  Differentiating, we can write $\dot{D} = -\dot{s}_\al s_\al^{-3} E$ for
  \begin{equation*}
    E(\al, r) \coloneqq 2F(s_\al) - 2F(s_\al r) -[s_\al f(s_\al) - s_\al r f(s_\al r)].
  \end{equation*}
  Using \eqref{eq:shooting-max}, we have
  \begin{equation}
    \label{eq:max-deriv}
    \dot{s}_\al = \frac{\al}{f(s_\al)} > 0.
  \end{equation}
  It thus suffices to show that $E < 0$ when $r \in (0, 1)$ and $\al \ll 1$.
  Using the definition of $F$, we can write
  \begin{equation}
    \label{eq:E}
    E = 2 \int_{r s_\al}^{s_\al} f(t) \d t - tf(t) \Big|_{rs_\al}^{s_\al}.
  \end{equation}
  Integrating by parts, we have
  \begin{equation*}
    \int_{r s_\al}^{s_\al} f(t) \d t = tf(t) \Big|_{rs_\al}^{s_\al} - \int_{r s_\al}^{s_\al} t f'(t) \d t.
  \end{equation*}
  Using this in \eqref{eq:E}, we find
  \begin{equation}
    \label{eq:E-int}
    E = \int_{r s_\al}^{s_\al} [f(t) - t f'(t)] \d t.
  \end{equation}
  Because $f''(0) > 0$ and $f \in \m{C}^2$, $f$ is strictly convex on some interval $[0, \eps]$ with $\eps \in (0, 1)$.
  By the mean value theorem and strict convexity,
  \begin{equation*}
    \frac{f(t)}{t} = \frac{f(t) - f(0)}{t - 0} < f'(t) \ForAll t \in (0, \eps].
  \end{equation*}
  That is, the integrand in \eqref{eq:E-int} is negative.
  Fix $\ubar{\al} \in (0, \al^*)$ such that
  \begin{equation*}
    \sup_{\al \in (0, \ubar{\al}]} s_\al \leq \eps.
  \end{equation*}
  For all $\al \in (0, \ubar{\al}]$ and $r \in [0, 1)$, we have shown that $E < 0$.
  As shown above, this implies that $\dot{L}_\al < 0$ for all $\al \in (0, \ubar{\al}]$.

  By Proposition~2.3(vii) of~\cite{BR}, the principal eigenvalue is continuous in the potential.
  By scaling, it is also continuous in the length $L$.
  Therefore
  \begin{equation*}
    \lambda\big(\!-\partial_x^2 - f'(\phi_\al), (0, L_\al)\big) \to \lambda\big(\!-\partial_x^2 - m, (0, L_*)\big) = 0 \quad \text{as } \al \searrow 0.
  \end{equation*}
  The principal eigenvalue is simple and thus has a spectral gap.
  That is, there exists $\delta > 0$ such that all non-principal eigenvalues of $-\partial_x^2 - m$ on $(0, L_*)$ exceed $2\delta$.
  Using the minimax formula for eigenvalues, one can readily check that the second-smallest Dirichlet eigenvalue is continuous in $\al$ as well.
  Thus there exists $\al \in (0, \ubar{\al}]$ such that all non-principal eigenvalues of $-\partial_x^2 - f'(\phi_\al)$ on $(0, L_\al)$ exceed $\delta$.
  On the other hand, because $\dot{L}_\al < 0$, Lemma~\ref{lem:instability} shows that
  \begin{equation*}
    \lambda\big(\!-\partial_x^2 - f'(\phi_\al), (0, L_\al)\big) < 0.
  \end{equation*}
  This completes the proof with $\phi = \phi_\al$ and $L = L_\al$.
\end{proof}

\subsection{Spatial dynamics}
We now deploy the theory of spatial dynamics to prove Proposition~\ref{prop:cylinder}.
Throughout this subsection, we fix the length $L$ and solution $\phi$ from Lemma~\ref{lem:simple-unstable}.
Let $\m{L} \coloneqq \partial_y^2 + f'(\phi)$ denote the linearization of \eqref{eq:main} about $\phi$ on $(0, L)$.
We are interested in solutions near $\phi$ of \eqref{eq:main} on the strip $\R \times (0, L)$.
We will think of the first coordinate as a time variable, so we write coordinates as $x = (\tau, y)$ on $\R \times (0, L)$.
Defining $v = u - \phi$ in \eqref{eq:main}, we can write
\begin{equation}
  \label{eq:recentered}
  0 = -\Delta v - [f(v + \phi) - f(\phi)] = -\partial_\tau^2 v - \m{L} v + g(y, v)
\end{equation}
for a nonlinear part $g(y, v)$ satisfying $g(y, 0) = \partial_v g(y, 0) = 0.$
We view \eqref{eq:recentered} as a second-order ODE in an infinite dimensional Banach space.
To make it first order, let $w \coloneqq \partial_\tau v$, so that
\begin{equation}
  \label{eq:vector-equation}
  \partial_\tau
  \begin{pmatrix}
    v\\
    w
  \end{pmatrix} =
  \begin{pmatrix}
    0 & 1\\
    -\m{L} & 0
  \end{pmatrix}
  \begin{pmatrix}
    v\\
    w
  \end{pmatrix}
  +
  \begin{pmatrix}
    0\\
    g(y, v)
  \end{pmatrix}.
\end{equation}
Let $z \coloneqq (v, w)^\top$, let $B$ denote the square linear operator above, and define the nonlinear operator $\m{N}(z) \coloneqq \big(0, g(y, v)\big)^\top$.
Then \eqref{eq:vector-equation} becomes
\begin{equation}
  \label{eq:first-order}
  \partial_\tau z = B z + \m{N}(z).
\end{equation}
By Lemma~\ref{lem:simple-unstable}, $\Spec(-\m{L}) = \{-\lambda\} \cup \m{P}$ for some $\lambda > 0$ and $\m{P} \subset \R_+$ satisfying $\inf \m{P} > 0$.
A short computation shows that the spectrum of $B$ consists of the ``square-root'' of $\Spec(-\m{L})$ in $\C$:
\begin{equation*}
  \Spec B = \{\pm \iu \sqrt{\lambda}\} \cup (\pm \sqrt{\m{P}}).
\end{equation*}
Thus $B$ has precisely two (conjugate) imaginary eigenvalues and the remainder of its spectrum lies in the complement of a strip about the imaginary axis.
Fischer studied dynamical systems satisfying these hypotheses in~\cite{Fischer}; his results more or less immediately imply Proposition~\ref{prop:cylinder}.
\begin{proof}[Proof of Proposition~\textup{\ref{prop:cylinder}}]
  By Theorem~5.1 of~\cite{Fischer}, there exists $\delta_0 > 0$ such that the solutions of \eqref{eq:first-order} satisfying $\norm{z}_{L_\tau^\infty H_y^1} \leq \delta_0$ constitute a two-dimensional manifold.
  (The dimension equals the number of imaginary eigenvalues of $B$ with multiplicity.)
  Moreover, by Theorem~6.1 of~\cite{Fischer}, there exists $\delta_1 \in (0, \delta_0]$ such that every solution satisfying $\norm{z}_{L_\tau^\infty H_y^1} \leq \delta_1$ is periodic in $\tau$.

  It only remains to show that such solutions are not, in fact, constant in $\tau$.
  To see this, recall that in the proof of Lemma~\ref{lem:simple-unstable}, there exists $\ubar{\al} \in (0, \al^*)$ such that $\dot{L}_\al < 0$ when $\al \in (0, \bar{\al}]$.
  Because $s$ is increasing in $\al,$ it follows that $\phi$ is the unique positive solution of \eqref{eq:main} on $(0, L)$ satisfying $\phi \leq s_{\bar{\al}}$.
  That is, \emph{small} solutions are unique.
  Thus for $\delta_1 \ll 1$, the only constant-in-$\tau$ solution $z$ of \eqref{eq:first-order} satisfying $\norm{z}_{L_\tau^\infty H_y^1} \leq \delta_1$ is $0.$
  Because there is an entire two-dimensional manifold of other small, periodic solutions, we see that \eqref{eq:main} admits a positive solution $u = \phi + v$ on the strip $\R \times (0, L)$ that is periodic but not constant in $\tau$.
\end{proof}
The above results show that instability in the cross-sectional problem can lead to a profusion of solutions in the cylinder.
We are naturally interested in complementary results establishing uniqueness in cylinders under suitable conditions.
Uniqueness in the cross-sectional problem is of course a necessary condition.
If we further assume that $0$ is strictly unstable, Proposition~\ref{prop:cylinder-unique} yields uniqueness in the corresponding cylinder.
It is unclear whether this additional hypothesis is necessary.
If $0$ is strictly \emph{stable} and the positive bounded cross-sectional solution $u$ is unique, index arguments imply that $u$ is marginally stable.
This is thus a delicate nongeneric situation, and we are not presently able to treat it.

\section{The stable-compact method}
\label{sec:stable-compact}

As noted in the introduction, many of the above proofs follow what we term the ``stable-compact method.''
Here, we clarify this approach by systematically examining our arguments through this lens.

The method relies on a decomposition of the domain into two parts, stable and compact.
These terms refer to solutions, not to the parts themselves.
In particular, the compact part is not necessarily (pre)compact as a subset of $\R^d$.
Rather, solutions on that part enjoy certain compactness properties, and similarly for the stable part.%
\footnote{\,``Compactiferous'' might be a more precise term for the compact part, in the sense that it is compact-bearing, not necessarily compact itself.}
We note that the decomposition is far from unique; for example, one can modify the division in a region compactly contained in $\Omega$ without disrupting the argument.

The precise forms of stability and compactness vary with the application.
On the stable part, solutions and certain transformations thereof lie near a linearly stable profile.
It follows that associated linear operators obey a maximum principle, which allows us to compare solutions and their transformations.
On the compact part, we deform one solution via a one-parameter family of transformations and compare it with another.
Compactness, which depends on the deformation, allows us to contradict the strong maximum principle unless uniqueness holds.
As in the moving plane method, this approach yields other qualitative properties as well.
  
\subsection{Examples}

To apply the method to a particular problem, we identify a deformation, an associated notion of compactness, and a source of stability in regions where compactness fails.
We describe this process in each of our main uniqueness arguments.

\subsubsection*{Exterior-star domains}
An exterior-star domain $\Omega$ is monotone with respect to dilation about the star center.
Moreover, if we compose a solution with a dilation, it becomes a subsolution.
This monotonicity allows us to use dilation as the deformation.
Dilation by a bounded factor provides a corresponding notion of compactness.
Given a large constant $\kappa$, the compact region becomes $\Omega \setminus (\kappa \Omega)$.

The corresponding stable part is $\kappa\Omega$; we choose $\kappa \gg 1$ to ensure stability.
Indeed, when $\kappa$ is large, the stable part is far from the original boundary $\partial \Omega$.
As a consequence, all positive bounded solutions are close to $1$ on $\kappa \Omega.$
Because $1$ is the stable root of the reaction $f$, solutions then stable on $\kappa \Omega$ and obey a maximum principle.

\subsubsection*{Large dilations}
Our argument on large dilations is a ``degenerate'' application of the stable-compact method, as we make no use of a deformation or compactness.
Rather, for a fixed reaction and $\kappa \gg 1$, solutions on $\kappa \Omega$ locally resemble the unique half-space solution.
This solution is stable, so we can linearize around it to prove a maximum principle (and thus uniqueness) for solutions of \eqref{eq:main} on $\kappa \Omega$.

\subsubsection*{AUL epigraphs}
Epigraphs are monotone with respect to vertical translation, so we use sliding as our deformation.
Then, points within a bounded height of $\partial \Omega$ can serve as the compact region.
(In fact, we are able to use a smaller set in the proof, which simplifies the argument.)

We are left searching for stability.
By design, asymptotically uniformly Lipschitz epigraphs resemble (rotations of) uniformly Lipschitz (UL) epigraphs at infinity.
We show that the unique solution on a UL epigraph is strictly stable.
Linearizing about these profiles, we can conclude that solutions on AUL epigraphs satisfy a maximum principle outside a bounded set.
In short, the far-field is stable and the near-field is compact and equipped with the sliding deformation.

We highlight one subtlety: our proof of strict stability on UL epigraphs (Proposition~\ref{prop:uniform-stability}) is itself an example of the stable-compact method.
We decompose a UL epigraph into regions far from and near to the boundary, and we use stability on the former and compactness on the latter.
There is no need to employ a deformation when proving stability (as opposed to uniqueness).
This demonstrates the flexibility of the stable-compact framework---it sheds light on multiple qualitative properties of elliptic equations.

\subsubsection*{Pockets}
In the opposite direction, Theorem~\ref{thm:pocket} states that we can disrupt uniqueness by attaching a suitable ``pocket'' to a domain.
This demonstrates the importance of the deformation.
After all, one can view the pocket as a bounded modification of the compact set, so that the ``stable-compact'' structure remains intact.
However, the pocket does disrupt the continuous deformation (dilation or sliding in the examples above), so we cannot prove uniqueness.

\subsection{The method of moving planes}
As noted in the introduction, we take inspiration from the method of moving planes developed in~\cite{Alexandrov,Serrin,GNN}.
And indeed, the formulation of this method by the first author and Nirenberg~\cite{BN91} can be viewed as an early exemplar of the stable-compact framework.
We describe this correspondence through the lens of \cite[Theorem~1.3]{BN91}, which loosely states that solutions of \eqref{eq:main} inherit the reflection symmetries of their domains.

The proof of that result uses reflection to reduce the problem to the following question, whose technical details we elide.
Let $(D_\tau, w_\tau)_{\tau}$ be a continuous family of bounded domains $D_\tau$ and elliptic solutions $w_\tau \colon \bar D_\tau \to \R$ that are nonnegative on the boundary.
Does $w_{\tau_0}|_{D_{\tau_0}} > 0$ imply the same for nearby $\tau$?

The answer is affirmative.
Indeed, the first author and Nirenberg observe that for $\tau$ near $\tau_0$, there is a fixed compact set $K \subset D_\tau$ such that $\abs{D_\tau \setminus K} \ll 1$.
It follows from the Alexandrov--Bakelman--Pucci inequality that $D_\tau \setminus K$ supports a maximum principle (see~\cite[Theorem~1.1]{BNV}).
This allow one to transfer positivity on $K$ (a consequence of continuity and compactness) to positivity on $D_\tau$.
In our language, $D_\tau \setminus K$ is the stable part (for it supports a maximum principle) and $K$ is the compact part.
We employ a similar strategy in the proof of Theorem~\ref{thm:irregular} below.

\subsection{Strong-KPP reactions}
\label{sec:strong-KPP-Lip}

To further illustrate the stable-compact method, we interpret our earlier work \cite{BG24} through this new lens.
In~\cite{BG24}, we introduced the following terminology:
\begin{definition}
  \label{def:KPP}
  A positive reaction $f$ is \emph{weak-KPP} if $f'(s) \leq f'(0) s$ for all $s \in [0, 1]$.
  It is \emph{strong-KPP} if $f(s)/s$ is strictly decreasing in $s \in (0, 1]$.
\end{definition}
Rabinowitz showed that if $\Omega$ is smooth and bounded and $f$ is strong-KPP, then \eqref{eq:main} admits at most one positive solution~\cite{Rabinowitz}.
In~\cite{BG24}, we extended this result to very general unbounded domains.
Although we did not use the terminology, our proof is a clear example of the stable-compact method.
We decomposed the domain $\Omega$ into a \emph{stable} part $\Omega_+$ obeying a maximum principle and a \emph{compact} part $\Omega_-$ on which positive solutions remain comparable to one another (a form of compactness).

Here, we deploy an idea from~\cite{BN91} to extend Rabinowitz' result in a different direction: we show uniqueness without requiring smoothness of the boundary.
The proof is a very simple illustration of the stable-compact method.
\begin{theorem}
  \label{thm:irregular}
  Let $\Omega$ be a bounded, connected open set with Lipschitz boundary.
  If $f$ is strong-KPP, then \eqref{eq:main} admits at most one positive solution in the class $\m{C}^2(\Omega) \cap \m{C}(\bar{\Omega})$.
\end{theorem}
\noindent
We note that this result can also be derived for more general operators using the methods of~\cite{Berestycki}.
\begin{proof}
  Let $\Lambda \coloneqq \op{Lip} f$.
  By Proposition~1.1 of~\cite{BN91} and Theorem~1.1 of~\cite{BNV}, there exists $\delta > 0$ depending on $d, \Lambda,$ and $\op{diam}\Omega$ such that
  \begin{equation*}
    \lambda(-\Delta, \Omega_+) > \Lambda
  \end{equation*}
  on any open set $\Omega_+ \subset \Omega$ satisfying $\abs{\Omega_+} \leq \delta$.
  (This is a simple consequence of the Alexandrov--Bakelman--Pucci inequality, and was first observed by Bakelman~\cite{Bakelman}.)
  Let $\Omega_+ \subset \Omega$ be such a set satisfying also $\Omega_- \coloneqq \Omega \setminus \bar{\Omega}_+ \Subset \Omega$.
  For example, $\Omega_+$ can be a thin collar around $\partial \Omega$.
  In our framework, $\Omega_+$ is the stable part and $\Omega_-$ the compact part.

  Now suppose $u_1$ and $u_2$ are two positive solutions of \eqref{eq:main}.
  Because $\Omega_- \Subset \Omega$, the strong maximum principle implies that $\inf_{\Omega_-} u_i > 0$ for each $i \in \{1, 2\}$.
  Thus
  \begin{equation*}
    \ubar{\mu} \coloneqq \inf\{\mu > 0 \mid u_1 \leq \mu u_2 \text{ in }\Omega_-\}
  \end{equation*}
  is finite.
  Suppose for the sake of contradiction that $\ubar{\mu} > 1$.
  Let $w \coloneqq \ubar{\mu}u_2 - u_1$, which is nonnegative in $\Omega_-$ by continuity.
  Using the strong-KPP property (and the fact that $u_i > 0$), we have
  \begin{equation*}
    -\Delta w = \ubar{\mu} f(u_2) - f(u_1) > f(\ubar{\mu} u_2) - f(u_1) \eqqcolon q w
  \end{equation*}
  for some difference quotient $q \in L^\infty(\Omega)$ satisfying $\abs{q} \leq \Lambda$.
  It follows from the construction of $\Omega_+$ that
  \begin{equation*}
    \lambda(-\Delta - q, \Omega_+) \geq \lambda(-\Delta - \Lambda, \Omega_+) > 0.
  \end{equation*}
  Now $w \geq 0$ on $\partial \Omega_-$ and $w = 0$ on $\partial \Omega$, so $w \geq 0$ on $\partial\Omega_+ = \partial \Omega \cup \partial \Omega_-$.
  Since $\lambda(-\Delta - q, \Omega_+) > 0$, Theorem~1.1 of~\cite{BNV} (a maximum principle on irregular bounded domains) implies that $w \geq 0$ in $\Omega_+$.
  Hence $w \geq 0$ in the entire domain $\Omega$.

  Because $-\Delta w > q w$, the strong maximum principle implies that in fact $w > 0$ in $\Omega$.
  Because $\Omega_- \Subset \Omega$, $\inf_{\Omega_-} w > 0$.
  It follows that $w \geq \eta u_2$ in $\Omega_-$ for some $\eta > 0$.
  That is, $u_1 \leq (\ubar{\mu} - \eta) u_2$ in $\Omega_-$, which contradicts the definition of $\ubar{\mu}$.

  We conclude that in fact $\ubar{\mu} \leq 1$ and $u_1 \leq u_2$ in $\Omega_-$.
  Applying the maximum principle in $\Omega_+$ as above, we see that $u_1 \leq u_2$ in $\Omega_+$ as well, and hence in the entire domain $\Omega$.
  By symmetry, $u_1 = u_2$.
\end{proof}

\section{Open problems}
\label{sec:open}

To close the paper, we briefly dwell on a number of lingering open problems.
Some are natural extensions of the results presented here, while others explore quite different directions.
It is our hope that the following problems will inspire future work in this rich subject.

\subsection{Extensions}

Several of our main results include conditions that may not be necessary.
First, Theorem~\ref{thm:ES} assumes that $\Omega^\cc$ is compact.
This is a more restrictive form of the hypothesis in Theorem~\ref{thm:ES-full} that $\Omega^\cc$ is convex at infinity.
We likewise always assume that $\Omega$ is \emph{strongly} exterior star.
\begin{question}
  \label{quest:ES}
  If $f$ is positive, does \eqref{eq:main} have exactly one solution on \emph{all} exterior-star domains?
\end{question}
\noindent
In a similar vein, we state Theorem~\ref{thm:flat} for asymptotically flat epigraphs.
More broadly, Theorem~\ref{thm:AUL} treats all asymptotically uniformly Lipschitz epigraphs.
but is a condition of this form necessary?
\begin{question}
  If $f$ is positive, does \eqref{eq:main} have exactly one solution on \emph{all} epigraphs?
\end{question}
\noindent
To these, we add one question posed in our prior work~\cite{BG24} on strong-KPP reactions.
\begin{question}
  \label{quest:strong-KPP}
  If $f$ is strong-KPP in the sense of Definition~\ref{def:KPP}, does \eqref{eq:main} have at most one solution on \emph{all} domains?
\end{question}
It seems likely to us that the answer to Questions~\ref{quest:ES}--\ref{quest:strong-KPP} is \emph{yes}.
We anticipate that the stable-compact method could make further headway on these problems, but new ideas may be required to completely resolve them. 

\subsection{Robin boundary}
In our study~\cite{BG24} of strong-KPP reactions, we were able to treat Dirichlet and Robin boundary conditions in a unified manner.
(Neumann conditions are much simpler, for the unique solution is the constant $1$; see~\cite{BHN,Rossi}.)
This is due to the fact that the deformation we employed on the compact part scaled the solution, and thus did not alter the domain.

In contrast, several of the main results of the present paper \emph{slide} the domain to treat the compact part.
This sliding leads to natural monotonicity when the boundary condition is Dirichlet.
However, this is not the case under Robin conditions.

For example, suppose $u$ and $v$ solve \eqref{eq:main} with Robin conditions on a uniformly Lipschitz epigraph.
There is no \emph{a priori} reason for $u$ and $v$ to be ordered on the boundary.
This issue persists in the interior.
If we slide $v$ vertically by a distance $y$, there is no reason that the translated solution $v_y$ should lie below $u$ on $\partial \Omega + y \tbf{e}_d.$
As a result, our proofs of uniqueness on epigraphs and exterior-star domains break down under Robin boundary conditions.
Indeed, the same challenge afflicts the proof of uniqueness on uniformly Lipschitz epigraphs in~\cite{BCN97b}.
\begin{question}
  Consider \eqref{eq:main} with Robin rather than Dirichlet boundary conditions: $u + \al \partial_\nu u = 0$ on $\partial \Omega$, where $\al \in \R_+$ and $\nu$ denotes the outward unit normal vector.
  Does uniqueness hold on the epigraphs of uniformly Lipschitz functions?
  On the complements of compact, convex sets?
\end{question}
\noindent
We pose this question in simple, approachable settings.
We are naturally interested in generalizations to all epigraphs and all exterior-star domains.

Curiously, our proof of Theorem~\ref{thm:dilation} (for large dilations) is largely unaffected by Robin data.
This is because the problem is ``purely stable'': as discussed in Section~\ref{sec:stable-compact}, it is an application of the stable-compact method with empty compact part.
Thus no sliding is required, and the proof in Section~\ref{sec:dilation} goes through with minor modification.
For example, we require a version of Lieb's eigenvalue inequality adapted to Robin conditions; this is Theorem~2.3 in~\cite{BG24}.
As a consequence, we simply assert:
\begin{proposition}
  Let $f$ be a positive reaction and let $\Omega$ be a uniformly smooth domain with Robin boundary parameter $\al \in \R_+$.
  Then there exists $\ubar{\kappa}(f, \Omega, \al) > 0$ such that for all $\kappa > \ubar{\kappa}$, \eqref{eq:main} with $\al$-Robin boundary has a unique bounded positive solution on the dilated domain $\kappa \Omega$.
\end{proposition}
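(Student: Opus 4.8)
The plan is to transcribe the proof of Theorem~\ref{thm:dilation} to the Robin setting, exploiting the fact that the dilated problem is \emph{purely stable}: the far-field limits of $\kappa\Omega$ as $\kappa\to\infty$ are again $\R^d$ and (isometries of) the half-space $\H^d$, and on each the relevant steady state is linearly stable. On $\R^d$ the only positive bounded solution remains the constant $1$. On $\H^d$ with the $\al$-Robin condition, I would first establish a one-dimensional profile $\varphi_\al$ solving $-\varphi_\al'' = f(\varphi_\al)$ on $\R_+$ with $\varphi_\al(0) = \al\varphi_\al'(0)$ and $\varphi_\al(+\infty) = 1$; as in the Dirichlet case this should be the unique positive bounded solution on $\H^d$ and depend only on the distance to $\partial\H^d$. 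I would then set $\Phi_\kappa(x) \coloneqq \varphi_\al\big(\op{dist}(x, \kappa\partial\Omega)\big)$ exactly as in \eqref{eq:dilated-profile}.

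First I would reprove Lemma~\ref{lem:dilation-convergence}: the interior subsolution construction in Lemma~\ref{lem:sub} is supported in a ball compactly contained in $\Omega$ and is therefore insensitive to the boundary condition, so solutions remain uniformly close to $1$ deep in $\kappa\Omega$. Centering around a sequence of points and extracting locally uniform limits, any limit solves \eqref{eq:main} on $\R^d$ or on $\H^d$ (now with the $\al$-Robin condition); uniqueness on each forces all solutions in $\m{U}_\kappa$ to coalesce around $\Phi_\kappa$ in $\m{C}(\kappa\Omega)$ as $\kappa\to\infty$.

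The crux is the stability analysis. I would prove the Robin analog of Lemmas~\ref{lem:product} and \ref{lem:half-line-stable}, namely $\lambda\big(-\partial_x^2 - f'(\varphi_\al), \R_+\big) > 0$ under the $\al$-Robin condition. Robin is a self-adjoint boundary condition, so the product identity of Lemma~\ref{lem:product} (which rests only on the equality $\lambda = \lambda'$ for self-adjoint operators) should transfer directly, reducing the half-space eigenvalue to the half-line one. With half-line stability in hand, I would establish the Robin version of Proposition~\ref{prop:dilation-asymp-stable}, that $\lambda(-\Delta - f'(\Phi_\kappa), \kappa\Omega) \to \lambda(-\partial_x^2 - f'(\varphi_\al), \R_+) > 0$; the only nontrivial input is a localization estimate in the spirit of Theorem~\ref{thm:potential-Lieb}, and here I would invoke the Robin-adapted eigenvalue inequality recorded as Theorem~2.3 of \cite{BG22b} in place of Theorem~\ref{thm:potential-Lieb}. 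The concluding step then mirrors the proof of Theorem~\ref{thm:dilation}: given two solutions $u,v \in \m{U}_\kappa$ with $\kappa \gg 1$, the difference $w = u-v$ satisfies a homogeneous linear equation $\m{L}w = 0$ with $\al$-Robin data, and since the mean-value potential differs from $f'(\Phi_\kappa)$ by at most $\mu/3$ in sup-norm, $\lambda(-\m{L}, \kappa\Omega) \geq \mu/3 > 0$; the Robin maximum principle for operators with positive principal eigenvalue then forces $w \equiv 0$.

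I expect the main obstacle to be the strict half-line stability. In the Dirichlet proof of Lemma~\ref{lem:half-line-stable}, the translation mode $\varphi'$ solves $\m{H}\varphi' = 0$ with the \emph{Neumann} datum $\varphi''(0) = 0$, and the sign of $\lambda_{\m{H}}$ is extracted from the clean boundary identity $\lambda_{\m{H}}\int_{\R_+}\psi\varphi' = \psi'(0)\varphi'(0) > 0$. Under Robin conditions the translated profile $\varphi_\al'$ no longer satisfies the homogeneous $\al$-Robin boundary condition, so the integration-by-parts identity acquires boundary contributions weighted by $\al$. Controlling the sign of these contributions---or, alternatively, packaging the argument through the absolutely continuous spectrum $[\abs{f'(1)},\infty)$ and ruling out a nonpositive principal eigenvalue by a direct variational comparison---is the one place where the Dirichlet argument does not transcribe mechanically, and it is where I would concentrate the technical effort.
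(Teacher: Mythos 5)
Your proposal is correct and is essentially the paper's own argument: the paper justifies this proposition only by asserting that the proof of Theorem~\ref{thm:dilation} transcribes to the Robin setting because the problem is purely stable (empty compact part, no sliding), with the Robin-adapted Lieb inequality (Theorem~2.3 of~\cite{BG22b}) replacing Theorem~\ref{thm:potential-Lieb}---precisely the substitution you propose. The one step you flag as non-mechanical indeed works out favorably: redoing the integration by parts of Lemma~\ref{lem:half-line-stable} with the Robin conditions $\psi(0) = \al\psi'(0)$ and $\varphi_\al(0) = \al\varphi_\al'(0)$ (so that $\varphi_\al''(0) = -f(\varphi_\al(0)) < 0$) gives $\lambda_{\m{H}}\int_{\R_+}\psi\varphi_\al' = \psi'(0)\varphi_\al'(0) - \psi(0)\varphi_\al''(0) = \psi'(0)\bigl[\varphi_\al'(0) + \al f(\varphi_\al(0))\bigr] > 0$, so the boundary contributions have a definite sign and strict half-line stability follows exactly as in the Dirichlet case.
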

\noindent
In fact, it seems likely that $\ubar{\kappa}$ can be taken independent of $\al$; this would require a somewhat more sophisticated argument.

\subsection{Drift}
In both \cite{BG24} and the present paper, we exclusively study self-adjoint problems.
This reflects a desire for simplicity, but also the fact that the theory of the generalized principal eigenvalue is rather less developed in the non-self-adjoint setting.
That said,~\cite{BR} establishes a link between the maximum principle and a different formulation $\lambda'$ of the principal eigenvalue, which differs from $\lambda$ when the operator is not self-adjoint.
It thus seems possible that a stable-compact method based on $\lambda'$ may pay dividends in non-self-adjoint problems.
In particular, we are interested in the validity of our main results in the presence of drift.
\begin{question}
  \label{quest:drift}
  Given a vector-field $q \colon \Omega \to \R^d$, suppose we replace $-\Delta$ by $-\Delta + q \cdot \nab$ in \eqref{eq:main}.
  Do variations on Theorems~\ref{thm:ES}--\ref{thm:flat} hold?
  Are additional conditions like $\nab \cdot q = 0$ (divergence-free), $(q \cdot \nu)|_{\partial\Omega} = 0$ (non-penetrating), or $\abs{q} \ll 1$ (small) helpful?
\end{question}
We note that in general, drift can lead to quite different behavior.
For example, on the line $\R$, positive reactions admit traveling wave solutions of all speeds $c \geq c_*$ for some $c_* > 0$ depending on the reaction.
Thus if $q \geq c_*$ is constant, \eqref{eq:main} admits multiple positive bounded solutions: $1$ and the traveling wave.
In this case, $q$ is divergence-free and non-penetrating (the boundary is empty), so these conditions alone do not preserve uniqueness.
On the other hand, uniqueness does hold if $\abs{q} < c_*$ (still assuming constant $q$).
For this reason, the condition $\abs{q} \ll 1$ in Question~\ref{quest:drift} seems particularly promising.

\subsection{Linearly degenerate reactions}
Throughout the paper, we have made essential use of the hypothesis that $f'(0) > 0$ (and, to a lesser extent, that $f'(1) < 0$).
Indeed, this nondegeneracy ensures that $0$ is strictly unstable on sufficiently large domains.
However, a number of applications call for reactions that satisfy $f|_{(0, 1)} > 0$ but vanish to higher order at zero.
We are naturally interested in the validity of our results in this more relaxed setting.

On the whole space $\R^d$, $1$ is the unique positive bounded solution of \eqref{eq:main} in the low-dimensional case $d = 1,2$.
Indeed, solutions of \eqref{eq:main} are superharmonic; in low dimensions, the only bounded, superharmonic functions are constants.
In contrast, when $d \geq 3$ and $f(s) = s^\beta$, there exist bounded positive solutions of \eqref{eq:main} when $\beta \geq \frac{d + 2}{d-2}$~\cite[Theorem~9.1]{QS}.
The critical case $\beta = \frac{d + 2}{d-2}$ is related to the Yamabe problem, and solutions correspond to extremizers of the Sobolev inequality~\mbox{\cite{Aubin,Talenti}}.
While this reaction does not satisfy the hypothesis $f(1) = 0$, we can scale the bounded solution to lie below $\tfrac{1}{2}$ and then modify $f$ above $\tfrac{1}{2}$ to vanish at $1$.
Thus uniqueness in \eqref{eq:main} on the whole space depends on both the dimension and the order of vanishing of $f$ near $0$.

The situation is somewhat different in the half-space.
Suppose only that $f|_{(0, 1)} > 0$ and $f(0) = f(1) = 0$.
In collaboration with Caffarelli and Nirenberg~\cite[Theorem~1.5]{BCN97a}, the first author showed that bounded solutions of \eqref{eq:main} in $\H^d$ are one-dimensional (functions of $x_d$ alone) when $d = 2, 3$.
Then by Lemma~6.1 of \cite{BG22a}, there is a unique positive bounded solution.
In particular, uniqueness holds in general in $\H^3$ but not $\R^3$.
Combining the results of~\cite{EL} with the methods of \cite{BCN97b}, one could likewise show uniqueness in coercive, uniformly Lipschitz epigraphs in dimensions $2$ and $3$.

To our knowledge, little is known when $d \geq 4$.
We note that $0$ is the only bounded nonnegative solution of \eqref{eq:main} on the half-space (of any dimension) in the pure-power case $f(s) = s^\beta$ (for any $\beta \geq 1$) \cite{CLZ}.
However, it is not clear that this forbids multiple positive bounded solutions when $f$ is modified to vanish at $1$.

These observations are by no means comprehensive, but they do indicate the complexity of the problem.
We record some of its facets in the following question.
\begin{question}
  Suppose $f(s) \sim A s^\beta$ as $s \to 0$ for some $A > 0$ and $\beta > 1$.
  Is there a critical exponent $\beta_d > 1$ such that Theorems~\ref{thm:ES}--\ref{thm:flat} hold when $\beta \in [1, \beta_d)$ and fail when $\beta > \beta_d$?
  Does the existence of $\beta_d$ depend on the dimension or domain?
\end{question}

\subsection{Specific domains}
Finally, we are interested in uniqueness on several simple domains that fall outside our main results.
\begin{question}\
  \begin{enumerate}[label = \textup{(\roman*)}, itemsep = 1ex]
  \item Given a smooth bounded cross-section $\omega \subset \R^{d-1}$, what can we say about uniqueness on the half-cylinder $\Omega = \R_+ \times \omega$ or suitable smoothings thereof?
    Can we relate uniqueness on $\Omega$ to uniqueness on $\omega$?

  \item
    Suppose $\Omega$ is the complement of two balls.
    Does \eqref{eq:main} admit a unique positive bounded solution on $\Omega$?
    Does the answer depend on the balls' radii or separation?
    We are grateful to Bassam Fayad for raising this question.
  \end{enumerate}
\end{question}
The stable-compact method provides partial results.
On half-cylinders, if \eqref{eq:main} has a unique positive solution on $\omega$ and it is \emph{strictly stable}, then one can prove stability at infinity in $\Omega$ and use sliding on a compact part to prove uniqueness.
This method breaks down, however, if the cross-sectional solution is marginally stable.
In the other direction, it seems possible that \eqref{eq:main} will admit multiple solutions on $\Omega$ if $\omega$ itself supports multiple positive solutions.

For the ``two-body problem,'' we recall that uniqueness holds outside a \emph{single} ball by Theorem~\ref{thm:ES}.
Moreover, one can adapt the proof of strict stability in Proposition~\ref{prop:uniform-stability} to show that this unique solution is strictly stable.
Using this observation, one can then show that the two-body problem is purely stable (much like the dilation problem in Section~\ref{sec:dilation}) provided the two balls are sufficiently far apart; uniqueness follows.
However, uniqueness is far from clear when the two balls are relatively near one another.

\appendix

\section{Construction of a marginally stable epigraph}
\label{app:marginal}

In this appendix, we use ODE arguments to prove Proposition~\ref{prop:marginal}.
The precise structure of the reaction will play a major role.
As noted in Section~\ref{sec:strong-KPP-Lip}, Rabinowitz showed that strong-KPP reactions (in the sense of Definition~\ref{def:KPP}) admit at most one positive solution on bounded domains~\cite{Rabinowitz}.
In contrast, \cite[Proposition~1.4]{BG24} states that weak-KPP reactions can admit multiple positive solutions.
To construct the reaction in Proposition~\ref{prop:marginal}, we examine a one-parameter family of reactions interpolating between strong- and weak-KPP endpoints.
The first appearance of nonuniqueness will correspond to marginal stability.

The heart of the matter is the study of \eqref{eq:main} on the one-dimensional interval.
\begin{lemma}
  \label{lem:small-large}
  Given $m \in \R_+$, let $\m{F}$ be a $\m{C}^2$-compact set of weak-KPP reactions satisfying $f'(0) = m$ and $f''(0) < 0$ for all $f \in \m{F}$.
  Then there exist $\ubar{L},\bar{L} \in \R_+$ satisfying $\pi m^{-1/2} < \ubar{L} \leq \bar{L}$ such that \eqref{eq:main} admits a unique positive solution on $(0, L)$ for all $L \in (\pi m^{-1/2}, \ubar{L}] \cup [\bar{L}, \infty)$ and all reactions $f \in \m{F}$.
\end{lemma}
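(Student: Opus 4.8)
The plan is to analyze the shooting map $\al \mapsto L_\al$ from \eqref{eq:shooting-length} and show it is injective for $L$ in the two stated ranges, uniformly over $\m{F}$. Recall that every positive solution of \eqref{eq:main} on $(0,L)$ is some $\phi_\al$ with $L_\al = L$ and $\al \in (0, \al^*)$, so uniqueness on $(0,L)$ is exactly the statement that $L$ has a single preimage under $\al \mapsto L_\al$. Writing $L_* \coloneqq \pi m^{-1/2}$ and $\psi \coloneqq \dot\phi_\al$, I would use two facts repeatedly: the identity $\dot\phi_\al(L_\al) = \al \dot L_\al$ (from differentiating $\phi_\al(L_\al)=0$ and $\phi_\al'(L_\al) = -\al$), and the Sturm characterization that $\phi_\al$ is strictly stable, i.e. $\lambda(-\partial_x^2 - f'(\phi_\al),(0,L_\al)) > 0$, precisely when the first conjugate point of $\psi$ lies beyond $L_\al$; in particular strict stability forces $\psi > 0$ on $(0, L_\al]$ and hence $\dot L_\al > 0$ (the partial converse to Lemma~\ref{lem:instability} recorded in Lemma~\ref{lem:unique-stable}). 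The structural input that keeps preimages localized is the global lower bound $L_\al > L_*$: substituting $s = s_\al r$ in \eqref{eq:shooting-length} and using the weak-KPP bound $f(t) \le mt$ gives $2F(s_\al) - 2F(s_\al r) = 2\int_{s_\al r}^{s_\al} f \le m s_\al^2(1 - r^2)$, whence $L_\al \ge \tfrac{2}{\sqrt m}\int_0^1 (1-r^2)^{-1/2}\,dr = L_*$, strictly since $f''(0)<0$ precludes $f(t)\equiv mt$.

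For the small-length range I would show $\dot L_\al > 0$ for $\al$ near $0$ by rerunning the computation in the proof of Lemma~\ref{lem:simple-unstable} with the opposite sign. Since $f''(0) < 0$, $f$ is strictly concave on a uniform interval $[0,\eps]$, so $f(t) - t f'(t) > 0$ there and hence $E = \int_{r s_\al}^{s_\al}[f(t) - t f'(t)]\,dt > 0$ once $s_\al \le \eps$. This reverses the sign of $\dot D$ and yields $\dot L_\al > 0$, so there is a uniform $\ubar\al > 0$ on which $L_\al$ increases strictly from $L_*$. Choosing $\ubar L$ strictly below $\inf\{L_\al : \al \ge \ubar\al\}$ (which exceeds $L_*$ by the previous bound together with the large-$\al$ analysis below) and within the range of $\al \mapsto L_\al$ on $(0,\ubar\al)$, every $L \in (L_*, \ubar L]$ then has its unique preimage inside $(0, \ubar\al)$.

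For the large-length range I would work near $\al^*$. As $\al \nearrow \al^*$ the profiles $\phi_\al$ converge to the half-line solution $\varphi$ and $L_\al \to \infty$, so by continuity of the principal eigenvalue together with its strict positivity on $\R_+$ (Lemma~\ref{lem:half-line-stable}) there is a uniform $\eta > 0$ with $\phi_\al$ strictly stable, hence $\dot L_\al > 0$, for all $\al \in (\al^* - \eta, \al^*)$; there $L_\al$ maps bijectively onto $[L_{\al^*-\eta}, \infty)$. Since $L_\al$ is continuous and, uniformly over the $\m{C}^2$-compact family $\m{F}$, bounded on the complementary set $\{\al \le \al^* - \eta\}$, there is a finite $\bar L$ beyond which every preimage must lie in $(\al^*-\eta, \al^*)$, where injectivity already holds. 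This gives uniqueness for all $L \ge \bar L$.

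The main obstacle, and the reason $\m{C}^2$-compactness of $\m{F}$ is indispensable, is making the thresholds $\ubar\al$, $\eta$, $\ubar L$, $\bar L$, and the concavity scale $\eps$ independent of $f$, and in particular localizing preimages: the pointwise bound $L_\al > L_*$ must be upgraded to a uniform gap $\inf_{\al \ge \ubar\al} L_\al \ge L_* + c$, and $L_\al$ must be shown uniformly bounded on $\{\al \le \al^* - \eta\}$ with $\al^*(f)$ bounded away from $0$. Each is a compactness-plus-continuity statement for the continuous map $(f,\al) \mapsto L_\al$, but assembling them so that the two monotonicity windows at the ends jointly control all of $(L_*, \ubar L] \cup [\bar L, \infty)$ is the delicate part of the argument.
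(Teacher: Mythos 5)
Your skeleton is the paper's: reduce uniqueness on $(0,L)$ to injectivity of the shooting map $\al \mapsto L_\al$, prove the strict lower bound $L_\al > \pi m^{-1/2}$ from the weak-KPP inequality $F(s) < \tfrac12 m s^2$, get $\dot L_\al > 0$ for small $\al$ by rerunning the computation from Lemma~\ref{lem:simple-unstable} with the sign of $f''(0)$ reversed (so $f(t) - tf'(t) > 0$ on a uniform concavity interval, hence $E > 0$ and $\dot L_\al > 0$), and use $\m{C}^2$-compactness of $\m{F}$ to make all thresholds uniform. Those steps are correct, and your choice of $\ubar L$ strictly below $\inf_{\al \ge \ubar\al} L_\al$ handles the endpoint $L = \ubar L$ a bit more cleanly than the paper's own definition $\ubar L = \inf_{[\ubar\al, \bar\al]} L_\al$.

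The gap is the large-$\al$ step. You deduce $\dot L_\al > 0$ near $\al^*$ from strict stability of $\phi_\al$ on $(0, L_\al)$, and you claim that stability follows from $\lambda(-\partial_x^2 - f'(\varphi), \R_+) > 0$ ``by continuity of the principal eigenvalue.'' But in this limit the interval $(0, L_\al)$ grows to $\R_+$ and the potential converges only locally uniformly; in that setting the eigenvalue is only \emph{upper} semicontinuous for free, i.e.\ $\limsup_{\al \nearrow \al^*} \lambda\big(\!-\partial_x^2 - f'(\phi_\al), (0,L_\al)\big) \leq \lambda(-\partial_x^2 - f'(\varphi), \R_+)$, which is the useless direction here. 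What you need is the matching lower bound, and that is precisely the ``hard direction'' that the paper, in the analogous statements (Propositions~\ref{prop:dilation-asymp-stable} and \ref{prop:far-stable}), proves with Lieb's localization inequality (Theorem~\ref{thm:potential-Lieb}) and a two-scale analysis; the continuity results one could cite (\cite[Proposition~2.3(vii)]{BR}, or \cite{Buttazzo}) concern a fixed bounded domain and do not apply. Nor does a soft concentration argument close it immediately: a subsequential limit of principal eigenfunctions is merely a positive Dirichlet solution on $\R_+$ with eigenvalue $\lambda_0 \leq 0$, and such solutions exist for every $\lambda_0$ below the principal eigenvalue (compare $\sinh(\sqrt{-\lambda_0}\,x)$ for the free operator), so one must additionally exploit boundedness of the limit and the behavior of the potential at infinity to reach a contradiction. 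The statement you want is true and patchable along either of these lines, but as written it is asserted, not proved. The paper sidesteps spectral theory here entirely: it differentiates the desingularized length formula \eqref{eq:shooting-length-desing} and shows $\dot L_\al \geq 2/f(s_\al) - \m{O}(1) \to +\infty$ as $\al \nearrow \al^*$, using only that $f$ is decreasing near $1$ (uniformly over $\m{F}$) and that $f(s_\al) \to f(1) = 0$. A small additional point: your parenthetical attributes ``strict stability $\Rightarrow \dot L_\al > 0$'' to Lemma~\ref{lem:unique-stable}, which actually says uniqueness implies weak stability; the implication you use is fine, but it rests on your own Sturm argument via $\dot\phi_\al(L_\al) = \al \dot L_\al$, not on that lemma.
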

As in Section~\ref{sec:unstable-interval}, we employ the shooting method.
Recall the ODE \eqref{eq:shooting} for the solution $\phi_\al$:
\begin{equation*}
  -\phi_\al'' = f(\phi_\al), \quad \phi_\al(0) = 0, \enspace \phi_\al'(0) = \al.
\end{equation*}
As noted in Section~\ref{sec:cylinder}, if $\varphi$ denotes the unique positive bounded solution of \eqref{eq:main} on $\R_+$ and $\al^* \coloneqq \varphi'(0)$, then $\phi_\al$ has a first zero $L_\al \in \R_+$ for all $\al \in (0, \al^*)$.
Then $\phi_\al$ solves \eqref{eq:main} on $(0, L_\al)$.
\begin{proof}
  Recall the notation $s_\al = \phi_\al(L_\al/2)$ from Section~\ref{sec:unstable-interval}, which satisfies
  \begin{equation}
    \label{eq:shooting-max-2}
    2F(s_\al) = \al^2.
  \end{equation}
  Using the substitution $s \mapsto s_\al - s$ in \eqref{eq:shooting-length}, we obtain
  \begin{equation}
    \label{eq:shooting-length-desing}
    L_\al = 2 \int_0^{s_\al} \frac{\dn z}{\sqrt{\al^2 - 2F(s_\al - z)}}\,.
  \end{equation}
  In \eqref{eq:shooting-length}, \eqref{eq:shooting-max-2} implies that the integrand is singular at the moving upper endpoint.
  We have changed variables in \eqref{eq:shooting-length-desing} so that the moving endpoint does not coincide with a singularity of the integrand.

  Because $f''(0) < 0$, $f(s) < m s$ near $0$.
  Since $f(s) \leq m s$ everywhere by hypothesis, $F(s) < \tfrac{1}{2}m s^2$.
  Using this in \eqref{eq:shooting-length-desing}, we see that $L_\al > L_*$ for all $\al > 0$.
  
  Now, ODE stability implies that $\phi_\al \to \varphi$ locally uniformly as $\al \nearrow \al^*$.
  So $s_\al \to 1$ and $L_\al \to \infty$ in this limit.
  We wish to show that there exist thresholds $\ubar{\al}$ and $\ubar{\al}$ independent of $f$ such that $0 < \ubar{\al} \leq \bar{\al} < \al^*$ and $L_\al$ is strictly increasing on $(0, \ubar{\al}]$ and $[\bar{\al}, \al^*)$.
  Letting
  \begin{equation*}
    \ubar{L} \coloneqq \inf_{\al \in [\ubar{\al}, \bar{\al}]} L_\al \And \bar{L} \coloneqq \sup_{\al \in [\ubar{\al}, \bar{\al}]} L_\al,
  \end{equation*}
  the lemma will follow.
  Indeed, any length $L \not \in [\ubar{L}, \bar{L}]$ will correspond to a region in which $\al \mapsto L_\al$ is injective, so exactly one $\phi_\al$ solves \eqref{eq:main} on $(0, L)$.
  
  In the following, we use the notation $\dot{g}$ to denote $\tfrac{\dn g}{\dn \al}$.
  Differentiating \eqref{eq:shooting-max-2} in $\al$, we find
  \begin{equation}
    \label{eq:max-deriv-2}
    \dot{s}_\al = \frac{\al}{f(s_\al)} > 0,
  \end{equation}
  so the maximum $s_\al$ is strictly increasing in $\al$.
  Differentiating the second integral in \eqref{eq:shooting-length-desing} and using \eqref{eq:max-deriv-2}, we find
  \begin{equation}
    \label{eq:length-deriv-1}
    \dot{L}_\al = \frac{2}{f(s_\al)} + 2 \al \int_0^{s_\al} \frac{f(s_\al - z)/f(s_\al) - 1}{[\al^2 - 2F(s_\al - z)]^{3/2}} \d z.
  \end{equation}
  Recall that each reaction $f$ is decreasing near $1$.
  By compactness, there exists a single $\delta \in (0, 1)$ such that $f'|_{(1 - \delta, 1]} < 0$ for all $f \in \m{F}$.  
  As $\al \nearrow \al^*$, $s_\al \to 1$, so $s_\al > 1 - \delta/2$ for $\al$ near $\al^*$.
  Hence
  \begin{equation*}
    \int_0^{\delta/2} \frac{f(s_\al - z)/f(s_\al) - 1}{[\al^2 - 2F(s_\al - z)]^{3/2}} \d z > 0
  \end{equation*}
  because the numerator is positive.
  Thus \eqref{eq:length-deriv-1} yields
  \begin{equation*}
    \dot{K}_\al > \frac{2}{f(s_\al)} - 2 \al \int_{\delta/2}^{s_\al} \frac{\dn z}{[\al^2 - 2F(s_\al - z)]^{3/2}} = \frac{1}{f(s_\al)} - \m{O}(1).
  \end{equation*}
  Since $f(s_\al) \to f(1) = 0$, we see that $\dot{K}_\al \to \infty$ as $\al \nearrow \al^*$.
  In particular, there exists $\bar{\al}(f) \in (0, \al^*)$ such that $\dot{K}_\al > 0$ on $(\bar{\al}(f), \al^*)$.
  Using the compactness of $\m{F}$, one can make $\bar{\al}$ uniform over (and independent of) $f$.

  We next consider $\al \searrow 0$.
  In the proof of Lemma~\ref{lem:simple-unstable}, we showed that the assumption $f''(0) > 0$ implies that $\dot{L} < 0$ near $\al = 0$.
  Here, we assume the opposite sign: $f''(0) < 0$.
  Thus the same calculations imply that $\dot{L} > 0$ when $\al \in (0, \ubar{\al}(f)]$ for some $\ubar{\al}(f) \in (0, \bar{\al}]$.
  Using the compactness of $\m{F}$, we can readily show that $\ubar{\al}$ can be taken independent of $f$.
  This completes the proof.
\end{proof}
We next show that uniqueness implies (weak) stability:
\begin{lemma}
  \label{lem:unique-stable}
  If \eqref{eq:main} admits a unique positive solution $u$ on a domain $\Omega$, then
  \begin{equation}
    \label{eq:weak-stability}
    \lambda(-\Delta - f'(u), \Omega) \geq 0.
  \end{equation}
\end{lemma}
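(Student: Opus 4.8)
The plan is to argue by contraposition: assuming the unique solution $u$ is strictly unstable, I will construct a \emph{second} positive bounded solution and contradict uniqueness. So suppose $\lambda(-\Delta - f'(u), \Omega) < 0$. Since the generalized principal eigenvalue is the decreasing limit of Dirichlet eigenvalues along any exhaustion (\cite[Proposition~2.3(iv)]{BR}), I can fix a bounded smooth subdomain $\Omega' \Subset \Omega$ on which $\lambda' \coloneqq \lambda(-\Delta - f'(u), \Omega') < 0$, with an associated positive Dirichlet principal eigenfunction $\psi$ solving $-\Delta\psi - f'(u)\psi = \lambda'\psi$ in $\Omega'$ and $\psi = 0$ on $\partial\Omega'$.

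Next I would show that, after extending $\psi$ by $0$ to $\Omega$, the function $\ubar w \coloneqq u - \eps\psi$ is a generalized supersolution of \eqref{eq:main} for all sufficiently small $\eps > 0$. In the interior of $\Omega'$, a Taylor expansion gives $-\Delta\ubar w - f(\ubar w) = \eps\psi\big(-\lambda' - \tfrac{1}{2} f''(\xi)\eps\psi\big)$, which is nonnegative once $\eps$ is small because $-\lambda' > 0$ is fixed and $\psi$ is bounded. On $\Omega \setminus \bar\Omega'$ the function $\ubar w$ coincides with the solution $u$. Across the interface $\partial\Omega'$, the Hopf lemma gives $\partial_\nu\psi < 0$, so extending $\psi$ by zero contributes a \emph{nonnegative} singular measure to $-\Delta\ubar w$, which only reinforces the supersolution inequality. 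Hence $\ubar w$ is a supersolution on all of $\Omega$ with $\ubar w \leq u$ and $\ubar w < u$ throughout $\Omega'$.

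I would then trap a genuine second solution between a small subsolution and $\ubar w$. Because $f$ is a positive reaction, $f'(0) > 0$ by \ref{hyp:positive}, so choosing a ball $B \Subset \Omega$ with $\lambda(-\Delta, B) < f'(0)$ and letting $\chi$ be its principal eigenfunction, the function $\underline z \coloneqq \eta\chi$ is a positive subsolution for small $\eta > 0$. Since $u$ is bounded below by a positive constant on the compact set $\bar B$ (strong maximum principle) and $\eps\,\norm{\psi}_\infty$ is small, one has $\underline z \leq \ubar w$ after further shrinking $\eta$. Running the parabolic semigroup $\m{P}$ of \eqref{eq:parabolic}, $\m{P}_t\underline z$ increases and $\m{P}_t\ubar w$ decreases, and by monotonicity they stay ordered; their limits are solutions $v_- \leq v_+$ with $\underline z \leq v_- \leq v_+ \leq \ubar w$. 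In particular $v_+ \geq \underline z > 0$ on $B$, so $v_+ > 0$ by the strong maximum principle, yet $v_+ \leq \ubar w < u$ on $\Omega'$, so $v_+ \neq u$. This second positive bounded solution contradicts uniqueness, forcing $\lambda(-\Delta - f'(u), \Omega) \geq 0$.

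The main obstacle is the interface computation in the second step: verifying that extending $\psi$ by zero yields a legitimate distributional supersolution rather than a singular term of the wrong sign. The point is that the outward normal derivative of $\psi$ on $\partial\Omega'$ is negative, so the jump in the normal derivative of $\ubar w$ deposits a nonnegative measure into $-\Delta\ubar w$; a one-dimensional model ($\psi = \sin$ on an interval, extended by zero) confirms the favorable sign. Everything else is routine sub/supersolution theory, with the nondegeneracy $f'(0) > 0$ used only to prevent the limiting solution from collapsing to $0$.
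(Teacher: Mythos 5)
Your argument is correct in substance but takes a genuinely different route from the paper's. The paper's proof is a two-line dynamic-stability argument: since $1$ is a supersolution, $\m{P}_t 1$ is nonincreasing and its limit solves \eqref{eq:main}, hence equals $u$ by uniqueness; consequently every $v$ with $u \leq v \leq 1$ satisfies $\m{P}_\infty v = u$, so $u$ is dynamically stable from above, which yields \eqref{eq:weak-stability}. Your contrapositive---strict linear instability on a bounded subdomain $\Omega' \Subset \Omega$ produces a second positive solution trapped between a small subsolution and the supersolution $\ubar w = u - \eps\psi$---is essentially the mechanism that the paper's terse final implication (``dynamically stable from above implies \eqref{eq:weak-stability}'') leaves to the reader, so you have in effect made that step explicit, organized in the opposite direction; it is also a direct proof of the content of Corollary~\ref{cor:unstable}. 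Your interface computation has the right sign: extending $\psi$ by zero deposits the singular term $-\eps(\partial_\nu \psi)\,d\sigma \geq 0$ into $-\Delta \ubar w$ by the Hopf lemma, so $\ubar w$ is a legitimate distributional supersolution, and the generalized parabolic comparison you invoke is squarely within the paper's toolkit (extension-by-zero sub/supersolutions are used the same way in Lemma~\ref{lem:MP-deep} and Theorem~\ref{thm:pocket-precise}). What the paper's route buys is brevity and no interface analysis at all; what yours buys is an explicit second solution and no need to identify $\m{P}_\infty 1$. One repair is needed: the standing assumption is only $f \in \m{C}^{1,\gamma}$, so the Taylor remainder $\tfrac{1}{2}f''(\xi)\eps^2\psi^2$ is not available. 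Instead write $f(u) - f(u - \eps\psi) = \eps\psi f'(\xi)$ by the mean value theorem and use $\abs{f'(\xi) - f'(u)} \leq [f']_{\m{C}^\gamma}\,(\eps \norm{\psi}_\infty)^\gamma$, which gives
\begin{equation*}
  -\Delta \ubar w - f(\ubar w) \geq \eps\psi\big(-\lambda' - C\eps^\gamma\big) \geq 0
\end{equation*}
for $\eps$ small, reaching the same conclusion.
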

\begin{corollary}
  \label{cor:unstable}
  If $\dot{L}_\beta < 0$ for some $\beta \in (0, \al^*)$, then \eqref{eq:main} admits multiple positive solutions on $(0, L_\beta)$.
\end{corollary}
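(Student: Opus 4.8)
The plan is to exploit the continuity of the length map $\al \mapsto L_\al$ together with the blow-up $L_\al \to \infty$ as $\al \nearrow \al^*$ established in the proof of Lemma~\ref{lem:small-large}. First I would record that $\al \mapsto L_\al$ is continuous on $(0, \al^*)$: this follows from continuous dependence of the shooting solution $\phi_\al$ on the initial slope $\al$, together with the fact that $\phi_\al$ crosses zero transversally at $L_\al$ (so the first zero persists and varies continuously in $\al$). The hypothesis $\dot{L}_\beta < 0$ then means $L_\al$ strictly decreases just to the right of $\beta$, so there exists $\al_1 \in (\beta, \al^*)$ with $L_{\al_1} < L_\beta$.

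Next I would apply the intermediate value theorem on $[\al_1, \al^*)$. Since $L_{\al_1} < L_\beta$ while $L_\al \to \infty$ as $\al \nearrow \al^*$, continuity produces some $\al_2 \in (\al_1, \al^*)$ with $L_{\al_2} = L_\beta$. By construction $\al_2 > \beta$, so $\phi_{\al_2}$ and $\phi_\beta$ have distinct slopes at the origin; by uniqueness for the initial-value problem \eqref{eq:shooting} they are therefore distinct functions. Yet both are positive solutions of \eqref{eq:main} on the common interval $(0, L_\beta)$. This exhibits two distinct positive solutions, which is exactly the asserted nonuniqueness.

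I do not anticipate a serious obstacle: the argument is essentially a one-line consequence of the intermediate value theorem once the ingredients are assembled, and it merely formalizes the remark following Lemma~\ref{lem:instability}. The only points deserving a word of care are the continuity of $\al \mapsto L_\al$ (which rests on the transversal, simple nature of the first zero of $\phi_\al$) and the distinctness of the two solutions (immediate from ODE uniqueness, since coinciding solutions would force equal initial slopes $\phi_\al'(0) = \al$). It is worth noting that this route bypasses the instability statement of Lemma~\ref{lem:instability} entirely, relying purely on the geometry of the length function $L_\al$ rather than on any spectral information.
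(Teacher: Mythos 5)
Your proof is correct, but it takes a genuinely different route from the paper's. The paper proves Corollary~\ref{cor:unstable} spectrally, as a two-line consequence of its neighbors: Lemma~\ref{lem:instability} shows that $\dot{L}_\beta < 0$ forces $\lambda\big(\!-\partial_x^2 - f'(\phi_\beta), (0, L_\beta)\big) < 0$, while Lemma~\ref{lem:unique-stable} shows that if the positive solution on $(0,L_\beta)$ were unique, that solution (necessarily $\phi_\beta$) would satisfy $\lambda \geq 0$; the contradiction yields multiplicity. You instead argue purely through the shooting geometry: continuity of $\al \mapsto L_\al$ (justified by transversality, $\phi_\al'(L_\al) = -\al < 0$, which follows from the first integral \eqref{eq:first-integral}), the inequality $L_{\al_1} < L_\beta$ for some $\al_1$ slightly above $\beta$, and the blow-up $L_\al \to \infty$ as $\al \nearrow \al^*$ (established in the paper, in the proof of Lemma~\ref{lem:small-large} and via~\cite{BG22a}) give, by the intermediate value theorem, some $\al_2 > \beta$ with $L_{\al_2} = L_\beta$, and $\phi_{\al_2} \neq \phi_\beta$ since their initial slopes differ. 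This is precisely the observation the paper records informally in the remark following Lemma~\ref{lem:instability}, but it is not the proof the paper actually gives for the corollary. Your route is more elementary and constructive: it exhibits the second solution explicitly and needs no spectral or parabolic machinery (Lemma~\ref{lem:unique-stable} rests on the parabolic semigroup and dynamic stability). The paper's route, on the other hand, reuses Lemma~\ref{lem:unique-stable}, which it needs anyway for Proposition~\ref{prop:marginal-interval}, and foregrounds the stability--uniqueness link that organizes the whole paper; your argument deliberately bypasses that link, which is fine for this statement but discards the spectral information that the appendix later exploits.
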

\begin{proof}
  This follows from Lemmas~\ref{lem:instability} and \ref{lem:unique-stable}.
\end{proof}
\begin{proof}[Proof of Lemma~\textup{\ref{lem:unique-stable}}]
  Recall the parabolic semigroup $\m{P}$ from \eqref{eq:parabolic}.
  Because $1$ is a supersolution of \eqref{eq:main}, $\m{P}_t 1$ is nonincreasing in $t$.
  It follows that the limit $\m{P}_\infty 1$ exists and solves \eqref{eq:main}.
  Since $u < 1$ on $\Omega$, comparison implies that $\m{P}_\infty 1 \geq u$.
  By hypothesis, $u$ is the only positive solution of \eqref{eq:main}, so $\m{P}_\infty 1 = u$.
  Now suppose $u \leq v \leq 1$.
  By comparison, $\m{P}_\infty v = u$.
  That is, $u$ is dynamically stable from above; this implies \eqref{eq:weak-stability}.
\end{proof}
We require one further technical lemma.
\begin{lemma}
  \label{lem:analytic}
  If $f$ is analytic and the map $\al \mapsto L_\al$ is not injective, then there exists $\beta \in (0, \al^*)$ such that $\dot{L}_\beta < 0$.
\end{lemma}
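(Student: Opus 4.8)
The plan is to use the analyticity of $f$ to upgrade $\al \mapsto L_\al$ to a real-analytic function, and then combine monotonicity with the identity theorem to force a contradiction.

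First I would establish that $L_\al$ is real-analytic on $(0, \al^*)$. Since $f$ is analytic, the standard theory of analytic dependence of ODE solutions on initial data and parameters shows that the map $(\al, x) \mapsto \phi_\al(x)$ is jointly real-analytic wherever $\phi_\al$ is defined. By the symmetry of $\phi_\al$ about its midpoint (used already in the proof of Lemma~\ref{lem:instability}), we have $\phi_\al'(L_\al) = -\al \neq 0$, so $\partial_x \phi_\al$ does not vanish at the first zero $L_\al$. Applying the analytic implicit function theorem to the equation $\phi_\al(L) = 0$ at $L = L_\al$ then yields that $\al \mapsto L_\al$ is real-analytic. I prefer this route over differentiating the integral formula \eqref{eq:shooting-length-desing} directly, since the square-root singularity at the endpoint makes a hands-on analyticity argument delicate.

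Next I would argue by contradiction. Suppose $\dot{L}_\al \geq 0$ for all $\al \in (0, \al^*)$, so that $L$ is non-decreasing. By hypothesis $\al \mapsto L_\al$ is not injective, so there exist $\al_1 < \al_2$ in $(0, \al^*)$ with $L_{\al_1} = L_{\al_2}$. Monotonicity then forces $L_\al$ to be constant on $[\al_1, \al_2]$. Because $L$ is real-analytic on the connected domain $(0, \al^*)$ and constant on a subinterval, the identity theorem implies that $L$ is constant on all of $(0, \al^*)$. This contradicts the fact (established in the proof of Lemma~\ref{lem:small-large}) that $L_\al \to \infty$ as $\al \nearrow \al^*$. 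Hence there must exist $\beta \in (0, \al^*)$ with $\dot{L}_\beta < 0$, as claimed.

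The only genuine obstacle is the analyticity of $L_\al$; once that is in hand, the remaining steps are soft, relying only on monotonicity, the identity theorem, and the known blow-up $L_\al \to \infty$.
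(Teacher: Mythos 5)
Your proof is correct, and it establishes the crucial analyticity input by a genuinely different route than the paper. Both arguments share the same skeleton: if $\dot{L}_\al \geq 0$ everywhere, non-injectivity forces $L_\al$ to be constant on an open subinterval, and the contradiction then comes from the identity theorem combined with $L_\al \to \infty$ at the upper endpoint. The difference is in what function is shown to be analytic, and how. The paper never proves that $\al \mapsto L_\al$ is analytic; instead it uses the strict monotonicity of $\al \mapsto s_\al$ to transfer the constancy to the function $\m{I}(s) = \int_0^s [F(s) - F(s-z)]^{-1/2} \dn z$ (the length expressed in terms of the maximum height), and it proves analyticity of $\m{I}$ by hand from the integral formula \eqref{eq:shooting-length-desing}: it writes $[F(s)-F(s-z)]/z = f(s) + z g(s,z)$ with $g$ analytic, inverts the square root for small $z$, splits the integral accordingly, and checks each piece is analytic in $s$; the blow-up $\m{I}(s) \to \infty$ as $s \to 1$ then finishes. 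You instead prove analyticity of $L_\al$ itself, via analytic dependence of ODE solutions on initial data plus the analytic implicit function theorem at the transversal zero $\phi_\al'(L_\al) = -\al \neq 0$, with continuity of $L_\al$ (which the paper records via ODE well-posedness) identifying the implicit branch with the first zero. Your route is softer and sidesteps exactly the endpoint-singularity computation that the paper carries out, at the cost of invoking the heavier classical machinery of analytic dependence on initial conditions. One small caveat: applying the implicit function theorem at $L_\al$ requires $\phi_\al$ to be defined on a two-sided neighborhood of $L_\al$, hence requires evaluating $f$ at slightly negative arguments; this is harmless when $f$ is analytic at $0$ (its power series provides the extension, and in the paper's application $f_*$ is a polynomial), whereas the paper's integral-formula argument only ever uses $f$ on $[0,1)$.
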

\begin{proof}
  Suppose $f$ is analytic and $L_\al$ is not injective.
  Because $L_\al$ is differentiable, there is either a slope $\beta$ of the desired form or there is a nonempty open interval $A \Subset (0, \al^*)$ on which $L_\al$ is constant.
  Suppose for the sake of contradiction that the latter holds.
  By \eqref{eq:max-deriv}, $s_\al$ is strictly increasing in $\al$.
  Combining \eqref{eq:shooting-max} and \eqref{eq:shooting-length}, we thus see that the function
  \begin{equation}
    \label{eq:I-def}
    \m{I}(s) \coloneqq \int_0^s \frac{\dn z}{\sqrt{F(s) - F(s - z)}}
  \end{equation}
  is likewise constant on some nonempty open interval $Y \Subset (0, 1)$.
  Because $f$ is analytic, so is $F$.
  It follows that $\m{I}$ is itself analytic on $(0, 1)$.
  After all, analyticity allows us to write
  \begin{equation*}
    \frac{F(s) - F(s - z)}{z} = f(s) + z g(s, z)
  \end{equation*}
  for some analytic function $g$.
  By the compactness of $Y$, there exists $\delta > 0$ such that $\abs{zg(s, z)} \leq \tfrac{1}{2} f(s)$ for all $s \in Y$ and $z \in (0, \delta)$.
  Hence we can invert and take a square root:
  \begin{equation*}
   \left(\frac{F(s) - F(s - z)}{z}\right)^{-1/2} = \frac{1}{\sqrt{f(s)}} + z h(s, z)
  \end{equation*}
  for some analytic $h$.
  Then \eqref{eq:I-def} becomes
  \begin{equation*}
    \m{I}(s) = \frac{1}{\sqrt{f(s)}} \int_0^\delta \frac{\dn z}{\sqrt{z}} + \int_0^\delta \sqrt{z} h(s, z) \d z + \int_{\delta}^s \frac{\dn z}{\sqrt{F(s) - F(s - z)}}.
  \end{equation*}
  Each term is analytic in $s$, as desired.
  Since $\m{I}(s) \to \infty$ as $s \to 1$, $\m{I}$ is not constant.
  Thus $\m{I}$ cannot coincide with a constant on any nonempty open set.
\end{proof}
We now construct the reaction in Proposition~\ref{prop:marginal}.
\begin{proposition}
  \label{prop:marginal-interval}
  There exists a weak-KPP reaction $f$ and a length $L > 0$ such that $f'(0) > \pi^2L^{-2}$, \eqref{eq:main} admits exactly one positive solution $\phi$ on $(0, L)$, and
  \begin{equation*}
    \lambda\big(\!-\partial_x^2 - f'(\phi), (0, L)\big) = 0.
  \end{equation*}
\end{proposition}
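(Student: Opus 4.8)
The plan is to obtain marginal stability as the exact threshold between uniqueness and nonuniqueness along an analytic family of reactions, exploiting the shooting identity that ties $\dot L_\al$ to the would-be zero eigenfunction. Fix $m > 0$ and construct an analytic one-parameter family $(f_t)_{t \in [0,1]}$ of weak-KPP reactions with $f_t'(0) = m$ and $f_t''(0) < 0$, forming a $\m{C}^2$-compact set $\m{F}$, such that $f_0$ is strong-KPP while $f_1$ is an analytic weak-KPP reaction admitting multiple positive solutions on some interval, as produced in \cite[Proposition~1.4]{BG22b}. The key observation is the shooting identity: differentiating $\phi_\al(L_\al) = 0$ in $\al$ and using the symmetry $\phi_\al'(L_\al) = -\al$ gives $\dot\phi_\al(L_\al) = \al\,\dot L_\al$. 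Writing $\psi \coloneqq \dot\phi_\al$ and $\m{L} \coloneqq \partial_x^2 + f'(\phi_\al)$, we have $\m{L}\psi = 0$ with $\psi(0) = 0$ and $\psi'(0) = 1$, so $\dot L_\al = 0$ forces $\psi$ to vanish at both ends of $(0, L_\al)$, exhibiting a candidate principal eigenfunction of eigenvalue zero.

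Next I would track the sign of $\dot L_\al^t$ across the family. By Lemma~\ref{lem:small-large} there exist thresholds $\ubar\al \le \bar\al$, uniform over $\m{F}$, with $\dot L_\al^t > 0$ whenever $\al \notin [\ubar\al, \bar\al]$; hence any change of sign of $\dot L^t$ is confined to the fixed compact range $[\ubar\al, \bar\al]$. Set $g(t) \coloneqq \min_{\al \in [\ubar\al, \bar\al]} \dot L_\al^t$, which is continuous in $t$ by smooth dependence of the shooting ODE on parameters. For the strong-KPP endpoint $f_0$, the positive solution on each interval is strictly stable (a standard consequence of the sublinearity of $f_0(s)/s$), which by the identity above precludes $\dot L_\al^0 = 0$; combined with positivity near the endpoints this gives $\dot L_\al^0 > 0$ and $g(0) > 0$. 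For $f_1$, nonuniqueness makes $\al \mapsto L_\al$ non-injective, so Lemma~\ref{lem:analytic} yields $\dot L_\beta^1 < 0$ for some $\beta$, whence $g(1) < 0$. By the intermediate value theorem there is a critical $t_* \in (0,1)$ with $g(t_*) = 0$, attained at some $\beta \in [\ubar\al, \bar\al]$; thus $\dot L_\beta^{t_*} = 0$ while $\dot L_\al^{t_*} \ge 0$ for every $\al$.

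I then set $f \coloneqq f_{t_*}$, $L \coloneqq L_\beta^{t_*}$, and $\phi \coloneqq \phi_\beta$. Since $\dot L_\al^{t_*} \ge 0$ everywhere and $f$ is analytic, the contrapositive of Lemma~\ref{lem:analytic} shows that $\al \mapsto L_\al$ is injective, so $\phi$ is the unique positive solution of \eqref{eq:main} on $(0, L)$; this also gives $f'(0) = m > \pi^2 L^{-2}$, since $L = L_\beta > \pi m^{-1/2}$ by Lemma~\ref{lem:small-large}. Uniqueness and Lemma~\ref{lem:unique-stable} then give $\lambda(-\partial_x^2 - f'(\phi), (0, L)) \ge 0$. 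To upgrade this to equality I would show $\psi \coloneqq \dot\phi_\beta$ is strictly positive on $(0, L)$: it is positive near $0$ because $\psi'(0) = 1$, and $\psi(L) = \beta\,\dot L_\beta = 0$; were $\psi$ to have an interior zero $\ell \in (0, L)$, then $\psi$ would be a positive eigenfunction of $\m{L}$ on $(0, \ell)$ with eigenvalue $0$, forcing $\lambda(-\m{L}, (0, L)) < 0$ on the strictly larger interval and contradicting weak stability. Hence $\psi > 0$ on $(0, L)$, so $\psi$ is the principal Dirichlet eigenfunction of $\m{L}$ with eigenvalue $0$, i.e.\ $\lambda(-\partial_x^2 - f'(\phi), (0, L)) = 0$, as required.

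The main obstacle is the construction of the interpolating family meeting every constraint at once: analytic (for Lemma~\ref{lem:analytic}), $\m{C}^2$-compact and weak-KPP with $f_t''(0) < 0$ throughout (for the uniform thresholds of Lemma~\ref{lem:small-large}), strong-KPP at $t = 0$ (to force $g(0) > 0$ via strict stability), and non-unique at $t = 1$. Verifying $\dot L_\al^0 > 0$ for all $\al$ at the strong-KPP endpoint is the delicate analytic input; the complementary subtlety is the final positivity of $\psi$, where the prior establishment of uniqueness---hence weak stability through Lemma~\ref{lem:unique-stable}---is precisely what excludes an interior zero and pins the threshold reaction to marginal rather than strict stability.
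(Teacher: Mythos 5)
Your proposal is correct in substance and shares the paper's skeleton---the same interpolating family $f_\tau = (1-\tau)f_0 + \tau f_1$ of polynomial (hence analytic) weak-KPP reactions with $f_\tau'(0) = m$ and $f_\tau''(0) < 0$, and the same supporting lemmas (Lemmas~\ref{lem:small-large}, \ref{lem:analytic}, \ref{lem:instability}, \ref{lem:unique-stable})---but the mechanism at the critical parameter is genuinely different. The paper works with the set $\m{T}$ of parameters exhibiting nonuniqueness: it is open (non-injectivity of $\al \mapsto L_\al$ plus analyticity give $\inf \dot{L}_\al < 0$ by Lemma~\ref{lem:analytic}, and Corollary~\ref{cor:unstable} propagates nonuniqueness to nearby parameters), so $\tau_* = \inf \m{T} \notin \m{T}$ yields uniqueness for free, and $\lambda \leq 0$ is obtained softly by approaching $\tau_*$ from within $\m{T}$ and invoking continuity of the principal eigenvalue in the potential and the length (\cite{BR}, Proposition~2.3(vii)). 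You instead run an intermediate value argument on $g(\tau) = \min_{\al \in [\ubar{\al}, \bar{\al}]} \dot{L}_\al^\tau$ and, at a zero $\tau_*$ of $g$, read off uniqueness from the contrapositive of Lemma~\ref{lem:analytic} and marginal stability directly from the explicit zero mode $\psi = \dot{\phi}_\beta$: since $\dot{L}_\beta = 0$, $\psi$ is a nontrivial Dirichlet solution of $\m{L}\psi = 0$, so $0$ lies in the Dirichlet spectrum and $\lambda \leq 0$ at once (your positivity detour for $\psi$ is not actually needed, though it is correct), while Lemma~\ref{lem:unique-stable} gives the reverse inequality. What your route buys is an explicit eigenfunction and no appeal to eigenvalue continuity; what it costs is two inputs the paper avoids: (i) joint continuity of $(\tau, \al) \mapsto \dot{L}_\al^\tau$, which requires a uniform dominated-convergence argument on the singular integral formula \eqref{eq:length-deriv-1} over the whole family (true, but it must be checked), and (ii) strict stability of every solution $\phi_\al$ at the strong-KPP endpoint to force $g(0) > 0$, where the paper needs only Rabinowitz uniqueness at $\tau = 0$; your stability claim is correct, since $\phi_\al$ is a positive eigenfunction of $-\partial_x^2 - f_0(\phi_\al)/\phi_\al$ with eigenvalue $0$ and $f_0'(s) < f_0(s)/s$ for the quadratic $f_0 = ms(1-s)$. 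Finally, you flag but do not carry out the construction of an analytic nonunique $f_1$; the paper settles this in one line by approximating $\ti{f}_1''$ in $L^\infty$ by a polynomial and integrating twice, and that step slots into your plan unchanged.
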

\noindent
By scaling $f$, one can in fact arrange $L = 1$; we will not use this freedom.
We also note that this proposition implies that Lemma~\ref{lem:unique-stable} cannot be improved to \emph{strict} stability: there exist domains with uniqueness but merely marginal stability.
\begin{proof}
  By Proposition~1.4 of~\cite{BG24}, there exists a weak-KPP reaction $\ti f_1$ such that \eqref{eq:main} has multiple positive solutions on the unit interval $(0, 1)$.
  A brief examination of the proof of \cite[Proposition~1.4]{BG24} shows that one can arrange $\ti f_1'(0) > \pi$, $\ti f_1 \in \m{C}^2$, and $\ti f_1''(0) < 0$.
  Approximating $\ti{f}_1''$ in $L^\infty$ by a polynomial and integrating twice, we can find a nearby weak-KPP polynomial $f_1$ with $f_1'(0) > \pi$ and $f_1''(0) < 0$ such that \eqref{eq:main} has multiple positive solutions on the unit interval $(0, 1)$ with reaction $f_1$.
  
  Let $m \coloneqq f_1'(0) > \pi$ and define $f_0(s) \coloneqq ms(1 - s)$.
  Then $f_0$ is a strong-KPP reaction and by Theorem~1.5 of~\cite{BG24}, \eqref{eq:main} admits a positive solution on $(0, 1)$ with reaction $f_0$.
  Rabinowitz showed that this solution is unique~\cite{Rabinowitz}.

  Next, given $\tau \in [0, 1]$, let $f_\tau \coloneqq (1 - \tau) f_0 + \tau f_1$, so that $f_\tau$ interpolates between the reactions $f_0$ and $f_1$.
  The weak-KPP condition is convex, so $f_\tau$ is weak-KPP for all $\tau$.
  Moreover, because $f_i'(0) = m$ and $f_i''(0) < 0$ for each $i \in \{0, 1\}$, we have $f_\tau'(0) = m$ and $f_\tau''(0) < 0$ for all $\tau \in [0, 1]$.
  The family $\m{F} \coloneqq \{f_\tau\}_{\tau \in [0, 1]}$ is clearly compact in $\m{C}^2$, so it satisfies the hypotheses on $\m{F}$ in Lemma~\ref{lem:small-large}.
  Let $L_* \coloneqq \pi m^{-1/2} < 1$.
  Then Lemma~\ref{lem:small-large} provides $L_* < \ubar{L} \leq \bar{L} < \infty$ such that \eqref{eq:main} has a unique positive solution on $(0, L)$ whenever $L \in (L_*, \ubar{L}] \cup [\bar{L}, \infty)$ and $f \in \m{F}$.
  Note that by the choice of $f_1$, $\ubar{L} < 1 < \bar{L}$.

  Let $\m{T} \subset [0, 1]$ denote the set of $\tau$ for which there exists $L(\tau)$ such that \eqref{eq:main} admits multiple positive solutions on $\big(0, L(\tau)\big)$ with reaction $f_\tau$.
  We claim that $\m{T}$ is open.
  To see this, take $\tau \in \m{T}$ and note that $L_\al$ is not injective at the value $L(\tau)$.
  As the convex combination of two polynomials, $f_*$ is a polynomial and hence analytic.
  It follows from Lemma~\ref{lem:analytic} that
  \begin{equation}
    \label{eq:length-decreasing}
    \inf \dot L_\al < 0 \ForAll \tau \in \m{T}.
  \end{equation}
  Since the family $f_\tau$ is smooth in $\tau$, the is an open neighborhood $U \ni \tau$ such that for all $\sigma \in U$, $\dot{L}_\beta(f_\sigma) < 0$ (where we make the dependence on $f$ explicit for clarity).
  By Corollary~\ref{cor:unstable}, $U \subset \m{T}$.
  That is, $\m{T}$ is open.
  
  Now define
  \begin{equation*}
    \tau_* \coloneqq \inf \m{T}
  \end{equation*}
  and $f_* \coloneqq f_{\tau_*}$.
  We claim that $f_*$ is the desired reaction.
  Noting that $0 \not \in \m{T}$ by construction, $\tau_*$ lies on the boundary of $\m{T}$, and in particular $\tau_* \not\in \m{T}$.
  It follows that \eqref{eq:main} admits precisely one solution with reaction $f_*$ on every interval $(0, L)$ with $L \in (L_*, \infty)$.

  By Lemma~\ref{lem:small-large}, for all $\tau \in [0, 1]$ we have uniqueness on lengths outside $[\ubar{L}, \bar{L}]$.
  Let $A \subset (0, \al^*)$ be a compact interval whose image under $\al \mapsto L_\al$ contains $[\ubar{L}, \bar{L}]$.
  Then
  \begin{equation*}
    \inf_{A^\cc} \dot{L}_\al \geq 0 \ForAll \tau \in [0, 1].
  \end{equation*}
  Hence \eqref{eq:length-decreasing} implies that
  \begin{equation*}
    \inf_A \dot{L}_\al < 0 \ForAll \tau \in \m{T}.
  \end{equation*}
  By Lemma~\ref{lem:instability},
  \begin{equation*}
    \inf_A \lambda\big(\!-\partial_x^2 - f_\tau'(\phi_\al), (0, L_\al)\big) < 0 \ForAll \tau \in \m{T}.
  \end{equation*}
  As noted in the proof of Lemma~\ref{lem:simple-unstable}, \cite[Proposition~2.3(vii)]{BR} implies that $\lambda$ is continuous in the potential and the length.
  Approaching $\tau_*$ from within $\m{T},$ it follows that there exists $\al \in A$ such that
  \begin{equation*}
    \lambda\big(\!-\partial_x^2 - f_*'(\phi_\al), (0, L_\al)\big) \leq 0.
  \end{equation*}
  On the other hand, we have uniqueness on $(0, L_\al)$, so by Lemma~\ref{lem:unique-stable},
  \begin{equation*}
    \lambda\big(\!-\partial_x^2 - f_*'(\phi_\al), (0, L_\al)\big) \geq 0.
  \end{equation*}
  Therefore
  \begin{equation*}
    \lambda\big(\!-\partial_x^2 - f_*'(\phi_\al), (0, L_\al)\big) = 0,
  \end{equation*}
  as desired.
  Additionally, $f'(0) = m = \pi^2L_*^{-2} > \pi^{-2} L_\al^{-2}$ because $L_\al \geq \ubar{L} > L_*$.
\end{proof}
We can finally construct an \emph{epigraph} with at best marginal stability.
\begin{proof}[Proof of Proposition~\textup{\ref{prop:marginal}}]
  Let $f$ and $L$ be as in Proposition~\ref{prop:marginal-interval}, and let $\phi$ denote the unique positive solution of \eqref{eq:main} on $(0, L)$, which is marginally stable.
  Let $\Omega \subset \R^2$ have the form in Figure~\ref{fig:counter}(b), so that $\Omega$ includes a sequence of ever deeper wells of limiting width $L$.
  Then the cylinder $\Gamma \coloneqq (0, L) \times \R$ is a local limit of $\Omega$.
  Let $u$ be a positive bounded solution of \eqref{eq:main} on $\Omega$.
  By Lemma~\ref{lem:product},
  \begin{equation}
    \label{eq:cylinder-unstable}
    \lambda(-\Delta, \Gamma) = \lambda\big(\!-\Delta, (0, L)\big) = \frac{\pi^2}{L^2} < f'(0).
  \end{equation}
  Hence by Lemma~4.5 of~\cite{BG24}, $u$ does not vanish locally uniformly in the limit to $\Gamma$.
  Let $u^* > 0$ be a subsequential limit of $u$ on the limit domain $\Gamma$, so $u^*$ solves \eqref{eq:main} on $\Gamma$.
  Write coordinates on $\Gamma$ as $(x', y) \in (0, L) \times \R$.
  We claim that $u^*(x', y) = \phi(x')$.
  To see this, observe that $u^* \leq 1$, so recalling the parabolic semigroup $\m{P}$ from \eqref{eq:parabolic}, the comparison principle yields $u^* \leq \m{P}_\infty 1$.
  Because $1$ is independent of $y$, so is $\m{P}_\infty 1$.
  Thus $\m{P}_\infty 1$ solves \eqref{eq:main} on $(0, L)$, and hence is $\phi$ by the reasoning from the proof of Lemma~\ref{lem:unique-stable}.

  For a lower bound, we observe that by \eqref{eq:cylinder-unstable} and Proposition~2.3(iv), there exist $H \in \R_+$ and $\delta \in (0, L/2)$ such that
  \begin{equation*}
    \lambda\big(-\Delta, (\delta, L - \delta) \times (0, H)\big) < f'(0).
  \end{equation*}
  Let $\psi$ be the principal eigenfunction on the rectangle $R \coloneqq (\delta, L - \delta) \times (0, H)$.
  Because $f'$ is continuous, there exists $\bar \eps \in (0, 1)$ such that $\eps \psi$ is a subsolution of \eqref{eq:main} on $\Gamma$ for all $\eps \in [0, \bar{\eps}]$.
  Because $u^* > 0$, there exists $\eps \in (0, \bar{\eps}]$ such that $u^* \geq \eps \psi$.
  Raising $\eps$, the strong maximum principle implies that $u^* \geq \bar{\eps} \psi$ (because $u^*$ cannot touch any $\eps \psi$).
  Sliding $R$ in $y$, the strong maximum principle further implies that
  \begin{equation*}
    u^*(x', y) \geq \theta(x') \coloneqq \sup_{y \in (0, H)} \psi(x', y).
  \end{equation*}
  Note that $\theta \geq 0$ and $\theta \not\equiv 0$.
  As the supremum of subsolutions, $\theta$ is itself a subsolution of \eqref{eq:main}.
  Hence the parabolic limit $\m{P}_\infty \theta$ exists is a positive solution of \eqref{eq:main} on $(0, L)$.
  By uniqueness, $\m{P}_\infty \theta = \phi$.
  Then comparison yields $u^* \geq \m{P}_\infty \theta = \phi$.
  So indeed $u^* = \phi$.

  By Lemma~2.4 of \cite{BG24}, the principal eigenvalue can only increase along limits:
  \begin{equation*}
    \lambda(-\Delta - f'(u), \Omega) \leq \lambda(-\Delta - f'(u^*), \Gamma) = \lambda(-\Delta - f'(\phi), \Gamma).
  \end{equation*}
  (The lemma is stated only for the operator $-\Delta$, but the proof applies to sequences of operators with potentials as well.)
  By Lemma~\ref{lem:product} and Proposition~\ref{prop:marginal-interval}, we find
  \begin{equation*}
    \lambda(-\Delta - f'(u), \Omega) \leq \lambda(-\Delta - f'(\phi), \Gamma) = \lambda\big(\!-\Delta - f'(\phi), (0, L)\big) = 0.\qedhere
  \end{equation*}
\end{proof}

\printbibliography
\end{document}